\documentclass[a4paper,12pt,reqno,makeidx]{amsart}
\usepackage{amsfonts,amssymb,amscd,amsmath,latexsym,amsbsy,enumerate}
\usepackage{stmaryrd,a4wide,mathrsfs,graphicx}
\usepackage[pagebackref,hypertexnames=false, colorlinks, citecolor=black, linkcolor=blue]{hyperref}

\textwidth 6.6truein
\textheight 8.5truein
\oddsidemargin 0pt
\evensidemargin 0pt
\pagestyle{myheadings}

\newtheorem{thm}{Theorem}[section]
\newtheorem{cor}[thm]{Corollary}
\newtheorem{lemma}[thm]{Lemma}
\newtheorem{prop}[thm]{Proposition}
\newtheorem{defn}[thm]{Definition}
\theoremstyle{definition}
\newtheorem{remark}[thm]{Remark}
\newtheorem{example}[thm]{Example}
\newtheorem{ass}[thm]{Assumption}
\numberwithin{equation}{section}

\def\al{\alpha}
\def\be{\beta}
\def\ga{\gamma}
\def\de{\delta}
\def\ep{\varepsilon}
\def\la{\lambda}
\def\ka{\kappa}
\def\si{\sigma}
\def\vp{\varphi}
\def\om{\omega}
\def\La{\Lambda}
\def\Ga{\Gamma}
\def\Om{\Omega}
\def\Z{\mathbb{Z}}
\def\R{\mathbb{R}}
\def\C{\mathbb{C}}
\def\N{\mathbb{N}}
\def\cD{\mathcal{D}}
\def\cH{\mathcal{H}}
\def\cK{\mathcal{K}}
\def\cP{\mathcal{P}}
\def\cU{\mathcal{U}}
\def\cO{\mathcal{O}}
\def\cL{\mathcal{L}}
\def\cF{\mathcal{F}}
\def\cN{\mathcal{N}}
\def\cV{\mathcal{V}}
\def\scA{\mathscr{A}}
\def\scB{\mathscr{B}}
\def\lc{\text{\rm lc}}
\def\Tr{\mathrm{Tr}}
\def\Ker{\mathrm{Ker}}
\def\Ran{\mathrm{Ran}}
\def\supp{\mathrm{supp}}
\def\SU{\mathrm{SU}}
\newcommand{\rphis}[5]{\,_{#1}\vp_{#2} \left( \genfrac{.}{.}{0pt}{}{#3}{#4}
\ ;#5 \right)}
\newcommand{\rFs}[5]{\,_{#1}F_{#2} \left( \genfrac{.}{.}{0pt}{}{#3}{#4}
\ ;#5 \right)}
\setcounter{tocdepth}{1}
\makeindex

\begin{document}
\title[Spectral theory and special functions]
{Applications of spectral theory to special functions}

\author{Erik Koelink}
\address{Radboud Universiteit, IMAPP, FNWI, PO Box 9010, 6500 GL Nijmegen,
the Netherlands}
\email{e.koelink@math.ru.nl}

\begin{abstract}
Many special functions are eigenfunctions to explicit operators, such as 
difference and differential operators, which is in particular true for 
the special functions occurring in the Askey-scheme, its $q$-analogue and 
extensions. 
The study of the spectral properties of such operators
leads to explicit information for the corresponding special functions. 
We discuss several instances of this application, involving 
orthogonal polynomials and their matrix-valued analogues. 
\end{abstract}

\maketitle
\tableofcontents


\section*{Preamble}
We use standard notation for hypergeometric series, basic hypergeometric series (also 
known as $q$-hypergeometric series) and special functions following 
standard references, such as e.g. Andrews, Askey and Roy \cite{AndrAR}, Gasper and Rahman \cite{GaspR},
Ismail \cite{Isma}, Koekoek and Swarttouw \cite{KoekLS}, \cite{KoekS}, Szeg\H{o} \cite{Szeg}, Temme \cite{Temm}. 
There is an abundance of references, and apart from the references in the books in 
the bibliography, the review paper by Damanik, Pushnitski and Simon \cite{DamaPS} contains many 
references. 
The appendix discusses the spectral theorem, and references are given there.
All measures discussed are Borel measures on the real line, and we 
denote the $\si$-algebra of Borel sets  on $\R$ by $\scB$.
Furthermore, $\N=\{0,1,2,\cdots\}$. 
All the results in these notes have appeared in the literature. 


\section{Introduction}\label{sec:Introduction}

Spectral decompositions of self-adjoint operators on Hilbert 
spaces can at least be traced back to the work of 
Fredholm on the solutions of integral equations.
The study of Sturm-Liouville differential operators 
was a great impetus for the development of spectral analysis, 
see e.g. \cite{Dieu}. 
For some explicit Sturm-Liouville type differential operators 
there is a link to well-known special functions, such as 
e.g. Jacobi polynomials, which shows the close connection between 
special functions and spectral theory. 
At the moment, this is for instance an important ingredient in the study of
so-called exceptional orthogonal polynomials, see e.g. \cite{Dura-LN}. 

Spectral theory is, loosely speaking,  essentially a study of the eigenvalues, 
or spectral data, of 
a suitable operator, and to determine such an operator 
completely in terms of its eigenvalues. For a self-adjoint matrix
this means that we look for its eigenvalues, which are real in this case,
and the corresponding eigenspaces, which are orthogonal in this case.
So we can write the self-adjoint matrix as a sum of multiplication 
and projection operators, and this is the most basic form 
of the spectral theorem for self-adjoint operators.
We recall the spectral theorem in its most general 
form in Appendix \ref{app:spectralthm}.

The application to differential operators, and also to various developments 
in physics, such as  quantum mechanics, is still very important.
Through this application, there have been many developments for 
special functions. One of the classical applications is to 
study the second order differential operator 
\[
D^{\al,\be} = (1-x^2) \frac{d^2}{dx^2} + \bigl( \be -\al - (\al+\be+2)x\bigr) \frac{d}{dx}
\]
on the weighted $L^2(w^{\al,\be})$ space for the weight $w^{\al,\be}(x)= C (1-x)^\al(1+x)^\be$ on $[-1,1]$
for a suitable normalisation constant $C$.
Then $D^{\al,\be}$ can be understood as an unbounded self-adjoint 
operator with compact resolvent. The spectral measure is then given 
by projections on the orthonormal Jacobi polynomials,\index{orthogonal polynomials!Jacobi polynomials} which are eigenfunctions 
of $D^{\al,\be}$. 
Similarly, the differential operator can also be studied on 
$[1,\infty)$ with respect to a suitable weight, and then its 
spectral decomposition leads to the Jacobi-function transform, 
see e.g. \cite[Ch.~XIII]{DunfS}, \cite{Koor-Jacobi} and references.

Another classical application of spectral analysis is a proof of 
Favard's theorem, see Corollary \ref{cor:Favardsthm}, stating that polynomials satisfying
a suitable three-term recurrence relation, are orthogonal polynomials.
This follows from studying a so-called Jacobi operator on the 
Hilbert space $\ell^2(\N)$ of square summable sequences. 
The spectral analysis of such a Jacobi operator is closely related to 
the moment problem, and this link can be found at several places
in the literature such as e.g. \cite{Deif}, \cite{DunfS}, \cite{Koel-Laredo}, \cite{Schm}, 
\cite{ShohT}, \cite{Simo}. 
The Haussdorf moment problem, i.e. on a finite interval, played an important
role in the development of functional analysis, notably the development of
functionals and related theorems, 
see \cite[\S I.3]{Monn}.

One particular application is to have other explicit operators, e.g. 
differential operators or difference operators, 
realised as Jacobi operators and next use this connection to obtain
results for these explicit operators.
In Section \ref{sec:JMatrixmethod} we give a couple of examples, 
including the original (as far as we are aware) motivating 
example of the Schr\"odinger operator with Morse potential due 
to the chemist Broad, see references in Section \ref{ssec:SchrodingerMorsepotential}.

\begin{figure}[th]
\begin{center}
\includegraphics[height=11truecm]{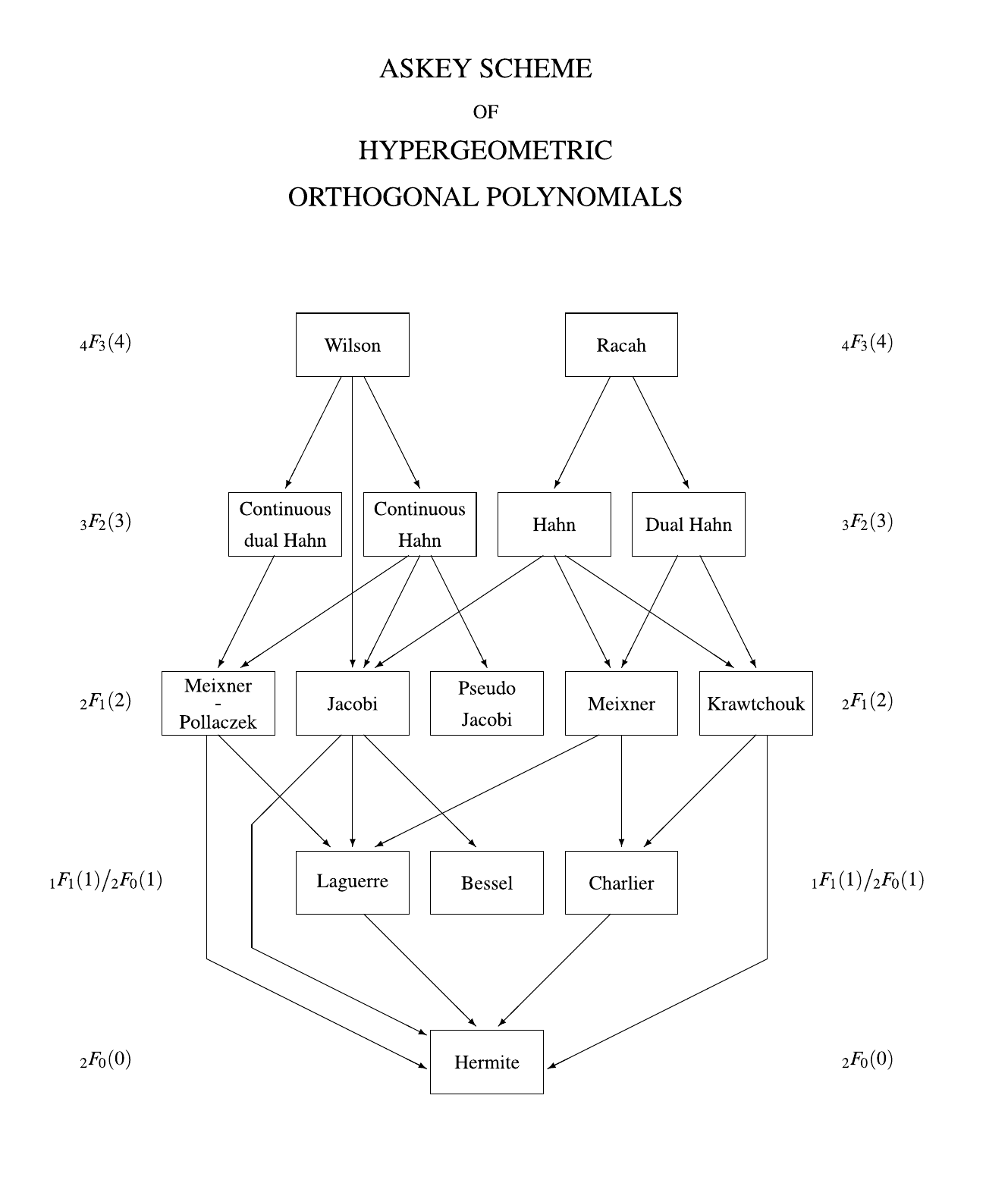}
\end{center}
\caption{The Askey scheme as in \cite{KoekLS}.}\label{fig:Askeyscheme}
\end{figure}

As is well-known the Askey scheme of hypergeometric orthogonal polynomials 
and its $q$-analogue, see e.g. \cite{KoekLS}, \cite{KoekS}, and 
initially observed by Askey in \cite[Appendix]{AskeW}, see also 
the first Askey-scheme in Labelle \cite{Labe} --drawn by hand--, 
consists of those polynomials which are also eigenfunctions to 
a second-order operator, which can be a differential 
operator, a difference operator or a $q$-difference operator
of some kind. See Figures \ref{fig:Askeyscheme} and 
\ref{fig:qAskeyscheme}, taken from Koekoek, Lesky, Swarttouw \cite{KoekLS}
for the current state of affairs. 
Naturally, many of these operators, like the 
differential operator for the Jacobi polynomials, have been studied 
in detail. This is in particular valid for the operators occurring in 
the Askey-scheme. For the other operators, especially the 
difference operators for the orthogonal polynomials in the $q$-analogue of
the Askey scheme corresponding to indeterminate moment
problems, see \cite{Chri-PhD}. 
On the other hand, it is natural to extend the ($q$-)Askey-scheme 
to include also integral transforms with kernels in 
terms of (basic) hypergeometric series, such 
as the Hankel, Jacobi, Wilson transform, and its $q$-analogues 
and to study these transforms and their properties from a 
spectral analytic point of view using the associated operators.
We refer to the schemes \cite[Fig.~1.1, 1.2]{KoelSNATO} 
remarking that in the meantime \cite[Fig.~1.1]{KoelSNATO} 
has been vastly extended to include the Wilson function 
transform by Groenevelt \cite{Groe-WT}, and various 
transformations that can be obtained as limiting cases.
In the terminology of Gr\"unbaum and coworkers, all the 
instances of the ($q$-)Askey-scheme are examples of 
the bispectral property.\index{bispectral property}
This means that the polynomials are eigenfunctions to 
a three-term recurrence operators (acting in the degree) 
and at the same time are eigenfunctions of a suitable
second order differential or difference operator in 
the variable. 
In particular, all these instances give rise to bispectral families
of special functions. 

\begin{figure}[th]
\begin{center}
\hspace*{-1truecm}
\includegraphics[height=17truecm, angle=-90]{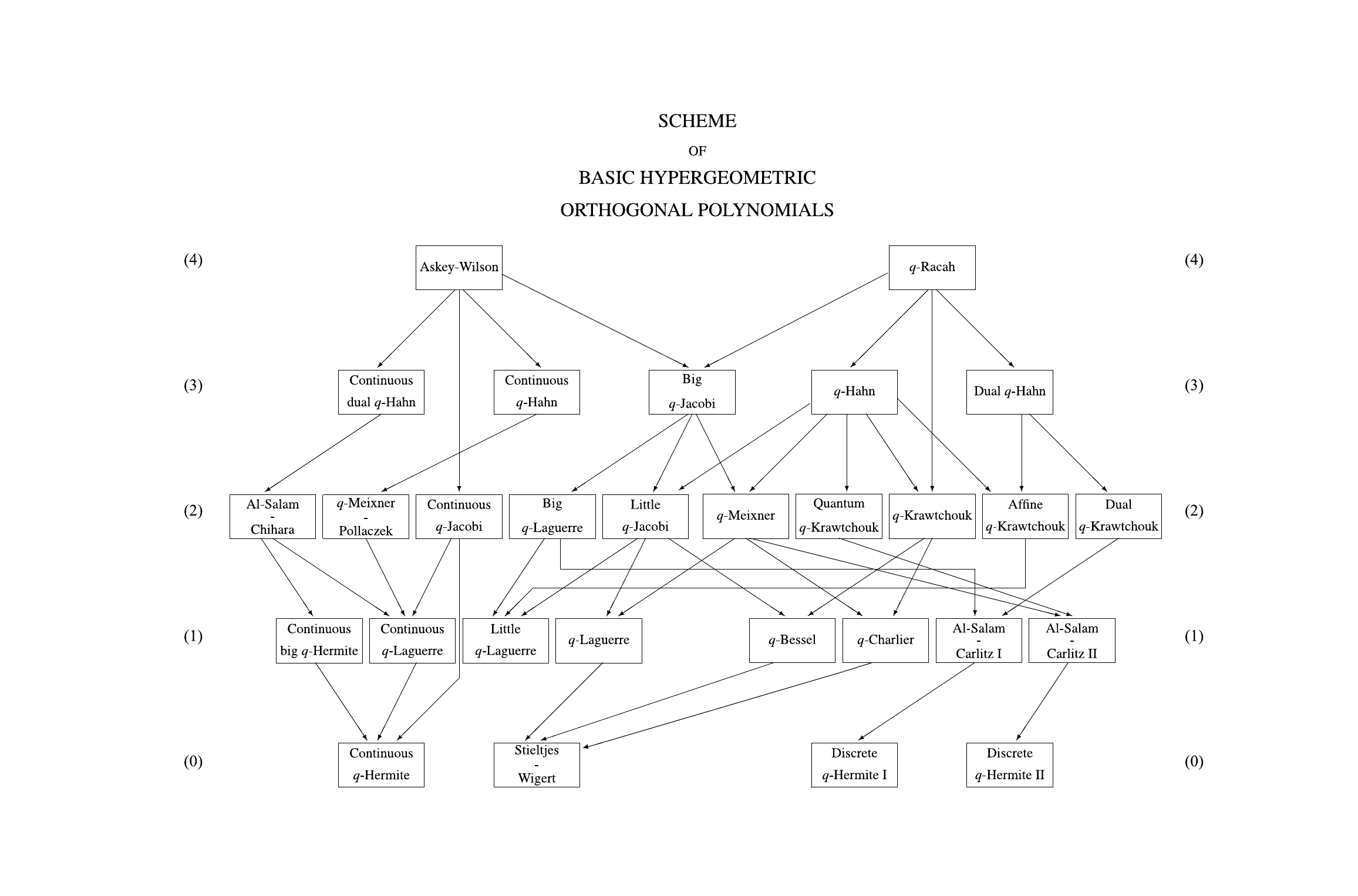}
\end{center}
\caption{The $q$-Askey scheme as in \cite{KoekLS}.}\label{fig:qAskeyscheme}
\end{figure}

Motivated by one of the second order $q$-difference operators
arising in the $q$-analogue of the Askey-scheme, we discuss 
the spectral analysis of three-term recurrence operators 
on $\ell^2(\Z)$ in Section \ref{sec:threetermZ}.
We apply the spectral theorem to a particular example and we obtain a set 
of orthogonality measures for the continuous $q^{-1}$-Hermite 
polynomials. Here we follow the convention $0<q<1$, so that $q^{-1}>1$. 
These measures turn out to be N-extremal, where
N stands for Nevanlinna. This result is originally obtained 
by Ismail and Masson \cite{IsmaM}, and this proof is due to 
Christiansen and the author \cite{ChriK} as a special case of 
results for the symmetric Al-Salam--Chihara polynomials for $q>1$. 
This is partly based on \cite[\S 4]{Koel-Laredo}.
Similar ideas have been used in e.g. \cite{ChriK-JAT}, \cite{GroeK-Meixner}, 
 to study other moment problems and related orthogonal
polynomials. 

In Section \ref{sec:threetermN} we briefly recall the 
relation between three-term recurrence operators and 
orthogonal polynomials.
This is a well known subject in the literature, and 
there are several books and review papers on this 
subject, e.g. \cite{Akhi}, \cite{BuchC}, \cite{DunfS}, \cite{Land}, 
\cite[Ch.~16]{Schm}, \cite{ShohT}, \cite{Simo}, \cite{Ston}.
We base ourselves on \cite{Koel-Laredo}, and we extend 
this approach to the case of matrix-valued orthogonal 
polynomials and block Jacobi operators. 
The spectral approach is essentially due to M.G.~Kre\u\i n \cite{Krei-mm}, 
\cite{Krei}, whose great mathematical legacy is discussed in 
\cite{Adam-etal-KreinCentenary}. 
We discuss briefly a rather general example of arbitrary size. 
In Section \ref{sec:MoreMweightsMVOPS-Jacobi} we discuss 
some of the assumptions made in the Section \ref{sec:MVOP}. 
Here we make also use of previous lecture series by 
Berg \cite{Berg} and Dur\'an and L\'opez-Rodr\'\i guez \cite{DuraLR-Laredo},
but also \cite{ApteN}, \cite{Gero}, \cite{Krei-mm}, \cite{Krei}. 

In Section \ref{sec:JMatrixmethod} we show how realisations of 
explicit operators, such as differential operators, as recurrence operators
can be used to study the spectral theory. This gives rise to 
relations between the spectral decomposition of such an 
operator and the related orthogonal polynomials. 
In the physics literature such a method is known as the $J$-matrix
method, and there is a vast literature of physics applications, see 
e.g. references to work of Al-Haidari, Bahlouli, Bender, Dunne, Yamani
and others in \cite{IsmaK}. 
The first example of Section \ref{sec:JMatrixmethod} 
is the study of the Schr\"odinger operator with a Morse
potential, originally due to Broad \cite{Broa-LNM}, see also \cite{Dies}. 
The second example of Section \ref{sec:JMatrixmethod} is in the same vein, and due to Ismail and the 
author \cite{IsmaK}. This case has recently 
been generalised by Genest et al. \cite{GeneIVZ-PAMS2016} to 
include more parameters and to cover the full family of Wilson 
polynomials. Moroever, in \cite{GeneIVZ-PAMS2016} a link to the Bannai-Ito algebra is established. 
The last example of Section \ref{sec:JMatrixmethod} leads to a more general family of matrix-valued
orthogonal polynomials for operators which have a realisation as  
a $5$-term recurrence operator. We then discuss an example of
such a case, extending the second example of Section \ref{sec:JMatrixmethod}. We apply this 
approach to 
an explicit second order differential operator.
The same realisation of suitable operators as tridiagonal
operators has useful implications in e.g. representation theory,
see e.g. \cite{CiccKK}, \cite{Groe-Laguerre}, \cite{Groe-PhD}, \cite{Groe-JMP2014}, 
\cite{GroeK-JPA2002}, \cite{GroeKK}, 
\cite{GroeKR}, \cite{KoelK}, 
\cite{KoelVdJ}, \cite{MassR}, \cite{Nere} for the case of 
representation theory of the Lie algebra $\mathfrak{su}(1,1)$
and its quantum group analogue. 
Using explicit realisations of representations, these 
results have given very explicit bilinear generating functions,
see e.g. \cite{Groe-Ramanujan}, \cite{KoelVdJ-CA}. 

All the general results as well as the explicit examples have 
appeared in the literature before. 
There are many other references available in the literature, 
and apart from the books --and the references mentioned there-- 
mentioned in the bibliography, one can 
especially consult the references in \cite{DamaPS}, where 
a list  of more than 200 references can be found. 
In particular, there are many papers available that generalise
known results in the general theory of orthogonal polynomials 
to the matrix-valued orthogonal polynomial case, and we 
refer to the references to work by Berg, Cantero, Castro, Dur\'an, 
Geronimo, Gr\"unbaum, 
de la Iglesia, Lop\'ez-Rodr\'\i guez, Marcell\'an, 
Pacharoni, Tirao, Van Assche, etc. to the references in \cite{DamaPS}. 

Let us note that in these notes the emphasis is on explicit operators 
related to explicit sets of special functions, so that 
information on these special functions is obtained from the 
spectral analysis.
On the other hand, there are also many results on  
the spectral analysis of more general
classes of operators. For this subject one can consult 
Simon's book \cite{Simo-book} and the extensive list 
of references given there. 

It may happen that a differential or difference operator
with suitable eigenfunctions in terms of well-known 
special functions cannot be suitably realised as a three-term
recurrence operator on a Hilbert space such as $\ell^2(\Z)$ or $\ell^2(\N)$.
It can then be very useful to look for a larger Hilbert space,  
and an extension of the operator to the larger Hilbert space.
This is different from the extension of a Hilbert space in order
to find self-adjoint extensions. 
Then one needs to find a way of obtaining the extended Hilbert space
and the extension of the operator. 
This is usually governed by the interpretation of these operators
and special functions in a different context, like e.g. 
representation theory. 
Examples are in e.g.  \cite{Groe-WT}, \cite{Groe-CA2009}, \cite{KoelSbig}, \cite{KoelSAW}, \cite{Nere}. 
This leads to extensions of the Askey and $q$-Askey scheme of 
Figures \ref{fig:Askeyscheme}, \ref{fig:qAskeyscheme} with non-polynomial 
function transforms arising as the spectral decomposition of 
suitable differential and difference operators on Hilbert spaces of 
functions, see e.g. Figures 1.1 and 1.2 in \cite{KoelSNATO}. 
Figure 1.2 of \cite{KoelSNATO} is still valid as an extension of 
the $q$-Askey scheme, but Figure 1.1 of \cite{KoelSNATO} has now
Groenevelt's Wilson function transforms \cite{Groe-WT} at the top level. 

\medskip
\textbf{Acknowledgement.} 
Thanks to Ren\'e Swarttouw and Roelof Koekoek for their 
version of the Askey-scheme in Figures \ref{fig:Askeyscheme} and \ref{fig:qAskeyscheme}. 
I thank the organisers of the summer school, in particular Howard Cohl, Mourad Ismail and Kasso Okoudjou, 
for the opportunity
to give the lectures at \emph{Orthogonal Polynomials and Special Functions Summer School
OPSF-S6}, July 2016, University of Maryland. I also thank all the participants of OPSF-6 for 
their feedback. I thank Wolter Groenevelt and Luud Slagter for their input. 
The referees have pointed out many errors and oversights, and I thank them for their 
help in improving these lecture notes. 

\section{Three-term recurrences in $\ell^2(\Z)$}
\label{sec:threetermZ}

In this section we discuss three-term recurrence relations 
on the Hilbert space $\ell^2(\Z)$. We apply to this one 
particular example, which is motivated by a second order
difference operator arising in the $q$-Askey scheme.

We consider sequence spaces and the associated Hilbert spaces as 
in Example \ref{exa:Hilbertspaces}.
For the Hilbert space $\ell^2(\Z)$ with orthonormal basis $\{ e_l\}_{l\in \Z}$
we consider for complex sequences $\{a_l\}_{l\in \Z}$, $\{b_l\}_{l\in \Z}$, 
$\{c_l\}_{l\in \Z}$
the operator 
\begin{equation}\label{eq:defL}
L\, e_l = a_l e_{l+1} + b_l e_l + c_{l}e_{l-1},\qquad l\in \Z,
\end{equation}
with dense domain $\cD$\index{D@$\cD$} the subspace of finite 
linear combinations of the basis vectors.

\begin{lemma}\label{lem:bddL}
$L$ extends to a bounded operator on $\ell^2(\Z)$ if and only if the 
sequences $\{a_l\}_{l\in \Z}$, $\{b_l\}_{l\in \Z}$, 
$\{c_l\}_{l\in \Z}$ are bounded.
\end{lemma}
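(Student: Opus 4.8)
The plan is to reduce the statement to two transparent facts: a diagonal (multiplication) operator on $\ell^2(\Z)$ is bounded precisely when its diagonal sequence is bounded, and the bilateral shift is unitary. To set this up I would first rewrite $L$ on $\cD$ in terms of these building blocks. Writing $S$ for the bilateral shift $S e_l = e_{l+1}$, which is unitary on $\ell^2(\Z)$ with $S^* e_l = e_{l-1}$, and $D_a, D_b, D_c$ for the diagonal operators $D_a e_l = a_l e_l$, $D_b e_l = b_l e_l$, $D_c e_l = c_l e_l$, a direct check on basis vectors gives $L = S D_a + D_b + S^* D_c$ on $\cD$. Both directions then follow by treating the three summands separately.

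For the sufficiency (``if'') direction I would assume the three sequences bounded and estimate. Each diagonal operator is bounded with $\|D_a\| = \sup_l |a_l|$ and likewise for $b,c$, while $\|S\| = \|S^*\| = 1$; so the triangle inequality yields $\|L\psi\| \le (\sup_l|a_l| + \sup_l|b_l| + \sup_l|c_l|)\,\|\psi\|$ for all $\psi \in \cD$. Since $\cD$ is dense, the standard extension theorem for bounded linear maps produces a unique bounded extension to all of $\ell^2(\Z)$, which is what is claimed.

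For the necessity (``only if'') direction I would recover the sequences from matrix coefficients. Assuming $L$ extends to a bounded operator, still denoted $L$, and using that every basis vector $e_l$ lies in $\cD$ (so the extension agrees with \eqref{eq:defL} there), I would pair $L e_l$ against basis vectors to isolate $\langle L e_l, e_{l+1}\rangle = a_l$, $\langle L e_l, e_l\rangle = b_l$, and $\langle L e_l, e_{l-1}\rangle = c_l$. Cauchy--Schwarz together with $\|e_m\| = 1$ bounds each of these by $\|L e_l\| \le \|L\|$ uniformly in $l$, forcing all three sequences to be bounded by $\|L\|$. I do not expect a real obstacle; the only care needed is the bookkeeping in identifying ``extends to a bounded operator'' with boundedness on the dense domain $\cD$, and in checking that the extracted entries genuinely are the original coefficients, which rests on the basis vectors lying in $\cD$.
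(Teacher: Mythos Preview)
Your proof is correct. The paper does not supply its own proof of this lemma; it is left as Exercise~\ref{exer:lembbdL} with a pointer to \cite[Lemma~(3.3.3)]{Koel-Laredo}, and your argument via the decomposition $L = S D_a + D_b + S^* D_c$ together with the matrix-coefficient extraction for the converse is exactly the standard route one would expect.
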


In Exercise \ref{exer:lembbdL} you are requested to prove Lemma \ref{lem:bddL}. 

In case $L$ is bounded, we see that $L$ acting on 
$v=\sum_{k\in \Z}v_k e_k \in \ell^2(\Z)$ is given by 
\begin{equation}\label{eq:Lv-bdd}
Lv = \sum_{k\in \Z}  (a_{k-1}v_{k-1} + b_k v_k + c_{k+1}v_{k+1}) \, e_k. 
\end{equation}
In case $L$ is not bounded, we have to interpret this in a suitable fashion,
by e.g. initially allowing only for $v\in \ell^2(\Z)$ with
only finitely many non-zero coefficients, i.e. for $v\in\cD$.  
In general we view $L$ as an operator acting on the sequence space of
sequences labeled by $\Z$, and we are in particular interested in the case
of square summable sequences. 

\begin{lemma}\label{lem:adjointL}
For $v=\sum_{k\in \Z}v_k e_k \in \ell^2(\Z)$ define 
\begin{equation*}
L^\ast \, v = \sum_{k=-\infty}^\infty
\bigl( \overline{a_k}v_{k+1}+ \overline{b_k} v_k+ \overline{c_{k}}v_{k-1}\bigr)\, e_k,
\end{equation*}
which, in general, is not an element of $\ell^2(\Z)$.
Define 
\[
\cD^\ast = \{ v\in \ell^2(\Z) \mid L^\ast v\in \ell^2(\Z)\}
\]
The adjoint of $(L, \cD)$ is $(L^\ast, \cD^\ast)$. 
\end{lemma}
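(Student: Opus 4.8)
The plan is to argue straight from the definition of the Hilbert-space adjoint of a densely defined operator. By that definition, a vector $w\in\ell^2(\Z)$ belongs to the domain of the adjoint of $(L,\cD)$ exactly when there exists $u\in\ell^2(\Z)$ satisfying $\langle Lv,w\rangle=\langle v,u\rangle$ for every $v\in\cD$, in which case the adjoint maps $w$ to this (necessarily unique) $u$. Since $\cD$ is by construction the linear span of the basis vectors $\{e_l\}_{l\in\Z}$, and both sides of the identity $\langle Lv,w\rangle=\langle v,u\rangle$ are linear in $v$, it is enough to impose the identity on each $e_l$. This collapses the whole question to a coefficientwise comparison.

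First I would compute the two inner products on the basis. Fixing the convention that $\langle\cdot,\cdot\rangle$ is linear in its first argument, equation \eqref{eq:defL} gives $\langle Le_l,w\rangle=a_l\overline{w_{l+1}}+b_l\overline{w_l}+c_l\overline{w_{l-1}}$, while $\langle e_l,u\rangle=\overline{u_l}$. Equating these for all $l$ and taking complex conjugates converts the adjoint condition into the explicit recurrence $u_l=\overline{a_l}\,w_{l+1}+\overline{b_l}\,w_l+\overline{c_l}\,w_{l-1}$, which is precisely the $l$-th coefficient of the formally defined sequence $L^\ast w$ appearing in the statement.

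From this the two inclusions drop out together. If $w\in\cD^\ast$, meaning the formal sequence $L^\ast w$ is square summable, then putting $u=L^\ast w$ and running the computation in reverse yields $\langle Lv,w\rangle=\langle v,u\rangle$ on the spanning set, hence on all of $\cD$ by linearity; so $w$ lies in the adjoint domain and the adjoint value is $L^\ast w$. Conversely, if $w$ is in the adjoint domain with adjoint $u\in\ell^2(\Z)$, then testing against $e_l$ forces $u_l=(L^\ast w)_l$ for every $l$, so the formal sequence $L^\ast w$ coincides with $u$ and is therefore in $\ell^2(\Z)$, i.e. $w\in\cD^\ast$. Together these give the identification of the adjoint as $(L^\ast,\cD^\ast)$.

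The computation itself is routine, so there is no genuine analytic obstacle; the single point requiring care is the passage between testing on the individual $e_l$ and testing on all of $\cD$. This reduction is legitimate precisely because the defining relation $\langle Lv,w\rangle=\langle v,u\rangle$ is linear in the test vector $v$, so agreement on a spanning set propagates to its span, and because any candidate $u$ is determined by its pairings $\langle e_l,u\rangle$ against the orthonormal basis. The only place a slip could creep in is in keeping the index shifts in $\langle Le_l,w\rangle$ correctly aligned with the conjugated, shifted entries of $w$.
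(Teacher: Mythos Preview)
Your argument is correct, and it is somewhat more direct than the route the paper sketches in Exercise~\ref{exer:adjointL} (and carries out in full in the analogous matrix-valued case, Proposition~\ref{prop:MVadjointJ}). The paper works with the continuity form of the adjoint domain: for one inclusion it writes $\langle Lv,w\rangle=\sum_k v_k\,\overline{(L^\ast w)_k}$ for $v\in\cD$ and invokes Cauchy--Schwarz to get $|\langle Lv,w\rangle|\le\|v\|\,\|L^\ast w\|$; for the reverse inclusion it plugs in the truncated test vector $v=\sum_{|k|\le M}(L^\ast w)_k e_k$, so that the bound $|\langle Lv,w\rangle|\le C\|v\|$ becomes $\|v\|\le C$ uniformly in $M$, forcing $L^\ast w\in\ell^2(\Z)$. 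You instead pass immediately to the Riesz-representation characterisation (existence of $u\in\ell^2(\Z)$ with $\langle Lv,w\rangle=\langle v,u\rangle$ on $\cD$) and read off the coefficients of $u$ by testing on the orthonormal basis. Your approach buys brevity: the orthonormal basis pins down $u$ coefficientwise without any estimates, and linearity in $v$ handles the extension from $\{e_l\}$ to $\cD$ for free. The paper's approach, by contrast, stays closer to the boundedness definition and its truncation trick generalises more transparently to settings where one does not have such a convenient spanning set inside the domain.
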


In Exercise \ref{exer:adjointL} you are requested to prove Lemma \ref{lem:adjointL}.

As for $L$ in \eqref{eq:Lv-bdd}, we apply $L^\ast$ to 
arbitrary sequences. 

Note that $\cD\subset \cD^\ast$, so that $(L,\cD)$ is symmetric in 
case $L^\ast \vert_{\cD}=L$, which is the case for $\overline{a_k}=c_{k+1}$ and
$\overline{b_k} = b_k$ for all $k\in\Z$. 

From now on we assume that $\overline{a_k}=c_{k+1}$ and
$\overline{b_k} = b_k$ for all $k\in\Z$, and moreover, that $a_k>0$ for all $k\in \Z$.
This last assumption is not essential, since changing each of the basis elements 
by a phase factor shows that we can assume this in case $a_k\not=0$ for all $k\in \Z$. 
Note that in case $a_{k_0}=0$ for some $k_0$ we have $L$-invariant subspaces, and we can consider $L$ on
such an invariant subspace. 
In particular, the dimension of the space of formal solutions to $L^\ast f = zf$ is two.

\begin{example}\label{exa:ASC-q1}
The first example is related to explicit orthogonal polynomials, namely the 
symmetric Al-Salam--Chihara polynomials\index{orthogonal polynomials!Al-Salam--Chihara polynomials} in base $q^{-1}$,
see \cite{GaspR}, \cite{KoekLS}, \cite{KoekS}. 
We are in particular interested in the limit case of the continuous $q^{-1}$ 
Hermite polynomials 
\index{orthogonal polynomials!continuous $q^{-1}$-Hermite polynomials} introduced
by Askey \cite{Aske}. 
These 
polynomials correspond to an indeterminate moment problem, see Section \ref{sec:threetermN}, and 
have been studied in detail by Ismail and Masson \cite{IsmaM}, who have determined the 
explicit expression of the  N-extremal measures,\index{N-extremal measure} 
where N stands for Nevanlinna. 
The N-extremal measures are the measures for which the polynomials are dense in the 
corresponding weighted $L^2$-space. 

The details of Example \ref{exa:ASC-q1} are taken from \cite{ChriK}, in which the case of 
the general symmetric Al-Salam--Chihara polynomials is studied and 
in the notation of \cite{ChriK} this example corresponds to $\be\downarrow 0$. 
The polynomials, after rescaling, are eigenfunctions to a second order $q$-difference
equation for functions supported on a set labeled by $\Z$. 
After rewriting, we find the following three-term recurrence operator: 
\begin{gather*}
L\, e_l = a_l e_{l+1} + b_l e_l + a_{l-1}e_{l-1}, \\
a_l=\frac{\al^2 q^{2l+\frac12}}{1+\al^2 q^{2l+1}}
\frac{1}{\sqrt{(1+\al^2
q^{2l})(1+\al^2 q^{2l+2})}} \qquad 
b_l= \frac{\al^2(1+q)q^{2l-1}}{(1+\al^2q^{2l+1})(1+\al^2q^{2l-1})},
\end{gather*}
where $\al\in (q,1]$. We emphasise that the polynomials being eigenfunctions to $L$
follows from the second order $q$-difference operator for the continuous $q^{-1}$-Hermite
polynomials \cite{Aske}, \cite[(3.26.5)]{KoekS}, and not from the three-term 
recurrence relation for orthogonal polynomials. 
Recall that $0<q<1$. 
It follows immediately from the explicit expressions that
\begin{gather*}
a_l=\begin{cases} \al^2 q^{2l+\frac12} + \cO(q^{4l}),
& l\to\infty, \\[3mm]
\al^{-2} q^{-2l-\frac32} + \cO(q^{-4l}), &l\to-\infty,
      \end{cases} \qquad 
b_l=
\begin{cases} \al^2(1+q) q^{2l-1} + \cO(q^{4l}), & l\to\infty,\\[3mm]
\al^{-2}(1+q)q^{-2l-1} + \cO(q^{-4l}), & l\to
-\infty.
       \end{cases}
\end{gather*}
The exponential decay of the coefficients $a_l$ and $b_l$ in this case for $l\to\pm\infty$, show that 
we can approximate $L$ by the finite rank operators $P_n L$, where $P_n$ is the 
projection on the finite dimensional subspace spanned by the basis vectors 
$\{e_{-n}, e_{-n+1}, \cdots, e_{n-1}, e_n\}$. The approximation holds true in 
operator norm, \index{operator norm}
$\| L- P_nL\| = \cO(q^n)$, so that $L$ is a compact operator. \index{compact operator}
So the operator $L$ has discrete spectrum accumulating at zero, and each of the 
eigenspaces for the non-zero eigenvalues is finite-dimensional. 
\end{example}

Next we consider the formal eigenspaces for $z\in \C$ of $L^\ast$; 
\begin{equation}
\begin{split}
S^+_z &= \{ f= \sum_{k\in\Z} f_k e_k \mid 
L^\ast f= z f, \ \sum_{k>0} |f_k|^2 <\infty\} \\
S^-_z &= \{ f= \sum_{k\in\Z} f_k e_k \mid  
L^\ast f= z f, \ \sum_{k<0} |f_k|^2 <\infty\} \\
\end{split}
\end{equation}
So $\dim S^{\pm}_z \leq 2$. Note that $S^\pm_z$ consist of those eigenvectors
that are square summable at $\pm\infty$, which we call the 
free solutions\index{free solution} at $\pm\infty$. 

For any two sequences $\{v\}_{l\in\Z}$, $\{f\}_{l\in\Z}$, we define 
the Wronskian\index{Wronskian} or 
Casorati determinant\index{Casorati determinant} 
by
\[
[v,f]_l = a_l\bigl( v_{l+1}f_l - f_{l+1}v_l\bigr),
\]
which is a sequence. However, for eigenvectors of $L^\ast$ the Wronskian or 
Casorati determinant is a constant sequence.

\begin{lemma}\label{lem:Wronskian}
Let $v$ and $f$ be formal solutions to $L^\ast u = zu$, then 
\[
[v,f] = [v,f]_l = a_l\bigl( v_{l+1}f_l - f_{l+1}v_l\bigr)
\]
is independent of $l\in \Z$. 
\end{lemma}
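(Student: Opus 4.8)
The plan is to show directly that the first difference $[v,f]_l - [v,f]_{l-1}$ vanishes for every $l\in\Z$, which forces the sequence $l \mapsto [v,f]_l$ to be constant. The only input needed is the recurrence encoded in the eigenvalue equation together with the symmetry assumptions already in force, so the argument is purely algebraic and requires no square-summability.

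First I would spell out $L^\ast u = zu$ in components. Under the standing assumptions $\overline{a_k} = c_{k+1}$, $\overline{b_k} = b_k$ and $a_k>0$, each $a_k$ is real and $c_k = a_{k-1}$, so the equation $L^\ast u = zu$ reads
\[
a_l\, u_{l+1} + b_l\, u_l + a_{l-1}\, u_{l-1} = z\, u_l, \qquad l\in\Z,
\]
for both $u=v$ and $u=f$. Solving for the top term gives $a_l v_{l+1} = (z-b_l) v_l - a_{l-1} v_{l-1}$ and likewise $a_l f_{l+1} = (z-b_l) f_l - a_{l-1} f_{l-1}$.

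Next I would substitute these two expressions into the definition $[v,f]_l = a_l(v_{l+1}f_l - f_{l+1}v_l)$. The terms proportional to $(z-b_l)v_l f_l$ cancel, precisely because $v$ and $f$ solve the equation for the same $z$ and the same coefficient $b_l$, leaving
\[
a_l\bigl(v_{l+1}f_l - f_{l+1}v_l\bigr) = a_{l-1}\bigl(v_l f_{l-1} - f_l v_{l-1}\bigr) = [v,f]_{l-1}.
\]
Hence $[v,f]_l = [v,f]_{l-1}$ for all $l\in\Z$, so the sequence is constant and we may unambiguously write $[v,f] = [v,f]_l$.

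I expect no genuine obstacle here: the content is the standard \emph{constancy of the Wronskian} computation for a second-order recurrence. The only point requiring care is the bookkeeping of indices, namely that the factor $a_{l-1}$ produced by the substitution is exactly the one appearing in $[v,f]_{l-1}$, and that the cancellation genuinely uses the \emph{same} eigenvalue $z$ for both $v$ and $f$. Since $v$ and $f$ are only required to be formal solutions, I would deliberately avoid any appeal to membership in $\ell^2(\Z)$; the identity holds termwise for arbitrary sequences satisfying the recurrence.
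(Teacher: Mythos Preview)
Your proof is correct and essentially identical to the paper's: both use the recurrence for $v$ and $f$ at level $l$ to show $[v,f]_l = [v,f]_{l-1}$, with the only cosmetic difference that you solve for $a_l u_{l+1}$ and substitute, while the paper multiplies the two recurrences by $f_l$ and $v_l$ respectively and subtracts. The algebra and the underlying idea are the same.
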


In particular, Lemma \ref{lem:Wronskian} can be applied to the solutions in $S^\pm_z$. 
Note that the Casorati determinant $[v,f]\not=0$ for non-trivial solutions 
unless $v$ and $f$ span a one-dimensional subspace of solutions. 

\begin{proof} Since $v$ and $f$ are formal solutions, we have for all $l\in \Z$ 
\begin{equation*}
\begin{split}
a_lv_{l+1}+ b_l v_l+ a_{l-1}v_{l-1} & = z v_l \\ 
a_lf_{l+1}+ b_l f_l+ a_{l-1}f_{l-1} & = z f_l \\ 
\end{split}
\end{equation*}
since we assume the self-adjoint case. Multiplying the first equation by $f_l$ and the second
by $v_l$ and subtracting gives
\begin{equation*}
a_l\bigl( v_{l+1}f_l - f_{l+1}v_l\bigr) + a_{l-1}\bigl(v_{l-1}f_l - f_{l-1}v_l\bigr) =0
\end{equation*}
which means that $[v,f]_l$ is indeed independent of $l\in \Z$. 
\end{proof}

\begin{thm}\label{thm:resolventL} 
Assume that $\dim S^\pm_z=1$ for all $z\in \C\setminus \R$ and that
$S^+_z \cap S^-_z =\{0\}$ for all $z\in \C\setminus \R$. 
Then $(L^\ast, \cD^\ast)$ is self-adjoint. 
The resolvent operator\index{resolvent operator} is given by 
\eqref{eq:def:GreenkernelZ1}, \eqref{eq:def:GreenkernelZ2}. 
\end{thm}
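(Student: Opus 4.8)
The plan is to split the statement into two parts: self-adjointness, which I would deduce from triviality of the deficiency spaces, and the resolvent formula, which I would obtain by exhibiting an explicit two-sided inverse of $L^\ast-z$ built from the two distinguished solutions.

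For self-adjointness, recall from Lemma \ref{lem:adjointL} that $(L^\ast,\cD^\ast)$ is the adjoint of the symmetric operator $(L,\cD)$; being an adjoint it is closed, and from $L\subseteq L^\ast$ it contains $\overline{L}=L^{\ast\ast}$. Hence $(L^\ast,\cD^\ast)$ is self-adjoint precisely when $L$ is essentially self-adjoint, i.e. when both deficiency spaces $\Ker(L^\ast\mp i)$ are trivial. The key observation I would make is the identification
\[
\Ker(L^\ast-z)=S^+_z\cap S^-_z,\qquad z\in\C\setminus\R .
\]
Indeed, any $f\in\ell^2(\Z)$ with $L^\ast f=zf$ is square summable at both $+\infty$ and $-\infty$, so $f\in S^+_z\cap S^-_z$; conversely any $f\in S^+_z\cap S^-_z$ lies in $\ell^2(\Z)$ and satisfies $L^\ast f=zf\in\ell^2(\Z)$, so $f\in\cD^\ast$. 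Applying this with $z=\pm i$ and invoking the hypothesis $S^+_z\cap S^-_z=\{0\}$ shows both deficiency spaces vanish, whence $L$ is essentially self-adjoint and $(L^\ast,\cD^\ast)$ is self-adjoint.

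For the resolvent, fix $z\in\C\setminus\R$; by the previous step $z$ lies in the resolvent set and $(L^\ast-z)^{-1}$ is bounded. Using $\dim S^\pm_z=1$, I would pick nonzero $\phi\in S^-_z$ and $\psi\in S^+_z$. Since $S^+_z\cap S^-_z=\{0\}$ they are linearly independent solutions of $L^\ast u=zu$, so by Lemma \ref{lem:Wronskian} and the remark following it the Casorati determinant $W=[\phi,\psi]$ is a nonzero constant. I would then set
\[
G(k,j)=\frac1W\begin{cases}\phi_k\,\psi_j,& k\le j,\\ \phi_j\,\psi_k,& k\ge j,\end{cases}
\qquad (R(z)g)_k=\sum_{j\in\Z}G(k,j)\,g_j .
\]
For $g\in\cD$ the sequence $R(z)g$ is a finite combination of the columns $k\mapsto G(k,j)$, each square summable at $+\infty$ through $\psi$ and at $-\infty$ through $\phi$, hence $R(z)g\in\ell^2(\Z)$. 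A direct computation, substituting the two homogeneous recurrences $a_l u_{l+1}+b_l u_l+a_{l-1}u_{l-1}=z u_l$ for $u=\phi,\psi$ and using that the only surviving contribution comes from the jump of the kernel across $k=j$, which reproduces exactly $W$, gives $(L^\ast-z)R(z)g=g$; in particular $R(z)g\in\cD^\ast$. Thus $R(z)$ agrees with the bounded operator $(L^\ast-z)^{-1}$ on the dense subspace $\cD$, and by continuity the resolvent is the summation operator with kernel $G$, which is the content of \eqref{eq:def:GreenkernelZ1}, \eqref{eq:def:GreenkernelZ2}.

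The main obstacle I expect is the Green's-function verification in the last step: one must check that the off-diagonal terms cancel by the homogeneous recurrences while the diagonal jump reproduces $g_k$ with exactly the normalisation $1/W$ forced by the constancy of the Casorati determinant (Lemma \ref{lem:Wronskian}), and one must track the sign convention so that the inverse is obtained rather than its negative. Everything else is either a direct consequence of the deficiency-index criterion or a summability estimate using the decay of $\phi$ and $\psi$ at the two ends.
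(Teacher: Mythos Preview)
Your proposal is correct and follows essentially the same route as the paper: identify the deficiency space with $S^+_z\cap S^-_z$ to get essential self-adjointness, then build the Green kernel from the two one-dimensional solution spaces, verify $(L^\ast-z)G(z)=\mathrm{id}$ on $\cD$ by the homogeneous recurrence plus the Casorati jump at the diagonal, and extend by density. The only cosmetic difference is that you swap the roles of the letters (your $\phi\in S^-_z$, $\psi\in S^+_z$ versus the paper's $\phi_z\in S^+_z$, $\Phi_z\in S^-_z$), which flips the sign of the Wronskian; the paper resolves the sign issue you flag by writing out the $k$-th entry of $[\phi_z,\Phi_z](L^\ast-z)G(z)v$ explicitly and recognising the Casorati determinant in the surviving boundary term.
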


In Section \ref{sec:threetermN} we show that $\dim S^\pm_z\geq 1$ for all $z\in \C\setminus\R$.
Note that $S^+_z \cap S^-_z$ gives the deficiency space at $z\in \C\setminus\R$ of
$(L^\ast, \cD^\ast)$, which has constant dimension on the upper and lower half plane,
see Appendix \ref{ssec:appunboundedsaoperators}. 
Since $L$ has real coefficients, it commutes with complex conjugation, i.e.
 for $f=\sum_l f_l e_l$ define the vector  
$\bar f=\sum_l \overline{f_l} e_l$, then $L^\ast \bar f = \overline{L^\ast f}$,
we see that the deficiency spaces $N_z$ and $N_{\bar z}$ have the same dimension. 
So we can replace the assumption $S^+_z \cap S^-_z =\{0\}$ for all $z\in \C\setminus \R$ 
in Theorem \ref{thm:resolventL} by $S^+_z \cap S^-_z =\{0\}$ for some $z\in \C\setminus \R$. 

\begin{proof} Since the deficiency index $n_z =\dim (S^+_z \cap S^-_z)=0$, we see 
that $(L^\ast, \cD^\ast)$ has deficiency indices $(0,0)$, so that 
by Proposition \ref{prop:App-extensionselfadjointopertaotrs}  it is self-adjoint.  

Now take non-zero 
$\phi_z\in S^+_z$, $\Phi_z\in S^-_z$, which are unique up to a scalar by 
assumption.  Moreover, the Wronskian $[\phi_z,\Phi_z]\not=0$, since $\phi_z$ and $\Phi_z$ are
not multiples of each other. 
We define the Green kernel\index{Green kernel} for $z\in\C\setminus\R$ by
\begin{equation}\label{eq:def:GreenkernelZ1}
G_{k,l}(z) = \frac{1}{[\phi_z,\Phi_z]}\begin{cases}
(\Phi_z)_k\, (\phi_z)_l, & k\leq l, \\
(\Phi_z)_l\, (\phi_z)_k, & k>l.
\end{cases}
\end{equation}
So $\{ G_{k,l}(z)\}_{k=-\infty}^\infty,
\{ G_{k,l}(z)\}_{l=-\infty}^\infty \in \ell^2(\Z)$ and
$\ell^2(\Z)\ni v\mapsto G(z)v$ given by
\begin{equation}\label{eq:def:GreenkernelZ2}
G(z) v = \sum_{k\in \Z} (G(z)v)_k e_k, \qquad 
(G(z)v)_k = \sum_{l=-\infty}^\infty v_l G_{k,l}(z) =
\langle v, \overline{G_{k,\cdot}(z)}\rangle
\end{equation}
is well-defined. Note that $v\in \cD$ implies 
\[
|(G(z)v)_k| \leq 
\sum_{\underset{\scriptstyle{\text{finite}}}{l=-\infty}}^\infty |v_l G_{k,l}(z)| 
\leq \Bigl( \sum_{\underset{\scriptstyle{\text{finite}}}{l=-\infty}}^\infty |v_l|^2 \Bigr)^{1/2}
\Bigl(\sum_{\underset{\scriptstyle{\text{finite}}}{l=-\infty}}^\infty |G_{k,l}(z)|^2\Bigr)^{1/2} 
= \|v\|\, \Bigl(\sum_{\underset{\scriptstyle{\text{finite}}}{l=-\infty}}^\infty |G_{k,l}(z)|^2\Bigr)^{1/2} 
\]
and 
\begin{gather*}
\|G(z) v\|^2 \leq \sum_{k\in \Z} |(G(z)v)_k|^2 \leq 
\|v\|^2 \sum_{k\in \Z} \sum_{\underset{\scriptstyle{\text{finite}}}{l=-\infty}}^\infty |G_{k,l}(z)|^2 
= \|v\|^2 \sum_{\underset{\scriptstyle{\text{finite}}}{l=-\infty}}^\infty 
\sum_{k\in \Z}  |G_{k,l}(z)|^2 <\infty
\end{gather*}
since $\sum_{k\in \Z}  |G_{k,l}(z)|^2 <\infty$ by the definition \eqref{eq:def:GreenkernelZ1}
and $\phi_z\in S^+_z$, $\Phi_z\in S^-_z$. So $G(z) v\in \ell^2(\Z)$. 

We first check $(L^\ast -z )G(z)v=v$ for $v$ in the
dense subspace $\cD$. We do so by calculating the 
$k$-th entry of $[\phi_z,\Phi_z] (L^\ast -z) G(z)v$ as
a sum over $l\in \Z$, which we split up in a sum until 
$k-1$, from $k+1$ and a single term. Explicitly, 
\begin{equation*}
\begin{split}
& [\phi_z,\Phi_z]\bigl( (L^\ast -z) G(z)v\bigr)_k  \\
= & [\phi_z,\Phi_z]\Bigl( a_k \bigl( G(z)v\bigr)_{k+1} +
(b_k-z) \bigl( G(z)v\bigr)_{k} + a_{k-1} \bigl( G(z)v\bigr)_{k-1}\Bigr) \\
= &\sum_{l=-\infty}^{k-1} v_l \bigl( a_k
(\phi_z)_{k+1}+(b_k-z)(\phi_z)_k +a_{k-1}(\phi_z)_{k-1}\bigr)
(\Phi_z)_l \\
& + \sum_{l=k+1}^\infty v_l \bigl( a_k
(\Phi_z)_{k+1}+(b_k-z)(\Phi_z)_k +a_{k-1}(\Phi_z)_{k-1}\bigr)
(\phi_z)_l  \\
& + v_k\bigl( a_k(\Phi_z)_k(\phi_z)_{k+1} +
(b_k-z)(\Phi_z)_k(\phi_z)_k +
a_{k-1}(\Phi_z)_{k-1}(\phi_z)_k\bigr) \\
= & v_k a_k\bigl((\Phi_z)_k(\phi_z)_{k+1} -(\Phi_z)_{k+1}
(\phi_z)_k\bigr) = v_k [\phi_z,\Phi_z] 
\end{split}
\end{equation*}
The first term vanishes, since $\phi_z$ is a formal eigenfunction to $L$. Similarly, the 
second sum vanishes, since $\Phi_z$ is an eigenfunction to $L$. 
Finally, use $(b_k-z)(\Phi_z)_k(\phi_z)_k  +a_{k-1}(\Phi_z)_{k-1}(\phi_z)_k  =
- a_k (\Phi_z)_{k+1}(\phi_z)_k$ and recognise the Casorati determinant.

By assumption, $\phi_z$ and $\Phi_z$ are not linearly dependent, so that the 
Casorati determinant $[\phi_z,\Phi_z]\not=0$. Dividing both sides by 
the Casorati determinant gives the result. Note that this also shows 
that $G(z) v\in \cD^\ast$. So we see see that $(L^\ast -z)G(z)$ is the identity
on the dense subspace $\cD$, and since $L^\ast$ is selfadjoint, we have 
that $R(z)=(L^\ast -z)^{-1}$ is a bounded operator which is equal to $G(z)$. 
\end{proof}

Note that the determination of the spectral measure is governed by the 
structure of the function $z\mapsto [\phi_z,\Phi_z]$, which is 
analytic in the upper and lower half plane. In particular, if it extends
to a  function on $\C$ with poles at the real axis, we see that 
the spectral measure is discrete. This happens in case of 
Example \ref{exa:ASC-q1}. 

\begin{example}\label{exa:ASC-q2}
We continue Example \ref{exa:ASC-q1}, and we describe the solution space in 
some detail. Define the constant
\begin{equation*}
C_l(\al) = \frac{\al^{2l}
q^{l^2-\frac12 l} \sqrt{1+\al^2 q^{2l}}}{(-\al^2q;q)_{2l}} 
= \begin{cases}
\cO( \al^{2l} q^{l^2-\frac12 l}), & l\to \infty \\
\cO( q^{-l^2-\frac12 l})     & l\to -\infty 
\end{cases}
\end{equation*}
and for $z\in \C\setminus \R$ the functions 
\begin{gather*}
(\phi_z)_l = C_l(\al) z^{-l} \rphis{0}{1}{-}{-\al^2 q^{l+1}}{q, -\frac{\al^2 q^{2l+1}}{z}}, \\
(\Phi_z)_l = \frac{1}{C_l(\al)} z^{l} 
\rphis{0}{1}{-}{-\al^{-2} q^{1-l}}{q, -\frac{q^{1-2l}}{\al^2 z}}. 
\end{gather*}
Then the corresponding elements $\phi_z\in S^+_z$ and $\Phi_z\in S^-_z$. 
The $\ell^2$-behaviour follows easily from the asymptotic behaviour of
the constant $C_l(\al)$. The fact that these functions actually are a solution
for the three-term recurrence relation follows from 
contiguous relations for basic hypergeometric series, and we do not give 
the details, see \cite{ChriK} and Exercise \ref{exer:SPmzforcontqinvHermite}. 
Next we calculate $[\phi_z, \Phi_z]=-z(1/z;q)_\infty$ using a limiting argument, 
see Exercise \ref{exer:SPmzforcontqinvHermite} as well.
\end{example}

Now that in the situation of Theorem \ref{thm:resolventL} we have 
explicitly determined the resolvent operator $R(z)=(L^\ast-z)^{-1}$
we can apply the Stieltjes-Perron inversion formula 
of Theorem \ref{thm:App-StieltjesPerron}. For this we need 
\begin{equation}\label{eq:jk}
\bigl< \bigl(L^\ast-z\bigr)^{-1}v,w \bigr>= \sum_{k\leq
j}\frac{(\phi_{z})_j(\Phi_{z})_k} {[\phi_{z},\Phi_{z}]}
(v_k\overline{w}_j+v_j\overline{w}_k)(1-\tfrac{1}{2}\delta_{j,k})
\end{equation}
for $v,w\in\ell^2(\Z)$, which follows by plugging in 
the expression of the Green kernel for the resolvent as
in Theorem \ref{thm:resolventL} and its proof, see 
\eqref{eq:def:GreenkernelZ1}, \eqref{eq:def:GreenkernelZ2}. 
So the outcome of the Stieltjes-Perron inversion formula 
of Theorem \ref{thm:App-StieltjesPerron} depends on the 
behaviour of the extension of the function, 
initially defined on $\C\setminus\R$, 
\begin{equation}\label{eq:defkernelStieltjesPerron}
z \mapsto \frac{(\phi_{z})_j(\Phi_{z})_k} {[\phi_{z},\Phi_{z}]}
\end{equation}
when approaching the real axis from above and below. 

We assume that the function in \eqref{eq:defkernelStieltjesPerron}
is analytic in the upper and lower half plane, which can 
be proved in general, see e.g. \cite{Krei-mm}, \cite{Schm}.
Assume now that it has  
an extension to a function exhibiting a pole at $x_0\in\R$.
Then Theorem \ref{thm:App-StieltjesPerron} shows that 
the spectral measure has a mass point at $x_0$ and 
\[
\left<E\bigl(\{x_0\}\bigr)v,w\right>=-\frac{1}{2\pi i}\oint_{(x_0)}
\bigl<(L^\ast-s)^{-1}v,w \bigr>\,ds, \quad v,w\in\ell^2(\Z).
\]
Moreover, assuming that the pole $x_0$ corresponds to a zero of 
the Casorati determinant or Wronskian $[\phi_{z},\Phi_{z}]$, we find
\begin{gather*}
\frac{1}{2\pi i}\oint_{(x_0)}
\frac{(\phi_s)_j(\Phi_s)_k}{[\phi_s,\Phi_s]}\,ds=
(\phi_{x_0})_j(\Phi_{x_0})_k\,\,\underset{\scriptstyle{z=x_0}}{\text{Res}}\,\frac{1}{[\phi_z,\Phi_z]}.
\end{gather*}
In case $\phi_{x_0}$ is a multiple of $\Phi_{x_0}$, the Casorati determinant 
vanishes, so assume $\Phi_{x_0}= A(x_0) \phi_{x_0}$ and that 
$\phi_{x_0}\in \ell^2(\Z)$, so that 
\begin{gather*}
\left<E\bigl(\{x_0\}\bigr)v,w\right>=-A(x_0)
\sum_{k\leq j}
(\phi_{x_0})_j(\phi_{x_0})_k (v_k\overline{w}_j+v_j\overline{w}_k)(1-\tfrac{1}{2}\delta_{j,k})\,
\underset{\scriptstyle{z=x_0}}{\text{Res}}\,\frac{1}{[\phi_z,\Phi_z]} \\
= -A(x_0) \underset{\scriptstyle{z=x_0}}{\text{Res}}\,\frac{1}{[\phi_z,\Phi_z]}
\langle v, \phi_{x_0} \rangle \langle \phi_{x_0}, w \rangle
\end{gather*}
assuming that $\phi_{x_0}= \sum_{l\in \Z} (\phi_{x_0})_l e_l$ has real-valued coefficients 
$(\phi_{x_0})_l$ for real $x_0$. 
See Exercise \ref{exer:realcoefficients} for the general case.

\begin{example}\label{exa:ASC-q3}
We continue Example \ref{exa:ASC-q1}, \ref{exa:ASC-q2}. 
Since $[\phi_z,\Phi_z] = -z(1/z;q)_\infty$ for $z\not=0$, we see 
that we can take $x_0=q^n$ for $n\in \N$ which is a simple 
zero of the Casorati determinant.
Now the residue calculation can be done explicitly; 
\begin{gather*}
\underset{\scriptstyle{z=q^n}}{\text{Res}}\,\frac{1}{[\phi_z,\Phi_z]} = 
\lim_{z\to q^n} \frac{z-q^n}{[\phi_z,\Phi_z]} =
\lim_{z\to q^n} \frac{z-q^n}{-z(1/z;q)_\infty} \\ =
\lim_{z\to q^n} \frac{z-q^n}{-z(1/z;q)_n (1-q^n/z) (q^{n+1}/z;q)_\infty} 
= \frac{-1}{(q^{-n};q)_n (q;q)_\infty} = \frac{(-1)^{n+1} q^{-\frac12 n(n+1)}}{(q;q)_n (q;q)_\infty}
\end{gather*}
Moreover, since the Casorati determinant vanishes, the two solutions of interest are proportional;
\[
(-1)^n\al^{2n+2}(\Phi_{q^n})_l=\frac{(-\al^2q;q)_\infty}{(-1/\al^2;q)_\infty}
(\phi_{q^n})_l, \qquad \forall \, l\in \Z, 
\]
which can be proved by manipulations of basic hypergeometric series, and 
we refer to \cite{ChriK} for the details. 
In particular, $\phi_{q^n} \in \ell^2(\Z)$ for $n\in \N$ and 
$L^\ast \phi_{q^n} = q^n \phi_{q^n}$. 
So the spectral measure in this case has a discrete mass point at $q^n$, $n
\in \N$, satisfying
\begin{gather*}
\left<E\bigl(\{q^n\}\bigr)v,w\right>= 
- (-1)^n\al^{-(2n+2)}\frac{(-\al^2q;q)_\infty}{(-1/\al^2;q)_\infty}
\frac{(-1)^{n+1} q^{-\frac12 n(n+1)}}{(q;q)_n (q;q)_\infty}
\langle v, \phi_{x_0} \rangle \langle \phi_{x_0}, w \rangle \\
= 
\frac{(-\al^2q;q)_\infty}{(-1/\al^2, q;q)_\infty}
\frac{\al^{-(2n+2)} q^{-\frac12 n(n+1)}}{(q;q)_n}
\langle v, \phi_{x_0} \rangle \langle \phi_{x_0}, w \rangle 
\end{gather*}
It follows that the eigenspace is one-dimensional spanned by $\phi_{q^n}$, 
since $E\bigl(\{q^n\}\bigr)$ is a rank one projection onto 
the space spanned the eigenvector $\phi_{q^n}$.
Plugging in $v=w=\phi_{q^n}$ then gives
\begin{gather*}
\|\phi_{q^n}\|^2 = \left<E\bigl(\{q^n\}\bigr)\phi_{q^n},\phi_{q^n}\right> 
= 
\frac{(-\al^2q;q)_\infty}{(-1/\al^2, q;q)_\infty}
\frac{\al^{-(2n+2)} q^{-\frac12 n(n+1)}}{(q;q)_n} \|\phi_{q^n}\|^4 \quad
\Longrightarrow \\
\|\phi_{q^n}\|^2 = 
\frac{(-1/\al^2, q;q)_\infty}{(-\al^2q;q)_\infty}
\al^{(2n+2)} q^{\frac12 n(n+1)} (q;q)_n
\end{gather*}
Since $\{0\}$ is not a discrete mass point, see Exercise 
\ref{exer:zeroEVcontqinvHermite}, we see that the spectrum 
of $L$ is $q^\N \cup \{0\}$ and that we have an orthogonal
basis of eigenvectors $\{ \phi_{q^n}\}_{n\in \N}$ for $\ell^2(\Z)$.

It turns out that we can rewrite the orthogonality of the 
eigenvectors $\{ \phi_{q^n}\}_{n\in \N}$ in terms of 
orthogonality relations for orthogonal polynomials, 
namely for the continuous $q^{-1}$-Hermite polynomials. 
\index{orthogonal polynomials!continuous $q^{-1}$-Hermite polynomials}
This is not a coincidence, since we started out with the second order $q$-difference
operator having these polynomials as eigenfunctions. 
Of course, this can be done since the continuous $q^{-1}$-Hermite polynomials 
are in the $q$-Askey scheme. 
Writing down the orthogonality relations explicitly gives 
\begin{equation}\label{eq:orthocontqinvHermite}
\sum_{l=-\infty}^\infty \al^{4l} q^{2l^2-l} (1+\al^2 q^{2l})
\, h_n(x_l(\al)|q) h_m(x_l(\al)|q) = \de_{n,m}
q^{-n(n+1)/2} (q;q)_n (-\al^2,-q/\al^2,q;q)_\infty. 
\end{equation}
where the polynomials are generated by the monic three-term recurrence relation
\[
x h_n(x|q) = h_{n+1}(x|q) + q^{-n}(1-q^n) h_{n-1}(x|q),
\qquad h_{-1}(x|q) =0, \ h_0(x|q) =1, 
\]
and the mass points are $x_l(\al)= \frac12 ((\al q^l)^{-1}-\al q^l)$. 
By the completeness of the basis of eigenvectors $\{ \phi_{q^n}\}_{n\in \N}$ 
it follows that the polynomials are dense in the weighted $L^2$-space of the 
corresponding discrete measures in \eqref{eq:orthocontqinvHermite}. 
Since $\al\in (q,1]$, for each $\xi\in \R$ there is a measure of the 
type in \eqref{eq:orthocontqinvHermite} with positive mass in $\xi$.
It follows from the general theory of moment problems \cite{Akhi}, \cite{BuchC} that 
\eqref{eq:orthocontqinvHermite} gives all N-extremal measures for the 
continuous $q^{-1}$-Hermite polynomials. 
The same result (and more) on the N-extremal measures has been obtained previously by
Ismail and Masson \cite{IsmaM} by calculating explicitly the functions
in the Nevanlinna parametrisation. 
\end{example}

\begin{example}\label{exa:l2Z-continuousspectrum} 
The example discussed in Examples \ref{exa:ASC-q1}, \ref{exa:ASC-q2}, \ref{exa:ASC-q3} 
is relatively easy, since $L$ is bounded, and even compact. Another well studied 
three-term recurrence operator
on $\ell^2(\Z)$ is the following unbounded operator 
\begin{gather*}
2L\, e_k \, = a_k\, e_{k+1} + b_k\, e_k + a_{k-1}\, e_{k-1}, \\
a_k \, = \sqrt{\left(1-\frac{q^{k+1}}{z}\right)\left(1-\frac{cq^{k+1}}{d^2z}\right)},
\qquad b_k =\frac{q^{k}(c+q)}{dz}.
\end{gather*}
assuming $z<0$, $0<c<1$, $d\in\R\setminus\{0\}$. 
The operator $L$ is essentially self-adjoint for $0 < c \leq q^2$, and the 
spectral decomposition has an absolutely continuous part and a discrete part, 
with infinite number of points. This can be proved in the same way as in this section,
where basic hypergeometric series play an important role in finding the 
(free) solutions to the eigenvalue equation $L^\ast f = zf$. 
The corresponding spectral decomposition leads to an integral transform known as the little $q$-Jacobi 
function transform, see \cite{KoelSNATO}.\index{little $q$-Jacobi function transform}
The quantum group theoretic interpretation  goes back to 
Kakehi \cite{Kake}, see also \cite[App.~A]{KoelSsu}. 
This result, including a suitable self-adjoint extension for the case $c=q$ 
and its spectral decomposition, can be found in \cite[App.~B, C]{GroeKK}.
In \cite{KoelSNATO} it is described how the little $q$-Jacobi 
function transform can be viewed as a non-polynomial addition to the 
$q$-Askey scheme. 
\end{example}

\begin{remark}
The solution space of the three term recurrence is two-dimensional, 
so that the dimension of $\dim S^\pm_z$ is determined by  
summability conditions at $\pm \infty$.  
In case one of $\dim S^\pm_z$ is bigger than $1$, we have higher deficiency 
indices. In case one of $S^\pm_z$ is one-dimensional, and the other is
$2$-dimensional, we have deficiency indices $(1,1)$. In case both spaces are
two-dimensional, the deficiency indices are $(2,2)$.
This is an observation essentially due to Masson and Repka \cite{MassR}. 
For an example of such a three-term recurrence relation with deficiency 
indices $(1,1)$, see \cite{Koel-IM}. 
\end{remark}

\subsection{Exercises}

\begin{enumerate}[1.]
\item\label{exer:lembbdL} Prove Lemma \ref{lem:bddL}. 
\item\label{exer:adjointL} Prove Lemma \ref{lem:adjointL}. 
\begin{enumerate}[(a)]
\item Recall the definition of the domain of the adjoint operator of $(L,\cD)$
from Section \ref{ssec:appunboundedsaoperators}, so we have to find all 
$w\in \ell^2(\Z)$ for which $\cD\ni v\mapsto \langle Lv, w\rangle$ is continuous.
This is the same as requiring the existence of a constant $C$ so that 
$|\langle Lv, w\rangle|\leq C \|v\|$ for all $v\in \cD$. Write for $v\in \cD$ 
\begin{gather*}
\langle Lv, w\rangle  = \sum_{\underset{\scriptstyle{\textrm{finite}}}{k\in \Z}} 
v_k\, \overline{ \bigl( \overline{a_k}w_{k+1}+ \overline{b_k} w_k+ \overline{c_{k}}w_{k-1}\bigr)}
\end{gather*}
and use Cauchy-Schwarz to prove that $\cD^\ast$ is contained in 
the domain of the adjoint of $(L,\cD)$.
\item Show conversely that any element in the domain of the adjoint is 
element of $\cD^\ast$. (Hint: Use the identity in (a) and take a special choice
for $v\in \cD$ which converges to an element of $\cD^\ast$.)
\item Finish the proof of Lemma \ref{lem:adjointL}. 
\end{enumerate}

\item\label{exer:SPmzforcontqinvHermite} Prove that in Example 
\ref{exa:ASC-q2} the spaces $S^{\pm}_z$ are indeed spanned by the elements given. 
\begin{enumerate}[(a)]
\item Show that $\sum_{l\in\Z}(\phi_z)_le_l$ is a formal eigenvector of $L$.
(Hint: This is not directly deducible from the expression as ${}_0\vp_1$, first transform
to a ${}_2\vp_1$, see \cite{GaspR}, and use contiguous relations for ${}_2\vp_1$.
See \cite{ChriK} for details.)
\item Next show that $\sum_{l>0} |(\phi_z)_l|^2<\infty$. (Hint: use 
the asymptotic behaviour of $C_l(\al)$  as $l\to \infty$.)
\item Conclude that $\phi_z\in S^+_z$.
\item Let $V\colon \ell^2(\Z)\to \ell^2(\Z)$ be the unitary involution $e_l\mapsto e_{-l}$.
Denote $L(\al)=L$ for the operator $L$ as in Example \ref{exa:ASC-q1} 
to stress the dependence on $\al$.
Show that $L(1/\al) = VL(\al)V^\ast$. 
Conclude that $\Phi_z\in S^-_z$.
\item Calculate the Casorati determinant or Wronskian $[\phi_z, \Phi_z]$ 
by taking the limit $l\to \infty$ in Lemma \ref{lem:Wronskian} 
using the asymptotic behaviour of $a_l$ as
in Example \ref{exa:ASC-q1} and
\[
\lim_{x\downarrow 0} \rphis{0}{1}{-}{1/x}{q, \frac{z}{x}} =  (z;q)_\infty
\]
Show that $[\phi_z, \Phi_z]=-z(1/z;q)_\infty$ for $z\not=0$, by taking the limit $l\to\infty$ in 
the Casorati determinant or Wronskian using Lemma \ref{lem:Wronskian}. 
\end{enumerate}
\item\label{exer:zeroEVcontqinvHermite} Show that in Example 
\ref{exa:ASC-q2} there is no eigenvector, i.e. in $\ell^2$, for the eigenvalue 
$0$. (Hint: show that $(-1)^l q^{-\frac12 l}\sqrt{1+\al^2 q^{2l}}$
as well as $(-1)^l q^{-\frac32 l}(1-q^l)(1+\al^2 q^l)\sqrt{1+\al^2 q^{2l}}$
give two linearly independent solutions for the recurrence for $z=0$, 
and that there is no linear combination which is square summable.)

\item\label{exer:realcoefficients} Show that in general we
can take $\overline{\phi_{\bar z}}\in S^+(z)$, next put 
\begin{equation*}
G_{k,l}(z) = \frac{1}{[\overline{\phi_{\bar z}}, \Phi_z]}
\begin{cases}
(\Phi_z)_k(\overline{\phi_{\bar z}})_l, & k\leq l, \\[2pt]
(\Phi_z)_l(\overline{\phi_{\bar z}})_k, & k>l,
\end{cases}
\end{equation*}
and show that the resolvent $R(z)$ can be 
obtained as in the proof of Theorem \ref{thm:resolventL}.

\item\label{exer:2times2} Rewrite the operator $L$ as three-term recurrence relation
labeled by $\N$ by considering $\C^2$-vectors 
\[
u_k = \begin{pmatrix} e_k\\ e_{-k-1}\end{pmatrix}, \quad k\in \N
\]
and define  
\begin{gather*}
\cL u_k = \begin{pmatrix} L e_k\\ L e_{-k-1}\end{pmatrix}.
\end{gather*}
and write $\cL$ as a three-term recurrence in terms of $u_k$ with $2\times 2$
matrices acting on naturally on $\ell^2(\N)\hat \otimes \C^2 \cong \ell^2(\Z)$.
Determine the matrices in the three-term recurrence explicitly in terms of 
the coefficients of $L$ in \eqref{eq:defL}. 
See also Section \ref{ssec:MVOPlinktoell2Z}.

\end{enumerate}


\section{Three-term recurrence relations and orthogonal polynomials}\label{sec:threetermN}

In this section we consider three-term recursion relations labeled by 
$l\in\N$, and we relate such operators to orthogonal polynomials and  
the moment problem.

\subsection{Orthogonal polynomials}

Assume $\mu$ is a positive Borel measure on the real line $\R$ 
with infinite support such that 
all moments 
\[
m_k = \int_\R x^k \, d\mu(x) <\infty
\]
exist. We assume the normalisation of $\mu$ by $m_0=\mu(\R)=1$, so that we have a 
probability measure. 

Note that all polynomials are contained in the Hilbert space $L^2(\mu)$. 
Then we can apply the Gram-Schmidt procedure to $\{1,x,x^2,x^3, \cdots\}$
to obtain a sequence of polynomials $p_n(x)$ of degree $n$ so that
\begin{equation}\label{eq:scalarop-orthorel}
\int_\R p_m(x)\, \overline{p_n(x)}\, d\mu(x) = \de_{m,n}.
\end{equation}
These polynomials form a family of orthogonal polynomials.\index{orthogonal polynomials}
We normalise the leading coefficient of $p_n$ to be positive, which 
can also be viewed as part of the Gram-Schmidt procedure. 
Observe also that, since all moment $m_k$ are real, the polynomials
have real coefficients, so we do not require complex conjugation in
\eqref{eq:scalarop-orthorel}.

\begin{thm}[Three term recurrence
relation] \index{three-term recursion}
\label{thm:3termrec-scalarOP}
Let $\{ p_k\}_{k=0}^\infty$
the orthonormal polynomials in $L^2(\mu)$,
then there exist sequences $\{ a_k\}_{k=0}^\infty$,
$\{ b_k\}_{k=0}^\infty$,  with $a_k>0$ and $b_k\in\R$, such
that
\begin{gather*}
x\, p_k(x) =  a_kp_{k+1}(x) + b_kp_k(x) + a_{k-1} p_{k-1}(x), \qquad
k\geq 1, \\
x\, p_0(x) = a_0p_1(x) + b_0p_0(x).
\end{gather*}
If $\mu$ is compactly supported, then the
sequences $\{ a_k\}_{k=0}^\infty$,
$\{ b_k\}_{k=0}^\infty$ 
are bounded.
\end{thm}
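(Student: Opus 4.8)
The plan is to exploit that for each $n$ the orthonormal polynomials $\{p_0,\dots,p_n\}$ form an orthonormal basis of the subspace $\cP_n\subset L^2(\mu)$ of polynomials of degree at most $n$. Since $x\,p_k(x)$ is a polynomial of degree $k+1$, I would expand it in this basis,
\[
x\,p_k = \sum_{j=0}^{k+1} c_{k,j}\, p_j, \qquad c_{k,j} = \langle x\,p_k, p_j\rangle,
\]
where no complex conjugation is needed because the $p_j$ have real coefficients and $\mu$ is a real measure. The key vanishing step is the following: for $j\leq k-2$ the polynomial $x\,p_j$ has degree $j+1\leq k-1$, hence lies in $\cP_{k-1}$, which is orthogonal to $p_k$. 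Using that multiplication by the real variable $x$ is symmetric, $\langle x\,p_k,p_j\rangle = \langle p_k, x\,p_j\rangle$, I conclude $c_{k,j}=0$ for $j\leq k-2$. Only three terms survive, so setting $a_k = c_{k,k+1}$ and $b_k = c_{k,k}$ already produces a three-term relation (with the $k=0$ case lacking a lower term automatically).

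Next I would identify the subdiagonal coefficient. The same symmetry gives
\[
c_{k,k-1} = \langle x\,p_k, p_{k-1}\rangle = \langle p_k, x\,p_{k-1}\rangle = c_{k-1,k} = a_{k-1},
\]
which is exactly the coefficient appearing in the stated recurrence. For the reality of $b_k$, note $b_k = \int_\R x\,p_k(x)^2\,d\mu(x)$ is real because $x$ is real and $p_k^2\geq 0$ on $\R$. For the positivity of $a_k$, I would compare leading coefficients: writing $p_n(x) = \ka_n x^n + \cdots$ with $\ka_n>0$ by the chosen normalisation, the recurrence forces $\ka_k = a_k\,\ka_{k+1}$, so that $a_k = \ka_k/\ka_{k+1} > 0$.

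For the boundedness claim under compact support, suppose $\supp(\mu)\subseteq[-M,M]$ for some $M>0$, so that $|x|\leq M$ holds $\mu$-almost everywhere. Then
\[
|b_k| = \Bigl| \int_\R x\,p_k^2\,d\mu \Bigr| \leq M \int_\R p_k^2\,d\mu = M\,\|p_k\|^2 = M,
\]
and by Cauchy--Schwarz
\[
|a_k| = |\langle x\,p_k, p_{k+1}\rangle| \leq M\,\|p_k\|\,\|p_{k+1}\| = M,
\]
so both sequences are bounded by $M$.

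I do not expect a genuine obstacle here; the argument is elementary once one fixes the right viewpoint. The only point requiring a little care is the symmetry computation identifying the subdiagonal coefficient as $a_{k-1}$ rather than an independent constant, which encodes the self-adjointness of multiplication by $x$ on $L^2(\mu)$ and is what makes the resulting Jacobi operator symmetric; everything else is a routine orthogonality and Cauchy--Schwarz estimate.
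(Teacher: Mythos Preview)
Your proposal is correct and follows essentially the same approach as the paper, which leaves the proof as an exercise with hints: expand $xp_n$ in the orthonormal basis, use symmetry of multiplication by $x$ to kill all but three terms and to identify the subdiagonal entry as $a_{k-1}$, obtain the integral formulas $a_n=\int_\R xp_np_{n+1}\,d\mu$ and $b_n=\int_\R xp_n^2\,d\mu$, and then bound $|x|\leq M$ together with Cauchy--Schwarz for the compactly supported case. Your leading-coefficient argument for $a_k>0$ is a clean way to handle positivity.
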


We leave the proof of Theorem \ref{thm:3termrec-scalarOP} as Exercise \ref{exer:thm3termrec-scalarOP}, 
where $a_n$ and $b_n$ are expressed as integrals. 

Conversely, given arbitrary coefficient sequences $\{a_n\}_{n\in \N}$ and $\{b_n\}_{n\in \N}$ 
with $a_n>0$, $b_n\in \R$ for all $n\in \N$, we see that the recursion of 
Theorem \ref{thm:3termrec-scalarOP} determines the polynomials $p_n(x)$ with 
the initial condition $p_0(x)=1$. 
In order to study these polynomials, one can study 
the Jacobi operator\index{Jacobi operator} 
\begin{equation}\label{eq:Jacobioperator}
J\, e_k = \begin{cases} a_k\, e_{k+1} + b_k\, e_k +
a_{k-1} \, e_{k-1}, & k\geq 1, \\
a_0\, e_1 + b_0\, e_0, & k=0. \end{cases}
\end{equation}
as an operator on the Hilbert space $\ell^2(\N)$ 
with orthonormal basis $\{ e_k\}_{k\in \N}$. 
Note that we can study such a Jacobi operator without 
assuming the situation of 
Theorem \ref{thm:3termrec-scalarOP}, i.e. arising from 
a Borel measure with finite moments. 
So we generate polynomials $\{p_n\}_{n\in \N}$ from 
the three-term recurrence 
relation of Theorem \ref{thm:3termrec-scalarOP}, but now
with the coefficients from the Jacobi operator. 
Note that once $p_0(z)$ is fixed, the polynomials are
determined. We assume that $p_0(z)=1$. 
See Section \ref{ssec:Jacobi-operator} for more information. 

Initially, $J$ is defined on the dense linear subspace $\cD$ 
of finite linear combinations of the 
orthonormal basis $\{ e_k\}_{k\in \N}$. 
It follows from \eqref{eq:Jacobioperator} and Theorem 
\ref{thm:3termrec-scalarOP} that, at least formally, we have found
eigenvectors for $J$;
\begin{equation}\label{eq:formaleigvecJ}
J\left( \sum_{k=0}^\infty p_k(z) e_k\right) = z\, \sum_{k=0}^\infty p_k(z) e_k.
\end{equation}
However, we haven't defined $J$ on arbitrary vectors and in general 
$\sum_{k=0}^\infty p_k(z) e_k\notin \ell^2(\N)$, but 
\eqref{eq:formaleigvecJ} indicates that there is a relation between the spectrum 
of $J$ and the orthonormal polynomials. By looking at a partial sum of
\eqref{eq:formaleigvecJ}, the left hand side is well-defined. 

\begin{lemma}\label{lem:finiteJacobioperator} For $M\in \N$ 
\[
J\left( \sum_{k=0}^M p_k(z) e_k\right) = z\, \sum_{k=0}^M p_k(z) e_k 
+ a_{M} p_{M}(z)e_{M+1} - a_Mp_{M+1}(z) e_M
\]
\end{lemma}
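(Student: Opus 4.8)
The plan is to verify the identity in Lemma~\ref{lem:finiteJacobioperator} by a direct, finite computation. The idea is that the proposed formal eigenvector $\sum_{k=0}^\infty p_k(z) e_k$ satisfies $J(\,\cdot\,)=z(\,\cdot\,)$ by \eqref{eq:formaleigvecJ}, and truncating the sum at level $M$ breaks the eigenvalue equation only through the coupling of the Jacobi operator across the index $M$. So the left-hand side should equal $z\sum_{k=0}^M p_k(z)e_k$ plus a boundary correction supported on the indices $M$ and $M+1$.

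First I would apply $J$ term by term to the truncated sum $\sum_{k=0}^M p_k(z)e_k$ using the definition \eqref{eq:Jacobioperator}. Grouping the contributions to each basis vector $e_j$, the coefficient of $e_j$ for $1\le j\le M-1$ is $a_{j-1}p_{j-1}(z)+b_jp_j(z)+a_jp_{j+1}(z)$, which by the three-term recurrence of Theorem~\ref{thm:3termrec-scalarOP} equals $z\,p_j(z)$; the coefficient of $e_0$ is $b_0p_0(z)+a_0p_1(z)=z\,p_0(z)$ by the initial relation. Thus all the ``interior'' coefficients reproduce $z\,p_j(z)$ exactly, which gives the main term $z\sum_{k=0}^M p_k(z)e_k$ up to the missing and extra pieces at the two edges.

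The only discrepancies arise at the indices $j=M$ and $j=M+1$. For $j=M+1$, the truncated sum has no $p_{M+1}(z)e_{M+1}$ term, yet $J e_M$ contributes $a_M p_M(z)e_{M+1}$; this is the extra term $+a_M p_M(z)e_{M+1}$. For $j=M$, the coefficient produced is $a_{M-1}p_{M-1}(z)+b_M p_M(z)$, whereas the full recurrence $z\,p_M(z)=a_M p_{M+1}(z)+b_M p_M(z)+a_{M-1}p_{M-1}(z)$ would require an additional $a_M p_{M+1}(z)$; since that term is absent, the coefficient of $e_M$ falls short of $z\,p_M(z)$ by exactly $a_M p_{M+1}(z)$, giving the correction $-a_M p_{M+1}(z)e_M$. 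Collecting these yields precisely the stated formula.

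There is no real obstacle here; the computation is entirely finite and mechanical, and the only care needed is to handle the boundary cases $k=0$ and $k=M$ of the recurrence separately from the interior. The conceptual point worth noting is simply that all telescoping in the interior is exact because of Theorem~\ref{thm:3termrec-scalarOP}, so the defect of the truncated eigenvalue equation is localised entirely at the top of the window $\{0,\dots,M\}$.
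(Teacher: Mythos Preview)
Your proposal is correct and is exactly the natural direct computation the paper has in mind; the paper states the lemma without proof, treating it as an immediate consequence of applying \eqref{eq:Jacobioperator} term by term and invoking the recurrence of Theorem~\ref{thm:3termrec-scalarOP}, which is precisely what you do. The only minor remark is that for $M=0$ the indices $j=0$ and $j=M$ coincide, so your separate treatments of the two boundary cases collapse into one; you may want to note this explicitly, but the formula is easily checked directly in that case.
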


Truncating 
$J$ to a $(M+1)\times (M+1)$-matrix, which we denote by $J_M$, we see that 
--using $\{e_0,\cdots, e_M\}$ as the standard basis-- 
\[
J_M\left( \sum_{k=0}^M p_k(z) e_k\right) = z\, \sum_{k=0}^M p_k(z) e_k - a_Mp_{M+1}(z) e_M.
\]
Since $J_M$ is a self-adjoint matrix, and since its eigenspaces are $1$-dimensional,
we obtain the following corollary.

\begin{cor}\label{cor:lemfiniteJacobioperator} For $M\in \N$, the zeroes of 
$p_{M+1}$ are real and simple.
\end{cor}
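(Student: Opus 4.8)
The plan is to read off both properties from the finite self-adjoint matrix $J_M$, using the truncated identity displayed just before the statement (the restriction to $J_M$ of Lemma~\ref{lem:finiteJacobioperator}). First I would record the structural facts about $J_M$: it is a real symmetric tridiagonal $(M+1)\times(M+1)$ matrix whose super- and subdiagonal entries are $a_0,\dots,a_{M-1}$, all strictly positive by hypothesis. In particular $J_M$ is self-adjoint, hence diagonalizable with only real eigenvalues, and the whole corollary will follow by matching the zeros of $p_{M+1}$ to the spectrum of $J_M$.

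For reality, suppose $z_0$ is a zero of $p_{M+1}$. Then the truncated identity gives
\[
J_M\Bigl(\sum_{k=0}^M p_k(z_0)\,e_k\Bigr) = z_0\sum_{k=0}^M p_k(z_0)\,e_k - a_M p_{M+1}(z_0)\,e_M = z_0\sum_{k=0}^M p_k(z_0)\,e_k,
\]
so $v(z_0):=\sum_{k=0}^M p_k(z_0)\,e_k$ is an eigenvector of $J_M$ with eigenvalue $z_0$; it is nonzero because its $e_0$-coefficient is $p_0(z_0)=1$. Thus every zero of $p_{M+1}$ is an eigenvalue of the self-adjoint matrix $J_M$, and is therefore real.

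For simplicity I would analyse the eigenvector recurrence directly. If $J_M v=\lambda v$ with $v=\sum_{k=0}^M v_k e_k$, then the rows $k=0,\dots,M-1$ express $v_1,\dots,v_M$ successively in terms of $v_0$ (here $a_k\neq 0$ is essential), so $v_0=0$ would force $v=0$; hence each eigenspace is one-dimensional and, normalising $v_0=1$, one finds $v_k=p_k(\lambda)$. The remaining row $k=M$ then reads $a_{M-1}p_{M-1}(\lambda)+b_M p_M(\lambda)=\lambda p_M(\lambda)$, which, compared with the three-term recurrence of Theorem~\ref{thm:3termrec-scalarOP}, is exactly the statement $a_M p_{M+1}(\lambda)=0$, i.e. $p_{M+1}(\lambda)=0$. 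Hence the eigenvalues of $J_M$ are precisely the zeros of $p_{M+1}$. Since the eigenspaces are one-dimensional and $J_M$ is self-adjoint, algebraic and geometric multiplicities coincide, so $J_M$ has exactly $M+1$ distinct real eigenvalues; as $p_{M+1}$ has degree $M+1$, these account for all of its zeros, and each is simple.

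The only substantive point—and the main obstacle—is the two-way correspondence between zeros of $p_{M+1}$ and eigenvalues of $J_M$, which is where the hypothesis $a_k>0$ genuinely enters: it guarantees both that an eigenvector is determined by its first entry (one-dimensional eigenspaces) and that the top-row equation converts into the condition $p_{M+1}(\lambda)=0$. Everything else is standard linear algebra for self-adjoint matrices together with the already-established identity.
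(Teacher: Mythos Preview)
Your proof is correct and follows exactly the approach the paper indicates: the paper's entire justification is the single sentence ``Since $J_M$ is a self-adjoint matrix, and since its eigenspaces are $1$-dimensional, we obtain the following corollary,'' and you have simply supplied the details behind that sentence, including the two-way correspondence between the zeros of $p_{M+1}$ and the eigenvalues of $J_M$ and the reason the eigenspaces are one-dimensional.
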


We now study the orthonormal polynomials of Theorem \ref{thm:3termrec-scalarOP}
by studying the Jacobi operator $(J, \cD)$. 

\begin{lemma}\label{lem:adjointofJacobioperator}
The adjoint $(J^\ast, \cD^\ast)$ is given by 
\[
\cD^\ast = \{ v=\sum_{k=0}^\infty v_ke_k \in \ell^2(\N) \mid \sum_{k=0}^\infty 
(a_kv_k + b_k v_k + a_{k-1}v_{k-1})e_k \in \ell^2(\N)\}
\]
and $J^\ast v = \sum_{k=0}^\infty 
(a_kv_k + b_k v_k + a_{k-1}v_{k-1})e_k$ for $v\in \cD^\ast$ of this form. 
\end{lemma}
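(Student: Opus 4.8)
The plan is to mimic the strategy suggested for Lemma~\ref{lem:adjointL}: the adjoint of the symmetric tridiagonal operator $(J,\cD)$ is nothing but the formal action of the same difference expression, taken on the largest domain on which it stays in $\ell^2(\N)$. Write $\tau$ for the formal operator sending a sequence $v=\sum_k v_k e_k$ to the sequence whose $k$-th entry is $a_k v_{k+1}+b_k v_k+a_{k-1}v_{k-1}$ for $k\geq 1$ and $a_0 v_1+b_0 v_0$ for $k=0$ (equivalently, set $a_{-1}:=0$). This is defined for every $v\in\ell^2(\N)$ as a sequence, though not necessarily a square-summable one. The statement to prove is then exactly $\mathrm{dom}(J^\ast)=\{v\in\ell^2(\N)\mid \tau v\in\ell^2(\N)\}=\cD^\ast$, together with $J^\ast v=\tau v$ on that domain.

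First I would establish the inclusion $\cD^\ast\subseteq\mathrm{dom}(J^\ast)$ with $J^\ast v=\tau v$. Fix $v\in\cD^\ast$, so $\tau v\in\ell^2(\N)$, and take arbitrary $w\in\cD$. Since $w$ has only finitely many nonzero coefficients, $Jw$ is a finite linear combination of basis vectors and the finite sum defining $\langle Jw,v\rangle$ may be rearranged freely; reindexing the three diagonals and using that the $a_k,b_k$ are real yields the discrete summation-by-parts identity $\langle Jw,v\rangle=\langle w,\tau v\rangle$. Cauchy--Schwarz then gives $|\langle Jw,v\rangle|\leq \|\tau v\|\,\|w\|$, so the functional $w\mapsto\langle Jw,v\rangle$ is bounded on the dense subspace $\cD$; by the definition of the adjoint domain this means $v\in\mathrm{dom}(J^\ast)$ and $J^\ast v=\tau v$.

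For the reverse inclusion $\mathrm{dom}(J^\ast)\subseteq\cD^\ast$, let $v\in\mathrm{dom}(J^\ast)$ and put $u=J^\ast v\in\ell^2(\N)$, so that $\langle Jw,v\rangle=\langle w,u\rangle$ for all $w\in\cD$. Testing this against the basis vectors $w=e_k$ reads off the coefficients of $u$ one by one: computing $\langle Je_k,v\rangle$ straight from the definition of $J$ gives $\overline{(\tau v)_k}$, while $\langle e_k,u\rangle=\overline{u_k}$, whence $u_k=(\tau v)_k$ for every $k\in\N$. Therefore $\tau v=u\in\ell^2(\N)$, which is precisely the condition defining $\cD^\ast$, and $J^\ast v=u=\tau v$. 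Combining the two inclusions proves the lemma.

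The underlying computations are routine, and I expect no serious obstacle. The only points needing care are the boundary term at $k=0$, where the recurrence degenerates through the absent $a_{-1}$ term, and the justification that the rearrangement in $\langle Jw,v\rangle$ is legitimate — which is guaranteed precisely because $w\in\cD$ is finitely supported, so that no summability hypothesis on $v$ is required at that stage. Keeping the index bookkeeping consistent at the endpoint is the main bit of friction.
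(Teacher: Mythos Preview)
Your proposal is correct and follows the same template the paper intends: the paper leaves this lemma as an exercise with the hint to proceed as for Lemma~\ref{lem:adjointL}, and the fully written-out matrix-valued analogue (Proposition~\ref{prop:MVadjointJ}) proceeds exactly as your first inclusion does, via the finite-sum reindexing and Cauchy--Schwarz. For the reverse inclusion you test against individual basis vectors $e_k$ to identify $J^\ast v$ with $\tau v$ coefficient-by-coefficient, whereas the paper's hint (and the proof of Proposition~\ref{prop:MVadjointJ}) instead plugs in the truncated vector $w=\sum_{k\leq M}(\tau v)_k e_k$ to bound the partial sums of $\|\tau v\|^2$ directly; both arguments are standard and equally short, so this is a cosmetic difference rather than a genuinely different route.
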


The proof of Lemma \ref{lem:adjointofJacobioperator} is 
completely analogous to the proof of Lemma \ref{lem:adjointL}, see
Exercise \ref{exer:lemadjointofJacobioperator}. 

In order to study the Jacobi operator we find another solution 
to the corresponding eigenvalue equation for $J$. Since the formal eigenspace
of $J$ is 1-dimensional, we can only find a solution of the 
equation $\langle Jv, e_k\rangle = x \langle v, e_k\rangle$ for 
$k\geq 1$. 
Let $r_k(x)$ be the sequence of
polynomials generated by the three-term recurrence of Theorem \ref{thm:3termrec-scalarOP}
for $k\geq 1$ 
with initial conditions $r_0(x)=0$ and
$r_1(x)=a_0^{-1}$. Obviously, $r_k$ is a polynomial of degree $k-1$. 
The polynomials
$\{ r_k\}_{k=0}^\infty$ are known as the associated polynomials\index{associated polynomials}
or polynomials of the second kind.\index{polynomials of the second kind}
In case we assume that the Jacobi operator \eqref{eq:Jacobioperator} 
comes from the three-term recurrence relation for orthogonal polynomials 
as in Theorem \ref{thm:3termrec-scalarOP}, we can describe the polynomials
$r_k$ explicitly in terms of the measure $\mu$. This is done in 
Lemma \ref{lem:associatedpols}. 

\begin{lemma}\label{lem:associatedpols} Let 
\[
w(z) = \int_\R \frac{1}{x-z}\, d\mu(x)
\]
be the Stieltjes transform of the measure $\mu$, which 
is well-defined for $z\in \C\setminus\R$. We have that 
\[
r_k(x) = \int_\R \frac{p_k(x)-p_k(y)}{x-y} \, d\mu(y)
\]
and for $z\in \C\setminus\R$ 
\[
\sum_{k=0}^\infty |w(z) p_k(z) + r_k(z)|^2 \leq 
\int_\R \frac{1}{|x-z|^2}\, d\mu(x) \leq \frac{1}{|\Im(z)|^2}<\infty
\]
\end{lemma}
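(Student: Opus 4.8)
The plan is to prove the three assertions in the order they are stated, establishing the closed form for $r_k$ first and then deducing the Bessel-type estimate from it.

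First I would verify that the right-hand side $\tilde r_k(x):=\int_\R\frac{p_k(x)-p_k(y)}{x-y}\,d\mu(y)$ is a well-defined polynomial of degree $k-1$ (the integrand is a polynomial in $x$ and $y$, being divisible by $x-y$, and all moments are finite) and that it carries the same data as $r_k$. For the initial conditions, $p_0\equiv 1$ gives $\tilde r_0\equiv 0$, while $p_1(x)=a_0^{-1}(x-b_0)$ gives $\frac{p_1(x)-p_1(y)}{x-y}=a_0^{-1}$, hence $\tilde r_1=a_0^{-1}m_0=a_0^{-1}$. For the recurrence I would insert the three-term recurrence of Theorem \ref{thm:3termrec-scalarOP}, applied in both the $x$- and the $y$-variable, to get
\[
a_k\tilde r_{k+1}(x)+b_k\tilde r_k(x)+a_{k-1}\tilde r_{k-1}(x)=\int_\R\frac{xp_k(x)-yp_k(y)}{x-y}\,d\mu(y),
\]
and then split $xp_k(x)-yp_k(y)=x\bigl(p_k(x)-p_k(y)\bigr)+(x-y)p_k(y)$ so that the integral becomes $x\tilde r_k(x)+\int_\R p_k(y)\,d\mu(y)$. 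The decisive point is that $\int_\R p_k(y)\,d\mu(y)=\langle p_k,p_0\rangle=\delta_{k,0}$ vanishes for $k\ge 1$, leaving exactly $x\tilde r_k(x)$. Since $\tilde r_k$ and $r_k$ share initial data and satisfy the same recurrence for $k\ge 1$, they coincide by induction.

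Next, the Bessel bound. The crucial observation is that combining the Stieltjes transform with the associated polynomial collapses into a single Fourier coefficient: using $w(z)p_k(z)=\int_\R\frac{p_k(z)}{x-z}\,d\mu(x)$ together with $r_k(z)=\int_\R\frac{p_k(x)-p_k(z)}{x-z}\,d\mu(x)$, the $p_k(z)$ terms cancel and
\[
w(z)p_k(z)+r_k(z)=\int_\R\frac{p_k(x)}{x-z}\,d\mu(x)=\langle g_z,p_k\rangle,\qquad g_z(x):=\frac{1}{x-z},
\]
where I use that $p_k$ has real coefficients, so $\overline{p_k(x)}=p_k(x)$ for real $x$. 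Since $|x-z|\ge|\Im(z)|$ for $x\in\R$ and $\mu(\R)=1$, the function $g_z$ lies in $L^2(\mu)$ with $\|g_z\|^2=\int_\R|x-z|^{-2}\,d\mu(x)\le|\Im(z)|^{-2}$, which is the rightmost inequality. Applying Bessel's inequality to the orthonormal system $\{p_k\}$ in $L^2(\mu)$ then yields $\sum_{k}|w(z)p_k(z)+r_k(z)|^2=\sum_k|\langle g_z,p_k\rangle|^2\le\|g_z\|^2$, which is the middle inequality.

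There is no serious obstacle: the whole content reduces to the two algebraic identities, namely the telescoping of $xp_k(x)-yp_k(y)$ and the cancellation producing $\langle g_z,p_k\rangle$, after which orthogonality, Bessel's inequality, and the elementary estimate $|x-z|\ge|\Im(z)|$ finish the argument. The only point I would flag for care is the reality of the coefficients of $p_k$, since this is exactly what allows me to drop complex conjugates and identify $w(z)p_k(z)+r_k(z)$ with a genuine inner product against $g_z$.
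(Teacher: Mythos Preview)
Your proof is correct and follows essentially the same route as the paper. The paper defers the integral representation of $r_k$ to an exercise whose hint is exactly your telescoping of $xp_k(x)-yp_k(y)$, and for the inequality it computes $\int_\R \frac{p_k(x)}{x-z}\,d\mu(x)=r_k(z)+w(z)p_k(z)$ and applies Bessel's inequality to $x\mapsto (x-z)^{-1}$ in $L^2(\mu)$, just as you do.
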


\begin{proof} 
We leave the explicit expression of $r_k$ as Exercise \ref{exer:lemassociatedpols}.
In the Hilbert space $L^2(\mu)$ we consider the expansion of 
the 
function $x\mapsto \frac{1}{x-z}$ for $z\in \C\setminus\R$, which 
is an element of $L^2(\mu)$ by the estimate $|\frac{1}{x-z}|\leq \frac{1}{|\Im(z)|}$ 
and $\mu$ being a probability measure. 
We calculate the inner product of $x\mapsto \frac{1}{x-z}$ with an 
orthonormal polynomial $p_k$;
\begin{gather*}
\int_\R \frac{p_k(x)}{x-z}\, d\mu(x) = 
\int_\R \frac{p_k(x)-p_k(z)}{x-z}\, d\mu(x) + p_k(z) \int_\R \frac{1}{x-z}\, d\mu(x) 
= r_k(z) + w(z) p_k(z)
\end{gather*}
By the Bessel inequality\index{Bessel inequality}  
for the orthonormal sequence $\{p_k\}_{k\in \N}$ in $L^2(\mu)$
the result follows.
\end{proof}

As a corollary to Lemma \ref{lem:associatedpols} we get that 
\begin{equation}\label{eq:Markovscalar}
\lim_{k\to \infty} \frac{r_k(z)}{p_k(z)} = -w(z) 
= \int_\R \frac{1}{z-x}\, d\mu(x), 
\qquad  z\in \C\setminus\R. 
\end{equation}
which is known as Markov's\index{Markov's theorem} theorem, see \cite{Berg-Markov} for an overview. 

In particular, we see that the vector 
\begin{equation*}
f(z) = \sum_{k=0}^\infty \bigl( r_k(z) + w(z) p_k(z)\bigr) e_k \in \ell^2(\N) 
\end{equation*}
for $z\in \C\setminus\R$, and 
satisfying $\langle J^\ast f(z), e_k \rangle = z \langle f(z), e_k \rangle$
for $k\geq 1$. 
We view $f(z)$ as the free solution\index{free solution} in this case. 
So it is a square summable solution for the three-term recurrence relation
for $k\gg 0$. 
From here we can define the Green function and calculate the 
resolvent explicitly. 
Under the assumption that $\sum_{n\in \N} |p_n(z)|^2$ diverges for 
$z\in \C\setminus\R$ this 
can be obtained from Section \ref{ssec:MVresolventoperator} 
by specialising to $N=1$. 

\subsection{Jacobi operators}\label{ssec:Jacobi-operator}

The converse problem, namely finding the orthogonality measure $\mu$ 
for the polynomials $\{p_k\}_{k\in \N}$ generated by 
a three-term recurrence relation as of Theorem \ref{thm:3termrec-scalarOP}, 
can be solved by studying the Jacobi operator of \eqref{eq:Jacobioperator}.
The operator $(J, \cD)$, with adjoint $(J^\ast, \cD^\ast)$ as in 
Lemma \ref{lem:adjointofJacobioperator}, can be studied from a  spectral point of view. 

\begin{prop}\label{prop:spectralJacobioperator}
The deficiency indices $(n_+, n_-)$ of $(J, \cD)$  are $(0,0)$ or $(1,1)$. 
In case $(n_+, n_-)=(0,0)$ the operator $(J, \cD)$ is essentially self-adjoint. 
Let $E$ be the spectral decomposition of $(J^\ast, \cD^\ast)$ in case
$(n_+, n_-)=(0,0)$ and of a self-adjoint extension $(J_\theta, D(J_\theta))$,
$(J, \cD)\subset (J_\theta, D(J_\theta))\subset (J^\ast, \cD^\ast)$
in case $(n_+, n_-)=(1,1)$. 
Then an orthogonality measure for the polynomials is given by 
$\mu(B) = \langle E(B)e_0, e_0\rangle$, $B\in\scB$. 
\end{prop}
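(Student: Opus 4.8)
The plan is to exploit the structure of the eigenvalue equation $J^\ast f = zf$: it is a second order difference relation carrying a boundary condition at $k=0$, so its space of \emph{formal} solutions is only one-dimensional. This caps the deficiency indices at $1$, and reality of the coefficients forces them to coincide. The orthogonality of the polynomials then falls out of the spectral theorem once one checks the identity $p_k(A)e_0 = e_k$, where $A$ denotes the relevant self-adjoint operator.

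First I would analyse, for $z\in\C\setminus\R$, the space $N_z$ of $\ell^2(\N)$-solutions of $J^\ast f = zf$, which controls the deficiency indices via Appendix \ref{ssec:appunboundedsaoperators}. Using the description of $J^\ast$ in Lemma \ref{lem:adjointofJacobioperator}, a vector $f=\sum_k f_k e_k$ with $J^\ast f = zf$ must satisfy $a_0 f_1 + b_0 f_0 = z f_0$ together with $a_k f_{k+1} + b_k f_k + a_{k-1} f_{k-1} = z f_k$ for $k\geq 1$. Since $a_k>0$, these relations determine $f_1,f_2,\dots$ recursively from $f_0$, so the space of formal solutions is one-dimensional, spanned by $(p_k(z))_{k\in\N}$ on account of Theorem \ref{thm:3termrec-scalarOP} and $p_0=1$. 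Hence $\dim N_z \leq 1$ for every non-real $z$. Because $a_k,b_k\in\R$ one has $p_k(\bar z) = \overline{p_k(z)}$, so $(p_k(z))_k\in\ell^2(\N)$ iff $(p_k(\bar z))_k\in\ell^2(\N)$, whence $\dim N_z = \dim N_{\bar z}$ and $n_+=n_-$. This leaves only $(0,0)$ or $(1,1)$. In the first case Proposition \ref{prop:App-extensionselfadjointopertaotrs} gives essential self-adjointness, exactly as in the proof of Theorem \ref{thm:resolventL}; in the second case von Neumann's extension theory supplies the one-parameter family of self-adjoint extensions $(J_\theta, D(J_\theta))$.

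Let $A$ be the self-adjoint operator at hand ($J^\ast$ or $J_\theta$) with spectral measure $E$, and set $\mu(B)=\langle E(B)e_0,e_0\rangle = \|E(B)e_0\|^2$, a positive measure with $\mu(\R)=\|e_0\|^2=1$. The key step is the identity $p_k(A)e_0 = e_k$, proved by induction on $k$: it holds for $k=0$ since $p_0=1$, and the recurrence of Theorem \ref{thm:3termrec-scalarOP}, together with $Ae_k = Je_k$ (valid because $A$ extends $J$ and $e_k\in\cD$), propagates it. Since $A^je_0 = J^je_0\in\cD$, we have $e_0\in\bigcap_k D(A^k)$, so every expression $p_k(A)e_0$ is legitimate. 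With this in hand the spectral theorem (Appendix \ref{app:spectralthm}) gives $\int_\R g\,d\mu = \langle g(A)e_0,e_0\rangle$ for polynomials $g$, and because $p_m,p_n$ are real and $A$ is self-adjoint,
\[
\int_\R p_m(x)p_n(x)\,d\mu(x) = \langle p_m(A)p_n(A)e_0,e_0\rangle = \langle p_n(A)e_0, p_m(A)e_0\rangle = \langle e_n,e_m\rangle = \de_{m,n},
\]
which is precisely the asserted orthonormality, so $\mu$ is an orthogonality measure for the polynomials.

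The main obstacle I anticipate lies in the domain bookkeeping in the unbounded $(1,1)$ case: one must verify that $e_0$ belongs to the domain of every power of $A$, so that the functional-calculus identity $p_m(A)p_n(A)=(p_mp_n)(A)$ and the self-adjointness move $\langle p_m(A)u,v\rangle = \langle u,p_m(A)v\rangle$ are justified when applied to $e_0$. This is exactly what the observation $A^je_0=J^je_0\in\cD$ secures, but it is the point where the argument genuinely departs from the bounded (compactly supported) situation, in which all these manipulations are automatic.
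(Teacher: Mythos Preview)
Your proposal is correct and follows essentially the same route as the paper: bound the deficiency indices by analysing the one-dimensional formal solution space of $J^\ast f=zf$, use reality of the coefficients (equivalently, commutation with conjugation) to get $n_+=n_-$, then invoke $e_k=p_k(A)e_0$ and the spectral theorem to obtain the orthogonality relations. Your treatment is in fact a little more careful than the paper's about the domain bookkeeping for $p_k(A)e_0$ in the unbounded case, which the paper leaves implicit.
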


\begin{proof} The deficiency indices are equal, since $J^\ast$ commutes 
with conjugation. Since the eigenvalue equation $J^\ast v = z v$ 
is completely determined
by the initial value $\langle v, e_0\rangle$, the deficiency space is
at most $1$-dimensional. 
Note that $J^\ast v = z v$ gives $\langle v, e_n\rangle = p_n(z) 
\langle v, e_0\rangle$, so that 
the defect indices are $(1,1)$ if and only if $\sum_{n=0}^\infty |p_n(z)|^2 <\infty$. 

Also, $e_0$ is a cyclic vector of $\ell^2(\N)$ for $J$, i.e. 
$\ell^2(\N)$ equals the closure of the space of $J^k e_0$, $k\in \N$
and even $e_k=p_k(J)e_0$, which follows by induction on $k$. 
Since $J^\ast$ or $J_\theta$ extend $J$, we have 
\begin{gather*}
\de_{k,l} = \langle e_k, e_l \rangle = 
\langle p_k(J)e_0, p_l(J)e_0\rangle = 
\langle p_l(J)p_k(J)e_0, e_0\rangle \\ 
= \int_\R p_l(\la)p_k(\la) \, dE_{e_0,e_0}(\la) 
= \int_\R p_l(\la)p_k(\la) \, d\mu(\la)
\end{gather*}
using the spectral theorem for self-adjoint operators in Appendix \ref{app:spectralthm}. 
\end{proof}

\begin{cor}[Favard's theorem]\label{cor:Favardsthm}\index{Favard's theorem}
Let the polynomials $p_n$ of degree $n$ be generated by the recursion $p_0(z)=1$, 
$p_1(z)=a_1^{-1}(z-b_0)$
and 
\[
zp_n(z) = a_n p_{n+1}(z) + b_n p_n(z) + a_{n-1}p_{n-1}(z)
\]
for sequences $\{a_n\}_{n\in \N}$,  $\{b_n\}_{n\in \N}$ with $a_n>0$ and $b_n\in \R$ 
for all $n$. Then there exists a Borel measure on $\R$ with finite moments so that 
$ \int_\R p_n(x)p_m(x) \, d\mu(x) = \de_{m,n}$. 
\end{cor}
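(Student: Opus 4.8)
The plan is to deduce the corollary directly from Proposition \ref{prop:spectralJacobioperator}. First I would observe that the hypotheses of the corollary are exactly the data required to build a Jacobi operator: given the sequences $\{a_n\}_{n\in\N}$ and $\{b_n\}_{n\in\N}$ with $a_n>0$ and $b_n\in\R$, I define the operator $(J,\cD)$ on $\ell^2(\N)$ by \eqref{eq:Jacobioperator}, with $\cD$ the dense subspace of finite linear combinations of the orthonormal basis $\{e_k\}_{k\in\N}$. The polynomials $p_n$ generated by the recursion in the statement are then precisely the polynomials attached to this Jacobi operator, satisfying the formal eigenvector relation \eqref{eq:formaleigvecJ}, and the normalisation $p_0(z)=1$ matches the convention used throughout the section.

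The key step is then simply to invoke Proposition \ref{prop:spectralJacobioperator}. That proposition guarantees that the deficiency indices of $(J,\cD)$ are either $(0,0)$ or $(1,1)$; in either case a self-adjoint operator is available. When the indices are $(0,0)$ the operator is essentially self-adjoint, and we take $E$ to be the spectral decomposition of its closure $(J^\ast,\cD^\ast)$. When the indices are $(1,1)$ we first fix any self-adjoint extension $(J_\theta, D(J_\theta))$ with $(J,\cD)\subset(J_\theta,D(J_\theta))\subset(J^\ast,\cD^\ast)$ and take $E$ to be its spectral decomposition. In both cases the proposition produces the measure $\mu(B)=\langle E(B)e_0,e_0\rangle$ for $B\in\scB$.

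It remains to confirm that this $\mu$ has the two properties demanded by the statement: that it has finite moments and that it orthonormalises the $p_n$. The orthonormality $\int_\R p_n(x)p_m(x)\,d\mu(x)=\de_{m,n}$ is exactly the computation carried out at the end of the proof of Proposition \ref{prop:spectralJacobioperator}, using that $e_0$ is cyclic, that $e_k=p_k(J)e_0$, and the spectral theorem applied to $dE_{e_0,e_0}$. Finiteness of the moments $m_k=\int_\R x^k\,d\mu(x)$ follows from the same identification: writing $m_k=\int_\R \la^k\,dE_{e_0,e_0}(\la)=\langle J^k e_0, e_0\rangle$ when $e_0$ lies in the domain of $J^k$, and more carefully from the fact that $p_k(J)e_0=e_k\in\ell^2(\N)$ expresses the relevant polynomial moments as finite inner products $\langle p_k(J)e_0, p_l(J)e_0\rangle=\de_{k,l}$, whence every moment is a finite linear combination of such quantities.

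The main obstacle is purely bookkeeping rather than conceptual: one must be slightly careful in the indeterminate $(1,1)$ case, where the self-adjoint extension, and hence the measure, is not unique, so the statement should be read as asserting existence of some orthogonality measure rather than uniqueness. One should also note a minor indexing discrepancy, namely that the recursion in the corollary is written with initial datum $p_1(z)=a_1^{-1}(z-b_0)$ whereas Theorem \ref{thm:3termrec-scalarOP} uses $a_0p_1(x)=xp_0(x)-b_0p_0(x)$; I would simply match conventions by reading $a_1$ here as the $a_0$ of \eqref{eq:Jacobioperator}, so that the generated polynomials coincide. With these remarks the corollary is an immediate consequence of Proposition \ref{prop:spectralJacobioperator}.
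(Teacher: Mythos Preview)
Your proposal is correct and matches the paper's approach exactly: the corollary is stated immediately after Proposition \ref{prop:spectralJacobioperator} with no separate proof, precisely because it is meant to be an immediate consequence of that proposition in the way you describe. Your remarks on the $(1,1)$ case (existence but not uniqueness of the measure) and on the indexing discrepancy between $a_1$ in the corollary and $a_0$ in \eqref{eq:Jacobioperator} are apt; the latter is indeed a typo in the paper, and your resolution is the intended reading.
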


\begin{remark}\label{rmk:propspectralJacobioperator}
According to Proposition \ref{prop:App-extensionselfadjointopertaotrs} 
the labeling of the self-adjoint extension of Proposition 
\ref{prop:spectralJacobioperator} is given by $U(n_+)=U(1)$, so we can think
of $\theta\in [0,2\pi)$ as parametrising the self-adjoint extensions 
of $(J,\cD)$ in Proposition \ref{prop:spectralJacobioperator}. 
It can then be proved that the corresponding orthogonality
measures for different self-adjoint extensions lead to 
different Borel measures for the orthogonal polynomials, 
see e.g. \cite[Ch.~XII.8]{DunfS}, \cite[Thm.~(3.4.5)]{Koel-Laredo}, \cite[Ch.~16]{Schm}, 
\end{remark}

Note that in particular, we see that the condition 
$\dim S^\pm_z \geq 1$, mentioned immediately after Theorem \ref{thm:resolventL},
follows by considering the two Jacobi operators associated to $L$ by considering
$k\to \infty$ and $k\to -\infty$. 
In a fact, a theorem by Masson and Repka \cite{MassR},
states the deficiency indices of the operator $L$ of
Section \ref{sec:threetermZ} can be obtained by adding the deficiency indices of
the Jacobi operators $k\to \infty$ and $k\to -\infty$.

\subsection{Moment problems}

The moment problem\index{moment problem} is the following:
\begin{enumerate}[1.]
\item Given a sequence $\{m_0,m_1, m_2, \ldots\}$, does there
exist a positive Borel measure $\mu$ on $\R$
such that $m_k=\int x^k\, d\mu(x)$?
\item If the answer to problem 1 is yes, is the measure obtained
unique?
\end{enumerate}

We exclude the case of finite discrete orthogonal polynomials, so
we assume $\supp(\mu)$ is not a finite set. 
This is equivalent to the Hankel matrix $(m_{i+j})_{0\leq i,j\leq N}$ being 
regular for all $N\in \N$. 
We do not discuss the conditions for existence of such a measure. 
The Haussdorf moment problem (1920) requires $\supp(\mu)\subset [0,1]$. 
The Stieltjes moment problem (1894) requires $\supp(\mu)\subset [0,\infty)$. 
The Hamburger moment problem (1922) does not require a 
condition on the support of the measure. 
See Akhiezer \cite{Akhi}, Buchwalter and Cassier \cite{BuchC},
Dunford and Schwartz \cite[Ch.~XII.8]{DunfS}, 
Schm\"udgen \cite[Ch.~16]{Schm}, Shohat and Tamarkin \cite{ShohT},
Simon \cite{Simo}, Stieltjes \cite{Stie}, Stone \cite{Ston} for more 
information. 

The fact that the measure is not determined by its moments was first noticed
by Stieltjes in his famous memoir \cite{Stie}, published 
posthumously. See Kjeldsen \cite{Kjel} for an overview of the early history of the moment
problem. Stieltjes's example is discussed in Exercise \ref{exer:Stieltjesexample}.

So we see that the moment problem is determinate --i.e. the answer to 2 is yes--
if and only if the corresponding Jacobi operator is essentially self-adjoint.

\subsection{Exercises}

\begin{enumerate}[1.]
\item\label{exer:thm3termrec-scalarOP} Prove Theorem \ref{thm:3termrec-scalarOP}.  
\begin{enumerate}[(a)]
\item Prove that there is a three-term recurrence relation. (Hint: Expand $xp_n(x)$ in the polynomials, and use that multiplying by $x$ is 
(a possibly unbounded) symmetric operator on the space of polynomials in 
$L^2(\mu)$, since $\mu$ is a real Borel measure.)
\item Establish $a_n=\int_\R xp_n(x)p_{n+1}(x)\, d\mu(x)$ and 
$b_n=\int_\R x\bigl(p_n(x)\bigr)^2\, d\mu(x)$.
\item Show that if $\mu$ has bounded support that the coefficients $a_n$ and $b_n$ are bounded.
(Hint: If $\supp(\mu)\subset [-M,M]$ then one can estimate $x$ in the integrals by $M$, and 
next use the Cauchy-Schwarz inequality in $L^2(\mu)$.)
\end{enumerate}
\item\label{exer:lemadjointofJacobioperator} Prove Lemma \ref{lem:adjointofJacobioperator}.
(Hint. Consider the proof as in Exercise \ref{exer:adjointL}.)
\item\label{exer:lemassociatedpols} Prove the explicit expression for $r_k$ of Lemma 
\ref{lem:associatedpols}. (Hint: write
\begin{gather*}
x\bigl(p_k(x) - p_k (y)) + (x - y)p_k (y) = \\
a_k \bigl(p_{k+1} (x) - p_{k+1}(y)\bigr) + b_k \bigl(p_k (x) - p_k (y)\bigr) + 
a_{k-1} \bigl(p_{k-1}(x) - p_{k-1}(y)\bigr)
\end{gather*}
using the three-term recurrence relation. Divide by $x-y$ and integrate with respect to 
$\mu$. Then the second term on the left hand side vanishes for $k\geq 1$. 
Check the initial values as well.)
\item\label{exer:Stieltjesexample} 
\begin{enumerate}[(a)]
\item 
Show that for $\ga>0$ 
\begin{gather*}
\int_0^\infty x^n \, e^{-\ga^2\ln^2 x}\sin (2\pi\ga^2\ln x)\, dx = 0, \qquad \forall\, n\in \N.
\end{gather*}
(Hint: switch to $y =\ga \ln(x) -\frac{1}{2\ga}(n+1)$.)
\item Conclude that the moments 
$\int_0^\infty x^n \, e^{-\ga^2\ln^2 x}\bigl(
1+r\sin (2\pi\ga^2\ln x)\bigr)\, dx$ are independent of $r$, and this is a 
positive measure for $r\in \R$ with $|r|\leq 1$. 
\end{enumerate}
\item\label{exer:ChristoffelDarbouxscal} 
Prove the Christoffel-Darboux formula\index{Christoffel-Darboux formula} for 
the orthonormal polynomials using the three-term recurrence relation;
\begin{equation*}
(x-y) \sum_{k=0}^{n-1} p_k(x)p_k(y) = a_{n-1}\bigl(
p_n(x)p_{n-1}(y) - p_{n-1}(x)p_n(y)\bigr)
\end{equation*}
and derive the limiting case 
$$
\sum_{k=0}^{n-1} p_k(x)^2  = a_{n-1}\bigl(
p_n'(x)p_{n-1}(x) - p_{n-1}'(x)p_n(x)\bigr).
$$
\end{enumerate}

\section{Matrix-valued orthogonal polynomials}\label{sec:MVOP}

In this section we study matrix-valued orthogonal polynomials 
using a spectral analytic description of the corresponding 
Jacobi operator. In this we follow  \cite{ApteN},  \cite{DamaPS}, \cite{Gero},
and references given there, in particular in \cite{DamaPS}. 

\subsection{Matrix-valued measures and related polynomials}\label{ssec:MVmeasurespols}

We consider $\C^N$ as a finite dimensional inner product space with standard
orthonormal basis $\{e_i\}_{i=1}^N$. 
By $M_N(\C)$\index{M@$M_N(\C)$} we denote the matrix algebra of linear maps 
$T\colon \C^N\to \C^N$, 
Let $E_{i,j}\in M_N(\C)$\index{E@$E_{i,j}$} be the rank one operators\index{rank one operator} $E_{i,j} v = \langle v,e_j\rangle e_i$, so that 
$E_{i,j}e_k = \de_{k,j}e_i$. So $E_{i,j}$ is the $N\times N$-matrix with all zeroes, except one $1$ at the $(i,j)$-th entry.  
Note that in particular $\C^N$ is a (finite-dimensional) Hilbert space, 
see Example \ref{exa:Hilbertspaces}, 
so that $M_N(\C)$ carries a norm and with this norm 
$M_N(\C)$ is a $\text{C}^\ast$-algebra, see Section \ref{ssec:HilbertCastmodules}. 

A linear map $T\colon \C^N\to \C^N$ is positive\index{positive linear map}, or
positive definite,\index{positive definite linear map} in case 
$\langle Tv,v\rangle > 0$ for all $v\in \C^N\setminus\{0\}$, which we denote by $T> 0$. 
$T$ is positive semi-definite\index{positive semi-definite linear map} if
$\langle Tv,v\rangle \geq 0$ for all $v\in \C^N$, denoted by $T\geq 0$. 
The space of positive linear semi-definite maps, or 
positive semi-definite matrices\index{positive semi-definite matrix} (after fixing a basis), is 
denoted by $P_N(\C)$\index{P1@$P_N(\C)$}.
$P_N(\C)$ is a closed cone in $M_N(\C)$. Its interior $P^o_N(\C)$\index{P2@$P^o_N(\C)$} is the open cone of positive matrices. 
Note that each positive linear map is
Hermitean, see \cite[\S 7.1]{HornJ}.
Then we set $T>S$ if $T-S>0$ and $T\geq S$ if $T-S\geq 0$, 
see Section \ref{ssec:HilbertCastmodules}.

\begin{defn}\label{def:MV-measure}
A matrix-valued measure (or matrix measure) \index{matrix-valued measure}\index{matrix measure}
is a $\si$-additive map $\mu\colon \scB \to P_N(\C)$
where $\scB$ is the Borel $\si$-algebra on $\R$.
\end{defn}

Recall that $\si$-additivity means that for any sequence $E_1, E_2, \cdots$ of pairwise disjoint 
Borel sets, we have 
\[
\mu\left( \bigcup_{k=1}^\infty E_k\right) = \sum_{k=1}^\infty \mu(E_k)
\]
where the right-hand side is unconditionally convergent in $M_N(\C)$. 

Note $\mu_{v,w}\colon \scB \to \C$, $\mu_{v,w}(B) = \langle \mu(B)w,v\rangle$ is 
a complex-valued Borel measure on $\R$, and in particular 
$\mu_{v,v}\colon \scB \to \R$ is a positive Borel measure on $\R$. 
Let $\tau_\mu= \sum_{i=1}^N \mu_{i,i}$ be the positive Borel measure on $\R$ corresponding 
to the trace of $\mu$, i.e. $\tau_\mu(B) = \Tr(\mu(B))$ for all $B\in \scB$. 
Here we use the notation $\mu_{i,j} = \mu_{e_i,e_j}$, but note that the 
trace measure $\tau_\mu$ is independent of the choice of basis for $\C^N$. 
The following result is \cite[Thm.~1.12]{Berg}, see also \cite[\S 1.2]{DamaPS},  
\cite[\S 3]{Krei-mm}, 
\cite{Rose}. 

\begin{thm}\label{thm:RDforMVmeasure}
For a matrix measure $\mu$ there exist functions $W_{i,j} \in L^1(\tau_\mu)$ such that 
\[
\mu_{i,j}(B) = \int_B W_{i,j}(x)\, d\tau_\mu(x), \qquad \forall\, B\in \scB
\]
and $W(x)=\bigl( W_{i,j}(x)\bigr)_{1\leq i,j\leq N} \in P_N(\C)$ for $\tau_\mu$-almost $x$. 
\end{thm}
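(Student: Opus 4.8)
The plan is to reduce the matrix statement to the scalar Radon--Nikodym theorem applied entrywise, and then to recover positivity of the density matrix by testing against a countable dense set of vectors.

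First I would establish that each scalar measure $\mu_{i,j}$ is absolutely continuous with respect to the trace measure $\tau_\mu$. Since $\mu(B)\in P_N(\C)$ is positive semi-definite for every $B\in\scB$, the entrywise Cauchy--Schwarz inequality for positive semi-definite matrices gives
\[
|\mu_{i,j}(B)|^2 \leq \mu_{i,i}(B)\,\mu_{j,j}(B), \qquad B\in\scB.
\]
If $\tau_\mu(B)=\Tr(\mu(B))=\sum_{k=1}^N \mu_{k,k}(B)=0$, then each positive measure $\mu_{k,k}$ satisfies $\mu_{k,k}(B)=0$, whence $\mu_{i,j}(B)=0$. To control the total variation, for a measurable partition $B=\bigcup_k B_k$ I would apply the same inequality together with the Cauchy--Schwarz inequality for sums,
\[
\sum_k |\mu_{i,j}(B_k)| \leq \sum_k \sqrt{\mu_{i,i}(B_k)\,\mu_{j,j}(B_k)}
\leq \sqrt{\mu_{i,i}(B)}\,\sqrt{\mu_{j,j}(B)},
\]
so that the total variation $|\mu_{i,j}|$ vanishes on $\tau_\mu$-null sets; hence $|\mu_{i,j}|\ll\tau_\mu$. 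The scalar (complex) Radon--Nikodym theorem then produces densities $W_{i,j}=d\mu_{i,j}/d\tau_\mu\in L^1(\tau_\mu)$ with $\mu_{i,j}(B)=\int_B W_{i,j}\,d\tau_\mu$ for all $B\in\scB$, which is the first assertion.

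It remains to show $W(x)=(W_{i,j}(x))\in P_N(\C)$ for $\tau_\mu$-almost every $x$. Fix $v=\sum_{j}c_j e_j\in\C^N$. Expanding, the positive Borel measure $\mu_{v,v}$ has $\tau_\mu$-density $\sum_{i,j}\overline{c_i}c_j W_{i,j}(x)=\langle W(x)v,v\rangle$, since
\[
\mu_{v,v}(B)=\langle \mu(B)v,v\rangle=\sum_{i,j}\overline{c_i}c_j\,\mu_{i,j}(B)
=\int_B \langle W(x)v,v\rangle\, d\tau_\mu(x).
\]
Because $\mu_{v,v}\geq 0$ as a measure, $\int_B\langle W(x)v,v\rangle\,d\tau_\mu(x)\geq 0$ for every $B\in\scB$, which forces $\langle W(x)v,v\rangle\geq 0$ for $\tau_\mu$-almost every $x$, with an exceptional null set $N_v$ depending on $v$.

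The main point is to make the exceptional null set independent of $v$. For this I would choose a countable set $D\subset\C^N$ that is dense in $\C^N$ (for instance vectors with coordinates in $\mathbb{Q}+i\mathbb{Q}$) and put $N=\bigcup_{v\in D}N_v$, still a $\tau_\mu$-null set. For $x\notin N$ one has $\langle W(x)v,v\rangle\geq 0$ for all $v\in D$; since $v\mapsto\langle W(x)v,v\rangle$ is continuous for the fixed matrix $W(x)$, the inequality extends to all $v\in\C^N$ by density. As the entrywise relation $\mu_{j,i}(B)=\overline{\mu_{i,j}(B)}$ forces $W_{j,i}=\overline{W_{i,j}}$ $\tau_\mu$-almost everywhere, $W(x)$ is Hermitean off a null set, and together with $\langle W(x)v,v\rangle\geq 0$ for all $v$ this yields $W(x)\in P_N(\C)$ for $\tau_\mu$-almost every $x$. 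I expect the interchange from the $v$-dependent null sets to a single null set --- justified by separability of $\C^N$ and continuity --- to be the only genuinely delicate step; everything else is the scalar Radon--Nikodym theorem applied coordinatewise.
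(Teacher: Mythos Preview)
Your proof is correct and follows essentially the same route as the paper: the entrywise Cauchy--Schwarz inequality for positive semi-definite matrices yields absolute continuity of each $\mu_{i,j}$ with respect to $\tau_\mu$, after which the scalar Radon--Nikodym theorem applies. The paper's sketch pushes the inequality one step further, to $|a_{i,j}|\leq \tfrac12(a_{i,i}+a_{j,j})\leq \Tr(A)$, which gives the cleaner bound $|\mu_{i,j}(B)|\leq \tau_\mu(B)$ directly (and hence also $|W_{i,j}(x)|\leq 1$ a.e., so $W(x)\leq I$); your argument via the total variation and Cauchy--Schwarz for sums is equally valid but does not immediately deliver this extra boundedness. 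Your treatment of the positivity of $W(x)$ via a countable dense set of test vectors is the standard way to upgrade the $v$-dependent null sets to a single one, and supplies detail that the paper only indicates by reference.
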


The proof is based on the fact that for a positive definite matrix $A=(a_{i,j})_{i,j=1}^N$ we have 
$|a_{i,j}|\leq \sqrt{a_{i,i}a_{j,j}} \leq \frac12( a_{i,i}+a_{j,j}) \leq \Tr(A)$ and the Radon-Nikodym theorem, see \cite{Berg} for details.
The first inequality follows from considering a positive definite $2\times 2$-submatrix, and the second
by the arithmic-geometric mean inequality. 
Note that this inequality also implies that $W(x)\leq I$ $\tau_\mu$-almost everywhere (a.e.), see also \cite[Lemma~2.3]{Rose}.
The measure $\tau_\mu$ is regular, see \cite[Thm.~2.18]{Rudi-RCA}, 
\cite[Satz I.2.14]{Wern}.

\begin{ass}\label{ass:finitemomentsandstrictlypos-ae}
From now on we assume for Section \ref{sec:MVOP} that $\mu$ is a matrix measure for which $\tau_\mu$ 
has infinite support and for which all moments exist, i.e.
$(x\mapsto x^k W_{i,j}(x)) \in L^1(\tau_\mu)$ for all $1\leq i,j\leq N$ and all $k\in \N$.
Moreover, we assume that, in the notation of Theorem \ref{thm:RDforMVmeasure}, 
the matrix $W$ is positive definite $\tau_\mu$-a.e., i.e. $W(x)>0$ $\tau_\mu$-a.e.
\end{ass}

Note that we do not assume that the weight is irreducible in a suitable sense, but
we discuss the reducibility issue briefly in Section \ref{ssec:MVreducibility}. 

By 
\begin{equation*}
M_k = \int_\R x^k \, d\mu(x) \in M_N(\C), \qquad (M_k)_{i,j} = \int_\R x^k W_{i,j}(x)\, d\tau_\mu,  
\end{equation*}
we denote the corresponding moments in $M_N(\C)$. Note that the even moments 
are positive definite, i.e. $M_{2k} \in P_N^o(\C)$. 

Given a weight function as Assumption \ref{ass:finitemomentsandstrictlypos-ae} 
we can associate matrix-valued orthogonal polynomials $P_n$ so that 
\begin{equation}\label{eq:MVOPorthonormal}
\int_\R P_n(x)\, W(x)\, P_m^\ast(x) \,  d\tau_\mu(x) = \de_{m,n}I
\end{equation}
where $P_m^\ast(z) = (P_m(\bar z))^\ast$ for $z\in \C$, so that the 
$P_m^\ast(z)= \sum_{k=0}^m A_k^\ast z^k$ if 
$P_m(z)= \sum_{k=0}^m A_k z^k$, where $A_k\in M_N(\C)$ are the coefficients 
of the polynomial $P_m$. Moreover, for all $m\in \N$, the leading coefficient $A_m$ of 
$P_m$ is regular, 
see e.g. \cite{DamaPS}, \cite{Berg}. 
See Exercise \ref{exer:MVOPexistence}.

Note that we do not normalise the first $M_0=\int_\R d\mu(x)$ as the identity matrix $I \in M_N(\C)$. 
So we normalise $P_0(z) = M_0^{-1/2}$, which can be done since $M_0$ is a positive definite matrix,
hence having a square root and an inverse having a square root as well. 

Consider the space 
of $M_N(\C)$-valued functions $F$ so that 
\begin{equation*}
\int_\R F(x)\, W(x)\, F^\ast(x) \,  d\tau_\mu(x) 
\end{equation*}
exists entry wise in $M_N(\C)$. Here, as before,  $F^\ast(z) = \bigl(F(\bar z)\bigr)^\ast$. 
So this means that integrals 
\begin{equation*}
\int_\R \sum_{i,j=1}^N F_{k,i}(x)\, W_{i,j}(x)\, F^\ast_{j,l}(x) \,  d\tau_\mu(x) 
\end{equation*}
exist for $1\leq k,l\leq N$. In general, the sum and integral cannot be interchanged, see \cite[Example, p.~292]{Rose}, but note that 
this can be done in case $F$ is polynomial 
by Assumption \ref{ass:finitemomentsandstrictlypos-ae}. 
The Hilbert $\text{C}^\ast$-module $L_C^2(\mu)$ is 
obtained by modding out by the space of functions for which 
the integral is zero (as the element in the cone of positive matrices in $M_N(\C)$). 
Because of Assumption \ref{ass:finitemomentsandstrictlypos-ae} these are
the $M_N(\C)$-valued functions which are zero $\tau_\mu$-a.e.
In case we do not assume $W$ to be positive definite $\tau_\mu$-a.e., we have to mod 
out by a larger space in general, see Section \ref{ssec:MoreMweights}.

Then $L_C^2(\mu)$ is a left $M_N(\C)$-module and the 
$M_N(\C)$-valued inner product on $L_C^2(\mu)$ is defined by 
\begin{equation*}
\langle F, G\rangle = \int_\R F(x)\, W(x)\, G^\ast(x) \,  d\tau_\mu(x) \in M_N(\C)
\end{equation*}
and satisfying for $F,G, H\in L^2(\mu)$, $A,B\in M_N(\C)$, 
\begin{equation*}
\begin{split}
&\langle AF+BG, H\rangle = A\langle F, H\rangle  + B \langle G, H\rangle, \qquad 
\langle F, G\rangle = \bigl( \langle G,F\rangle\bigr)^\ast, \\
& \qquad
\langle F, F\rangle \geq 0, \quad \langle F, F\rangle =0 \ \Longleftrightarrow\ F=0
\end{split}
\end{equation*}
so that we have a Hilbert $\text{C}^\ast$-module.
The completeness is proved in \cite[Thm.~3.9]{Rose}, using the fact that 
the Hilbert-Schmidt norm on $M_N(\C)$ is equivalent to the operator norm. 
So in particular, the polynomials $P_n$ give an orthonormal collection 
for the Hilbert $\text{C}^\ast$-module $L_C^2(\mu)$. 

With $\mu$ we also associate the Hilbert space $L^2_v(\mu)$, which 
is the space of $\C^N$-valued functions $f$ so that 
\[
 \int_\R f^\ast (x)\, W(x)\, f(x) \,  d\tau_\mu(x) 
 = \int_\R \sum_{i,j=1}^N f_i^\ast (x)\, W_{i,j}(x)\, f_j(x) \, d\tau_\mu(x)< \infty
\]
where $f(z)$ is a column vector and $f^\ast(z) = \bigl( f(\bar z)\bigr)^\ast$ is a row vector. 
Then the inner product in $L^2_v(\mu)$ is given by 
\begin{equation*}
\langle f, g\rangle = \int_\R g^\ast (x)\, W(x)\, f(x) \,  d\tau_\mu(x)
\end{equation*}
Again, we assume we have modded out by $\C^N$-valued functions $f$ with 
$\langle f, f\rangle = 0$, which in this case are functions $f\colon \R\to \C^N$ 
which are zero $\tau_\mu$-a.e. by Assumption \ref{ass:finitemomentsandstrictlypos-ae}. 
The space $L^2_v(\mu)$ is studied in detail, and in greater generality, in 
\cite[XIII.5.6-11]{DunfS}.

If $F\in L_C^2(\mu)$ then $z\mapsto F^\ast(z)v$ is an element of 
$L_v^2(\mu)$. For $f_i\in L_v^2(\mu)$, the $M_N(\C)$-valued function 
$F$ having $f^\ast_i$ as its $i$-th column is in $L^2_C(\mu)$.  

\begin{thm}\label{thm:MVthreetermrecurrence}
There exist sequence of matrices $\{ A_n\}_{n\in \N}$, $\{ B_n\}_{n\in \N}$ so that
$\det(A_n)\not=0$ for all $n\in \N$ and $B_n^\ast=B_n$ for all $n\in \N$ and 
\begin{equation*}
z P_n(z) = 
\begin{cases} A_n P_{n+1}(z) + B_n P_n(z) + A_{n-1}^\ast P_{n-1}(z), & n\geq 1 \\
A_0 P_{1}(z) + B_0 P_0(z), & n=0.
\end{cases}
\end{equation*}
\end{thm}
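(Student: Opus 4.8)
The plan is to establish the matrix-valued three-term recurrence by exploiting the same structural ingredients that work in the scalar case, namely the fact that multiplication by $z$ is a symmetric operation with respect to the $M_N(\C)$-valued inner product, together with the degree filtration of the polynomials $P_n$. First I would observe that $zP_n(z)$ is a matrix polynomial of degree $n+1$, so by the orthonormality relation \eqref{eq:MVOPorthonormal} we may expand it as $zP_n(z) = \sum_{k=0}^{n+1} C_{n,k} P_k(z)$ with coefficients $C_{n,k} = \langle zP_n, P_k\rangle \in M_N(\C)$; this expansion is legitimate since $\{P_k\}$ is an orthonormal basis for the relevant finite-dimensional module of polynomials of bounded degree. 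I would then set $A_n := C_{n,n+1}$ and $B_n := C_{n,n}$ and show all other coefficients vanish.

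The key step is the symmetry argument. Because multiplication by the real variable $x$ commutes past the weight, one has $\langle zF, G\rangle = \langle F, zG\rangle$ for polynomial $F,G$; concretely $\int_\R xF(x)W(x)G^\ast(x)\,d\tau_\mu(x)$ is manifestly unchanged if the factor $x$ is regrouped with $G$. Applying this to $C_{n,k} = \langle zP_n, P_k\rangle = \langle P_n, zP_k\rangle$, and noting $zP_k$ has degree $k+1$, the right-hand inner product is zero whenever $n > k+1$ by orthogonality. This forces $C_{n,k}=0$ for $k < n-1$, leaving only $k = n-1, n, n+1$, which yields the three-term form. To identify the lower coefficient with $A_{n-1}^\ast$, I would use the Hermitian-adjoint symmetry of the inner product, $\langle F,G\rangle = \langle G,F\rangle^\ast$, to write $C_{n,n-1} = \langle zP_n, P_{n-1}\rangle = \langle zP_{n-1}, P_n\rangle^\ast = C_{n-1,n}^\ast = A_{n-1}^\ast$. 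The self-adjointness $B_n^\ast = B_n$ follows identically from $C_{n,n} = \langle zP_n,P_n\rangle = \langle zP_n,P_n\rangle^\ast$ since $x$ is real.

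It remains to verify the invertibility $\det(A_n)\neq 0$. For this I would compare leading coefficients: if $P_n(z) = A_n^{(\mathrm{lead})} z^n + (\text{lower})$ with $A_n^{(\mathrm{lead})}$ the regular leading coefficient guaranteed after \eqref{eq:MVOPorthonormal}, then matching the top-degree term $z^{n+1}$ on both sides of the recurrence gives $A_n^{(\mathrm{lead})} = A_n A_{n+1}^{(\mathrm{lead})}$, whence $A_n = A_n^{(\mathrm{lead})} \bigl(A_{n+1}^{(\mathrm{lead})}\bigr)^{-1}$ is a product of invertible matrices and therefore invertible. The $n=0$ case is handled separately by noting $zP_0$ has degree one and the same expansion argument truncates it to $A_0 P_1 + B_0 P_0$.

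I expect the main obstacle to be purely bookkeeping around the noncommutativity of $M_N(\C)$: in the matrix setting left and right multiplication by coefficients must be kept strictly separated, and the convention that $\langle AF, H\rangle = A\langle F, H\rangle$ (left-linearity) means the expansion $zP_n = \sum_k C_{n,k}P_k$ must have the coefficients on the left, matching the module structure recorded above. One must take care that the adjoint identity $\langle F,G\rangle = \langle G,F\rangle^\ast$ produces precisely $A_{n-1}^\ast$ and not $A_{n-1}$, and that reality of $x$ — rather than any commutativity — is what drives the symmetry $\langle zF,G\rangle=\langle F,zG\rangle$. No genuine analytic difficulty arises here beyond these algebraic consistency checks.
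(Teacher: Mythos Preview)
Your proposal is correct and matches the paper's intended approach: the paper leaves the proof as Exercise~\ref{exer:thmMVthreetermrecurrence}, stating precisely that $A_n=\int_\R xP_n(x) W(x) P^\ast_{n+1}(x)\, d\tau_\mu(x)$ and $B_n=\int_\R xP_n(x) W(x) P^\ast_{n}(x)\, d\tau_\mu(x)$, which is exactly your $C_{n,n+1}$ and $C_{n,n}$. Your treatment of the noncommutative bookkeeping and the leading-coefficient argument for $\det(A_n)\neq 0$ are the standard ones and fill in the details the paper omits.
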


We leave the proof of Theorem \ref{thm:MVthreetermrecurrence} as Exercise 
\ref{exer:thmMVthreetermrecurrence}.

\begin{remark}\label{rmk:MVchangeofONpols}
(i) For a sequence of unitary matrices $\{ U_n\}_{n\in \N}$, the polynomials 
$\tilde{P}_n(z) = U_n P_n(z)$ are also orthonormal polynomials with 
respect to the same matrix-valued measure $\mu$ and with matrices $A_n$, $B_n$ replaced 
by $\tilde{A}_n = U_nA_n U_{n+1}^\ast$, $\tilde{B}_n = U_nB_n U_{n}^\ast$.
Conversely, if $\{ \tilde{P_n}\}_{n\in\N}$ is a family of orthonormal polynomials, 
then there exist unitary matrices $\{ U_n\}_{n\in \N}$ such that polynomials 
$\tilde{P}_n(z) = U_n P_n(z)$. 

\noindent
(ii) By (i), there is always a choice in fixing the matrix $A_n$. One possible 
choice is to take $A_n$ upper (or lower) triangular. Another normalisation is to 
consider monic matrix-valued polynomials $\{R_n\}_{n\in\N}$ instead. The 
three-term recurrence for $n\geq 1$ becomes 
\begin{equation*}
z R_n(z) =  R_{n+1}(z) +  \bigl( \lc(P_n)^{-1}B_n \lc(P_n)\bigr) R_n(z) + 
\bigl( \lc(P_{n-1})^{-1} A_{n-1} A_{n-1}^\ast \lc(P_{n-1})\bigr) R_{n-1}(z)
\end{equation*}
since $\lc(P_n) = A_n \lc(P_{n+1})$, where $\lc(P_n)\in M_N(\C)$ denotes the 
leading coefficient of the polynomial $P_n$, which is a regular matrix. 
\end{remark}

\begin{example}\label{exa:MVGegenbauerpols} 
This example gives an explicit example of a matrix-valued measure and corresponding
three-term recurrence relation for arbitrary size, which can be considered 
as a matrix-valued analogue of the Gegenbauer or ultraspherical polynomials,\index{matrix-valued Gegenbauer polynomials} 
\index{orthogonal polynomials!matrix-valued Gegenbauer polynomials} 
see e.g. \cite{AskeRS}, \cite{Isma}, \cite{KoekLS}, \cite{KoekS}, \cite{Szeg}, \cite{Temm} for the scalar case.
It is one of few examples for arbitrary size where most, if not all, of the important 
properties are explicitly known. The case $\nu=1$ was originally obtained 
using group theory and analytic methods, see \cite{KoelvPR1}, \cite{KoelvPR2}, 
motivated by \cite{Koor-SIAM}, \cite{GrunPT}, and later analytically 
extended in $\nu$, see \cite{KoeldlRR}. A $q$-analogue for the case $\nu=1$, viewed
as a matrix-valued analogue of a subclass of continuous $q$-ultraspherical
polynomials can be found in \cite{AldeKR}. 
For $2\times 2$-matrix-valued cases, Pacharoni and Zurri\'an \cite{PachZ} have 
also derived analogues of the Gegenbauer polynomials, and there is some
overlap with the irreducible subcases specialised to the $2\times 2$-cases of this general example. 
This family of matrix-valued orthogonal polynomials is studied in 
\cite{KoeldlRR}, and we refer to this paper for details. 

In this example $N=2\ell+1$, where $\ell\in\frac12\N$, and we use the 
numbering from $0$ to $2\ell$ for the indices. 
We use the standard notation for Gegenbauer polynomials, see e.g. \cite[\S 4.5]{Isma}. 
For $\nu> 0$, $W^{(\nu)}(x)$ has the following LDU-decomposition
\begin{equation}\label{eq:MV-GegenbauerLDU}
W^{(\nu)}(x)=
L^{(\nu)}(x)T^{(\nu)}(x)L^{(\nu)}(x)^{t}, \qquad x\in(-1,1),
\end{equation} 
where $L^{(\nu)}\colon [-1,1]\to M_{2\ell+1}(\C)$ is the unipotent lower triangular matrix-valued polynomial 
\[
\bigl(L^{(\nu)}(x)\bigr)_{m,k}=\begin{cases} 0 & \text{if } m<k \\
\displaystyle{\frac{m!}{k! (2\nu+2k)_{m-k}} C^{(\nu+k)}_{m-k}(x)} & \text{if } m\geq k.
\end{cases}
\]
and $T^{(\nu)}\colon (-1,1)\to M_{2\ell+1}(\C)$ is the diagonal matrix-valued function 
\begin{equation*}
\begin{split}
\bigl(T^{(\nu)}(x)\bigr)_{k,k}\, =\, t^{(\nu)}_{k}\,  (1-x^2)^{k+\nu-1/2}, \quad 
 t^{(\nu)}_{k}\, =\, \frac{k!\, (\nu)_k}{(\nu+1/2)_k}
\frac{(2\nu + 2\ell )_{k}\, (2\ell+\nu)}{(2\ell - k+1)_k\, ( 2\nu + k - 1)_{k}}.
\end{split}
\end{equation*}
From this expression it immediately follows that $W^{(\nu)}$ is 
positive definite on $(-1,1)$, since for $\nu>0$ all the constants are
positive. The definition \eqref{eq:MV-GegenbauerLDU} is not used as the 
definition in \cite{KoeldlRR}, but it has the advantage that it proves that
$W^{(\nu)}$ is positive definite immediately. 

So we can consider the corresponding monic matrix-valued orthogonal polynomials for which
we have the orthogonality relations, see \cite[Thm.~3.1]{KoeldlRR}, 
\begin{gather*}
\int_{-1}^1 P_n^{(\nu)}(x) \, W^{(\nu)}(x) \,  \bigl(P_m^{(\nu)}\bigr)^\ast(x) \, dx \, = \, \de_{n,m} H_n^{(\nu)}, \\
\bigl( H^{(\nu)}_n\bigr)_{k,l} = \de_{k,l} \sqrt{\pi}\,  \frac{\Ga(\nu+\frac12)}{\Ga(\nu+1)}
\frac{\nu(2\ell+\nu+n)}{\nu+n} \frac{n!\, (\ell+\frac12+\nu)_n (2\ell+\nu)_n(\ell+\nu)_n}{(2\ell+\nu+1)_n(\nu+k)_n(2\ell+2\nu+n)_n(2\ell+\nu-k)_n} \\
\times \frac{k!\, (2\ell-k)!\, (n+\nu+1)_{2\ell}}{(2\ell)!\, (n+\nu+1)_k (n+\nu+1)_{2\ell-k}}
\end{gather*}
where $\Ga$ denotes the standard $\Ga$-function, $\Ga(z) = \int_0^\infty t^{z-1} e^{-t}dt$,\index{G@$\Ga$-function}
see e.g. \cite{AndrAR}, \cite{Isma}, \cite{Temm}. 
The three-term recurrence relation for the monic matrix-valued orthogonal polynomials is 
\begin{equation*}
xP^{(\nu)}_{n}(x)=P^{(\nu)}_{n+1}(x)+B_n^{(\nu)}P^{(\nu)}_{n}(x)+
C^{(\nu)}_{n}P^{(\nu)}_{n-1}(x) 
\end{equation*}
where the matrices $B^{(\nu)}_n$, $C^{(\nu)}_n$ are given by
\begin{align*}
B^{(\nu)}_n&=\sum_{j=1}^{2\ell}  \frac{j(j+\nu-1)}{2(j+n+\nu-1)(j+n+\nu)}E_{j,j-1} + 
\\
&\qquad\qquad  
 \sum_{j=0}^{2\ell-1}
 \frac{(2\ell-j)(2\ell-j+\nu-1)}{2(2\ell-j+n+\nu-1)(2\ell+n-j+\nu)}E_{j,j+1}  \\
C^{(\nu)}_n&=\sum_{j=0}^{2\ell} \frac{n(n+\nu-1)(2\ell+n+\nu)(2\ell+n+2\nu-1)}
{4(2\ell+n+\nu-j-1)(2\ell+n+\nu-j)(j+n+\nu-1)(j+n+\nu)} E_{j,j}.
\end{align*}
The proofs of the orthogonality relations and the three-term recurrence relation
involve shift operators, where the lowering operator is essentially the derivative and
the raising operator is a suitable adjoint (in the context of a Hilbert 
$\text{C}^\ast$-module) of the derivative. 
The explicit value for $C_n$ follows easily from the quadratic norm, and the 
calculation of $B_n$ requires the use of these shift operators. 

Putting $P_n(x) = \bigl( H^{(\nu)}_n\bigr)^{-1/2} P_n^{(\nu)}(x)$ as the corresponding orthonormal polynomials,
we find the three-term recurrence relation of Theorem \ref{thm:MVthreetermrecurrence} with 
$A_n= \bigl( H^{(\nu)}_n\bigr)^{-1/2} \bigl( H^{(\nu)}_{n+1}\bigr)^{1/2}$, so that 
$A^\ast_{n-1}= \bigl( H^{(\nu)}_n\bigr)^{-1/2} C^{(\nu)}_n \bigl( H^{(\nu)}_{n-1}\bigr)^{1/2}$, and 
$B_n= \bigl( H^{(\nu)}_n\bigr)^{-1/2} B^{(\nu)}_n \bigl( H^{(\nu)}_{n}\bigr)^{1/2}$. 
Finally, note that we have not written the weight measure in terms of the corresponding
tracial weight. Note that 
\begin{gather*}
\Tr\bigl( W^{(\nu)}(x)\bigr) = \sum_{p=0}^{2\ell} 
\left( \sum_{k=p}^{2\ell}\bigl( L^{(\nu)}_{k,p}(x)\bigr)^2\right) \, T^{(\nu)}_{p,p}(x), 
\end{gather*}
so that by \eqref{eq:MV-GegenbauerLDU}, the trace measure $\tau_\mu$ is absolutely continuous
with respect to the standard Gegenbauer weight $(1-x^2)^{\nu-1/2}dx$ on $[-1,1]$. 
Now a result by Rosenberg \cite[p.~294]{Rose} states that the abstractly defined, i.e. 
using the trace measure $\tau_\mu$, 
spaces $L^2_C(\mu)$ and $L^2_v(\mu)$ are indeed the same as the corresponding
spaces using the weight $W^{(\nu)}$ on $[-1,1]$. 
Finally, note that in the limit $n\to \infty$ the recurrence relation reduces 
to a diagonal recurrence, in which the matrices are multiples of the identity.
So this example fits into the approach of Aptekarev and Nikishin \cite{ApteN},
Geronimo \cite{Gero}, Dur\'an \cite{Dura-JAT1999}. 
\end{example}

Starting with the matrix-valued measure and choosing a corresponding set 
of matrix-valued orthonormal polynomials $\{P_n\}_{n\in \N}$, we can associate
the corresponding matrix-valued polynomials of the 
second kind\index{matrix-valued polynomials of the 
second kind}
\begin{equation}\label{eq:defMVpolssecondkind}
Q_n(z) = \int_\R \frac{P_n(z) - P_n(x)}{z-x}\, d\mu(x) 
= \int_\R \frac{P_n(z) - P_n(x)}{z-x}\, W(x) d\tau_\mu(x)
\end{equation}
so that $Q_0(z)=0$ (as a matrix in $M_N(\C)$) and, since $P_1(z) = A_0^{-1}(xM_0^{-1/2}-B_0)$, 
we have $Q_1(z)= A_0^{-1} M_0^{-1/2} M_0 = A_0^{-1}M_0^{1/2}$. 
Note that, in the context of Remark \ref{rmk:MVchangeofONpols}, we have $\tilde{Q}_n(z) = 
U_n Q_n(z)$. 

In the case $\ell=0$ or $N=1$ of Example \ref{exa:MVGegenbauerpols} the associated
polynomials can be expressed in terms of the Gegenbauer polynomials $C^{(\nu+1)}_{n-1}$.
This breaks down in the general case of the matrix-valued Gegenbauer 
polynomials in Example \ref{exa:MVGegenbauerpols}. 

\begin{lemma}\label{lem:MV2ndkindsolverecurrence}
With the notation of Theorem \ref{thm:MVthreetermrecurrence} and \eqref{eq:defMVpolssecondkind}
we have for $n\geq 1$
\begin{equation*}
z Q_n(z) =  A_n Q_{n+1}(z) + B_n Q_n(z) + A_{n-1}^\ast Q_{n-1}(z).
\end{equation*}
\end{lemma}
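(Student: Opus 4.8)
The plan is to imitate the scalar computation of Exercise~\ref{exer:lemassociatedpols}, keeping careful track of the non-commutativity: the coefficient matrices $A_n,B_n,A_{n-1}^\ast$ will act on the left, whereas the integration against $d\mu$ acts on the right, so the two never interfere. I would start from the three-term recurrence of Theorem~\ref{thm:MVthreetermrecurrence}, written out at the spectral point $z$ and at the integration variable $x$,
\[
z P_n(z) = A_n P_{n+1}(z) + B_n P_n(z) + A_{n-1}^\ast P_{n-1}(z), \qquad
x P_n(x) = A_n P_{n+1}(x) + B_n P_n(x) + A_{n-1}^\ast P_{n-1}(x),
\]
both valid for $n\geq 1$ with the \emph{same} matrices $A_n,B_n,A_{n-1}^\ast$.

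Next I would subtract the two identities and use the elementary rearrangement $z P_n(z) - x P_n(x) = z\bigl(P_n(z)-P_n(x)\bigr) + (z-x) P_n(x)$ to obtain
\[
z\bigl(P_n(z)-P_n(x)\bigr) + (z-x) P_n(x)
= A_n\bigl(P_{n+1}(z)-P_{n+1}(x)\bigr) + B_n\bigl(P_n(z)-P_n(x)\bigr) + A_{n-1}^\ast\bigl(P_{n-1}(z)-P_{n-1}(x)\bigr).
\]
Since $z-x$ is a scalar it commutes with every matrix, so I may divide through by $z-x$; each difference quotient $\frac{P_m(z)-P_m(x)}{z-x}$ is then a matrix-valued polynomial in $x$ of degree $m-1$, hence $d\mu$-integrable by Assumption~\ref{ass:finitemomentsandstrictlypos-ae}.

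Finally I would integrate the resulting identity against $d\mu(x)$ on the right. Because the constant matrices $A_n,B_n,A_{n-1}^\ast$ multiply on the left they pull out of the integral, producing exactly $A_n Q_{n+1}(z) + B_n Q_n(z) + A_{n-1}^\ast Q_{n-1}(z)$ together with $z Q_n(z)$ from the first term on the left, by the definition \eqref{eq:defMVpolssecondkind}. The only leftover is $\int_\R P_n(x)\, d\mu(x)$, coming from the term $(z-x)P_n(x)/(z-x)=P_n(x)$. This is where the hypothesis $n\geq 1$ enters: taking $m=0$ in the orthonormality relation \eqref{eq:MVOPorthonormal} and recalling that $P_0 = M_0^{-1/2}$ is a constant Hermitian invertible matrix gives $\int_\R P_n(x)\,d\mu(x) = \de_{n,0} M_0^{1/2}$, which vanishes for $n\geq 1$. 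Assembling these pieces yields the claimed recurrence.

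The argument is essentially bookkeeping, so the only genuine point to watch is the consistent placement of left- versus right-multiplication: the recurrence coefficients must remain on the left throughout, so that they commute past the right-handed integration, and the vanishing of $\int_\R P_n\,d\mu$ must be read off from orthogonality to the \emph{constant} polynomial $P_0$ rather than from any higher orthogonality. No analytic obstacle arises, since every integrand is polynomial.
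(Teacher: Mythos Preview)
Your proof is correct and follows exactly the approach the paper intends: the paper leaves Lemma~\ref{lem:MV2ndkindsolverecurrence} as Exercise~\ref{exer:lemMV2ndkindsolverecurrence}, instructing the reader to generalise the scalar hint of Exercise~\ref{exer:lemassociatedpols}, which is precisely the subtract--divide--integrate manoeuvre you carry out. Your explicit attention to the left/right placement of the coefficient matrices versus the measure, and your identification of $\int_\R P_n(x)\,d\mu(x)=\de_{n,0}M_0^{1/2}$ via orthogonality to the constant $P_0$, are exactly the points that distinguish the matrix case from the scalar one.
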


See Exercise \ref{exer:lemMV2ndkindsolverecurrence} for the proof of 
Lemma \ref{lem:MV2ndkindsolverecurrence}. 

There are many relations between the two solutions, however an easy analogue 
of Lemma \ref{lem:Wronskian} is not available, since the non-commutativity of 
$M_N(\C)$ has to be taken into account. 
For our purposes we need the matrix-valued analogue of the 
Liouville-Ostrogradsky\index{Liouville-Ostrogradsky} result 
in order to describe the Green kernel for the corresponding Jacobi operator. 

\begin{lemma}\label{lem:MVLiouvilleOstrogradsky}
Let $z\in \C$. For $k\geq 1$ we have 
\[
Q_k(z)P_{k-1}^\ast(z) - P_k(z)Q_{k-1}^\ast (z) = A_{k-1}^{-1} 
\]
and for $k\geq 0$ we have $Q_k(z)P_k^\ast(z)= P_k(z) Q_k^\ast(z)$. 
\end{lemma}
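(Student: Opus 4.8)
The plan is to prove both identities at once by recognizing them as the components of a single block-Wronskian that is conserved along the recurrence. The only analytic input is that $\{P_n\}$ and $\{Q_n\}$ obey the \emph{same} three-term recurrence, namely that of Theorem \ref{thm:MVthreetermrecurrence} and Lemma \ref{lem:MV2ndkindsolverecurrence} for $n\geq 1$, together with their adjoint versions. Writing $P_n^\ast$ for the polynomial whose matrix coefficients are the conjugate transposes of those of $P_n$ (this is the same as $P_n^\ast(z)=(P_n(\bar z))^\ast$, and one has $(zF)^\ast=zF^\ast$ and $(AF)^\ast=F^\ast A^\ast$ for constant $A$), applying $\ast$ to the recurrence and using $B_n^\ast=B_n$ gives
\[
z P_n^\ast = P_{n+1}^\ast A_n^\ast + P_n^\ast B_n + P_{n-1}^\ast A_{n-1}, \qquad n\geq 1,
\]
and identically for $Q_n^\ast$. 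First I would record these four recurrences and note $(A^{-1})^\ast=(A^\ast)^{-1}$.

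Next I would package the two target quantities $X_k := Q_kP_{k-1}^\ast - P_kQ_{k-1}^\ast$ and $Y_k := Q_kP_k^\ast - P_kQ_k^\ast$ into one object. Setting $\Om_k=\begin{pmatrix} P_k & Q_k \\ P_{k-1} & Q_{k-1}\end{pmatrix}$ and $J=\begin{pmatrix} 0 & -I \\ I & 0\end{pmatrix}\in M_{2N}(\C)$, a direct block multiplication shows
\[
\Om_k J \Om_k^\ast = \begin{pmatrix} Y_k & X_k \\ -X_k^\ast & Y_{k-1}\end{pmatrix}.
\]
Thus the whole lemma becomes the single assertion $\Om_kJ\Om_k^\ast=C_{k-1}$ for all $k\geq 1$, where $C_m:=\begin{pmatrix} 0 & A_m^{-1} \\ -(A_m^\ast)^{-1} & 0\end{pmatrix}$: the top-right block gives $X_k=A_{k-1}^{-1}$ ($k\geq 1$), while the diagonal blocks give $Y_k=0$ (for $k\geq1$, and $Y_0=0$ is read off the bottom-right block at $k=1$).

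To prove $\Om_kJ\Om_k^\ast=C_{k-1}$ I would use the transfer-matrix form of the recurrence: since $\det A_k\neq 0$, the recurrences say exactly that $\Om_{k+1}=T_k\Om_k$ for $k\geq 1$ with $T_k=\begin{pmatrix} A_k^{-1}(zI-B_k) & -A_k^{-1}A_{k-1}^\ast \\ I & 0\end{pmatrix}$. Hence $\Om_{k+1}J\Om_{k+1}^\ast = T_k(\Om_kJ\Om_k^\ast)T_k^\ast$, and the proof reduces to the single algebraic identity $T_kC_{k-1}T_k^\ast=C_k$, i.e.
\[
T_k\begin{pmatrix} 0 & A_{k-1}^{-1} \\ -(A_{k-1}^\ast)^{-1} & 0\end{pmatrix}T_k^\ast = \begin{pmatrix} 0 & A_k^{-1} \\ -(A_k^\ast)^{-1} & 0\end{pmatrix},
\]
which I would verify by multiplying out the $2\times2$ blocks, the point being that the $z$-dependent contributions all enter the $(1,1)$ block as $A_k^{-1}(zI-B_k)(A_k^\ast)^{-1}$ with opposite signs and cancel. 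The induction is closed by the base case $k=1$: substituting $P_0=M_0^{-1/2}$, $Q_0=0$, $P_1(z)=A_0^{-1}(zI-B_0)M_0^{-1/2}$, $Q_1=A_0^{-1}M_0^{1/2}$ and computing $\Om_1J\Om_1^\ast$ directly yields $C_0$.

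The main obstacle is bookkeeping rather than conceptual: one must handle the polynomial adjoint $\ast$ and the noncommutativity of $M_N(\C)$ consistently (keeping $(A^{-1})^\ast=(A^\ast)^{-1}$ and $B_n^\ast=B_n$ throughout) and carry out the block product $T_kC_{k-1}T_k^\ast$ so that the $z$-linear terms cancel exactly. As an alternative to the transfer-matrix packaging, one can run an intertwined induction directly on $X_k,Y_k$, using the recurrences to express $A_kX_{k+1}$ and $Y_{k+1}A_k^\ast$ in terms of $X_k,Y_k,Y_{k-1}$; this produces the same cancellations but is messier to organize.
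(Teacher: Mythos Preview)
Your proof is correct and takes a genuinely different route from the paper's. The paper proceeds exactly by the ``messier alternative'' you mention at the end: a joint induction on the two scalar statements $X_k=A_{k-1}^{-1}$ and $Y_k=0$, handled separately. For the first, it multiplies the $P$- and $Q$-recurrences from the right by $Q_n^\ast$ and $P_n^\ast$, subtracts, and uses the induction hypotheses to get $A_n X_{n+1}=-I$. For the second, it expands $A_nP_{n+1}Q_{n+1}^\ast$ and $A_nQ_{n+1}P_{n+1}^\ast$ via the recurrence and its adjoint, subtracts, and chases cancellations; this step is noticeably longer.

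Your packaging into the block Wronskian $\Om_kJ\Om_k^\ast$ and the transfer-matrix identity $T_kC_{k-1}T_k^\ast=C_k$ compresses the whole induction step into a single four-block verification in which the $z$-dependent terms cancel in the $(1,1)$ block at one stroke. This makes the underlying mechanism transparent: the pair of identities is a discrete Liouville--Ostrogradsky conservation law, and the transfer matrices respect the weighted symplectic form $C_m$. The paper's approach, by contrast, is entirely elementary and avoids introducing $2N\times 2N$ objects, at the cost of a longer computation for the second statement. Both arguments share the same initial data and the same analytic input (the common recurrence for $P_n$ and $Q_n$ for $n\ge 1$, plus $B_n^\ast=B_n$ and invertibility of $A_n$).
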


We follow \cite[\S 5]{Berg} for its proof.  

\begin{proof} We proceed by joint induction on $k$. The case $k=1$ is 
\begin{equation*}
Q_1(z)P_0^\ast(z) - P_1(z) Q^\ast_0(z) = 
A_0^{-1}M_0^{1/2} (M_0^{-1/2})^\ast - 0 = A_0^{-1}.
\end{equation*}
The case $k=0$ of the second statement is trivial, since $Q_0(z)=0$. For 
$k=1$, we see that both sides equal 
\[
A_0^{-1}(zM_0^{-1/2}-B_0)(A_0^{-1})^\ast 
\]
since $M_0$ and $B_0$ are self-adjoint. 

Now assume that both statements have been proved for $k\leq n$. 
Use Theorem \ref{thm:MVthreetermrecurrence} multiplied from the right by $Q_n^\ast(z)$ 
and Lemma \ref{lem:MV2ndkindsolverecurrence} multiplied from the right by $P_n^\ast(z)$ 
and subtract to get 
\begin{multline*}
A_n \bigl( P_{n+1}(z)Q_n^\ast(z) - Q_{n+1}(z)P_n^\ast(z)\bigr) 
+ (B_n-z)\bigl( P_{n}(z)Q_n^\ast(z) - Q_{n}(z)P_n^\ast(z)\bigr) \\
+ A_{n-1}^\ast \bigl( P_{n-1}(z)Q_n^\ast(z) - Q_{n-1}(z)P_n^\ast(z)\bigr) = 0
\end{multline*}
By the induction hypothesis the middle term vanishes, and the last term is 
$A_{n-1}^\ast (A_{n-1}^{-1})^\ast = I$ by taking adjoints. Hence,
\[
A_n \bigl( P_{n+1}(z)Q_n^\ast(z) - Q_{n+1}(z)P_n^\ast(z)\bigr) = -I
\]
which is the first statement for $k=n+1$. 

To prove the second statement for $k = n+1$, write 
\begin{gather*}
zP_n(z)Q^\ast_{n+1}(z)= 
A_nP_{n+1}(z)Q^\ast_{n+1}(z) + B_nP_{n}(z)Q^\ast_{n+1}(z) + A_{n-1}^\ast P_{n-1}(z)Q^\ast_{n+1}(z) 
\qquad \Longrightarrow \\
A_nP_{n+1}(z)Q^\ast_{n+1}(z) = 
(z-B_n)P_{n}(z)Q^\ast_{n+1}(z) - \qquad\qquad\qquad\qquad \\ \qquad\qquad\qquad\qquad
A_{n-1}^\ast P_{n-1}(z)\Bigl( Q_n^\ast(z)(z-B_n) -Q_{n-1}^\ast(z)A_{n-1} \Bigr)(A_n^\ast)^{-1} 
\end{gather*}
since $Q_{n+1}^\ast(z) = \bigr( Q_n^\ast(z)(z-B_n) -Q_{n-1}^\ast(z)A_{n-1} \bigr)(A_n^\ast)^{-1}$ 
by taking adjoints in Lemma \ref{lem:MV2ndkindsolverecurrence} using the regularity of 
$A_k$ and $B_k$ being self-adjoint. Since this argument only uses the recursion for $k\geq 1$ we
can interchange the roles of the polynomials $P_k$ and $Q_k$. Subtracting the 
two identities then gives 
\begin{multline*}
A_n\Bigl(P_{n+1}(z)Q^\ast_{n+1}(z) - Q_{n+1}(z)P^\ast_{n+1}(z)\Bigr) = 
(z-B_n)\Bigl( P_{n}(z)Q^\ast_{n+1}(z) - Q_{n}(z)P^\ast_{n+1}(z)\Bigr) - 
\\ 
A_{n-1}^\ast \Bigl( P_{n-1}(z)Q_n^\ast(z) - Q_{n-1}(z)P_n^\ast(z) \Bigr)(z-B_n) (A_n^\ast)^{-1}
\\ 
-A_{n-1}^\ast \Bigl( P_{n-1}(z)Q_{n-1}^\ast(z) - Q_{n-1}(z)P_{n-1}^\ast(z)\Bigl) A_{n-1}(A_n^\ast)^{-1} 
\end{multline*}
Applying the induction hypothesis for the second statement, the last term vanishes.
Since we assume the first statement for $k\leq n$, and we  have already proved the first statement 
for $k=n+1$, we find
\[
A_n\Bigl(P_{n+1}(z)Q^\ast_{n+1}(z) - Q_{n+1}(z)P^\ast_{n+1}(z)\Bigr) = 
(z-B_n) (A_n^{-1})^\ast  -
A_{n-1}^\ast  (A_{n-1}^{-1})^\ast (z-B_n) (A_n^\ast)^{-1}
\]
Since the right-hand side is zero and $A_n$ is invertible, the second statement follows for 
$k=n+1$. So we have established the induction step, and the lemma follows.
\end{proof}

\subsection{The corresponding Jacobi operator}

We now consider the Hilbert space $\ell^2(\N) \hat\otimes \C^N$, which we denote by 
$\ell^2(\C^N)$, \index{l@$\ell^2(\C^N)$}
as the Hilbert space tensor product of the Hilbert spaces $\ell^2(\N)$ 
equipped with the standard orthonormal basis $\{e_n\}_{n\in \N}$
and $\C^N$ with the standard orthonormal basis $\{e_n\}_{n=1}^N$,
see Example \ref{exa:Hilbertspaces}(iv).
In explicit examples, such as Example \ref{exa:MVGegenbauerpols}, 
it is convenient to have a slightly different labeling.
Then we can denote 
\begin{equation*}
V = \sum_{n=0}^\infty e_n \otimes v_n \in \ell^2(\C^N) = \ell^2(\N) \hat\otimes \C^N 
\end{equation*}
where $v_n \in \C^N$. The inner product in the Hilbert space 
$\ell^2(\C^N) = \ell^2(\N)\hat\otimes \C^N$ is then 
\begin{equation*}
\langle V, W \rangle = \sum_{n=0}^\infty \langle v_n, w_n\rangle
\end{equation*}
where $W = \sum_{n=0}^\infty e_n \otimes w_n \in \ell^2(\C^N)$.
We denote the inner products in $\ell^2(\C^N)$ and $\C^N$
by the same symbol $\langle \cdot, \cdot\rangle$, where the context dictates
which inner product to take. 
This space can also be thought of sequences $(v_0, v_1, \cdots)$ with 
$v_n\in \C^N$ which are square summable $\sum_{n=0}^\infty \|v_n\|^2 <\infty$.
The case $N=1$ gives back the Hilbert space $\ell^2(\N)$ of square summable
sequences. 

Given the sequences $\{ A_n\}_{n\in \N}$ and $\{ B_n\}_{n\in \N}$ 
in $M_N(\C)$ 
with all matrices $A_n$ regular and all matrices $B_n$ self-adjoint, 
we define the Jacobi operator\index{Jacobi operator} $J$ with domain $\cD$ by
\begin{equation}\label{eq:defMVJacobi}
\begin{split}
JV & = e_0 \otimes (A_0 v_{1} + B_0 v_0) + 
\sum_{n=1}^\infty e_n \otimes \Bigl( A_n v_{n+1} + B_n v_n + A_{n-1}^\ast v_{n-1}\Bigr), \\ 
\cD & =\{ V = \sum_{\underset{\scriptstyle{\text{finite}}}{{n=0}}}^\infty e_n \otimes v_n\}\subset \ell^2(\C^N), 
\end{split}
\end{equation}
so that $(J, \cD)$ is a symmetric operator 
\begin{equation*}
\langle JV, W \rangle = \langle V, JW \rangle, \qquad \forall\, V,W \in \cD.
\end{equation*}
Note that 
\begin{equation*}
J(e_k \otimes v) = \begin{cases} 
e_{k+1}\otimes A^\ast_kv + e_k \otimes B_k v + e_{k-1} \otimes A_{k-1}v, & k\geq 1, \\
e_{1}\otimes A^\ast_1v + e_0 \otimes B_0 v, & k=0.
\end{cases}
\end{equation*}
so that 
\begin{equation*}
e_{k+1}\otimes v = J(e_k \otimes (A^\ast_k)^{-1}v) -  e_k \otimes B_k (A^\ast_k)^{-1}v - e_{k-1} \otimes A_{k-1}(A^\ast_k)^{-1}v
\end{equation*}
for $k\geq 1$ and 
\begin{equation*}
e_{1}\otimes v = J(e_0 \otimes (A^\ast_1)^{-1}v) - e_0 \otimes B_0 (A^\ast_1)^{-1}v
\end{equation*}
Using induction with respect to $k\in \N$ we immediately 
obtain Lemma \ref{lem:MVcyclicvectors}.

\begin{lemma}\label{lem:MVcyclicvectors}
The closure of the linear span of $J^pv$ where $v\in \C^N$ and $p\in \N$ is equal
to $\ell^2(\C^N)$. 
\end{lemma}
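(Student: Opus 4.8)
The plan is to prove the slightly stronger statement that the (not yet closed) linear span $\cV$ of $\{J^p v : v\in\C^N,\ p\in\N\}$ already contains the domain $\cD$, where I identify each $v\in\C^N$ with the vector $e_0\otimes v\in\ell^2(\C^N)$ sitting in the zeroth slot. Since $\cD=\operatorname{span}\{e_k\otimes v : k\in\N,\ v\in\C^N\}$ is dense in $\ell^2(\C^N)$, the inclusion $\cD\subseteq\cV$ immediately forces $\overline{\cV}=\ell^2(\C^N)$.

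First I would record the one structural property of $\cV$ that the argument uses, namely that $\cV$ is invariant under $J$: if $w=\sum_i c_i J^{p_i}v_i$, then $Jw=\sum_i c_i J^{p_i+1}v_i\in\cV$. No other fact about $\cV$ is needed.

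The core is an induction on $k$ showing $e_k\otimes v\in\cV$ for every $v\in\C^N$. The base case $k=0$ is immediate, as $e_0\otimes v=J^0v\in\cV$. The identity producing $e_1\otimes v$ from $e_0\otimes v$ handles the next level, and for $k\geq 1$ I would invoke the identity recorded just above the lemma,
\[
e_{k+1}\otimes v = J\bigl(e_k \otimes (A^\ast_k)^{-1}v\bigr) - e_k \otimes B_k (A^\ast_k)^{-1}v - e_{k-1} \otimes A_{k-1}(A^\ast_k)^{-1}v.
\]
Here the regularity of the matrices $A_n$ is essential: invertibility of $A_k^\ast$ guarantees that an arbitrary $v\in\C^N$ can be written as $A_k^\ast\bigl((A_k^\ast)^{-1}v\bigr)$, so that every $e_{k+1}\otimes v$ is reached. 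By the induction hypothesis the three summands on the right lie in $\cV$ — the first because $\cV$ is $J$-invariant, the other two directly, being of the form $e_j\otimes w$ with $j\leq k$ — whence $e_{k+1}\otimes v\in\cV$.

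Assembling these steps gives $e_k\otimes v\in\cV$ for all $k$ and $v$, so $\cD\subseteq\cV$, and taking closures yields $\ell^2(\C^N)=\overline{\cD}\subseteq\overline{\cV}\subseteq\ell^2(\C^N)$. I do not expect any genuine obstacle: everything reduces to the recurrences already derived from \eqref{eq:defMVJacobi}. The only points meriting explicit mention are the $J$-invariance of $\cV$ and the use of the invertibility of the $A_n$ to realise an arbitrary $v\in\C^N$ in the inductive step.
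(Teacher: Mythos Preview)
Your proposal is correct and follows exactly the route the paper intends: the paper records the identities expressing $e_{k+1}\otimes v$ in terms of $J(e_k\otimes (A_k^\ast)^{-1}v)$ and lower-indexed basis vectors, and then simply states that Lemma~\ref{lem:MVcyclicvectors} follows by induction on $k$. You have spelled out precisely that induction, including the two points the paper leaves implicit, namely the $J$-invariance of the span and the role of the invertibility of each $A_k$.
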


It is clear from \eqref{eq:defMVJacobi} and Theorem \ref{thm:MVthreetermrecurrence}
that we can consider $\sum_{n=0}^\infty e_n \otimes P_n(z) v$ formally as
eigenvectors for $J$, and we first take a look at the truncated version. 

\begin{lemma}\label{lem:MVeigenvaluesJ}
Let $V= \sum_{n=0}^M e_n \otimes P_n(z) v\in \cD$, $M\geq 1$, for some $v\in\C^N$, then
\[
JV = z V - e_{M} \otimes A_M P_{M+1}(z)v + e_{M+1}\otimes A^\ast_MP_M(z)v.
\]
Let $\cP_M\colon \ell^2(\C^N)\to \ell^2(\C^N)$ be the projection onto the 
span of $e_n\otimes v$, $0\leq n\leq M$ and $v\in \C^N$, we see that $V$ is an eigenvector of the truncated $\cP_M J\cP_M$ matrix for the eigenvalue
$z$ if and only if $\det(P_{N+1}(z))=0$ and $v\in \Ker(P_{N+1}(z))$. 
In particular, the zeroes of $\det(P_{N+1}(z))$ are real. 
\end{lemma}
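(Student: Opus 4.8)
The plan is to prove the three assertions of Lemma~\ref{lem:MVeigenvaluesJ} in order, exploiting the three-term recurrence of Theorem~\ref{thm:MVthreetermrecurrence} and the self-adjointness of the truncated operator $\cP_M J\cP_M$. First I would establish the formula for $JV$ by direct computation. Applying $J$ to $V=\sum_{n=0}^M e_n\otimes P_n(z)v$ using the explicit action in \eqref{eq:defMVJacobi}, the rows $e_n$ with $1\leq n\leq M-1$ reproduce $z\,e_n\otimes P_n(z)v$ exactly by the recurrence $zP_n(z)=A_nP_{n+1}(z)+B_nP_n(z)+A_{n-1}^\ast P_{n-1}(z)$, and the $n=0$ row works via the initial case $zP_0(z)=A_0P_1(z)+B_0P_0(z)$. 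The only deviations are the boundary terms: at row $M$ the term $A_MP_{M+1}(z)v$ that would be needed to complete the recurrence is missing, and the operator $J$ produces an extra contribution $e_{M+1}\otimes A_M^\ast P_M(z)v$ in the row $M+1$ beyond the truncation. Collecting these gives exactly
\[
JV = zV - e_M\otimes A_M P_{M+1}(z)v + e_{M+1}\otimes A_M^\ast P_M(z)v,
\]
which is the first claim.

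Next I would read off the eigenvalue condition for the truncated matrix. Applying $\cP_M$ to the displayed identity kills the $e_{M+1}$ term (since $\cP_M$ projects onto rows $0,\dots,M$) and leaves $\cP_M J\cP_M V = zV - e_M\otimes A_M P_{M+1}(z)v$, using that $V$ already lies in the range of $\cP_M$. Hence $V$ is an eigenvector of $\cP_M J\cP_M$ with eigenvalue $z$ precisely when the error term vanishes, i.e.\ $A_M P_{M+1}(z)v=0$. Since $A_M$ is regular by Theorem~\ref{thm:MVthreetermrecurrence}, this is equivalent to $P_{M+1}(z)v=0$, that is $v\in\Ker(P_{M+1}(z))$, and a nonzero such $v$ exists exactly when $\det(P_{M+1}(z))=0$.

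Finally, for the reality of the zeroes of $\det(P_{M+1}(z))$ I would invoke self-adjointness. The truncation $\cP_M J\cP_M$ is a self-adjoint operator on the finite-dimensional space $\cP_M\ell^2(\C^N)$, because $(J,\cD)$ is symmetric and the $B_n$ are self-adjoint with $A_{n-1}^\ast$ appearing as the transpose-conjugate coupling in \eqref{eq:defMVJacobi}. Therefore all its eigenvalues are real. By the previous step, every zero $z_0$ of $\det(P_{M+1}(z))$ yields a nonzero eigenvector of $\cP_M J\cP_M$ for the eigenvalue $z_0$, so $z_0$ must be real. The main obstacle, such as it is, lies in the bookkeeping of the first step: one must verify carefully that the recurrence closes at every interior index and isolate exactly the two boundary terms, being attentive to the placement of $A_M$ versus $A_M^\ast$ dictated by \eqref{eq:defMVJacobi}; once that identity is correct, the eigenvalue characterization and the reality statement follow immediately from the regularity of $A_M$ and the self-adjointness of the truncation.
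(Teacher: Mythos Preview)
Your proposal is correct and follows essentially the same approach as the paper: compute $JV$ from \eqref{eq:defMVJacobi} and the recurrence, truncate to kill the $e_{M+1}$ term, use the invertibility of $A_M$ to reduce to $P_{M+1}(z)v=0$, and conclude reality of the zeroes from the self-adjointness of the finite truncation. The paper's proof is just a terser version of exactly this argument.
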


\begin{proof} The expression for $JV$ follows from \eqref{eq:defMVJacobi}.
Taking the truncated version kills the last term. 
Then the eigenvectors of the truncated Jacobi operator 
can only occur if $A_M P_{M+1}(z)v=0\in \C^N$, since 
$A_M$ invertible. This gives the statement, and since the truncated Jacobi operator is
self-adjoint, we find that the zeroes of $\det(P_{N+1}(z))$ are real. 
\end{proof}

In case $\{ \|A_n\|\}_{n\in\N}$ and $\{ \|B_n\|\}_{n\in\N}$ are bounded sequences, 
then $J$ is a bounded operator. In that case $J$ extends to 
a bounded self-adjoint operator on $\ell^2(\C^N)$. 
If this is not the case, then we can determine its adjoint 
by the same action on its maximal domain, which is the content of 
Proposition \ref{prop:MVadjointJ}. 

\begin{prop}\label{prop:MVadjointJ}
The adjoint of $(J,\cD)$  is given by $(J^\ast, \cD^\ast)$ with 
\begin{gather*}
\cD^\ast = \{ W =\sum_{n=0}^\infty e_n \otimes w_n \in \ell^2(\C^N) \mid \qquad\qquad\qquad\qquad\qquad \\
\qquad\qquad\qquad \|A_0 w_1 + B_0 w_0\|^2 
+ \sum_{n=1}^\infty \| A_{n-1}^\ast w_{n-1} +  B_nw_n + A_{n} w_{n+1}\|^2 <\infty \}, \\
J^\ast W = e_0\otimes \bigl( A_0 w_1 + B_0 w_0\bigr) 
+ \sum_{n=1}^\infty e_n \otimes \bigl( A_{n-1}^\ast w_{n-1} +  B_nw_n + A_{n} w_{n+1}\bigr).
\end{gather*}
\end{prop}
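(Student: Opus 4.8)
The plan is to identify the adjoint by working directly from the definition of the adjoint of an unbounded symmetric operator, exactly as in the scalar cases treated in Lemma \ref{lem:adjointL} and Lemma \ref{lem:adjointofJacobioperator}. Recall that $W \in \cD^\ast$ precisely when the linear functional $\cD \ni V \mapsto \langle JV, W\rangle$ is continuous, i.e. when there is a constant $C$ with $|\langle JV, W\rangle| \leq C\|V\|$ for all $V \in \cD$. First I would write $V = \sum_{n} e_n\otimes v_n$ with only finitely many nonzero $v_n$, apply the explicit formula \eqref{eq:defMVJacobi} for $JV$, and use the symmetry of $B_n$ together with the adjoint relation pairing $A_n$ with $A_n^\ast$ to reorganize the sum so that it is expressed as $\langle JV, W\rangle = \sum_n \langle v_n, (J^\ast W)_n\rangle$, where the candidate $n$-th component $(J^\ast W)_n = A_{n-1}^\ast w_{n-1} + B_n w_n + A_n w_{n+1}$ (with the $n=0$ term being $B_0 w_0 + A_0 w_1$) reads off the displayed formula. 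This step is purely a bookkeeping rearrangement of a finite sum.

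The substantive content is to show that the maximal-domain description of $\cD^\ast$ is correct, and this splits into two inclusions. For the inclusion that any $W$ satisfying the summability condition lies in $\cD^\ast$, I would fix such a $W$, set $W' = \sum_n e_n\otimes (J^\ast W)_n \in \ell^2(\C^N)$ (which is in $\ell^2(\C^N)$ precisely by the assumed finiteness of $\sum_n \|(J^\ast W)_n\|^2$), and then use the rearrangement above to conclude $\langle JV, W\rangle = \langle V, W'\rangle$ for all $V\in\cD$, whence by Cauchy--Schwarz $|\langle JV, W\rangle| \leq \|V\|\,\|W'\|$, giving continuity and $J^\ast W = W'$.

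For the reverse inclusion, suppose $W$ is in the domain of the adjoint, so $\langle JV, W\rangle = \langle V, J^\ast W\rangle$ for all $V\in\cD$. Testing against $V = e_n\otimes u$ for arbitrary $u\in\C^N$ and each fixed $n$, and using that $J(e_n\otimes u)$ involves only the three neighbouring basis blocks, I would extract that the $n$-th component of $J^\ast W$ must equal $A_{n-1}^\ast w_{n-1} + B_n w_n + A_n w_{n+1}$. Since $J^\ast W \in \ell^2(\C^N)$ by definition of the adjoint, the sum of the squared norms of these components is finite, which is exactly the summability condition defining $\cD^\ast$; this forces $W\in\cD^\ast$. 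The main obstacle, and the only point requiring care, is keeping the noncommutativity of $M_N(\C)$ straight in the rearrangement: one must pair $A_n$ acting on the $(n+1)$-block in $JV$ with $A_n^\ast$ appearing on the $n$-block of $J^\ast W$, and verify that $\langle A_n v_{n+1}, w_n\rangle = \langle v_{n+1}, A_n^\ast w_n\rangle$ lands the index shift on the correct block, together with the self-adjointness $B_n = B_n^\ast$ for the diagonal term. Everything else is the standard scalar argument run blockwise, so I would simply indicate that the proof is the $\C^N$-valued analogue of Lemma \ref{lem:adjointL} and leave the index bookkeeping to the reader.
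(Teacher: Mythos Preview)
Your proposal is correct and follows essentially the same approach as the paper: both compute $\langle JV,W\rangle$ for $V\in\cD$, rearrange using $B_n=B_n^\ast$ and $\langle A_nv_{n+1},w_n\rangle=\langle v_{n+1},A_n^\ast w_n\rangle$ to identify the candidate $(J^\ast W)_n$, and then check the two inclusions. The only minor difference is in the reverse inclusion: the paper plugs in the specific test vector $V=\sum_{n=0}^M e_n\otimes (J^\ast W)_n$ and uses $|\langle JV,W\rangle|\leq C\|V\|$ to bound the partial sums directly, whereas you invoke the existence of $J^\ast W\in\ell^2(\C^N)$ and read off its components by testing against $e_n\otimes u$; both arguments are standard and equivalent.
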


\begin{proof} Recall the definition of the adjoint operator  for 
an unbounded operator, see Section \ref{ssec:appunboundedsaoperators}. 
Take $W\in \ell^2(\C^N)$ and consider for $V=\sum_{n=0}^\infty e_n \otimes v_n\in \cD$, so the sum for $V$ is finite,  
\begin{equation*}
\begin{split}
& \langle JV, W\rangle = \langle A_0v_1 + B_0v_0, w_0\rangle 
+ \sum_{n=1}^\infty \langle A_n v_{n+1} + B_n v_n + A_{n-1}^\ast v_{n-1}, w_n\rangle \\
&= \sum_{n=1}^\infty \langle v_{n+1}, A_n^\ast w_n \rangle + \sum_{n=1}^\infty \langle v_n, B_nw_n\rangle 
+ \sum_{n=1}^\infty \langle v_{n-1}, A_{n-1} w_n\rangle + \langle v_1, A_0^\ast w_0\rangle + \langle v_0, B_0 w_0\rangle \\
& = \sum_{n=2}^\infty \langle v_{n}, A_{n-1}^\ast w_{n-1} \rangle +  \sum_{n=1}^\infty \langle v_n, B_nw_n\rangle 
+ \sum_{n=0}^\infty \langle v_{n}, A_{n} w_{n+1} \rangle + \langle v_1, A_0^\ast w_0\rangle + \langle v_0, B_0 w_0\rangle \\
& = \sum_{n=1}^\infty \langle v_{n}, A_{n-1}^\ast w_{n-1} +  B_nw_n + A_{n} w_{n+1} \rangle 
+ \langle v_0, A_0 w_1 + B_0 w_0\rangle
\end{split}
\end{equation*}
since $B_n$ is self-adjoint for all $n\in \N$ and all sums are finite since $V\in \cD$. 
First assume that $W\in \cD^\ast$, then by the above calculation we have 
\begin{equation*}
| \langle JV, W\rangle | \leq \| V \| \, \| J^\ast W\| \leq C \|V\|, \qquad \forall \, V\in \cD 
\end{equation*}
so that $\cD^\ast$ is contained in the domain of 
the adjoint of $(J,\cD)$.

Conversely, for $W$ in the domain of 
the adjoint of $(J,\cD)$, we have by definition that for all $V\in \cD$ 
\begin{equation}\label{eq:MVadjoint1}
| \langle JV, W\rangle | \leq C\, \| V \|
\end{equation}
for some constant $C$. Take $V= e_0\otimes (A_0w_1+B_0w_0) + \sum_{k=1}^M e_n \otimes \bigl(A_{n-1}^\ast w_{n-1} +  B_nw_n + A_{n} w_{n+1}\bigr)$
in \eqref{eq:MVadjoint1} and using the above calculation we find
\begin{gather*}
\left(  \|A_0w_1+B_0w_0\|^2 + \sum_{n=1}^M \| A_{n-1}^\ast w_{n-1} +  B_nw_n + A_{n} w_{n+1}\|^2\right)^{1/2} \leq C
\end{gather*}
Since $C$ is independent of $M$, by taking $M\to\infty$ we see $W\in \cD^\ast$. 
The expression for the action of the adjoint of $(J,\cD)$ follows from the above calculation. 
Hence, the lemma follows. 
\end{proof}

\subsection{The resolvent operator}\label{ssec:MVresolventoperator}

Define the Stieltjes transform of the matrix-valued measure by
\begin{equation*}
S(z) = \int_\R \frac{1}{x-z} \, d\mu(x) = \int_\R \frac{1}{x-z} W(x)\, d\tau_\mu(x),
\qquad z\in \C\setminus \R, 
\end{equation*}
and note that $S^\ast(z)=\bigl( S(\bar z)\bigr)^\ast = S(z)$, since the 
measure $\tau_\mu$ is positive and $W(x)$ is positive definite $\tau_\mu$-a.e.
So $S\colon \C\setminus \R \to M_N(\C)$. Note that 
$S$ is holomorphic in the upper and lower half plane, meaning that each 
of its matrix entries is holomorphic.
The Stieltjes transform encodes the moments as in the classical case, see \cite{ApteN}. 

Define for $z\in \C\setminus \R$ and $k\in \N$  
\begin{equation*}
F_k(z) = Q_k(z) +  P_k(z) S(z),  
\end{equation*}
then,  by Theorem \ref{thm:MVthreetermrecurrence} and Lemma \ref{lem:MV2ndkindsolverecurrence}, 
\begin{equation}\label{eq:MVfreesolution}
z F_n(z) =  A_n F_{n+1}(z) + B_n F_n(z) + A_{n-1}^\ast F_{n-1}(z), \qquad n\geq 1. 
\end{equation}
Moreover, by Lemma \ref{lem:MVLiouvilleOstrogradsky}, 
\begin{gather*}
A_{k-1}\Bigl( F_k(z)P_{k-1}^\ast(z) - P_k(z)F_{k-1}^\ast (z)\Bigr) =  \\
A_{k-1}\Bigl( Q_k(z)P_{k-1}^\ast(z) + P_k(z) S(z)P_{k-1}^\ast(z) - P_k(z)Q_{k-1}^\ast (z) - P_k(z) S^\ast(z)P_{k-1}^\ast(z)\Bigr) =  \\
A_{k-1}\Bigl( Q_k(z)P_{k-1}^\ast(z)  - P_k(z)Q_{k-1}^\ast (z)\Bigr) = I
\end{gather*}
since $S(z) = S^\ast(z)$. 

\begin{lemma}\label{lem:MVfreesolution}
For $v\in \C^N$, $\sum_{n=0}^\infty e_n \otimes F_n(z) v \in \ell^2(\C^N)$.  
\end{lemma}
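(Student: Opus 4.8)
The plan is to imitate the scalar argument of Lemma \ref{lem:associatedpols}, reducing the claim to Bessel's inequality in the Hilbert space $L^2_v(\mu)$. The first and decisive step is to rewrite $F_n(z)$ as a single integral. Using the definition of $Q_n$ in \eqref{eq:defMVpolssecondkind}, the definition of $S(z)$, and $\frac{1}{x-z} = -\frac{1}{z-x}$, I would compute
\[
F_n(z) = Q_n(z) + P_n(z) S(z) = \int_\R \frac{P_n(z) - P_n(x) - P_n(z)}{z-x}\, W(x)\, d\tau_\mu(x) = \int_\R \frac{P_n(x)}{x-z}\, W(x)\, d\tau_\mu(x),
\]
so that $F_n(z)$ is exactly the matrix of inner products of the resolvent kernel $x\mapsto (x-z)^{-1}$ against the orthonormal polynomials. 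This is the matrix analogue of $\int_\R \frac{p_k(x)}{x-z}\,d\mu(x) = r_k(z) + w(z) p_k(z)$ from the scalar case.

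Next I would exhibit the relevant orthonormal system. For fixed $v \in \C^N$ introduce the $\C^N$-valued function $g_z(x) = (x-z)^{-1} v$; by the identity \eqref{eq:MVOPorthonormal}, the functions $x \mapsto P_n^\ast(x) e_i$, for $n\in\N$ and $1\leq i\leq N$, form an orthonormal system in $L^2_v(\mu)$, since on the real axis $P_n^\ast(x) = P_n(x)^\ast$ and $\langle P_n^\ast e_i, P_m^\ast e_j\rangle = e_j^\ast \bigl(\int_\R P_m W P_n^\ast\, d\tau_\mu\bigr) e_i = \delta_{n,m}\delta_{i,j}$. Computing the Fourier coefficients and using the displayed identity for $F_n(z)$ gives $\langle g_z, P_n^\ast e_i\rangle = e_i^\ast F_n(z) v = (F_n(z)v)_i$, so that $\|F_n(z)v\|^2 = \sum_{i=1}^N |\langle g_z, P_n^\ast e_i\rangle|^2$.

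Finally I would check $g_z \in L^2_v(\mu)$ and apply Bessel. Using $W(x)\leq I$ $\tau_\mu$-a.e. (the remark after Theorem \ref{thm:RDforMVmeasure}) together with $|x-z|^{-1} \leq |\Im z|^{-1}$, one bounds $\langle g_z, g_z\rangle = \int_\R |x-z|^{-2}\, v^\ast W(x) v\, d\tau_\mu(x) \leq |\Im z|^{-2}\,\|v\|^2\, \Tr(M_0) < \infty$. Bessel's inequality then yields $\sum_{n=0}^\infty \|F_n(z)v\|^2 = \sum_{n,i} |\langle g_z, P_n^\ast e_i\rangle|^2 \leq \langle g_z, g_z\rangle < \infty$, which is precisely the assertion that $\sum_n e_n \otimes F_n(z)v \in \ell^2(\C^N)$. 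The only real care needed is the bookkeeping of adjoints and conjugates when passing between $L^2_C(\mu)$ and $L^2_v(\mu)$ --- in particular the fact that $P_n^\ast(x)$ coincides with the honest conjugate transpose $P_n(x)^\ast$ on the real line --- so that the orthonormality relation \eqref{eq:MVOPorthonormal} transfers correctly to the vector-valued system; everything else is a direct transcription of the scalar proof.
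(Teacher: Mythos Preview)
Your proof is correct and follows the same essential line as the paper's: rewrite $F_n(z)$ as a single integral of $P_n(x)W(x)$ against the resolvent kernel, then apply a Bessel inequality. The only difference is in the setting of the Bessel step. The paper works in the Hilbert $\text{C}^\ast$-module $L^2_C(\mu)$, taking $F(x)=(x-\bar z)^{-1}I$ and invoking the $M_N(\C)$-valued Bessel inequality $\sum_k F_k(z)^\ast F_k(z)\le \langle F,F\rangle$, after which pairing with $v$ yields the bound $\sum_n\|F_n(z)v\|^2\le v^\ast M_0 v/|\Im z|^2$. You instead work in the ordinary Hilbert space $L^2_v(\mu)$, expand the orthonormal system $\{P_n^\ast(\cdot)e_i\}_{n,i}$, and apply the classical scalar Bessel inequality. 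Your route is marginally more elementary in that it avoids the $\text{C}^\ast$-module machinery of Appendix~\ref{ssec:HilbertCastmodules}; the paper's route is basis-free and gives the matrix inequality directly, which is convenient if one later needs the operator estimate rather than just its quadratic form. Either way the content is identical, and your bookkeeping of adjoints (in particular that $P_n^\ast(x)=P_n(x)^\ast$ on the real axis, so \eqref{eq:MVOPorthonormal} translates to orthonormality of $\{P_n^\ast e_i\}$ in $L^2_v(\mu)$) is handled correctly.
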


\begin{proof} Start by rewriting 
\begin{equation*}
\begin{split}
F_k(z) & = Q_k(z) +  P_k(z) S(z) \\
& = \int_\R \frac{P_k(z) - P_k(x)}{z-x}\, d\mu(x)  + \int_\R \frac{1}{z-x}P_k(z)  \, d\mu(x) \\
& = \int_\R \frac{- P_k(x)}{z-x}\, d\mu(x)  =  \int_\R P_k(x)\, W(x) \, F^\ast(x)\, d\tau_\mu(x)  
\end{split}
\end{equation*}
where $F(x) = (x-\bar z)^{-1}I$. Note that $F\in L^2_C(\mu)$ for $z\in \C\setminus \R$, 
so that by the Bessel inequality for Hilbert $\text{C}^\ast$-modules, see Appendix \ref{ssec:HilbertCastmodules}, 
\begin{gather*}
\sum_{k=0}^\infty \bigl(F_k(z)\bigr)^\ast F_k(z) \leq  \langle F, F\rangle = 
\int_\R F(x)\, W(x)\, F^\ast(x)\, d\tau_\mu(x) \quad \Longrightarrow\\
\sum_{n=0}^\infty  \| F_n(z) v\|^2 \leq 
\int_\R v^\ast F(x)\, W(x)\, F^\ast(x)v\, d\tau_\mu(x) 
\leq \frac{v^\ast M_0 v}{|\Im(z)|^2}< \infty. \qedhere
\end{gather*}
\end{proof}

Since the series in Lemma \ref{lem:MVfreesolution} converges, we see that 
\begin{equation}\label{eq:MV-Markov}
S(z) = - \lim_{k\to \infty} P_k(z)^{-1} Q_k(z) \quad \text{for} \ z\in \C\setminus \R.
\end{equation}
Note that $P_k(z)$ is invertible by Lemma \ref{lem:MVeigenvaluesJ} for $z\in \C\setminus \R$.
The convergence \eqref{eq:MV-Markov} is in operator norm, and hence leads to entrywise 
convergence. \index{Markov's theorem}
This is a matrix-valued analogue of Markov's theorem \eqref{eq:Markovscalar}, 
see also \cite[\S 1.4]{ApteN}.

\begin{defn}\label{def:MVspmz}
Define for $z\in \C$ the vector space 
\begin{equation*}
S^+_z = \{ V=\sum_{n=0}^\infty e_n \otimes v_n \in \ell^2(\C^N) \mid \exists M\in \N\,  \forall n\geq M \quad  zv_n = A_nv_{n+1} + B_nv_n + A_{n-1}^\ast v_{n-1}\} 
\end{equation*}. 
\end{defn}

Since for linearly independent vectors in $\C^N$, the corresponding elements 
in Lemma \ref{lem:MVfreesolution} are linearly independent, 
we see that $\dim S^+_z \geq N$ for $z\in \C\setminus \R$. Note 
that the condition $V=\sum_{n=0}^\infty e_n\otimes v_n \in S^+_z$ 
only involves the behaviour of $v_n$ for $n\gg 0$, and we can recursively 
adapt $v_{M-1}, v_{M-2}, \cdots, v_0$ by requiring the recursion
relation. Note that in general $zv_0 \not= A_0v_1 +B_0v_1$, as can be
seen for the element $\sum_{n=0}^\infty e_n\otimes F_n(z)v$ of Lemma
\ref{lem:MV2ndkindsolverecurrence} from the explicit values 
for $P_0(z)$, $P_1(z)$, $Q_0(z)$, $Q_1(z)$ in Section 
\ref{ssec:MVmeasurespols}. 

So $S^+_z$ is not the deficiency space for $(J^\ast, \cD^\ast)$, 
since we do not require that it satisfies the 
recurrence for all $n\in \N$.
Moreover, any solution for the recurrence relation for all $n\in \N$ is of 
the form $\sum_{n=0}^\infty e_n\otimes P_n(z) v$, so we find for $z\in \C\setminus \R$ 
\begin{equation}\label{eq:MVdeficiencyspace}
N_z = \{ V\in \cD^\ast \mid J^\ast V= zV\} = 
\{\sum_{n=0}^\infty e_n \otimes P_n(z)v \mid v\in \C^N\} \cap S^+_z
\end{equation}
In particular, we see that deficiency indices $0\leq n_\pm\leq N$. 
In case $A_n, B_n \in M_N(\R)$ for all $n\in \N$ we see that $n_+=n_-$, since
conjugation induces an isomorphism of $N_z$ onto $N_{\bar z}$. 
Note that also $n_+=n_-$ if we can find a sequence $\{U_n\}_{n\in \N}$ of unitary 
operators such that $U_nA_n U_{n+1}^\ast, U_nB_n U_{n}^\ast\in M_N(\R)$ for all 
$n\in \N$, see Remark \ref{rmk:MVchangeofONpols}. Note that it is always 
possible to find unitary $U_n$ so that $U_nB_n U_{n}^\ast\in M_N(\R)$, since 
$B_n$ is self-adjoint. 
For $N=1$ this can always be done, so that in this case the deficiency 
indices are always the same; $(n_+,n_-)=(0,0)$ or $(1,1)$. 

Assumption \ref{ass:MVpolsolnotfree} 
says $N_z=\{0\}$ for all $z\in \C\setminus \R$. Hence, $(J,\cD)$ is essentially self-adjoint 
and thus $(J^\ast, \cD^\ast)$ is self-adjoint. 

\begin{ass}\label{ass:MVpolsolnotfree}
For all $v\in \C^N$, the element $\sum_{n=0}^\infty e_n \otimes P_n(z)v \notin S^+_z$ for $z\in\C\setminus \R$. 
\end{ass}

The assumption means that $\sum_{n=0}^\infty \|P_n(z) v\|^2$ diverges for all $v\in \C^N$. 

\begin{thm}\label{thm:MVresolvent} Define the operator $G_z \colon \cD \to \ell^2(\C^N)$ 
for $z\in \C \setminus \R$ 
by 
\begin{gather*}
G_zV = \sum_{n=0}^\infty e_n \otimes (G_zV)_n, \qquad (G_zV)_n = \sum_{k=0}^\infty (G_z)_{n,k} v_k, \\
M_N(\C) \ni (G_z)_{n,k}  = 
\begin{cases}
P_n(z) F_k^\ast (z),  & n\leq k \\
F_n(z) P_k^\ast (z), & n>k.
\end{cases}
\end{gather*}
Then $G_z$ is the resolvent operator for $J^\ast$, i.e. $G_z = (J^\ast-z)^{-1}$, so 
$G_z \colon \ell^2(\C^N) \to \ell^2(\C^N)$ extends to a bounded operator. 
\end{thm}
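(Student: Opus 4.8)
The plan is to follow the scalar argument of Theorem \ref{thm:resolventL} essentially verbatim, the only new feature being the non-commutativity of $M_N(\C)$, which dictates that matrices be multiplied on the correct side. First I would record that by Assumption \ref{ass:MVpolsolnotfree} the deficiency space $N_z$ of \eqref{eq:MVdeficiencyspace} is trivial for every $z\in\C\setminus\R$, so $(J,\cD)$ has deficiency indices $(0,0)$, is essentially self-adjoint, and $(J^\ast,\cD^\ast)$ is self-adjoint. Hence every $z\in\C\setminus\R$ lies in the resolvent set and $R(z)=(J^\ast-z)^{-1}$ is a bounded operator on $\ell^2(\C^N)$. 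It therefore suffices to prove the single identity $(J^\ast-z)G_zV=V$ for all $V\in\cD$: this forces $G_zV\in\cD^\ast$ and $G_zV=R(z)V$ on the dense subspace $\cD$, so that $G_z$ agrees with the bounded operator $R(z)$ there and extends by continuity to $R(z)$ on all of $\ell^2(\C^N)$, which is exactly the assertion.

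Before the computation I would verify that $G_zV\in\ell^2(\C^N)$ for $V=\sum_n e_n\otimes v_n\in\cD$. Writing $\xi_n=\sum_{k\ge n}F_k^\ast(z)v_k$ and $\eta_n=\sum_{k<n}P_k^\ast(z)v_k$ (finite sums, as $V$ has finite support), the definition gives $(G_zV)_n=P_n(z)\xi_n+F_n(z)\eta_n$. If $v_k=0$ for $k>K$, then $\xi_n=0$ and $\eta_n=\sum_{k=0}^{K}P_k^\ast(z)v_k=:w$ is a fixed vector for every $n>K$, so $(G_zV)_n=F_n(z)w$ on the tail; this tail is square summable by Lemma \ref{lem:MVfreesolution}, and only finitely many remaining components have to be controlled, so $G_zV\in\ell^2(\C^N)$.

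The heart of the matter is the entrywise evaluation of $(J^\ast-z)G_zV$. For $n\ge 1$ I would expand $A_{n-1}^\ast(G_zV)_{n-1}+(B_n-z)(G_zV)_n+A_n(G_zV)_{n+1}$ after expressing $\xi_{n\pm1}$ and $\eta_{n\pm1}$ through $\xi_n,\eta_n$ and $v_n,v_{n-1}$, and then collect. The coefficient of $\xi_n$ is $A_nP_{n+1}(z)+(B_n-z)P_n(z)+A_{n-1}^\ast P_{n-1}(z)$, which vanishes by the three-term recurrence of Theorem \ref{thm:MVthreetermrecurrence}; the coefficient of $\eta_n$ is $A_nF_{n+1}(z)+(B_n-z)F_n(z)+A_{n-1}^\ast F_{n-1}(z)$, which vanishes by \eqref{eq:MVfreesolution}. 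What survives are two ``jump'' terms, $A_n\bigl(F_{n+1}(z)P_n^\ast(z)-P_{n+1}(z)F_n^\ast(z)\bigr)v_n$ and $A_{n-1}^\ast\bigl(P_{n-1}(z)F_{n-1}^\ast(z)-F_{n-1}(z)P_{n-1}^\ast(z)\bigr)v_{n-1}$. The first bracket equals $A_n^{-1}$ by the identity $A_{k-1}\bigl(F_k(z)P_{k-1}^\ast(z)-P_k(z)F_{k-1}^\ast(z)\bigr)=I$ established just before Lemma \ref{lem:MVfreesolution} (taken with $k=n+1$), so that term contributes exactly $v_n$; the second bracket is zero because $F_k(z)P_k^\ast(z)=P_k(z)F_k^\ast(z)$, which follows from $F_k=Q_k+P_kS$, $S(z)=S^\ast(z)$ and the second identity $Q_kP_k^\ast=P_kQ_k^\ast$ of Lemma \ref{lem:MVLiouvilleOstrogradsky}. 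Thus $[(J^\ast-z)G_zV]_n=v_n$ for $n\ge1$. The top row $n=0$ is handled separately using $(J^\ast W)_0=A_0w_1+B_0w_0$ and $\eta_0=0$: the $\xi_0$-terms cancel by the $n=0$ recurrence $zP_0(z)=A_0P_1(z)+B_0P_0(z)$, while the jump term $A_0\bigl(F_1(z)P_0^\ast(z)-P_1(z)F_0^\ast(z)\bigr)v_0=v_0$ again by Lemma \ref{lem:MVLiouvilleOstrogradsky} with $k=1$. Since $V$ and $G_zV$ both lie in $\ell^2(\C^N)$, the formal action $J^\ast G_zV=zG_zV+V$ is in $\ell^2(\C^N)$, so $G_zV\in\cD^\ast$ and the identity holds rigorously.

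I expect the main obstacle to be exactly the bookkeeping of sidedness forced by non-commutativity: one must be sure that $P_n$ and $F_n$ multiply the partial sums $\xi_n,\eta_n$ and the vectors $v_k$ from the correct side so that the two three-term recurrences and the two Liouville--Ostrogradsky identities of Lemma \ref{lem:MVLiouvilleOstrogradsky} apply as written. The slightly delicate point is the vanishing of the lower jump term, which rests on the symmetry $F_k(z)P_k^\ast(z)=P_k(z)F_k^\ast(z)$ rather than on a Wronskian being a constant scalar; once this and the normalization $A_n^{-1}$ of the upper jump term are in hand, the rest is the same cancellation pattern as in the scalar Theorem \ref{thm:resolventL}.
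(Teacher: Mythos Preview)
Your proof is correct and follows essentially the same strategy as the paper's: verify $G_zV\in\ell^2(\C^N)$ for $V\in\cD$, check $(J^\ast-z)G_zV=V$ componentwise using the two three-term recurrences and the Liouville--Ostrogradsky identity, and then conclude $G_z=R(z)$ by self-adjointness of $(J^\ast,\cD^\ast)$.

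The only organisational difference is your $\xi_n,\eta_n$ bookkeeping versus the paper's direct split of $\sum_k$ into $k<n$, $k=n$, $k>n$. Your packaging is tidy but produces an extra jump term $A_{n-1}^\ast\bigl(P_{n-1}F_{n-1}^\ast-F_{n-1}P_{n-1}^\ast\bigr)v_{n-1}$, which you then kill using the symmetry $P_kF_k^\ast=F_kP_k^\ast$ (equivalently $P_kQ_k^\ast=Q_kP_k^\ast$ from Lemma \ref{lem:MVLiouvilleOstrogradsky} together with $S^\ast=S$). The paper's split places the diagonal $k=n$ in the middle term and uses the $P$-recurrence on $(B_n-z)P_nF_n^\ast+A_{n-1}^\ast P_{n-1}F_n^\ast=-A_nP_{n+1}F_n^\ast$ directly, so only the first identity $A_n\bigl(F_{n+1}P_n^\ast-P_{n+1}F_n^\ast\bigr)=I$ is needed and no $v_{n-1}$ term ever appears. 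Both routes are valid; yours invokes both halves of Lemma \ref{lem:MVLiouvilleOstrogradsky}, the paper's only the first.
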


\begin{proof}
First, we prove $G_zV\in \cD^\ast \subset \ell^2(\C^\N)$ for 
$V=\sum_{n=0}^\infty e_n \otimes v_n\in\cD$. 
In order to do so we need to see that $(G_zV)_n$ is well-defined;
the sum over $k$ is actually finite and, by the Cauchy-Schwarz inequality, 
\begin{gather*}
\sum_{\underset{\scriptstyle{\text{finite}}}{k=0}}^\infty \| (G_z)_{n,k} v_k\| \leq  
\sum_{\underset{\scriptstyle{\text{finite}}}{k=0}}^\infty \| (G_z)_{n,k}\|\, \| v_k\|
\\ \leq  \Bigl( \sum_{\underset{\scriptstyle{\text{finite}}}{k=0}}^\infty \| (G_z)_{n,k}\|^2\Bigr)^{1/2} 
\Bigl( \sum_{\underset{\scriptstyle{\text{finite}}}{k=0}}^\infty \| v_k \|^2\Bigr)^{1/2} = 
\|V\| \Bigl( \sum_{\underset{\scriptstyle{\text{finite}}}{k=0}}^\infty \| (G_z)_{n,k}\|^2\Bigr)^{1/2}.
\end{gather*}
So in order to show that $G_zV\in \ell^2(\C^\N)$ we estimate
\begin{gather*}
\sum_{n=0}^\infty \| \sum_{\underset{\scriptstyle{\text{finite}}}{k=0}}^\infty (G_z)_{n,k} v_k\|^2 
\leq \sum_{n=0}^\infty \Bigl( \sum_{\underset{\scriptstyle{\text{finite}}}{k=0}}^\infty \| (G_z)_{n,k} v_k\|\Bigr)^2 
\leq \|V\|^2  \sum_{n=0}^\infty \sum_{\underset{\scriptstyle{\text{finite}}}{k=0}}^\infty \| (G_z)_{n,k}\|^2 
 \end{gather*}
Next note that the double sum equals, using $K$ for the maximum term occurring in the finite sum, 
\begin{gather*}
\sum_{\underset{\scriptstyle{\text{finite}}}{k=0}}^\infty \sum_{n=0}^\infty  \| (G_z)_{n,k}\|^2 \leq 
\sum_{\underset{\scriptstyle{\text{finite}}}{k=0}}^\infty \sum_{n=0}^K  \| (G_z)_{n,k}\|^2 +
\sum_{\underset{\scriptstyle{\text{finite}}}{k=0}}^\infty \| P^\ast_k(z)\|^2\sum_{n=K+1}^\infty  \|F_n(z)\|^2 
\end{gather*}
which converges by Lemma \ref{lem:MV2ndkindsolverecurrence}. Hence, $G_zV\in \ell^2(\C^\N)$.

Next we consider 
\begin{multline*}
(J^\ast - z) G_zV = e_0 \otimes (A_0 (G_zV)_{1}+ (B_0-z) (G_zV)_{0})+ \\
\sum_{n=1}^\infty e_n \otimes \Bigl( A_n (G_zV)_{n+1}+ (B_n-z) (G_zV)_{n} + A_{n-1}^\ast (G_zV)_{n-1} \Bigr) 
\end{multline*}
and we want to show that 
\begin{equation}\label{eq:MVGisresolvent}
A_0 (G_zV)_{1}+ (B_0-z) (G_zV)_{0} = v_0, \quad 
A_n (G_zV)_{n+1}+ (B_n-z) (G_zV)_{n} + A_{n-1}^\ast (G_zV)_{n-1} = v_n
\end{equation}
for $n\geq 1$. 
Note that \eqref{eq:MVGisresolvent} in particular implies that $G_zV\in \cD^\ast$. 

In order to establish \eqref{eq:MVGisresolvent} we use the definition of the operator $G$ to find 
for $n\geq 1$ 
\begin{equation*}
\begin{split}
& A_n (G_zV)_{n+1}+ (B_n-z) (G_zV)_{n} + A_{n-1}^\ast (G_zV)_{n-1}  \\
& = \sum_{k=0}^\infty \Bigl( A_n (G_z)_{n+1,k} v_k +  (B_n-z) (G_z)_{n,k} v_k + A_{n-1}^\ast (G_z)_{n-1,k} v_k \Bigr) \\
& = \sum_{k=0}^{n-1} \Bigl( A_n F_{n+1}(z)  +  (B_n-z) F_{n}(z) + A_{n-1}^\ast F_{n-1}(z) \Bigr)P^\ast_k(z) v_k \\
& \quad + A_n (G_z)_{n+1,n} v_n +  (B_n-z) (G_z)_{n,n} v_n + A_{n-1}^\ast (G_z)_{n-1,n} v_n \\
& \quad  \sum_{k=n+1}^\infty \Bigl( A_n P_{n+1}(z)  +  (B_n-z) P_n(z) + A_{n-1}^\ast P_{n-1}(z) \Bigr)F^\ast_{k}(z) v_k
\end{split}
\end{equation*}
where we note that all sums are finite, since we take $V\in \cD$.
But also for $V\in\ell^2(\C^\N)$ the series converges, because of Lemma \ref{lem:MV2ndkindsolverecurrence}.

Because of \eqref{eq:MVfreesolution} and Theorem \ref{thm:MVthreetermrecurrence}, the first and 
the last term vanish. For the middle term we use the definition for $G$ to find
\begin{equation*}
\begin{split}
& A_n (G_z)_{n+1,n} v_n +  (B_n-z) (G_z)_{n,n} v_n + A_{n-1}^\ast (G_z)_{n-1,n} v_n \\
= & \Bigl( A_n F_{n+1}(z) P^\ast_n(z) +  (B_n-z) P_{n}(z) F^\ast_n(z) + A_{n-1}^\ast P_{n-1}(z) F^\ast_n(z)\Bigr) v_n \\
= & \Bigl( A_n F_{n+1}(z) P^\ast_n(z) -  A_n P_{n+1}(z) F^\ast_n(z) \Bigr) v_n \\
= &  A_n \Bigl( F_{n+1}(z) P^\ast_n(z) -  P_{n+1}(z) F^\ast_n(z) \Bigr) v_n = v_n
\end{split}
\end{equation*}
where we use Theorem \ref{thm:MVthreetermrecurrence} once more and Lemma \ref{lem:MVLiouvilleOstrogradsky}. 
This proves \eqref{eq:MVGisresolvent} for $n\geq 1$. We leave the case $n=0$ for Exercise \ref{exer:MVGresolvent}.

So we find that $G_z\colon \cD \to  \cD^\ast$ and $(J^\ast-z)G_z$ is the identity on $\cD$. Since
$z\in \C\setminus \R$, $z\in \rho(J^\ast)$ and $(J^\ast-z)^{-1}\in B(\ell^2(\C^N))$ which coincides
with $G_z$ on a dense subspace. So $G_z= (J^\ast-z)^{-1}$. 
\end{proof}

\subsection{The spectral measure}

We stick with the Assumptions \ref{ass:finitemomentsandstrictlypos-ae}, \ref{ass:MVpolsolnotfree}.

Having Theorem \ref{thm:MVresolvent} we calculate the matrix entries of the 
resolvent operator $G_z$ for 
$V=\sum_{n=0}^\infty e_n\otimes v_n, W=\sum_{n=0}^\infty e_n\otimes w_n\in\cD$ and $z\in \C\setminus \R$;
\begin{gather*}
\langle G_zV, W\rangle = \sum_{n=0}^\infty \langle (G_zV)_n, w_n\rangle = \sum_{k,n=0}^\infty \langle (G_z)_{n,k}v_k, w_n\rangle   \\
= \sum_{\underset{\scriptstyle{n\leq k}}{k,n=0}}^\infty \langle P_n(z)F^\ast_k(z)v_k, w_n\rangle
+ \sum_{\underset{\scriptstyle{n> k}}{k,n=0}}^\infty \langle F_n(z)P^\ast_k(z)v_k, w_n\rangle \\
= \sum_{\underset{\scriptstyle{n\leq k}}{k,n=0}}^\infty \langle \bigl( Q^\ast_k(z) + S(z)P^\ast_k(z)\bigr)v_k, \bigl(P_n(z)\bigr)^\ast w_n\rangle
+ \sum_{\underset{\scriptstyle{n> k}}{k,n=0}}^\infty \langle P^\ast_k(z)v_k, \bigl(Q_n(z) + P_n(z)S(z)\bigr)^\ast w_n\rangle  \\
= \sum_{\underset{\scriptstyle{n\leq k}}{k,n=0}}^\infty \langle P_n(z)Q^\ast_k(z)v_k,  w_n\rangle 
+ \sum_{\underset{\scriptstyle{n> k}}{k,n=0}}^\infty \langle Q_n(z)P^\ast_k(z)v_k, w_n\rangle  
+ \sum_{k,n=0}^\infty \langle P_n(z)S(z)P^\ast_k(z)v_k,  w_n\rangle
\end{gather*}
where all sums are finite since $V,W\in \cD$. The first two terms are polynomial, hence analytic, in $z$, and 
do not contribute to the spectral measure 
\begin{equation}
\begin{split}
E_{V,W}((a,b)) &= \lim_{\de\downarrow 0} \lim_{\ep \downarrow 0} \frac{1}{2\pi i} \int_{a+\de}^{b-\de} 
\langle G_{x+i\ep}V, W\rangle - \langle G_{x-i\ep}V, W\rangle \, dx \\
\end{split}
\end{equation}

\begin{lemma}\label{lem:StieltjesPerroninversion}
Let $\tau_\mu$ be a positive Borel measure on $\R$, $W_{i,j}\in L^1(\tau_\mu)$ so that 
$x\mapsto x^kW_{i,j}(x)\in L^1(\tau_\mu)$ for all $k\in \N$. Define for $z\in \C\setminus \R$ 
\[
g(z) = \int_\R \frac{p(s) W_{i,j}(s)}{s-z} \, d\tau_\mu(s)
\]
where $p$ is a polynomial, 
then for $-\infty<a<b<\infty$  
\[
\lim_{\de\downarrow 0} \lim_{\ep \downarrow 0} \frac{1}{2\pi i} \int_{a+\de}^{b-\de} 
g(x+i\ep)-g(x-i\ep) \, dx = \int_{(a,b)} p(x) W_{i,j}(x) \, d\tau_\mu(x)
\]
\end{lemma}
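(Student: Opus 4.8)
The plan is to recognise the left-hand side as the classical Stieltjes--Perron inversion applied to the finite complex measure $d\nu(s)=p(s)\,W_{i,j}(s)\,d\tau_\mu(s)$ and to carry out the computation directly. First I would check that $\nu$ is well-defined and finite: writing $p(s)=\sum_k c_k s^k$, the hypothesis $(s\mapsto s^kW_{i,j}(s))\in L^1(\tau_\mu)$ gives $p\,W_{i,j}\in L^1(\tau_\mu)$, so the total variation $\int_\R|p(s)W_{i,j}(s)|\,d\tau_\mu(s)$ is finite; the same bound shows $g(z)$ is finite for $z\in\C\setminus\R$ because $|s-z|^{-1}\le|\Im z|^{-1}$. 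Subtracting the two Cauchy transforms and using
\[
\frac{1}{s-x-i\ep}-\frac{1}{s-x+i\ep}=\frac{2i\ep}{(s-x)^2+\ep^2},
\]
I obtain
\[
\frac{1}{2\pi i}\bigl(g(x+i\ep)-g(x-i\ep)\bigr)=\frac1\pi\int_\R\frac{\ep}{(s-x)^2+\ep^2}\,d\nu(s),
\]
the convolution of $\nu$ with the Poisson kernel of the upper half-plane.

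Next I would integrate in $x$ over $[a+\de,b-\de]$ and apply Fubini, which is justified since the kernel is nonnegative and bounded and $|\nu|$ is finite. The inner integral is elementary,
\[
\frac1\pi\int_{a+\de}^{b-\de}\frac{\ep}{(s-x)^2+\ep^2}\,dx=\frac1\pi\Bigl(\arctan\tfrac{b-\de-s}{\ep}-\arctan\tfrac{a+\de-s}{\ep}\Bigr)=:\psi_{\ep,\de}(s),
\]
and it satisfies $0\le\psi_{\ep,\de}(s)\le1$ for every $s$, being the ($\nu$-independent) Poisson mass of a subinterval of $\R$. Thus
\[
\frac1{2\pi i}\int_{a+\de}^{b-\de}\bigl(g(x+i\ep)-g(x-i\ep)\bigr)\,dx=\int_\R\psi_{\ep,\de}(s)\,d\nu(s).
\]

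For the inner limit $\ep\downarrow0$ with $\de$ fixed, inspection of the $\arctan$ gives the pointwise limit $\psi_{\ep,\de}(s)\to\mathbf 1_{(a+\de,b-\de)}(s)+\tfrac12\mathbf 1_{\{a+\de,b-\de\}}(s)$; combined with the uniform bound $\psi_{\ep,\de}\le1$ and the dominating function $|p\,W_{i,j}|\in L^1(\tau_\mu)$, dominated convergence yields
\[
\lim_{\ep\downarrow0}\int_\R\psi_{\ep,\de}\,d\nu=\nu((a+\de,b-\de))+\tfrac12\nu(\{a+\de\})+\tfrac12\nu(\{b-\de\}).
\]
Finally, for $\de\downarrow0$ I would use the identity
\[
\nu((a+\de,b-\de))+\tfrac12\nu(\{a+\de\})+\tfrac12\nu(\{b-\de\})=\tfrac12\bigl(\nu([a+\de,b-\de])+\nu((a+\de,b-\de))\bigr),
\]
and observe that both $[a+\de,b-\de]$ and $(a+\de,b-\de)$ increase to $(a,b)$ as $\de\downarrow0$. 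Continuity from below, valid for the finite complex measure $\nu$ by countable additivity, then forces each term to tend to $\nu((a,b))=\int_{(a,b)}p\,W_{i,j}\,d\tau_\mu$, which is the claim. The main point to handle with care is the endpoint behaviour: the $\ep$-limit produces the half-weights $\tfrac12\nu(\{a+\de\})$ and $\tfrac12\nu(\{b-\de\})$ at the possibly atomic endpoints, and the averaging identity above is exactly what lets these be absorbed cleanly in the $\de$-limit, so that one need not restrict $\de$ to values avoiding the at most countably many atoms of $\nu$.
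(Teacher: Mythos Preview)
Your argument is correct and follows exactly the route the paper indicates (the proof is left as Exercise~\ref{exer:lemStieltjesPerroninversion}): compute the difference of Cauchy transforms to obtain the Poisson kernel, interchange the $x$-integral with the $\tau_\mu$-integral to produce the $\arctan$ expression, and then pass to the limits. Your treatment of the endpoint atoms via the averaging identity and continuity from below is a clean way to handle part~(c) of that exercise, which the paper leaves open.
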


The proof of Lemma \ref{lem:StieltjesPerroninversion}  is in Exercise \ref{exer:lemStieltjesPerroninversion}.

From Lemma \ref{lem:StieltjesPerroninversion} we find 
\begin{equation}
\begin{split}
E_{V,W}((a,b))  &= \sum_{k,n=0}^\infty \int_{(a,b)} 
w_n^\ast P_n(x)\, W(x)\, P^\ast_k(x)v_k \, d\tau_\mu(x) \\
&=
\sum_{k,n=0}^\infty  
w_n^\ast \left( \int_{(a,b)} P_n(x)\, W(x)\, P^\ast_k(x) \, d\tau_\mu(x)\right) v_k
\end{split}
\end{equation}
By extending the integral to $\R$ we find
\begin{equation}\label{eq:spectraldecomposition}
\begin{split}
\langle V, W\rangle   &= 
\sum_{k,n=0}^\infty  
w_n^\ast \left( \int_\R P_n(x)\, W(x)\, P^\ast_k(x) \, d\tau_\mu(x)\right) v_k
\end{split}
\end{equation}
so that in particular we find the orthogonality relations for the polynomials 
\begin{equation}
 \int_\R P_n(x)\, W(x)\, P^\ast_k(x) \, d\tau_\mu(x) = \de_{n,m} I.
\end{equation}

We can rephrase \eqref{eq:spectraldecomposition} as the following theorem. 

\begin{thm}\label{thm:MVspectraldecomposition}
Let $(J, \cD)$ be essentially self-adjoint, then the unitary map 
\begin{equation*}
\cU \colon \ell^2(\C^N) \to L^2_v(\mu), 
\quad V=\sum_{n=0}^\infty e_n\otimes v_n 
\mapsto \sum_{n=0}^\infty P^\ast_n(\cdot) v_n, 
\end{equation*}
intertwines its closure $(J^\ast, \cD^\ast)$  
with multiplication, i.e. $\cU\, J^\ast = M_z\, \cU$, where
$M_z \colon \cD(M_z)\subset L^2_v(\mu) \to L^2_v(\mu)$, $f\mapsto \bigl(z\mapsto zf(z)\bigr)$,
where $\cD(M_z)$ is its maximal domain. 
\end{thm}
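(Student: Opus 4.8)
The plan is to verify three things in turn: that $\cU$ is a well-defined isometry, that it is onto, and that it intertwines $(J^\ast,\cD^\ast)$ with $M_z$. The isometry is essentially already in hand from the computation leading up to the statement. For $V=\sum_n e_n\otimes v_n$ and $W=\sum_n e_n\otimes w_n$ in $\cD$, I would unwind the inner product of $L^2_v(\mu)$ and use that $(P_n^\ast(x))^\ast=P_n(x)$ for real $x$ to get
\begin{equation*}
\langle \cU V, \cU W\rangle = \int_\R \Bigl(\sum_n w_n^\ast P_n(x)\Bigr) W(x) \Bigl(\sum_k P_k^\ast(x) v_k\Bigr)\, d\tau_\mu(x) = \sum_{k,n=0}^\infty w_n^\ast\Bigl(\int_\R P_n(x)\,W(x)\,P_k^\ast(x)\,d\tau_\mu(x)\Bigr)v_k,
\end{equation*}
which is exactly the right-hand side of \eqref{eq:spectraldecomposition}; hence $\langle\cU V,\cU W\rangle=\langle V,W\rangle$. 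Thus $\cU$ is isometric on the dense subspace $\cD$ and extends to an isometry of $\ell^2(\C^N)$ into $L^2_v(\mu)$. (Equivalently, this reads off directly from the orthonormality \eqref{eq:MVOPorthonormal}.)

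Next I would establish the intertwining on $\cD$, a purely algebraic step. Replacing $z$ by $\bar z$ in the three-term recurrence of Theorem \ref{thm:MVthreetermrecurrence}, taking matrix adjoints, and using $B_n^\ast=B_n$ together with $P_n^\ast(z)=(P_n(\bar z))^\ast$ yields the ``right-hand'' recurrence
\begin{equation*}
z\,P_n^\ast(z)=P_{n+1}^\ast(z)A_n^\ast+P_n^\ast(z)B_n+P_{n-1}^\ast(z)A_{n-1},\qquad n\geq 1,
\end{equation*}
together with $zP_0^\ast(z)=P_1^\ast(z)A_0^\ast+P_0^\ast(z)B_0$. For $V\in\cD$ I would compute $\cU(JV)=\sum_n P_n^\ast(\cdot)(JV)_n$ and regroup the finite sum by collecting the coefficient of each $v_m$; that coefficient is precisely $P_{m-1}^\ast(z)A_{m-1}+P_m^\ast(z)B_m+P_{m+1}^\ast(z)A_m^\ast=zP_m^\ast(z)$ by the recurrence above, so $\cU(JV)=z\,\cU(V)=M_z\,\cU(V)$. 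Since $\cU V$ is a vector-valued polynomial, it lies in $\cD(M_z)$, so this is legitimate.

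It remains to prove surjectivity, which I expect to be the main obstacle. The range of $\cU$ is closed (isometries have closed range) and contains $\cU(\cD)=\mathrm{span}\{P_n^\ast(\cdot)v:n\in\N,\ v\in\C^N\}$. Since $\lc(P_n)$ is invertible, so is its adjoint, and the leading term of $P_n^\ast(\cdot)v$ is $(\lc(P_n))^\ast v\,z^n$; letting $v$ vary and inducting downward on the degree shows that $\cU(\cD)$ equals the space of all $\C^N$-valued polynomials. Hence $\cU$ is onto if and only if the $\C^N$-valued polynomials are dense in $L^2_v(\mu)$. This density is precisely the completeness statement equivalent to determinacy of the matrix moment problem, i.e. to the essential self-adjointness of $(J,\cD)$ which we are assuming; I would invoke the corresponding results of Berg \cite{Berg} and Damanik-Pushnitski-Simon \cite{DamaPS}. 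One can also argue intrinsically once unitarity is in place: the relation $\cU G_z=(M_z-z)^{-1}\cU$ shows the range is invariant under the resolvents $(M_z-z)^{-1}$, hence under every bounded Borel function of $M_z$; since $\cU(e_0\otimes v)=M_0^{-1/2}v$ sweeps out all constant vector functions as $v$ varies, and $\{e_0\otimes v\}$ generates under $J^\ast$ by Lemma \ref{lem:MVcyclicvectors}, the range must be all of $L^2_v(\mu)$.

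Finally, with $\cU$ now unitary, I would promote the intertwining from $\cD$ to the closure. The operator $\cU^\ast M_z\cU$ is self-adjoint, because multiplication by the real variable $x$ on its maximal domain $\cD(M_z)$ is self-adjoint, and it extends $(J,\cD)$ by the computation of the second paragraph. As $(J,\cD)$ is essentially self-adjoint by Assumption \ref{ass:MVpolsolnotfree}, its unique self-adjoint extension is $(J^\ast,\cD^\ast)$; therefore $\cU^\ast M_z\cU=J^\ast$, that is $\cU J^\ast=M_z\cU$, and $\cU$ carries $\cD^\ast$ onto $\cD(M_z)$, completing the proof.
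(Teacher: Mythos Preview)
Your argument is correct, but it is organised differently from the paper's. The paper does not verify isometry, intertwining, and surjectivity as three separate steps. Instead it uses the explicit Green kernel of Theorem~\ref{thm:MVresolvent} to compute $\langle G_zV,W\rangle$ for $V,W\in\cD$, applies the Stieltjes--Perron inversion (Lemma~\ref{lem:StieltjesPerroninversion}) to obtain the spectral measure $E_{V,W}((a,b))=\sum_{k,n}w_n^\ast\bigl(\int_{(a,b)}P_nWP_k^\ast\,d\tau_\mu\bigr)v_k$, and then simply observes that this is exactly the $L^2_v(\mu)$ inner product of $\cU V$ and $\cU W$ restricted to $(a,b)$; the theorem is declared as a rephrasing of \eqref{eq:spectraldecomposition}. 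In that route the unitarity of $\cU$ --- in particular the surjectivity you single out as the ``main obstacle'' --- falls out of the spectral theorem itself: $E(\R)=I$ forces equality in \eqref{eq:spectraldecomposition}, so completeness of the $P_n$ in $L^2_v(\mu)$ is a \emph{consequence} of the resolvent computation, not an input.

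Your approach is more elementary and modular: the isometry and the intertwining on $\cD$ are indeed immediate from \eqref{eq:MVOPorthonormal} and the adjoint recurrence, and your final paragraph promoting to the closure via uniqueness of self-adjoint extensions is clean. The cost is that surjectivity now has to be supplied externally: you invoke determinacy results from \cite{Berg}, \cite{DamaPS}, whereas the paper's Stieltjes--Perron computation delivers density of polynomials intrinsically from Assumption~\ref{ass:MVpolsolnotfree}. Your alternative resolvent-invariance sketch is on the right track but would need more care (one must check that $(M_z-z)^{-1}$ preserves the closed range of the isometry $\cU$ before $\cU$ is known to be unitary), and once fleshed out it essentially reconstructs the paper's argument.
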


\begin{remark}\label{rmk:MVmultiplicity} 
(i) Note that 
Theorem \ref{thm:MVspectraldecomposition} shows that 
the closure $(J, \cD)$ has spectrum equal to the support of $\tau_\mu$, 
and that each point in the spectrum has multiplicity $N$. 
According to general theory, see e.g. \cite[\S~VII.1]{Wern}, 
we can split the (separable) Hilbert space into $N$ invariant
subspaces $\cH_i$, $1\leq i\leq  N$, which are $J^\ast$-invariant and 
which can  each can be diagonalised with multiplicity $1$.
In this case we can take for $f\in L^2_v(\mu)$ the 
function $(P_if)$, where $P_i$ is the projection on the 
basis vector $e_i\in \C^N$. Note that because
$P_i^\ast W(x) P_i \leq W(x)$, we see that 
$P_if\in L^2_v(\mu)$ and note that 
 $(P_if)\in L^2(w_{i,i}d\tau_\mu)$. 
And the inverse image of the elements $P_if$ for $f\in L^2_v(\mu)$ 
under $\cU$ gives the invariant subspaces $\cH_i$. 
Note that in practice this might be hard to do, and for this reason 
it is usually easier to have an easier description, but with higher 
multiplicity. 

(ii) We have not discussed reducibility of the weight matrix.
If the weight can be block-diagonally decomposed, the same is valid
for the corresponding $J$-matrix (up to suitable normalisation, e.g. 
in the monic version). 
For the development as sketched here, this is not required.
We give some information on reducibility issues in 
Section \ref{ssec:MVreducibility}. 
\end{remark}

We leave the analogue of Favard's theorem \ref{cor:Favardsthm}
in this case as Exercise \ref{exer:MVFavard}.

\subsection{Exercises}

\begin{enumerate}[1.]
\item \label{exer:MVOPexistence}
Show that under Assumption \ref{ass:finitemomentsandstrictlypos-ae} there 
exist orthonormal matrix-valued polynomials satisfying \eqref{eq:MVOPorthonormal}.
Show that the polynomials are determined up to left multiplication by a unitary
matrix, i.e. if $\tilde{P}_n$ forms another set of polynomials satisfying 
\eqref{eq:MVOPorthonormal} then there exist unitary matrices $U_n$, $n\in \N$, 
with $\tilde{P}_n(z) = U_n P_n(z)$.
\item \label{exer:MVGegenbauercommutant}
In the context of Example \ref{exa:MVGegenbauerpols} define the map $J\colon \C^{2\ell+1} \to \C^{2\ell+1}$
by $J\colon e_n \mapsto e_{2\ell-n}$  (recall labeling of the basis $e_n$ with $n\in \{0,\cdots, 2\ell\}$).
Check that $J$ is a self-adjoint involution. 
Show that $J$ commutes with all the matrices $B^{(\nu)}_n$, $C^{(\nu)}_n$   in the recurrence
relation for the corresponding monic matrix-valued orthogonal polynomials, and with 
all squared norm matrices $H^{(\nu)}_n$. 
\item \label{exer:thmMVthreetermrecurrence} 
Prove Theorem \ref{thm:MVthreetermrecurrence}, and show that 
$A_n=\int_\R xP_n(x) W(x) P^\ast_{n+1}(x)\, d\tau_\mu(x)$ is invertible and 
$B_n=\int_\R xP_n(x) W(x) P^\ast_{n}(x)\, d\tau_\mu(x)$ is self-adjoint. 
\item\label{exer:lemMV2ndkindsolverecurrence} 
Prove Lemma \ref{lem:MV2ndkindsolverecurrence} by generalising 
Exercise \ref{exer:lemassociatedpols}. 
\item\label{exer:MVGresolvent} 
Prove the case $n=0$ of \eqref{eq:MVGisresolvent} in the proof of
Theorem \ref{thm:MVresolvent}. 
\item\label{exer:lemStieltjesPerroninversion} In this exercise we
prove Lemma \ref{lem:StieltjesPerroninversion}. 
\begin{enumerate}[(a)]
\item Show that for $\ep>0$ 
\[
g(x+i\ep)- g(x-i\ep) = \int_\R \frac{2i\ep}{(s-x)^2 + \ep^2} p(s) W_{i,j}(s) \, d\tau_\mu(s)
\]
\item Show that for $-\infty < a < b < \infty$ 
\[
\frac{1}{2\pi i}\int_a^b g(x+i\ep)- g(x-i\ep)\, dx  = \int_\R \frac{1}{\pi}\Bigl( \arctan\bigl(\frac{b-s}{\ep}\bigr) - \arctan\bigl(\frac{a-s}{\ep}\bigr)\Bigr)  
p(s) W_{i,j}(s) \, d\tau_\mu(s)
\]
\item Finish the proof of Lemma \ref{lem:StieltjesPerroninversion}. 
\end{enumerate}
\item\label{exer:ChristoffelDarbouxMVOP} 
Prove the Christoffel-Darboux formula\index{Christoffel-Darboux formula} for 
the matrix-valued orthonormal polynomials;
\begin{equation*}
(x-y) \sum_{k=0}^{n-1} P_k^\ast(x) P_k(y) = 
P^\ast_n(x)A^\ast_{n-1}P_{n-1}(y) - P^\ast_{n-1}(x)A_{n-1}P_n(y)
\end{equation*}
and derive an expression for 
$\sum_{k=0}^{n-1} P_k^\ast(x) P_k(x)$
as in Exercise \ref{sec:threetermN}.\ref{exer:ChristoffelDarbouxscal}.
\item \label{exer:MVFavard}
Assume that we have matrix-valued polynomials generated 
the recurrence as in Theorem \ref{thm:MVthreetermrecurrence}.
Moreover, assume that $\{ \|A_n\|\}_{n\in \N}$, $\{ \|B_n\|\}_{n\in \N}$ 
are bounded. Conclude that the corresponding Jacobi operator 
is a bounded self-adjoint operator. 
Apply the spectral theorem, and show that there exists a 
matrix-valued weight for which the matrix-valued polynomials 
are orthogonal. 
\item\label{exer:Carleman} Show that $\sum_{n=0}^\infty \| A_n\|^{-1} = \infty$ implies Assumption \ref{ass:MVpolsolnotfree}.
\end{enumerate}

\section{More on matrix weights, matrix-valued orthogonal polynomials and Jacobi operators}\label{sec:MoreMweightsMVOPS-Jacobi}

In Section \ref{sec:MVOP} we have made several assumptions, notably 
Assumption \ref{ass:finitemomentsandstrictlypos-ae} and 
Assumption \ref{ass:MVpolsolnotfree}.  
In this section we discuss how to weaken the 
Assumption \ref{ass:finitemomentsandstrictlypos-ae}.

\subsection{Matrix weights}\label{ssec:MoreMweights}

Assumption \ref{ass:finitemomentsandstrictlypos-ae} is related to the space
$L_C^2(\mu)$ for a matrix-valued measure $\mu$. 
We will keep the assumption that $\tau_\mu$ has infinite support as the 
case that $\tau_\mu$ has finite support reduces to the case 
that $L_C^2(\mu)$ will be finite dimensional and we are in a situation 
of finite discrete matrix-valued orthogonal polynomials.
The second assumption in 
Assumption \ref{ass:finitemomentsandstrictlypos-ae} is that 
$W$ is positive definite $\tau_\mu$-a.e.

\begin{defn} For a positive definite matrix $W\in P_N(\C)$ define 
the projection $P_W \in M_N(\C)$ on the range of $W$. 
\end{defn}

Note that $P_W W= WP_W=W$ and $W(I-P_W)=0=(I-P_W)W$. 

In the context of Theorem \ref{thm:RDforMVmeasure} we have a
Borel measure $\tau_\mu$, so we need to consider measurability 
with respect to the Borel sets of $\R$. 

\begin{lemma}\label{lem:MV-PWmeasurable}
Put $J(x) = P_{W(x)}$, then $J\colon \R \to M_N(\C)$ is measurable. 
\end{lemma}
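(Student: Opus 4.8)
The plan is to realise $J(x) = P_{W(x)}$ as the pointwise limit, in $M_N(\C)$, of manifestly measurable functions of $x$, by approximating the range projection of a positive semi-definite matrix by resolvents. First I would record the inputs: by Theorem \ref{thm:RDforMVmeasure} each entry $W_{i,j}$ lies in $L^1(\tau_\mu)$, so $W\colon \R \to M_N(\C)$ is measurable, and $W(x)\in P_N(\C)$ for $\tau_\mu$-a.e.\ $x$; it suffices to argue on this conull set.

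The key algebraic identity is that for any $W\in P_N(\C)$,
\[
P_W = \lim_{\ep\downarrow 0} W\,(W+\ep I)^{-1}.
\]
To verify it, diagonalise $W = UDU^\ast$ with $U$ unitary and $D=\mathrm{diag}(\la_1,\dots,\la_N)$, $\la_i\geq 0$; then
\[
W\,(W+\ep I)^{-1} = U\,\mathrm{diag}\!\left(\tfrac{\la_i}{\la_i+\ep}\right)U^\ast,
\]
and as $\ep\downarrow 0$ each diagonal entry tends to $1$ when $\la_i>0$ and to $0$ when $\la_i=0$. The limit is $U$ times the diagonal matrix with entries $1$ in the positions where $\la_i>0$ and $0$ elsewhere, times $U^\ast$, which is precisely the orthogonal projection onto $\Ran(W)$, i.e.\ $P_W$.

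Next I would check that, for each fixed $\ep>0$, the map $x\mapsto W(x)\,(W(x)+\ep I)^{-1}$ is measurable. Since $W(x)\in P_N(\C)$ gives $W(x)+\ep I \geq \ep I$, the matrix $W(x)+\ep I$ is invertible with $\det(W(x)+\ep I)\neq 0$; by Cramer's rule the entries of $(W(x)+\ep I)^{-1}$ are polynomials in the entries of $W(x)+\ep I$ divided by this nonvanishing determinant, hence measurable functions of $x$. As a product of two measurable $M_N(\C)$-valued functions, $x\mapsto W(x)\,(W(x)+\ep I)^{-1}$ is then measurable.

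Finally, specialising to $\ep = 1/n$ and letting $n\to\infty$, the identity above shows that $J(x)=P_{W(x)}$ is the pointwise ($\tau_\mu$-a.e.) limit of the measurable functions $x\mapsto W(x)\,(W(x)+\tfrac1n I)^{-1}$, and a pointwise limit of measurable functions is measurable; this completes the argument. There is no real obstacle here: the only points needing care are the justification of the resolvent limit (handled by the diagonalisation above) and the a.e.\ invertibility that licenses Cramer's rule, both of which are secured by restricting to the $\tau_\mu$-conull set on which $W(x)\in P_N(\C)$.
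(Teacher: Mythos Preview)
Your proof is correct and follows the same overall strategy as the paper---realise $P_{W(x)}$ as a pointwise limit of manifestly measurable functions of $W(x)$---but with a genuinely different choice of approximant. The paper uses $n$-th roots: since $0\leq W(x)\leq I$ holds $\tau_\mu$-a.e., one can approximate $t\mapsto t^{1/n}$ uniformly on $[0,1]$ by polynomials, so $\sqrt[n]{W(x)}$ is measurable, and then $J(x)=\lim_{n\to\infty}\sqrt[n]{W(x)}$. Your resolvent formula $P_W=\lim_{\ep\downarrow 0}W(W+\ep I)^{-1}$ has the advantage of being more elementary (no Weierstrass approximation is needed, and Cramer's rule makes the measurability of each approximant explicit as a rational function of the entries), and it does not rely on the bound $W(x)\leq I$. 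On the other hand, the paper's functional-calculus route pays an immediate dividend: it simultaneously yields the measurability of $x\mapsto\sqrt{W(x)}$, which is part of the subsequent corollary, whereas your approach would need a separate (though equally easy) argument for that.
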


\begin{proof} The matrix-entries $W_{i,j}$ are measurable by Theorem \ref{thm:RDforMVmeasure}, so $W$ is 
measurable. Then $p(W)\colon \R \to M_N(\C)$ for any polynomial $p$ is measurable.
Since we have observed that $0\leq W(x) \leq I$ $\tau_\mu$-a.e., we can 
use a polynomial approximation (in sup-norm) of $\sqrt[n]{\cdot}$ on the interval $[0,1]\supset \si(W(x))$ 
$\tau_\mu$-a.e.
Hence, $\sqrt[n]{W(x)}$ is measurable, and next observe that 
$J(x) = \lim_{n\to\infty} \sqrt[n]{W(x)}$ to conclude that $J$ is measurable.  
\end{proof}

\begin{cor}\label{cor:lemMV-PWmeasurable}
The functions 
$d(x) = \dim \Ran\bigl( J(x)\bigr)$ and $\sqrt{W(x)}$ are measurable. 
So the set $D_d =\{ x\in \R \mid \dim \Ran\bigl( W(x)\bigr) =d\}$ is measurable for all $d$. 
\end{cor}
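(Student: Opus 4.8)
The plan is to reduce all three claims to the already-established measurability of $J(x)=P_{W(x)}$ from Lemma \ref{lem:MV-PWmeasurable}, together with the measurability of $W$ from Theorem \ref{thm:RDforMVmeasure}. First I would treat $\sqrt{W(x)}$ by repeating verbatim the polynomial-approximation argument used for $\sqrt[n]{\cdot}$ in the proof of Lemma \ref{lem:MV-PWmeasurable}. Since we have observed that $0\leq W(x)\leq I$ $\tau_\mu$-a.e., the spectrum $\si(W(x))$ lies in $[0,1]$ for $\tau_\mu$-almost all $x$, so I would choose polynomials $p_n$ converging uniformly to $t\mapsto\sqrt{t}$ on $[0,1]$. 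Each $p_n(W(x))$ is a polynomial in the measurable matrix-valued function $W$, hence measurable, and $\sqrt{W(x)}=\lim_{n\to\infty}p_n(W(x))$ $\tau_\mu$-a.e.; a pointwise limit of measurable $M_N(\C)$-valued functions is measurable, giving the claim.

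Next I would handle $d(x)=\dim\Ran\bigl(J(x)\bigr)$. The key observation is that $J(x)=P_{W(x)}$ is an orthogonal projection, so its rank equals its trace:
\[
d(x)=\dim\Ran\bigl(J(x)\bigr)=\Tr\bigl(J(x)\bigr).
\]
Since the trace is a continuous (indeed linear) functional on $M_N(\C)$ and $J$ is measurable by Lemma \ref{lem:MV-PWmeasurable}, the composition $x\mapsto\Tr\bigl(J(x)\bigr)=d(x)$ is measurable. Moreover, because $P_{W(x)}$ is by definition the projection onto the range of $W(x)$, we have $\Ran\bigl(J(x)\bigr)=\Ran\bigl(W(x)\bigr)$, so this same function $d$ computes $\dim\Ran\bigl(W(x)\bigr)$.

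Finally, the sets $D_d$ follow immediately: since $d$ is an integer-valued measurable function, $D_d=\{x\in\R\mid d(x)=d\}=d^{-1}(\{d\})$ is the preimage of a Borel set and is therefore measurable for every $d$. I do not expect any genuine obstacle here; the proof is essentially bookkeeping on top of Lemma \ref{lem:MV-PWmeasurable}. The only point that needs a moment's care is the identity $\dim\Ran(J(x))=\Tr(J(x))$, which converts a rank (a priori not obviously measurable) into a continuous function of the matrix entries; once that is noted, measurability of $d$ and of the level sets $D_d$ is automatic.
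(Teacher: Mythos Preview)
Your argument is correct and is precisely the kind of reasoning the paper leaves implicit by calling this a corollary (no proof is given in the paper). Note in particular that the measurability of $\sqrt{W(x)}$ is already contained in the proof of Lemma~\ref{lem:MV-PWmeasurable} as the case $n=2$ of the $\sqrt[n]{W(x)}$ argument, and your use of $d(x)=\Tr\bigl(J(x)\bigr)$ is exactly the clean way to extract measurability of the rank from measurability of the projection.
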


We now consider all measurable $F\colon \R \to M_N(\C)$ such that 
$\int_\R F(x) W(x) F^\ast(x) \, d\tau_\mu(x) < \infty$, which we denote by $\cL_C^2(\mu)$,  
and we mod out by  
\[
\cN_C = \{ F \in \cL_C^2(\mu) \mid \langle F, F \rangle = 0\}
\]
and then the completion of $\cL_C^2(\mu)/\cN_C$ is the corresponding Hilbert $\text{C}^\ast$-module $L^2_C(\mu)$.  

\begin{lemma}\label{lem:MVdescriptionNC} 
$\cN_C$ is a left $M_N(\C)$-module, and 
\[
\cN_C = \{ F \in \cL^2_C(\mu) \mid  \Ran\bigl(J(x)\bigr)\subset \Ker(F(x)) \quad \tau_\mu\mathrm{-a.e.} \}
\]
\end{lemma}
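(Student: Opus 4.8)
The plan is to establish the set-theoretic description of $\cN_C$ first and then read off the module structure as a formal consequence. The guiding observation is that for any $F \in \cL^2_C(\mu)$ the integrand $G(x) = F(x) W(x) F^\ast(x)$ is positive semi-definite for $\tau_\mu$-almost every $x$: since $W(x)\in P_N(\C)$ $\tau_\mu$-a.e. by Theorem \ref{thm:RDforMVmeasure}, we may factor $G(x) = \bigl(F(x)\sqrt{W(x)}\bigr)\bigl(F(x)\sqrt{W(x)}\bigr)^\ast$, where $\sqrt{W(x)}$ is measurable by Corollary \ref{cor:lemMV-PWmeasurable}. Thus $\langle F, F\rangle = \int_\R G(x)\, d\tau_\mu(x)$ is the integral of a positive semi-definite matrix-valued function, and the first key step is to show that this integral vanishes if and only if $G(x) = 0$ for $\tau_\mu$-almost every $x$.

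For that step I would argue entrywise. Vanishing of $\langle F, F\rangle$ forces each diagonal entry $\int_\R G_{ii}(x)\, d\tau_\mu(x) = 0$; since $G(x) \geq 0$ gives $G_{ii}(x) \geq 0$ a.e., each $G_{ii}$ vanishes off a $\tau_\mu$-null set, and intersecting these finitely many null sets leaves a set of full measure on which all diagonal entries of the positive semi-definite matrix $G(x)$ are zero. For a positive semi-definite matrix a vanishing diagonal entry forces the whole corresponding row and column to vanish, since $\langle G(x) e_i, e_i\rangle = 0$ gives $\sqrt{G(x)}\, e_i = 0$ and hence $G(x) e_i = 0$; therefore $G(x) = 0$ a.e. The converse is immediate.

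Next I would identify the condition $G(x) = 0$ with the kernel containment. Writing $M = F(x)\sqrt{W(x)}$, the identity $G(x) = M M^\ast$ together with the elementary fact that $M M^\ast = 0$ is equivalent to $M = 0$ reduces the claim to $F(x)\sqrt{W(x)} = 0$. Since $\sqrt{W(x)}$ and $W(x)$ have the same range — namely $\Ran\bigl(J(x)\bigr)$, by the definition $J(x) = P_{W(x)}$ and the equality $\Ker(\sqrt{W(x)}) = \Ker(W(x))$ — the equation $F(x)\sqrt{W(x)} = 0$ holds exactly when $\Ran\bigl(J(x)\bigr) \subset \Ker\bigl(F(x)\bigr)$. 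Stringing these equivalences together for a.e.\ $x$ yields the displayed description of $\cN_C$.

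Finally, the module statement follows formally from this characterization: kernel containment is stable under left multiplication, since $\Ker\bigl(F(x)\bigr) \subset \Ker\bigl(AF(x)\bigr)$ for every $A \in M_N(\C)$, and stable under sums, since $\Ker\bigl(F(x)\bigr) \cap \Ker\bigl(G(x)\bigr) \subset \Ker\bigl(F(x) + G(x)\bigr)$; combined with the fact that $AF$ and $F+G$ lie in $\cL^2_C(\mu)$ (their self-inner-products being finite, indeed zero), this shows $\cN_C$ is a left $M_N(\C)$-module. I expect the main obstacle to be the analytic first step, namely rigorously passing from the vanishing of the matrix integral to the $\tau_\mu$-a.e.\ vanishing of the positive semi-definite integrand; the diagonal argument above disposes of it cleanly, and everything else is linear algebra applied pointwise.
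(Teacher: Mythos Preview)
Your proof is correct and takes essentially the same approach as the paper: both reduce $\langle F,F\rangle=0$ to $F(x)\sqrt{W(x)}=0$ $\tau_\mu$-a.e.\ by passing to scalar nonnegative integrands (the paper uses the trace, you use individual diagonal entries---equivalent devices), and then translate this into the range-kernel containment. The only cosmetic difference is that the paper obtains the module statement directly from $\langle AF,AF\rangle=A\langle F,F\rangle A^\ast$, whereas you deduce it from the kernel description; both are immediate.
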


By taking orthocomplements the condition can be rephrased as $\Ran(F^\ast(x))\subset \Ker(J(x))$, and since 
$\Ran(J(x))=\Ran(W(x))$ and 
$\Ker(J(x))=\Ker(W(x))$ it can also be rephrased in terms of the range and kernel of $W$. 

\begin{proof}
$\cN_C$ is a left $M_N(\C)$-module by construction of the $M_N(\C)$-valued inner product. 

Observe, with $J\colon \R \to M_N(\C)$ as in Lemma  \ref{lem:MV-PWmeasurable}, that 
we can split  a function $F\in \cL^2_C(\mu)$ in the functions 
$FJ$ and $F(I-J)$, both again in $\cL^2_C(\mu)$, so that $F=FJ + F(I-J)$ and 
\begin{gather*}
\langle F, F\rangle = 
\langle FJ, FJ\rangle + \langle F(I-J), FJ\rangle 
+ \langle FJ, F(I-J)\rangle + \langle F(I-J), F(I-J)\rangle \\ 
= \langle FJ, FJ\rangle =
\int_\R (FJ)(x) W(x) (JF)^\ast(x) \, d\tau_\mu(x) 
\end{gather*}
since $(I-J(x))W(x) = 0= W(x)(I-J(x))$ $\tau_\mu$-a.e.
It follows that for any $F \in \cL^2_C(\mu)$ with $\Ran(J(x))\subset \Ker(F(x))$ 
$\tau_\mu$-a.e. the function $FJ$ is zero, and then  $F\in \cN_C$. 

Conversely, if $F\in\cN_C$ and hence 
\begin{gather*}
0 = \Tr(\langle F, F\rangle) = 
\int_\R \Tr\bigl( F(x) W(x) F^\ast(x)\bigr) \, d\tau_\mu(x) 
\end{gather*}
Since $\Tr(A^\ast A) = \sum_{k,j=1}^N |a_{k,j}|^2$ we see that all matrix-entries
of $x\mapsto F(x)\bigl(W(x)\bigr)^{1/2}$ are zero $\tau_\mu$-a.e. 
Hence $x\mapsto F(x)\bigl(W(x)\bigr)^{1/2}$ is zero $\tau_\mu$-a.e. 
This gives $x\mapsto \langle W(x) F^\ast(x) v, F^\ast(x) v\rangle =0$ for all $v\in \C^N$ and $\tau_\mu$-a.e.
Hence, $\Ran(F^\ast(x))\subset \Ker(J(x))$ $\tau_\mu$-a.e., and 
so $\Ran\bigl(J(x)\bigr)\subset \Ker(F(x))$ $\tau_\mu$-a.e.
\end{proof}

Similarly, we define the space $\cL_v^2(\mu)$ of measurable functions 
$f\colon \R \to \C^N$ so that 
\[
\int_\R f^\ast(x)\, W(x)\, f(x)\, d\tau_\mu(x) < \infty
\]
where $f$ is viewed as a column vector and $f^\ast$ as a row vector. 
Then we mod out by $\cN_v = \{ f\in\cL_v^2(\mu) \mid \langle f, f\rangle =0\}$ and we 
complete in the metric induced from the inner product 
\[
\langle f, g\rangle = \int_\R g^\ast(x)\, W(x)\, f(x)\, d\tau_\mu(x) = 
\int_\R \langle W(x)\, f(x), g(x) \rangle \, d\tau_\mu(x)
\]
The analogue of Lemma \ref{lem:MVdescriptionNC} for $L^2_v(\mu)$ is discussed in detail in 
\cite[XIII.5.8]{DunfS}.

\begin{lemma}\label{lem:MVdescriptionNv} 
$\cN_v = \{ f \in \cL^2_v(\mu) \mid  f(x)\in \Ker\bigl(J(x)\bigr)\quad \tau_\mu\mathrm{-a.e.} \}$. 
\end{lemma}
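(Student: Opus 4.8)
The plan is to exploit the fact that, in contrast with the matrix-valued situation of Lemma \ref{lem:MVdescriptionNC}, the quantity whose integral defines $\langle f, f\rangle$ is already a nonnegative \emph{scalar}, so no trace argument is needed. First I would observe that for $f\in \cL^2_v(\mu)$ the integrand
\[
g_f(x) = f^\ast(x)\, W(x)\, f(x) = \langle W(x) f(x), f(x)\rangle
\]
is a measurable function of $x$, being built from the measurable matrix function $W$ of Theorem \ref{thm:RDforMVmeasure} and the measurable $f$, and that it is nonnegative since $W(x)\geq 0$ $\tau_\mu$-a.e. Hence $\langle f, f\rangle = \int_\R g_f(x)\, d\tau_\mu(x)$ is the integral of a nonnegative measurable function against the positive measure $\tau_\mu$.

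Next I would apply the standard fact that such an integral vanishes if and only if the integrand vanishes $\tau_\mu$-a.e. Thus $f\in \cN_v$, i.e. $\langle f, f\rangle = 0$, holds precisely when $\langle W(x) f(x), f(x)\rangle = 0$ for $\tau_\mu$-almost every $x$. It then remains to translate this pointwise condition into the stated kernel condition. For a fixed positive semi-definite matrix $W(x)$ one has $\langle W(x) v, v\rangle = \| (W(x))^{1/2} v\|^2$, so $\langle W(x) v, v\rangle = 0$ is equivalent to $(W(x))^{1/2} v = 0$, hence to $W(x) v = 0$, i.e. $v\in \Ker(W(x))$; measurability of $x\mapsto (W(x))^{1/2}$ is already recorded in Corollary \ref{cor:lemMV-PWmeasurable}. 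Finally, since $J(x) = P_{W(x)}$ is the orthogonal projection onto $\Ran(W(x))$ and $W(x)$ is self-adjoint, we have $\Ker(J(x)) = \Ran(W(x))^{\perp} = \Ker(W(x))$.

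Combining these steps gives $f\in \cN_v$ if and only if $f(x)\in \Ker(J(x))$ $\tau_\mu$-a.e., which is the assertion. I expect no genuine obstacle here: unlike the matrix case, the positivity is immediate at the level of scalars, and the only points requiring care are the measurability of the integrand — handled by the cited results — and the elementary identity $\Ker(J(x)) = \Ker(W(x))$ for positive semi-definite $W(x)$.
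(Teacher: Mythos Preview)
Your proposal is correct and is exactly the adaptation of the proof of Lemma \ref{lem:MVdescriptionNC} that the paper intends: the lemma is left as Exercise \ref{exer:lemMVdescriptionNv} with the instruction to follow that proof, and your argument does precisely that, with the appropriate simplification that the integrand is already a nonnegative scalar so no trace step is needed.
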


The proof of Lemma \ref{lem:MVdescriptionNv} is Exercise \ref{exer:lemMVdescriptionNv}. 

\subsection{Matrix-valued orthogonal polynomials}\label{ssec:MoreMVOPs}

In general, for a not-necessarily positive definite
matrix measure $d\mu = W\, d\tau_\mu$ with finite moments we cannot perform a Gram-Schmidt procedure, so we 
have to impose another condition. 
Note that it is guaranteed by Theorem \ref{thm:RDforMVmeasure} that $W$ is positive semi-definite.

\begin{ass}\label{ass:finitemomentsnotstrictlypos-ae}
From now on we assume for Section \ref{sec:MoreMweightsMVOPS-Jacobi} that $\mu$ is a matrix measure for which $\tau_\mu$ 
has infinite support and for which all moments exist, i.e.
$(x\mapsto x^k W_{i,j}(x)) \in L^1(\tau_\mu)$ for all $1\leq i,j\leq N$ and all $k\in \N$.
Moreover, we assume that all 
even moments $M_{2k} = \int_\R x^{2k} \, d\mu(x) = \int_\R x^{2k} W(x)\, d\tau_\mu(x)$ are 
positive definite, $M_{2k} \in P_N^o(\C^N)$, for all $k\in \N$.
\end{ass}

Note that Lemma \ref{lem:MVdescriptionNC} shows that $x^k\notin \cN_C$ (except for the trivial case),
so that $M_{2k}\not=0$. This, however, does not guarantee that $M_{2k}$ is positive definite. 

\begin{thm}\label{thm:MVthreetermrecurrence-noAssstrictly}
Under the Assumption \ref{ass:finitemomentsnotstrictlypos-ae} there exists 
a sequence of matrix-valued orthonormal polynomials $\{ P_n\}_{n\in \N}$ with 
regular leading coefficients. There exist sequences of matrices 
$\{ A_n\}_{n\in \N}$, $\{ B_n\}_{n\in \N}$ so that
$\det(A_n)\not=0$ for all $n\in \N$ and $B_n^\ast=B_n$ for all $n\in \N$, so that 
\begin{equation*}
z P_n(z) = 
\begin{cases} A_n P_{n+1}(z) + B_n P_n(z) + A_{n-1}^\ast P_{n-1}(z), & n\geq 1 \\
A_0 P_{1}(z) + B_0 P_0(z), & n=0.
\end{cases}
\end{equation*}
\end{thm}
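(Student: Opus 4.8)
The plan is to imitate the scalar Gram--Schmidt construction inside the Hilbert $\text{C}^\ast$-module $L^2_C(\mu)$, taking care that each quadratic norm one inverts is genuinely positive definite. One starts with $P_0 = M_0^{-1/2}$, which is legitimate because $M_0 = M_{2\cdot 0}$ is positive definite by Assumption \ref{ass:finitemomentsnotstrictlypos-ae}. Inductively, given orthonormal $P_0,\dots,P_{n-1}$ with regular leading coefficients, I would form the monic residual $R_n(x) = x^n I - \sum_{k=0}^{n-1} \langle x^n I, P_k\rangle P_k(x)$, which by left-linearity of the inner product satisfies $\langle R_n, P_k\rangle = 0$ for $k<n$ and has leading coefficient $I$. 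To obtain $P_n$ I then set $P_n = \langle R_n, R_n\rangle^{-1/2} R_n$, so that $\langle P_n, P_n\rangle = I$ and $\lc(P_n) = \langle R_n, R_n\rangle^{-1/2}$ is regular. This last step is valid precisely when $\langle R_n, R_n\rangle$ is positive definite, so the entire existence statement hinges on that positivity.

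The crux is therefore to prove $\langle R_n, R_n\rangle \in P^o_N(\C)$ for every $n$. Writing $H_n = (M_{i+j})_{0\le i,j\le n}$ for the block Hankel matrix, $\langle R_n, R_n\rangle$ is exactly the Schur complement of $H_{n-1}$ in $H_n$, so the claim is equivalent to $H_n>0$ for all $n$. By Lemma \ref{lem:MVdescriptionNv}, $H_n$ fails to be positive definite if and only if there is a nonzero $\C^N$-valued polynomial $q$ of degree at most $n$ with $q(x)\in \Ker\bigl(J(x)\bigr)$ for $\tau_\mu$-almost every $x$, i.e. a nonzero element of $\cN_v$ among the vector polynomials. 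I expect this to be the main obstacle. Assumption \ref{ass:finitemomentsnotstrictlypos-ae} disposes immediately of the constant case: if the constant polynomial $q\equiv v$ with $v\neq 0$ belonged to $\cN_v$, then $v^\ast M_0 v = \int_\R v^\ast W(x) v\, d\tau_\mu = 0$, contradicting $M_0>0$, and more generally the even moments control $\int_\R x^{2k} v^\ast W(x) v\, d\tau_\mu$ for constant $v$. Excluding genuinely non-constant $q$ is the delicate point, and I would expect to have to invoke positivity of the full family of moment matrices $H_n$ rather than the even moments alone.

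Granting the construction of $\{P_n\}_{n\in\N}$, the three-term recurrence is routine. Since $x$ is real on $\supp(\tau_\mu)$, multiplication by $x$ is symmetric for the $M_N(\C)$-valued inner product, $\langle xF, G\rangle = \langle F, xG\rangle$, as one reads off directly from $\langle F,G\rangle = \int_\R F W G^\ast\, d\tau_\mu$. As $xP_n$ is a polynomial of degree $n+1$, its expansion in the orthonormal family reads $xP_n = \sum_{k=0}^{n+1} \langle xP_n, P_k\rangle P_k$. For $k\le n-2$ the polynomial $xP_k$ has degree $k+1<n$, whence $\langle xP_n, P_k\rangle = \langle P_n, xP_k\rangle = 0$, so only $k\in\{n-1,n,n+1\}$ survive. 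Setting $A_n = \langle xP_n, P_{n+1}\rangle$ and $B_n = \langle xP_n, P_n\rangle$, the identity $\langle F,G\rangle = \langle G,F\rangle^\ast$ gives $B_n^\ast = B_n$ together with $\langle xP_n, P_{n-1}\rangle = \langle P_n, xP_{n-1}\rangle = \langle xP_{n-1}, P_n\rangle^\ast = A_{n-1}^\ast$, which is exactly the asserted relation. Finally $A_n$ is regular because $A_n\,\lc(P_{n+1}) = \lc(P_n)$ with both leading coefficients regular, and the case $n=0$ is recovered by omitting the $P_{n-1}$ term.
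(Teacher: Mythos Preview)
Your architecture matches the paper's: Gram--Schmidt to produce monic $R_n$, set $P_n=\langle R_n,R_n\rangle^{-1/2}R_n$, and deduce the three-term recurrence from symmetry of multiplication by $x$ together with the degree count and regularity of leading coefficients. The recurrence part of your write-up is correct and is exactly what the paper does (there left as an exercise).

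The gap you flag is real, and the paper's treatment does not in fact close it. The paper writes $R_n=x^nI+Q$ and claims $\langle x^n,Q\rangle+\langle Q,x^n\rangle+\langle Q,Q\rangle\geq 0$, so that $M_{2n}>0$ forces $\langle R_n,R_n\rangle>0$. But with $C_{n,m}=-\langle x^n,R_m\rangle\langle R_m,R_m\rangle^{-1}$ one computes
\[
\langle x^n,Q\rangle+\langle Q,x^n\rangle \,=\, -2\sum_{m<n}\langle x^n,R_m\rangle\,\langle R_m,R_m\rangle^{-1}\,\langle x^n,R_m\rangle^\ast \,\leq\, 0,
\]
equivalently $\langle R_n,R_n\rangle = M_{2n}-\sum_{m<n}\langle x^n,R_m\rangle\langle R_m,R_m\rangle^{-1}\langle x^n,R_m\rangle^\ast$ is precisely the Schur complement you identified. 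So the paper's displayed identity has the sign (and an inverse) wrong, and its argument does not go through. Your suspicion that positive definiteness of the even moments alone is insufficient is correct: on $[-1,1]$ with Lebesgue measure and $W(x)=\bigl(\begin{smallmatrix}1&x\\ x&x^2\end{smallmatrix}\bigr)$ one has $M_{2k}=\mathrm{diag}\bigl(\tfrac{2}{2k+1},\tfrac{2}{2k+3}\bigr)>0$ for all $k$, yet $q(x)=\bigl(\begin{smallmatrix}-x\\ 1\end{smallmatrix}\bigr)$ satisfies $W(x)q(x)=0$ everywhere, so $q\in\cN_v$ and $H_1$ is singular; hence $\langle R_1,R_1\rangle$ is not positive definite. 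What is genuinely needed is positive definiteness of every block Hankel matrix $H_n$ (equivalently, that no nonzero vector polynomial lies in $\cN_v$), not merely of the even moments.
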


\begin{proof} Instead of showing the existence of the orthonormal polynomials
we show the existence of the monic matrix-valued orthogonal polynomials $R_n$
so that $\langle R_n, R_n\rangle$ positive definite for all $n\in \N$.
Then $P_n = \langle R_n, R_n\rangle^{-1/2} R_n$ gives a sequence of matrix-valued orthonormal polynomials.

We start with $n=0$, then $R_0(x)=I$, and $\langle R_0, R_0\rangle = M_0 >0$ by 
Assumption \ref{ass:finitemomentsnotstrictlypos-ae}. We now assume that 
the monic matrix-valued orthogonal polynomials $R_k$
so that $\langle R_k, R_k\rangle$ is positive definite have 
been constructed for all $k <n$. We now prove the statement for $k=n$.

Put, since $R_n$ is monic,  
\[
R_n(x) = x^nI + \sum_{m=0}^{n-1} C_{n,m} R_m(x), \qquad C_{n,m}\in M_N(\C)
\]
The orthogonality requires $\langle R_n, R_m\rangle =0$ for $m<n$. This gives 
the solution 
\[
C_{n,m} = - \langle x^n, R_m\rangle \langle R_m, R_m\rangle^{-1}, \qquad m<n
\]
which is well-defined by the induction hypothesis. It remains to show
that $\langle R_n, R_n \rangle>0$, i.e. $\langle R_n, R_n \rangle$ is positive definite. 
Write $R_n(x) = x^nI + Q(x)$, so that 
\begin{gather*}
\langle R_n, R_n \rangle = \int_\R x^n W(x) x^n \, d\tau_\mu(x) + 
\langle x^n, Q \rangle +  \langle Q, x^n \rangle + \langle Q, Q \rangle
\end{gather*}
so that the first term equals the  positive definite moment $M_{2n}$ 
by Assumption \ref{ass:finitemomentsnotstrictlypos-ae}. 
It suffices to show that the other three terms are positive semi-definite, so that 
the sum is positive definite. 
This is clear for $\langle Q, Q \rangle$, and a calculation shows 
\begin{gather*}
\langle x^n, Q \rangle +  \langle Q, x^n \rangle = 
2 \sum_{m=0}^{n-1} \langle x^n, R_m\rangle \langle R_m,R_m\rangle \langle R_m, x^n\rangle 
\end{gather*}
and, since with $B>0$ we have $ABA^\ast \geq 0$, the induction 
hypothesis shows that these terms are also positive definite. 
Hence $\langle R_n, R_n \rangle$ is  positive definite. 

Establishing that the corresponding orthonormal polynomials 
satisfy a three-term recurrence relation is done as in Section \ref{sec:MVOP}, 
see Exercise \ref{exer:MoreMVOPs3termrecurrence}.
\end{proof}

We can now go through the proofs of Section \ref{sec:MVOP} and see that we
can obtain in the same way the spectral decomposition of 
the self-adjoint operator $(J^\ast, \cD^\ast)$ 
in Theorem \ref{thm:MVspectraldecomposition}, where the 
Assumption \ref{ass:finitemomentsandstrictlypos-ae} is 
replaced by Assumption \ref{ass:finitemomentsnotstrictlypos-ae}
and the Assumption \ref{ass:MVpolsolnotfree} is still in force.

\begin{cor}\label{cor:thmMVspectraldecomposition} The 
spectral decomposition of the self-adjoint
extension $(J^\ast, \cD^\ast)$ of $(J,\cD)$ of 
Theorem \ref{thm:MVspectraldecomposition} remains valid. 
The multiplicity of the spectrum is given 
by the function $d\colon \si(J^\ast)\to \N$ $\tau_\mu$-a.e. 
where $d$ is defined in Corollary \ref{cor:lemMV-PWmeasurable}.
\end{cor}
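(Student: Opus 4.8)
The plan is to re-run the entire development of Section \ref{sec:MVOP} essentially verbatim, replacing only the two places where strict positive definiteness of $W$ was used. First I would observe that Assumption \ref{ass:finitemomentsandstrictlypos-ae} entered solely through (a) the Gram--Schmidt construction of the orthonormal polynomials $P_n$, and (b) the identification of the Hilbert $\text{C}^\ast$-module $L^2_C(\mu)$ and the Hilbert space $L^2_v(\mu)$ as spaces of honest functions. Under Assumption \ref{ass:finitemomentsnotstrictlypos-ae} the first is now supplied by Theorem \ref{thm:MVthreetermrecurrence-noAssstrictly}, giving orthonormal polynomials with regular leading coefficients together with the three-term recurrence, while the second is supplied by the quotient construction of Section \ref{ssec:MoreMweights}, where $L^2_C(\mu)=\cL^2_C(\mu)/\cN_C$ and $L^2_v(\mu)=\cL^2_v(\mu)/\cN_v$ with $\cN_C$, $\cN_v$ described in Lemmas \ref{lem:MVdescriptionNC} and \ref{lem:MVdescriptionNv}. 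Every subsequent object --- the polynomials of the second kind \eqref{eq:defMVpolssecondkind}, the recurrence of Lemma \ref{lem:MV2ndkindsolverecurrence}, the Liouville--Ostrogradsky identities of Lemma \ref{lem:MVLiouvilleOstrogradsky}, the free solution $F_k(z)=Q_k(z)+P_k(z)S(z)$ of Lemma \ref{lem:MVfreesolution}, the resolvent of Theorem \ref{thm:MVresolvent}, and the Stieltjes--Perron inversion of Lemma \ref{lem:StieltjesPerroninversion} --- uses only the recurrence relations and the Bessel inequality for the Hilbert $\text{C}^\ast$-module, neither of which sees the degeneracy of $W$. Hence the isometry $\cU$ of \eqref{eq:spectraldecomposition}, the intertwining $\cU J^\ast = M_z\,\cU$, and its unitarity onto $L^2_v(\mu)$ go through unchanged; in particular Assumption \ref{ass:MVpolsolnotfree} continues to force essential self-adjointness of $(J,\cD)$, which is precisely what makes $\cU$ surjective. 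This establishes the first assertion.

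For the multiplicity I would realize $L^2_v(\mu)$ as a direct integral over $(\R,\tau_\mu)$ whose fibre over $x$ is $\C^N$ carrying the (possibly degenerate) form $\langle W(x)\cdot,\cdot\rangle$, under which $M_z$ acts fibrewise as multiplication by the scalar $x$. By Lemma \ref{lem:MVdescriptionNv}, passing from $\cL^2_v(\mu)$ to $L^2_v(\mu)=\cL^2_v(\mu)/\cN_v$ amounts fibrewise to quotienting out $\Ker\bigl(J(x)\bigr)=\Ker\bigl(W(x)\bigr)$, so the fibre Hilbert space is $\C^N/\Ker\bigl(W(x)\bigr)\cong \Ran\bigl(W(x)\bigr)$, of dimension $d(x)=\dim\Ran\bigl(J(x)\bigr)$ as in Corollary \ref{cor:lemMV-PWmeasurable}. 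Since the spectral multiplicity of a multiplication operator on a direct integral equals the fibre dimension, and since $d$ is measurable with measurable level sets $D_d$ by Corollary \ref{cor:lemMV-PWmeasurable}, the multiplicity of $(J^\ast,\cD^\ast)$ at $x$ equals $d(x)$ for $\tau_\mu$-almost every $x$. This specialises to the uniform value $N$ of Remark \ref{rmk:MVmultiplicity} precisely when $W(x)>0$ $\tau_\mu$-a.e., and yields the second assertion.

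The main obstacle is making the direct-integral identification rigorous and reading the multiplicity off from it. One must verify that the quotient $\cL^2_v(\mu)/\cN_v$ is genuinely unitarily equivalent to the measurable field $x\mapsto \C^N/\Ker\bigl(W(x)\bigr)$, which requires a measurable choice of orthonormal frame for $\Ran\bigl(W(x)\bigr)$ over each level set $D_d$, and then invoking the uniqueness theory for the multiplicity of multiplication operators on such fields. This measurable-field bookkeeping is exactly what is carried out in \cite[XIII.5.6--11]{DunfS} (see in particular \cite[XIII.5.8]{DunfS} for the analogue of Lemma \ref{lem:MVdescriptionNv}) and in the general diagonalisation theory of \cite[\S~VII.1]{Wern}, to which I would defer for the technical measure-theoretic details rather than reproving them here.
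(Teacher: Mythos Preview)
Your proposal is correct and matches the paper's approach exactly. The paper itself does not give a detailed proof of this corollary: it simply states that one ``can now go through the proofs of Section~\ref{sec:MVOP}'' with Assumption~\ref{ass:finitemomentsandstrictlypos-ae} replaced by Assumption~\ref{ass:finitemomentsnotstrictlypos-ae}, and then remarks (immediately after the corollary) that the operator is abstractly realised as multiplication on a direct integral $\int H_{d(x)}\,d\nu(x)$---precisely your fibre-dimension argument, with a reference to \cite[Ch.~VII]{Ston} rather than \cite{DunfS} for the direct-integral machinery. Your write-up is in fact more explicit than the paper's, particularly in isolating exactly where strict positivity of $W$ was used and in spelling out the quotient-by-$\cN_v$ identification of the fibres.
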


Corollary \ref{cor:thmMVspectraldecomposition} means that 
the operator $(J^\ast, \cD^\ast)$ is abstractly realised as a 
multiplication operator on a direct integral 
of Hilbert spaces $\int H_{d(x)}\, d\nu(x)$,
where $H_{d}$ is the Hilbert space of dimension $d$ 
and $\nu$ is a measure on the spectrum of $(J^\ast, \cD^\ast)$,
see e.g. \cite[Ch.~VII]{Ston} for more information. 

\subsection{Link to case of $\ell^2(\Z)$}\label{ssec:MVOPlinktoell2Z}

In \cite[\S~VII.3]{Bere} Berezanski\u\i\  discusses how three-term recurrence operators on 
$\ell^2(\Z)$ can be related to $2\times 2$-matrix recurrence on $\N$, so that we 
are in the case $N=2$ of Section \ref{sec:MVOP}. Let us discuss briefly a 
possibility to do this, following \cite[\S~VII.3]{Bere}, see
also Exercise  \ref{sec:threetermZ}.\ref{exer:2times2}. 

We identify $\ell^2(\Z)$ with $\ell^2(\C^2) = \ell^2(\N) \hat \otimes \C^2$ by
\begin{equation}\label{eq:MVidentificationl2Zandl2NC2}
e_n \mapsto e_n \otimes \begin{pmatrix} 1 \\ 0 \end{pmatrix}, \qquad 
e_{-n-1} \mapsto e_n \otimes \begin{pmatrix} 0 \\ 1 \end{pmatrix}, \quad n\in \N,
\end{equation}
where $\{e_n\}_{n\in \Z}$ denotes the standard orthonormal
basis of $\ell^2(\Z)$ and $\{e_n\}_{n\in \N}$ the standard orthonormal
basis of $\ell^2(\N)$, as before. 
The identification \eqref{eq:MVidentificationl2Zandl2NC2} is highly non-canonical. 
By calculating $L(ae_n + be_{-n-1})$ using Section \ref{sec:threetermZ} 
we get the corresponding operator $J$ acting on $\cD \subset \ell^2(\C^2)$ 
\begin{gather*}
\sum_{n=0}^\infty e_n \otimes v_n \mapsto e_0\otimes (A_0v_1+B_0v_0) 
+ \sum_{n=1}^\infty e_n \otimes (A_nv_{n+1} + B_n v_n + A^\ast_{n-1}v_{n-1}) \\
A_n = \begin{pmatrix} a_n & 0 \\ 0 & a_{-n-2} \end{pmatrix}, \ n\in \N,  \quad
B_n = \begin{pmatrix} b_n & 0 \\ 0 & b_{-n-1} \end{pmatrix}, \ n\geq 1, \quad
B_0 = \begin{pmatrix} b_0 & a_{-1} \\ a_{-1} & b_{-1} \end{pmatrix}
\end{gather*}

Using the notation of Section \ref{sec:threetermZ}, let $S^\pm_z$ be spanned
by $\phi_z= \sum_{n\in\Z} (\phi_z)_n f_n \in S^+_z$ and 
$\Phi_z= \sum_{n\in\Z} (\Phi_z)_n f_n \in S^-_z$. Then under the 
correspondence of this section, the $2\times 2$-matrix-valued function 
\begin{gather*}
F_n(z) = \begin{pmatrix} (\phi_z)_n & 0 \\ 0 & (\Phi_z)_{-n-1}\end{pmatrix}  
\in S^+_z \\
z F_n(z) = A_n F_{n+1}(z) + B_n F_n(z) + A^\ast_{n-1} F_{n-1}(z), \qquad n\geq 1. 
\end{gather*}

The example discussed in Examples  \ref{exa:ASC-q1}, \ref{exa:ASC-q2}, 
\ref{exa:ASC-q3} shows that the multiplicity of each element in 
the spectrum is $1$, so we see that the corresponding 
$2\times 2$-matrix weight measure is purely discrete and that 
$d(\{q^n\})=1$ for each $n\in \N$.

\subsection{Reducibility}\label{ssec:MVreducibility}\index{reducibility} 

Naturally, if we have positive Borel measures $\mu_p$, $1\leq p \leq N$, 
we can obtain a matrix-valued measure $\mu$ by 
putting
\begin{equation}\label{eq:MVtrivmeasure}
\mu(B) = T 
\begin{pmatrix} 
\mu_1(B) & 0 & \cdots & 0 \\
0 & \mu_2(B) & \cdots & 0 \\
\vdots& & \ddots & \vdots \\
0 & \cdots & 0 & \mu_N(B)
\end{pmatrix} T^\ast 
\end{equation}
for an invertible $T\in M_N(\C)$. Denoting the scalar-valued
orthonormal polynomials for the measure $\mu_i$ by $p_{i;n}$, then 
\begin{equation*}
P_n(x) = 
\begin{pmatrix} 
p_{1;n}(x) & 0 & \cdots & 0 \\
0 & p_{2;n}(x) & \cdots & 0 \\
\vdots& & \ddots & \vdots \\
0 & \cdots & 0 & p_{N;n}(x)
\end{pmatrix} T^{-1} 
\end{equation*}
are the corresponding matrix-valued orthogonal polynomials. 
Similarly, we can build up a matrix-valued measure of size $(N_1+N_2)\times (N_1+N_2)$
starting from a $N_1\times N_1$-matrix measure and a  $N_2\times N_2$-matrix measure. 
In such cases the Jacobi operator $J$ can be reduced as well. 

We consider the real vector space 
\begin{equation}\label{eq:definition_Acal}
\scA = \scA(\mu) =\{ T\in M_{N}(\C)\mid T\mu(B) = \mu(B) T^* \,\,\, \forall B\in \scB \},
\end{equation}
and the commutant algebra 
\begin{equation}\label{eq:definition_A}
A=A(\mu)=\{ T\in M_{N}(\C)\mid  T\mu(B) = \mu(B) T \,\,\, \forall B\in \scB\}, 
\end{equation}
which is a $\ast$-algebra,  
for any matrix-valued measure $\mu$. 

Then, by Tirao and Zurri\'an \cite[Thm.~2.12]{TiraZ}, the weight splits into
a sum of smaller dimensional weights if and only of $\R I \varsubsetneq \scA$. 
On the other hand, the commutant algebra $A$ is easier to study, and 
in \cite[Thm.~2.3]{KoelP}, it is proved that $\scA\cap \scA^\ast = A_h$,
the Hermitean elements in the commutant algebra $A$, so that 
we immediately get that $\scA = A_h$ if $\scA$ is $\ast$-invariant.
The $\ast$-invariance of $\scA$ can then be studied using 
its relation to moments, quadratic norms, the monic polynomials, and 
the corresponding coefficients in the three-term recurrence relation,
see \cite[Lemma~3.1]{KoelP}. See also Exercise \ref{exer:commutant}. 

In particular, for the case of the matrix-valued Gegenbauer polynomials 
of Example \ref{exa:MVGegenbauerpols}, we have that 
$A=\C I \oplus \C J$, where $J\colon \C^{2\ell+1}\to \C^{2\ell+1}$,
$e_n\mapsto e_{2\ell-n}$ is a self-adjoint involution, see 
\cite[Prop.~2.6]{KoeldlRR}, and that $\scA$ is $\ast$-invariant, 
see \cite[Example~4.2]{KoelP}. See also Exercise \ref{exer:MVGegenbauercommutant}. 
So in fact, we can decompose the weight in Example \ref{exa:MVGegenbauerpols}
into a direct sum of two weights obtained by projecting on the 
$\pm 1$-eigenspaces of $J$, and then there is no further reduction possible.

\subsection{Exercises}

\begin{enumerate}[1.]
\item\label{exer:lemMVdescriptionNv} 
Prove Lemma \ref{lem:MVdescriptionNv} following Lemma \ref{lem:MVdescriptionNC}. 
\item\label{exer:MoreMVOPs3termrecurrence} Prove the 
statement on the three-term recurrence relation of Theorem 
\ref{thm:MVthreetermrecurrence-noAssstrictly}. 
\item\label{exer:commutant} 
Consider the following $2\times 2$-weight function on $[0,1]$ with respect to
the Lebesgue measure;
\begin{gather*}
W(x)=
\begin{pmatrix}
x^2+x & x \\ x & x 
\end{pmatrix}
\end{gather*}
Show that $W(x)$ is  positive definite a.e. on $[0,1]$.
Show that the commutant algebra $A$ is trivial, and 
that the vector space $\scA$ is non-trivial.
\end{enumerate}

\section{The $J$-matrix method}\label{sec:JMatrixmethod}

The $J$-matrix method\index{J-matrix method@$J$-matrix method} 
consists of realising an operator to 
be studied, e.g. a Schr\"odinger operator, as 
a recursion operator in a suitable basis.
If this recursion is a three-term recursion 
then we can try to bring orthogonal polynomials 
in play.
In case, the recursion is more generally a $2N+1$-term recursion we can 
use a result of Dur\'an and Van Assche \cite{DuraVA}, see also 
\cite[\S 4]{Berg}, to write it as a three-term recursion 
for $N\times N$-matrix-valued polynomials.
The $J$-matrix method is used for a number of physics models, 
see e.g. references in \cite{IsmaK}.

We start with the case of  a linear operator $L$ acting on a suitable 
function space; typically $L$ is a differential
operator, or a difference operator. 
We look for linearly independent functions $\{ y_n\}_{n=0}^\infty$ 
such that $L$ is tridiagonal with respect to these
functions, i.e. there exist constants $A_n$, $B_n$, $C_n$ 
($n\in\N$)  such that 
\begin{equation}\label{eq:gentridiagonalform}
L\, y_n  =\begin{cases} A_n\, y_{n+1} + B_n\, y_n + C_n\, y_{n-1}, & n\geq 1, \\
A_0\, y_1 + B_0\, y_0, & n=0. 
\end{cases} 
\end{equation}
Note that we do not assume that the functions $\{y_n\}_{n\in\N}$ 
form an orthogonal or orthonormal basis. 
We combine both equations by assuming $C_0=0$. 
Note also that in case some $A_n=0$ or $C_n=0$, we can have invariant subspaces
and we need to consider the spectral decomposition on 
such an invariant subspaces, and on its complement if 
this is also invariant and otherwise on the corresponding quotient space.
An example of this will be encountered in 
Section \ref{ssec:SchrodingerMorsepotential}. 

It follows that $\sum_{n=0}^\infty p_n(z) \, y_n$ is 
a formal eigenfunction of $L$ for the eigenvalue $z$ if
$p_n$ satisfies
\begin{equation}\label{eq:recurrencepn}
z\, p_n(z) = C_{n+1}\, p_{n+1}(z) + B_n\, p_n(z) + A_{n-1}\, p_{n-1}(z)
\end{equation}
for $n\in \N$ with the convention $A_{-1}=0$. In case $C_n\not= 0$ for
$n\geq 1$, we can define $p_0(z)=1$ and use \eqref{eq:recurrencepn}
recursively to find $p_n(z)$ as a polynomials of degree $n$ 
in $z$. 
In case $A_n C_{n+1}>0$, $B_n\in\R$, $n\geq 0$, the polynomials $p_n$ are orthogonal
with respect to a positive measure on $\R$ by Favard's theorem, see 
Corollary \ref{cor:Favardsthm}, 
and the measure
and its support then can give information on $L$ in case
$\{ y_n\}_{n=0}^\infty$ gives a basis for the function space
on which $L$ acts, or for $L$ restricted to the closure of the 
span $\{ y_n\}_{n=0}^\infty$ (which depends on the function
space under consideration).
Of particular interest is whether we can match the corresponding 
Jacobi operator to a well-known class of orthogonal polynomials, e.g. 
from the ($q$-)Askey scheme. 

We illustrate this method by a couple of examples.
In the first example in Section \ref{ssec:SchrodingerMorsepotential}, 
an explicit Schr\"odinger operator is considered. 
The Schr\"odinger operator with the Morse potential is used 
in modelling potential energy in diatomic molecules, and it is
physically relevant since it allows for bound states, which 
is reflected in the occurrence of an invariant 
finite-dimensional subspace of the corresponding Hilbert space
in Section \ref{ssec:SchrodingerMorsepotential}.

In the second example we use an explicit differential 
operator for orthogonal polynomials to construct 
another differential operator suitable for 
the $J$-matrix method. We work out the details in 
a specific case. 

In the third example we extend the method to 
obtain an operator for which we have a $5$-term recurrence 
relation, to which we associate $2\times 2$-matrix valued
orthogonal polynomials.


\subsection{Schr\"odinger equation with Morse potential}\label{ssec:SchrodingerMorsepotential}

The Schr\"odinger equation with Morse potential is studied by
Broad \cite{Broa-LNM} and Diestler \cite{Dies} in the study of 
a larger system of coupled equations used in
modeling atomic dissocation. The Schr\"odinger equation with Morse potential
is used to model a two-atom molecule in this larger system. 
We use the approach as discussed in \cite[\S 3]{IsmaK}. 

The Schr\"odinger equation with Morse potential\index{Schr\"odinger equation with Morse potential} is
\begin{equation}\label{eq:SchrodingerMorsepotential}
-\frac{d^2}{dx^2} + q, \qquad q(x) = b^2(e^{-2x}-2e^{-x}), 
\end{equation}
which is an unbounded operator on $L^2(\R)$. Here $b>0$
is a constant. 
It is a self-adjoint operator with respect to its form domain,
see \cite[Ch.~5]{Sche} and $\lim_{x\to\infty} q(x) =0$, and
$\lim_{x\to-\infty} q(x) =+\infty$. 
Note $\min (q)=-b^2$, so that by general results in
scattering theory the discrete spectrum is contained
in $[-b^2,0]$ and it consists of isolated points, and we 
show how they occur in this approach. 

We look for solutions to $-f''(x) + q(x)f(x)=\ga^2 f(x)$.
Put $z= 2be^{-x}$ so that $x\in\R$ corresponds to $z\in(0,\infty)$,
and let $f(x)$ correspond to $\frac{1}{\sqrt{z}} g(z)$, then
\begin{equation}\label{eq:Whittakereq}
g''(z) + \frac{(-\frac14 z^2+bz +\ga^2 +\frac14)}{z^2} g(z) = 0.
\end{equation}
which is precisely the Whittaker equation\index{Whittaker equation} 
with $\ka=b$, $\mu=\pm i\ga$, and
the Whittaker integral transform gives the spectral decomposition
for this Schr\"{o}dinger equation, see e.g. \cite[\S~IV]{Fara}.
In particular, depending on the value of $b$ the
Schr\"odinger equation has finite discrete spectrum, i.e.
bound states, see the Plancherel formula \cite[\S~IV]{Fara}, and
in this case the Whittaker function terminates and can be written
as a Laguerre polynomial of type $L_m^{(2b-2m-1)}(x)$,\index{orthogonal polynomials!Laguerre polynomials} 
for those $m\in \N$ such that $2b-2m>0$. 
So the spectral decomposition can be done directly using the 
Whittaker transform.

We now indicate how the spectral decomposition of 
three-term recurrence (Jacobi) operators can be used to find the 
spectral decomposition as well. 
The Schr\"{o}dinger operator is tridiagonal
in a basis introduced by Broad \cite{Broa-LNM} and Diestler \cite{Dies}.
Put $N=\# \{ n\in\N \, |\,  n < b-\frac12 \}$, i.e. 
$N= \lfloor b+\frac12 \rfloor$, so that $2b-2N > -1$, and we assume
for simplicity $b\notin \frac12+\N$. 
Let $T\colon L^2(\R)\to L^2((0,\infty); z^{2b-2N}e^{-z}dz)$ be the map
$(Tf)(z) = z^{N-b-\frac12}e^{\frac12 z}\, f(\ln(2b/z))$, then
$T$ is unitary, and
\[
T\bigl(-\frac{d^2}{dx^2} + q\bigr) T^\ast=L \qquad
L= M_A \frac{d^2}{dz^2} + M_B \frac{d}{dz} + M_C
\]
where $M_f$ denotes the operator of multiplication by $f$.
Here $A(z)=-z^2$, $B(z)=(2N-2b-2 + z)z$, $C(z)=-(N-b-\frac12)^2 + z(1-N)$. 
Using the second-order differential equation, see 
e.g. \cite[(4.6.15)]{Isma}, \cite[(1.11.5)]{KoekS}, \cite[(5.1.2)]{Szeg}, for
the Laguerre polynomials, the three-term recurrence
relation for the Laguerre polynomials, 
see e.g. \cite[(4.6.26)]{Isma}, \cite[(1.11.3)]{KoekS}, \cite[(5.1.10)]{Szeg},
and the differential-recursion formula 
\[
x\frac{d}{dx} L^{(\al)}_n(x) \, = \, n\, L^{(\al)}_n(x) \, 
- (n+\al)\, L^{(\al)}_{n-1}(x)
\]
see \cite[Case II]{AlSaC}, 
for the Laguerre polynomials we find that this
operator is tridiagonalized by the Laguerre polynomials
$L_n^{(2b-2N)}$. 

Translating this back to the Schr\"{o}dinger
operator we started with,  we obtain 
\begin{equation*}
y_n(x) = (2b)^{(b-N+\frac12)}\sqrt{\frac{n!}{\Ga(2b-2N+n+1)}}
e^{-(b-N+\frac12)x} e^{-be^{-x}}
\, L^{(2b-2N)}_n(2be^{-x}) 
\end{equation*}
as an orthonormal basis for $L^2(\R)$ such that 
\begin{equation}\label{eq:ttrpart13}
\begin{split}
\Bigl(-\frac{d^2}{dx^2} + q\Bigr) y_n \, = &
\, -(1-N+n)\sqrt{(n+1)(2b-2N+n+1)}\, y_{n+1}  \\ 
\, &+\, \Bigl( -(N-b-\frac12)^2 + (1-N+n)(2n+2b-2N+1) - n  \Bigr)\, y_n \\
\, &\, -(n-N) \sqrt{n(2b-2N+n)}\,\,  y_{n-1}.
\end{split}
\end{equation}
Note that \eqref{eq:ttrpart13} is written in a symmetric 
tridiagonal form. 

The space $\cH^+$ spanned by $\{ y_n\}_{n=N}^\infty$ and the
space $\cH^-$ spanned by $\{ y_n\}_{n=0}^{N-1}$ are invariant with
respect to $-\frac{d^2}{dx^2} + q$ which follows 
from \eqref{eq:ttrpart13}.
Note that $L^2(\R) = \cH^+ \oplus \cH^-$, $\dim (\cH^-)=N$.
In particular, there will be discrete eigenvalues, hence bound 
states, for the restriction to $\cH^-$. 

In order to determine the spectral properties of the
Schr\"{o}dinger operator, we first consider
its restriction on the finite-dimensional invariant subspace
$\cH^-$. We look for eigenfunctions $\sum_{n=0}^{N-1} P_n(z)\, y_n$ for
eigenvalue $z$, so we need to solve 
\begin{equation*}
\begin{split}
z\, P_n(z) &\, = (N-1-n)\sqrt{(n+1)(2b-2N+n+1)}\, P_{n+1}(z)  
\\ &\qquad + \, \Bigl( -(N-b-\frac12)^2 + (1-N+n)(2n+2b-2N+1) - n  \Bigr)\, P_n(z)  
\\ &\qquad +  (N-n) \sqrt{n(2b-2N+n)}\,  P_{n-1}(z), \qquad 0\leq n\leq N-1.
\end{split}
\end{equation*}
which corresponds to some orthogonal polynomials on a finite 
discrete set. These polynomials are expressible in terms
of the dual Hahn polynomials, see \cite[\S 6.2]{Isma}, \cite[\S 1.6]{KoekS}, 
\index{orthogonal polynomials!dual Hahn polynomials} 
and we find that
$z$ is of the form $-(b-m-\frac12)^2$, $m$ a nonnegative integer
less than $b-\frac12$, and 
\begin{equation*}
\begin{split}
&\, P_n(-(b-m-\frac12)^2) = \sqrt{\frac{(2b-2N+1)_n}{n!}}
\, R_n(\la(N-1-m); 2b-2N,0, N-1), 
\end{split}
\end{equation*}
using the notation of \cite[\S 6.2]{Isma}, \cite[\S 1.6]{KoekS}. 
Since we have now two expressions for the
eigenfunctions of the Schr\"{o}dinger operator for a 
specific simple eigenvalue, we obtain, 
after simplifications,
\begin{gather}
\sum_{n=0}^{N-1} \, R_n(\la(N-1-m); 2b-2N,0, N-1)\,
L^{(2b-2N)}_n(z)\,   =\, C\,   z^{N-1-m} \, L^{(2b-2m-1)}_m(z), 
\label{eq:expansionLaguerreanddualHahn} \\ 
 C\, = \, (-1)^{N+m+1} \left( (N+m-2b)_{N-1-m} \binom{N-1}{m}\right)^{-1} \nonumber
\end{gather}
where the constant $C$ can be determined by e.g. considering leading
coefficients on both sides. 

On the invariant subspace $\cH^+$ we look for formal eigenvectors 
$\sum_{n=0}^\infty P_n(z)\ y_{N+n}(x)$ for the eigenvalue $z$. 
This leads to the recurrence relation 
\begin{equation*}
\begin{split}
z\, P_n(z) &\, =  -(1+n)\sqrt{(N+n+1)(2b-N+n+1)}\,  P_{n+1}(z) \\
&\qquad + \bigl( -(N-b-\frac12)^2 +(1+n)(2n+2b+1)- n-N\bigr)\, P_n(z) \\
&\qquad
-n \sqrt{(N+n)(2b-N+n)}\, P_{n-1}(z).
\end{split}
\end{equation*}
This corresponds with the three-term recurrence relation for the
continuous dual Hahn polynomials, see \cite[\S 1.3]{KoekS}, \index{orthogonal polynomials!continuous dual Hahn polynomials} 
with $(a,b,c)$ replaced by  $(b+\frac12, N-b+\frac12, b-N+\frac12)$, and
note that the coefficients $a$, $b$ and $c$ are positive. 
We find, with $z=\ga^2\geq 0$ 
\begin{equation*}
\begin{split}
P_n(z) &\, = 
\frac{S_n(\ga^2;b+\frac12, N-b+\frac12, b-N+\frac12)}
{n! \sqrt{(N+1)_n\, (2b-N+1)_n}} 
\end{split}
\end{equation*}
and these polynomials satisfy 
\begin{gather*}
\int_0^\infty P_n(\ga^2) P_m(\ga^2) \, w(\ga)\, d\ga = \de_{n,m}, 
\\ 
\, w(\ga) =\frac{1}{2\pi\, N!\, \Ga(2b-N+1)} \left| \frac{\Ga(b+\frac12 +i\ga)\Ga(N-b+\frac12 +i\ga)\Ga(b-N+\frac12 +i\ga)}{\Ga(2i\ga)}\right|^2.
\end{gather*}
Note that the series $\sum_{n=0}^\infty P_n(\ga^2)\, y_{N+n}$ diverges in 
$\cH^+$ (as a closed subspace of $L^2(\R)$). Using 
the results on spectral decomposition of Jacobi operators
as in Section \ref{sec:threetermN}, we 
obtain the spectral decomposition
of the Schr\"{o}dinger operator restricted
to $\cH^+$ as 
\begin{gather*}
\Upsilon\colon  \cH^+ \to L^2((0,\infty); w(\ga)\, d\ga), 
\qquad \bigl( \Upsilon y_{N+n}\bigr)(\ga) = P_n(\ga^2), \\
\langle ( -\frac{d^2}{dx^2} + q ) f, g\rangle = 
\int_0^\infty \ga^2 (\Upsilon f)(\ga)  \overline{(\Upsilon g)(\ga)}\, w(\ga)\, d\ga
\end{gather*}
for $f,g\in \cH^+\subset L^2(\R)$ such that $f$ is in the domain
of the Schr\"odinger operator. 

In this way we have obtained the spectral decomposition of the Schr\"odinger
operator on the invariant subspaces $\cH^-$ and $\cH^+$, where 
the space $\cH^-$ is spanned by the bound states, i.e. by the eigenfunctions
for the negative eigenvalues, and $\cH^+$ is the reducing subspace on
which the  Schr\"odinger
operator has spectrum $[0,\infty)$. 
The link between the two approaches for the discrete spectrum is
given by \eqref{eq:expansionLaguerreanddualHahn}. For the
continuous spectrum it leads to the fact that the Whittaker
integral transform maps Laguerre polynomials to continuous
dual Hahn polynomials, and we can interpret
\eqref{eq:expansionLaguerreanddualHahn} also in this way.
For explicit formulas we refer to 
\cite[(5.14)]{Koor-LNM}. 

Koornwinder \cite{Koor-LNM} generalizes
this to the case of the Jacobi function transform mapping 
Jacobi polynomials to Wilson polynomials, which in turn has been 
generalized by Groenevelt \cite{Groe-WT} to the Wilson function
transform, an integral transformation with a ${}_7F_6$ as kernel, 
mapping Wilson polynomials to Wilson 
polynomials, which is at the highest level of the Askey-scheme,
see Figure  \ref{fig:Askeyscheme}. 
Note that conversely, we can define a unitary map $U\colon L^2(\mu)\to L^2(\nu)$ 
between two weighted $L^2$-spaces by mapping an orthonormal basis $\{\phi_n\}_{n\in \N}$ 
of  $L^2(\mu)$ to an orthonormal basis $\{\Phi_n\}_{n\in \N}$ of $L^2(\nu)$.
Then we can define formally a map $U_t\colon L^2(\mu)\to L^2(\nu)$ by 
\[
(U_tf)(\la) = \int_\R f(x) \sum_{k=0}^\infty t^k \phi_k(x)\Phi_k(\la) \, d\mu(x)
\]
and consider convergence as $t\to 1$. Note that the convergence of the (non-symmetric)
Poisson kernel\index{Poisson kernel} $\sum_{k=0}^\infty t^k \phi_k(x)\Phi_k(\la)$ 
needs to be studied carefully. 
In case of the Hermite functions as eigenfunctions of the Fourier transform, this 
approach is due to Wiener \cite[Ch.~1]{Wien}, in which the Poisson kernel is 
explicitly known as the Mehler formula. 
More information on explicit expressions of non-symmetric Poisson kernels 
for orthogonal polynomials from the $q$-Askey scheme can 
be found in \cite{AskeRS}.


\subsection{A tridiagonal differential operator}\label{ssec:JmatrixDO}
In this section we create tridiagonal operators from 
explicit well-known operators, and we show in 
an explicit example how this works. 
This is example is based on \cite{IsmaK-AA}, and we refer to 
\cite{IsmaK}, \cite{IsmaK-SIGMA} for more examples and general
constructions. 
Genest et al. \cite{GeneIVZ-PAMS2016} have generalised this 
approach and have obtained the full family of Wilson polynomials 
in terms of an algebraic interpretation. 

Assume now  $\mu$ and $\nu$ are orthogonality measures of infinite support 
for orthogonal 
polynomials;
\begin{equation*} 
\int_\R P_n(x)P_m(x)\, d\mu(x)\,=\, H_n\de_{n,m}, \qquad
\int_\R p_n(x)p_m(x)\, d\nu(x)\,=\, h_n\de_{n,m}.
\end{equation*}
We assume that both $\mu$ and $\nu$ correspond to a determinate 
moment problem, so that the space $\cP$ of polynomials is dense in $L^2(\mu)$ and $L^2(\nu)$. 
We also assume that $\int_\R f(x)\, d\mu(x) = \int_\R f(x)r(x)\, d\nu(x)$, 
where $r$ is a polynomial of degree $1$, 
so that the Radon-Nikodym derivative 
$\frac{d\nu}{d\mu}=\de = 1/r$. 
Then we obtain, using $\lc(p)$ for the leading coefficient of a polynomial $p$, 
\begin{equation}\label{eq:explexpgenpols}
p_n\, = \, \frac{\lc(p_n)}{\lc(P_n)}\, P_n \, + \, \lc(r)
\frac{h_n}{H_{n-1}}\frac{\lc(P_{n-1})}{\lc(p_n)}\, P_{n-1}
\end{equation}
by expanding $p_n$ in the basis $\{P_n\}_{n\in\N}$. Indeed, 
$p_n(x) = \sum_{k=0}^n c^n_kP_k(x)$ with 
\[
 c_k^n H_k = \int_\R p_n(x) P_k(x) \, d\mu(x) = 
\int_\R p_n(x) P_k(x) r(x) \, d\nu(x),
\]
so that $c_k^n=0$ for $k<n-1$ by orthogonality 
of the polynomials $p_n\in L^2(\nu)$. Then $c_n^n$ follows by comparing 
leading coefficients, and 
\[
c_{n-1}^n  = \int_\R p_n(x) P_k(x) r(x) \, d\nu(x) = \frac{\lc(P_{n-1}\lc(r)}{\lc(p_n)}h_n.
\]
By taking $\phi_n$, respectively $\Phi_n$, the corresponding 
orthonormal polynomials to $p_n$, respectively $P_n$, we see that
\begin{equation}\label{eq:OPphicombinationofPhi}
\begin{split}
&\phi_n \, = \, A_n\, \Phi_n \, + \, B_n \, \Phi_{n-1}, \qquad A_n \, = \, \frac{\lc(p_n)}{\lc(P_n)} \sqrt{\frac{H_n}{h_n}}, \quad
B_n \, = \, \lc(r)
\sqrt{\frac{h_n}{H_{n-1}}}\frac{\lc(P_{n-1})}{\lc(p_n)}.
\end{split}
\end{equation}
We assume the existence of a self-adjoint operator $L$ with domain $\cD=\cP$ on $L^2(\mu)$ with 
$LP_n =\La_n P_n$, and so $L\Phi_n =\La_n \Phi_n$, for eigenvalues $\La_n\in \R$. 
By convention $\La_{-1} =0$. So this means that 
we assume that $(P_n)_{n\in \N}$ satisfies a bispectrality property, and we can 
typically take the 
family $(P_n)_n$ from the Askey scheme or its $q$-analogue, 
see Figure \ref{fig:Askeyscheme}, \ref{fig:qAskeyscheme}. 

\begin{lemma}\label{lem:TDfromL}
The operator $T = r(L+\ga)$ with domain $\cD = \cP$ on $L^2(\nu)$
is tridiagonal with respect to the basis $\{\phi_n\}_{n\in\N}$. 
Here $\ga$ is a constant, and $r$ denotes multiplication by the polynomial $r$ 
of degree $1$. 
\end{lemma}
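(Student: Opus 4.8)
The plan is to compute $T\phi_n = r(L+\ga)\phi_n$ directly and read off that only $\phi_{n-1},\phi_n,\phi_{n+1}$ occur. First I would apply $(L+\ga)$ to the expansion \eqref{eq:OPphicombinationofPhi}; since $L\Phi_k=\La_k\Phi_k$ this gives
\[
(L+\ga)\phi_n = A_n(\La_n+\ga)\Phi_n + B_n(\La_{n-1}+\ga)\Phi_{n-1}.
\]
Multiplying by $r$ then reduces the whole question to understanding how multiplication by the degree-one polynomial $r$ carries the orthonormal system $\{\Phi_k\}$ of $L^2(\mu)$ into the orthonormal system $\{\phi_k\}$ of $L^2(\nu)$. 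All polynomials here have real coefficients, so no complex conjugation enters the inner products.

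The key step is the \emph{dual} of \eqref{eq:OPphicombinationofPhi}, namely $r\,\Phi_n = A_n\phi_n + B_{n+1}\phi_{n+1}$. To establish this I would use the hypothesis $d\mu = r\,d\nu$ (equivalently $\de = d\nu/d\mu = 1/r$) to pass between the two inner products: for any $m$,
\[
\langle r\Phi_n,\phi_m\rangle_\nu = \int_\R r\,\Phi_n\,\phi_m\, d\nu = \int_\R \Phi_n\,\phi_m\, d\mu = \langle \Phi_n, \phi_m\rangle_\mu.
\]
Substituting $\phi_m = A_m\Phi_m + B_m\Phi_{m-1}$ and invoking orthonormality of $\{\Phi_k\}$ in $L^2(\mu)$ yields $\langle\Phi_n,\phi_m\rangle_\mu = A_m\de_{n,m} + B_m\de_{n,m-1}$, which is nonzero only for $m=n$ and $m=n+1$. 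Since $r\Phi_n$ is a polynomial of degree $n+1$ it is automatically a finite combination of $\phi_0,\dots,\phi_{n+1}$, so these two surviving coefficients give exactly the displayed identity.

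Assembling the pieces, I would substitute the expressions for $r\Phi_n$ and $r\Phi_{n-1}$ into $T\phi_n = A_n(\La_n+\ga)\,r\Phi_n + B_n(\La_{n-1}+\ga)\,r\Phi_{n-1}$ to obtain
\begin{align*}
T\phi_n = {}& A_nB_{n+1}(\La_n+\ga)\,\phi_{n+1} \\
& + \bigl(A_n^2(\La_n+\ga) + B_n^2(\La_{n-1}+\ga)\bigr)\,\phi_n \\
& + A_{n-1}B_n(\La_{n-1}+\ga)\,\phi_{n-1},
\end{align*}
which is visibly tridiagonal in $\{\phi_n\}_{n\in\N}$. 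The main obstacle is genuinely just spotting and verifying the dual expansion $r\Phi_n = A_n\phi_n + B_{n+1}\phi_{n+1}$; once $d\mu = r\,d\nu$ is exploited to convert a $\nu$-inner product into a $\mu$-inner product, the rest is bookkeeping of the coefficients $A_n,B_n$ from \eqref{eq:OPphicombinationofPhi}. Domain issues are harmless, since $T$ preserves $\cP=\cD$ (multiplication by $r$ raises the degree by one and $L$ preserves $\cP$), and the determinacy of both moment problems guarantees that $\{\phi_n\}$ and $\{\Phi_n\}$ are genuine orthonormal bases of $L^2(\nu)$ and $L^2(\mu)$.
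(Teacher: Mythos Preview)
Your proof is correct and rests on the same key observation as the paper: the relation $d\mu = r\,d\nu$ converts $\langle r\,\cdot,\cdot\rangle_{L^2(\nu)}$ into $\langle\cdot,\cdot\rangle_{L^2(\mu)}$, after which orthonormality of $\{\Phi_k\}$ in $L^2(\mu)$ does the work. The only organizational difference is that the paper computes the matrix elements $\langle T\phi_n,\phi_m\rangle_{L^2(\nu)}$ directly (expanding both $\phi_n$ and $\phi_m$ via \eqref{eq:OPphicombinationofPhi} and using the measure change inside the inner product), whereas you isolate the intermediate identity $r\Phi_n = A_n\phi_n + B_{n+1}\phi_{n+1}$ and then substitute; the resulting coefficients agree exactly with the paper's $a_n$, $b_n$, $a_{n-1}$.
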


\begin{proof} Note that $(L+\ga)\Phi_n = \La_n^\ga \Phi_n = (\La_n + \ga)\Phi_n$ and 
\begin{equation*}
\begin{split}
\langle T\phi_n, \phi_m\rangle_{L^2(\nu)} \, =\, & 
\langle A_n\, T \Phi_n \, + \, B_n \, T \Phi_{n-1}, 
A_m\, \Phi_m \, + \, B_m \, \Phi_{m-1} \rangle_{L^2(\nu)} \\
 =\, & \langle A_n\, (L+\ga) \Phi_n \, + \, B_n \, (L+\ga) \Phi_{n-1}, 
A_m\, \Phi_m \, + \, B_m \, \Phi_{m-1} \rangle_{L^2(\mu)} \\
 =\, & \La^\ga_n A_n  B_{n+1}\de_{n+1,m} \, + \, (A_n^2\, \La^\ga_n +B_n^2 \La^\ga_{n-1}) \de_{n,m}
\, + \, \La^\ga_{n-1} A_{n-1}  B_{n}\de_{n,m+1}. 
\end{split}
\end{equation*}
so that 
\begin{gather*}
T\phi_n\, = \, a_n \phi_n \, +\,   b_n\phi_n\,  +\,  a_{n-1} \phi_{n-1}, \\
a_n \, = \, \La^\ga_n \lc(r) \frac{\lc(p_n)}{\lc(p_{n+1})} \sqrt{\frac{h_{n+1}}{h_n}}, 
\quad
b_n \, = \, \La^\ga_n \frac{H_n}{h_n} \left( \frac{\lc(p_n)}{\lc(P_n)}\right)^2 
\, + \, \La^\ga_{n-1} \lc(r)^2 \frac{h_n}{H_{n-1}} 
\left( \frac{\lc(P_{n-1})}{\lc(p_n)}\right)^2.  \qedhere
\end{gather*}
\end{proof}

So we need to solve for the orthonormal polynomials $r_n(\la)$ 
satisfying 
\begin{equation*} 
\begin{split}
\la r_n(\la) &= a_n r_n(\la) +  b_n r_n(\la) + a_{n-1} r_{n-1}(\la), 
\end{split}
\end{equation*}
where we assume that we can use the parameter $\ga$ in order ensure that 
$a_n\not=0$. If $a_n=0$, then we need to proceed as in Section \ref{ssec:SchrodingerMorsepotential} and split the space into invariant subspaces. 

This is a general set-up to find tridiagonal operators. In general, the three-term recurrence
relation of Lemma \ref{lem:TDfromL} needs not be matched with a known family of
orthogonal polynomials, such as e.g. from the Askey-scheme.
Let us work out a case where it does, namely for the Jacobi polynomials and the related hypergeometric differential operator. 
See \cite{IsmaK-AA} for other cases. 

For the Jacobi polynomials $P^{(\al,\be)}_n(x)$, we follow the standard notation 
\cite{AndrAR}, \cite{Isma}, 
\cite{KoekS}. \index{orthogonal polynomials!Jacobi polynomials}
We take the measures $\mu$ and $\nu$ to be the
orthogonality measures for the Jacobi polynomials for parameters $(\al+1,\be)$, 
and $(\al,\be)$ respectively. We assume $\al,\be>-1$. So we set $P_n(x)=P^{(\al+1,\be)}_n(x)$, 
$p_n(x)=P^{(\al,\be)}_n(x)$. This gives
\begin{gather*}
h_n = N_n(\al) = \frac{2^{\al+\be+1}}{2n+\al+\be+1}
\frac{\Ga(n+\al+1)\Ga(n+\be+1)}{\Ga(n+\al+\be+1)\, n!}, \quad H_n = N_n(\al+1), 
\\ 
\lc(p_n) = l_n(\al) 
= \frac{(n+\al+\be+1)_n}{2^n n!}, \quad \lc(P_n) = l_n(\al+1).
\end{gather*}
Moreover, $r(x)=1-x$. 
Note that we could have also shifted in $\be$, but due to the symmetry 
$P_n^{(\al,\be)}(x) = (-1)^n P_n^{(\be,\al)}(-x)$ 
of the Jacobi polynomials in $\al$ and $\be$ it suffices to consider the
shift in $\al$ only.

The Jacobi polynomials are eigenfunctions of a hypergeometric differential operator
\begin{gather}
L^{(\al,\be)} f (x)\, = \, (1-x^2)\, f''(x) + \bigl( \be-\al-(\al+\be+2)x\bigr) f'(x), 
\label{eq:DOforJacobipols}\\
L^{(\al,\be)}P^{(\al,\be)}_n \, = \, -n(n+\al+\be+1)\, P^{(\al,\be)}_n \nonumber
\end{gather}
and we take $L=L^{(\al+1,\be)}$ so that $\La_n = -n(n+\al+\be+2)$. 
We set $\ga= -(\al+\de+1)(\be-\de+1)$, so that
we have the factorisation $\La_n^\ga = -(n+\al+\de+1)(n+\be -\de+1)$.
So on $L^2([-1,1], (1-x)^\al (1+x)^\be\, dx)$  we study the operator 
$T= (1-x)(L+\ga)$.
Explicitly  $T$ is the second-order differential operator 
\begin{equation}\label{eq:JacobidefT}
T  =  (1-x)(1-x^2) \frac{d^2}{dx^2} + (1-x)\bigl( \be-\al-1-(\al+\be+3)x\bigr) \frac{d}{dx} - (1-x)(\al+\de+1)(\be-\de+1),
\end{equation}
which is tridiagonal by construction.
Going through the explicit details of Lemma \ref{lem:TDfromL} 
we find the explicit expression for the recursion coefficients in the three-term realisation of $T$; 
\begin{multline*} 
a_n =  \frac{2 (n+\al+\de+1)(n+\be-\de+1)}{2n+\al+\be+2} 
\sqrt{\frac{(n+1)\, (n+\al+1)\, (n+\be+1)\, (n+\al+\be+1)}
{(2n+\al+\be+1)\, (2n+\al+\be+3)}}
\\ 
b_n =  - \frac{2(n+\al+\de+1)(n+\be-\de+1)(n+\al+1)(n+\al+\be+1)}
{(2n+\al+\be+1)\, (2n+\al+\be+2)}   \qquad\qquad\qquad\qquad\qquad \\
\, - \, \frac{2 n (n+\be) (n+\al+\de+1)(n+\be-\de)}
{(2n+\al+\be)\, (2n+\al+\be+1)}. 
\end{multline*}
Then the recursion relation 
from Lemma \ref{lem:TDfromL} for $\frac12 T$ is solved by the orthonormal version of 
the Wilson polynomials \cite[\S 1.1]{KoekS}, \cite[\S 9.1]{KoekLS}, \index{orthogonal polynomials!Wilson polynomials}
\[
W_n(\mu^2; \frac12(1+\al), \frac12(1+\al)+\de, \frac12 (1-\al)+\be-\de, \frac12 (1+\al)),
\]
where the relation between the eigenvalue $\la$ of $T$ and $\mu^2$ is given by 
$\la = -2 \left(\frac{\al+1}{2}\right)^2-2\mu^2$. 
Using the spectral decomposition of a Jacobi operator as in Section \ref{sec:threetermN}
proves the following theorem. 

\begin{thm}\label{thm:Jacobi} Let $\al>-1$, $\be>-1$, and assume $\ga= -(\al+\de+1)(\be-\de+1)\in \R$. The unbounded operator $(T, \cP)$ defined by \eqref{eq:JacobidefT} on  $L^2([-1,1], (1-x)^\al (1+x)^\be\, dx)$
with domain the polynomials $\cP$  is essentially self-adjoint. The spectrum  
of the closure $\bar{T}$ is simple and given by 
\[
\begin{split}
(-\infty, &-\frac12(\al+1)^2) \cup 
\{ -\frac12(\al+1)^2+2(\frac12(1+\al)+\de+k)^2 \, \colon \,  k\in\N, \ \frac12(1+\al)+\de+k<0\} \\ & \cup 
\{ -\frac12(\al+1)^2+2(\frac12(1-\al)+\be-\de+l)^2 \, \colon \,  l\in\N, \ \frac12(1-\al)+\be-\de+l<0\} 
\end{split}
\]
where the first set gives the absolutely continuous spectrum and the other sets correspond to the 
discrete spectrum of the closure of $T$. 
The discrete spectrum consists of at most one of these sets, and can be empty.  
\end{thm}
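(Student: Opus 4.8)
The plan is to transfer everything to the Jacobi operator on $\ell^2(\N)$ attached to the three-term recurrence of Lemma \ref{lem:TDfromL}, and then to read off self-adjointness and the spectrum from the known orthogonality measure of the Wilson polynomials. First I would record that $(T,\cP)$ is symmetric on $L^2(\nu)$: since $d\mu=(1-x)\,d\nu$ and $L+\ga$ is symmetric on $(\cP,L^2(\mu))$, for $f,g\in\cP$ one has $\langle Tf,g\rangle_{L^2(\nu)}=\int(1-x)(L+\ga)f\cdot\bar g\,d\nu=\langle(L+\ga)f,g\rangle_{L^2(\mu)}$, which is symmetric in $f,g$. Because $\nu$ has compact support it is determinate, so the orthonormal polynomials $\{\phi_n\}_{n\in\N}$ form an orthonormal basis of $L^2(\nu)$; the map $\phi_n\mapsto e_n$ is then a unitary $L^2(\nu)\to\ell^2(\N)$ carrying $(T,\cP)$ to the Jacobi operator $(J,\cD)$ with the real coefficients $a_n,b_n$ computed in Lemma \ref{lem:TDfromL}. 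One checks $a_n\sim\tfrac12 n^2$, so $J$ is unbounded. (Should some $a_n$ vanish for special $\de$, one first splits off the resulting finite-dimensional invariant subspace as in Section \ref{ssec:SchrodingerMorsepotential}; I would treat the generic case $a_n\neq 0$.)

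Next I would use that the orthonormal polynomials generated by this recurrence are the Wilson polynomials with parameters $a=d=\tfrac12(1+\al)$, $b=\tfrac12(1+\al)+\de$, $c=\tfrac12(1-\al)+\be-\de$, as identified just before the theorem. Essential self-adjointness then follows from Proposition \ref{prop:spectralJacobioperator}: the Wilson moment problem is determinate, equivalently $\sum_n|r_n(z)|^2=\infty$ for $z\in\C\setminus\R$, so the deficiency indices are $(0,0)$ and $(J,\cD)$, hence $(T,\cP)$, is essentially self-adjoint. Simplicity of the spectrum is automatic for a Jacobi operator, since $e_0$ (that is, $\phi_0$) is a cyclic vector.

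For the spectrum I would apply Proposition \ref{prop:spectralJacobioperator} once more: $\langle E(\cdot)e_0,e_0\rangle$ equals the orthogonality measure of the Wilson polynomials, so $\si(\bar T)$ is its support. The Wilson measure has an absolutely continuous part supported on the continuous spectrum together with finitely many point masses at $\mu^2=-(b+k)^2$ (present only when $b<0$) and at $\mu^2=-(c+l)^2$ (present only when $c<0$), where $k,l$ run over the nonnegative integers making the base negative; since $a=d=\tfrac12(1+\al)>0$ these two parameters contribute no point masses. Substituting through the spectral change of variable $\la=-\tfrac12(\al+1)^2-2\mu^2$ turns the absolutely continuous part into $(-\infty,-\tfrac12(\al+1)^2)$ and the two discrete families into exactly the two sets in the statement, giving absolutely continuous spectrum on the interval and eigenvalues at the listed points.

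The final dichotomy follows from $b+c=\tfrac12(1+\al)+\de+\tfrac12(1-\al)+\be-\de=1+\be>0$: since $\be>-1$, $b$ and $c$ cannot both be negative, so at most one discrete family is nonempty; and the interval $-\tfrac12(1+\al)<\de<\tfrac12(1-\al)+\be$ is nonempty (again because $\be>-1$), so any $\de$ in it makes $b,c>0$ and the discrete spectrum empty. I expect the main obstacle to be the determinacy step: because $a_n\sim\tfrac12 n^2$, Carleman's condition $\sum a_n^{-1}=\infty$ fails, so determinacy cannot be obtained from a crude growth estimate and one must invoke the classical determinacy of the Wilson moment problem (equivalently, the explicit uniqueness of the Wilson measure). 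The second delicate point is correctly separating the absolutely continuous and discrete parts of the Wilson measure and tracking them through the quadratic substitution $\mu^2\mapsto\la$, while handling the degenerate parameter values at which an $a_n$ vanishes.
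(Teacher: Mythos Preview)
Your proposal is correct and follows the same route as the paper: transport $(T,\cP)$ to the Jacobi operator on $\ell^2(\N)$ via the orthonormal Jacobi-polynomial basis, identify the three-term recurrence with that of the Wilson polynomials $W_n(\mu^2;\tfrac12(1+\al),\tfrac12(1+\al)+\de,\tfrac12(1-\al)+\be-\de,\tfrac12(1+\al))$, and then read off essential self-adjointness, simplicity, and the spectrum from the Wilson orthogonality measure under the affine change $\la=-\tfrac12(\al+1)^2-2\mu^2$. The paper's proof is in fact the single sentence ``Using the spectral decomposition of a Jacobi operator as in Section~\ref{sec:threetermN} proves the following theorem'', so you have supplied exactly the details the paper leaves implicit---including the nice observation $b+c=1+\be>0$ for the dichotomy, and the correct warning that Carleman fails and determinacy must come from the known uniqueness of the Wilson measure. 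One small addendum: your dichotomy argument assumes $\de\in\R$; the other admissible case $\Re\de=\tfrac12(\be-\al)$ makes $b,c$ complex conjugates with positive real part, so no discrete mass arises there either, which the paper notes just after the theorem.
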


Note that in Theorem \ref{thm:Jacobi} we require $\de\in \R$ or $\Re \de = \frac12(\be-\al)$. 
In the second case there is no discrete spectrum. 

The eigenvalue equation $T f_\la = \la f_\la$ is a second-order differential operator 
with regular singularities at $-1$, $1$, $\infty$. 
In the Riemann-Papperitz notation, see e.g. \cite[\S 5.5]{Temm}, it 
is 
\[
\cP \left\{ \begin{matrix} -1 & 1 & \infty &  \\
0 & -\frac12(1+\al) + i \tilde\la & \al+\de+1 & x \\
-\be & -\frac12(1+\al) + i \tilde\la & \be-\de+1 &  
                    \end{matrix}
\right\}
\]
with the reparametrisation $\la = -\frac12(\al+1)^2 -2\tilde\la^2$ of the spectral parameter. 
The case $\ga=0$, we can exploit this relation and establish a link to the 
Jacobi function transform mapping (special) Jacobi polynomials to (special) Wilson
polynomials, see \cite{Koor-LNM}. We refer to \cite{IsmaK-AA} for the details.
Going through this procedure and starting with the Laguerre polynomials and taking
special values for the additional parameter gives results relating Laguerre 
polynomials to Meixner polynomials involving confluent hypergeometric functions, i.e.
Whittaker functions. This is then related to the results of Section \ref{ssec:SchrodingerMorsepotential}.
Genest et al. \cite{GeneIVZ-PAMS2016} show how to extend this method in order to 
find the full $4$-parameter family of Wilson polynomials in this way. 

\subsection{$J$-matrix method with matrix-valued orthogonal polynomials}
\label{ssec:JMatrixMVOP}

We generalise the situation of Section \ref{ssec:Jacobi-operator} to 
operators that are $5$-diagonal in a suitable basis.
By Dur\'an and Van Assche \cite{DuraVA}, see also e.g. \cite{Berg}, \cite{DuraLR-Laredo}, 
a $5$-diagonal recurrence can be written as a three-term recurrence relation 
for $2\times 2$-matrix-valued orthogonal polynomials. More generally, 
Dur\'an and Van Assche \cite{DuraVA} show that 
$2N+1$-diagonal recurrence can be written as a three-term recurrence relation 
for $N\times N$-matrix-valued orthogonal polynomials, and we leave it to the 
reader to see how the result of this section can be generalised to 
$2N+1$-diagonal operators. 
The results of this section are based on \cite{GroeIK}, and we specialise 
again to the case of the Jacobi polynomials. 
Another similar example is based on the little $q$-Jacobi polynomials 
and other operators which arise as $5$-term recurrence operators in
a natural way, see \cite{GroeIK} for these cases. 

In Section \ref{ssec:JmatrixDO} we used known orthogonal polynomials, in particular
their orthogonality relations, in order to find spectral information on a 
differential operator. In this section we generalise the approach of 
Section \ref{ssec:JmatrixDO} by assuming now that the polynomial $r$, 
the inverse of the Radon-Nikodym derivative, is of degree $2$. 
This then leads to a $5$-term recurrence relation, see 
Exercise \ref{exer:5termrecurrence}.
Hence we have 
an explicit expression for the matrix-valued Jacobi operator. 
Now we assume that the 
resulting differential or difference operator leads to an operator of
which the spectral decomposition is known. Then we can find from this 
information the orthogonality measure for the matrix-valued 
polynomials. This leads to a case of matrix-valued 
orthogonal polynomials where both the orthogonality measure and the 
three-term recurrence can be found explicitly. 

So let us start with the general set-up. 
Let $T$ be an operator on a Hilbert space $\cH$ of functions, typically  
a second-order difference or differential operator. We assume that $T$ 
has the following properties;
\begin{enumerate}[(a)]
\item $T$ is (a possibly unbounded) self-adjoint operator on $\cH$ (with domain $D$ in 
case $T$ is unbounded);
\item there exists an orthonormal basis $\{f_n\}_{n=0}^\infty$ of $\cH$ so that 
$f_n\in D$ in case $T$ is unbounded and so that there exist sequences 
$(a_n)_{n=0}^\infty$, $(b_n)_{n=0}^\infty$, $(c_n)_{n=0}^\infty$ of 
complex numbers with 
$a_n>0$, $c_n\in\R$,  for all $n\in \N$ so that 
\begin{equation}\label{eq:Tis5term}
T\, f_n\, = \, a_n f_{n+2} \, + \, b_n f_{n+1} \, + \, c_n f_n \, + \, 
\overline{b_{n-1}}f_{n-1} \, + \, a_{n-2}f_{n-2}.
\end{equation}
\end{enumerate}

Next we assume that we have a suitable spectral decomposition of $T$. 
We assume that the spectrum $\si(T)$ is simple or at most of multiplicity $2$. 
The double spectrum is contained in 
$\Om_2\subset \si(T) \subset \R$, and the simple spectrum is contained in 
$\Om_1=\si(T)\setminus \Om_2\subset \R$. 
Consider functions $f$ defined on $\si(T)\subset \R$ so that 
$f\vert_{\Om_1} \colon \Om_1\to \C$ and $f\vert_{\Om_2} \colon \Om_2\to \C^2$.
We let $\si$ be a Borel measure on $\Om_1$ and $V\, \rho$ a 
$2\times 2$-matrix-valued measure 
on $\Om_2$ as in \cite[\S 1.2]{DamaPS}, so $V\colon \Om_2 \to M_2(\C)$ maps into the 
positive semi-definite matrices and $\rho$ is a positive Borel measure on $\Om_2$. 
We assume $V$ is positive semi-definite $\rho$-a.e., but not necessarily 
positive definite. 

Next we consider the weighted Hilbert space $L^2(\cV)$ of such functions for which
\[
\int_{\Om_1} |f(\la)|^2\, d\si(\la) \, + \, \int_{\Om_2} f^\ast(\la) 
V(\la) f(\la)\, d\rho(\la) \, < \, \infty
\]
and we obtain $L^2(\cV)$ by modding out by the functions of norm zero, see 
the discussion in Section \ref{ssec:MoreMweights}. 
The inner product is given by
\[
\langle f, g\rangle \, = \, \int_{\Om_1} f(\la)\overline{g(\la)}\, d\si(\la) \, + \, 
\int_{\Om_2} g^\ast(\la) V(\la) f(\la)\, d\rho(\la).
\]

The final assumption is then
\begin{enumerate}
 \item[(c)] there exists a unitary map $U\colon \cH \to L^2(\cV)$ so that $UT=MU$, where $M$ is 
 the multiplication operator by $\la$ on $L^2(\cV)$. 
\end{enumerate}

Note that assumption (c) is saying that $L^2(\cV)$ is the spectral decomposition of 
$T$, and since this also gives the spectral decomposition of polynomials in $T$, 
we see that all moments exist in $L^2(\cV)$. 

Under the assumptions (a), (b), (c) we link the spectral measure to an orthogonality measure 
for matrix-valued orthogonal polynomials.
Apply $U$ to the $5$-term expression \eqref{eq:Tis5term} for $T$ on the basis 
$\{f_n\}_{n=0}^\infty$, so that 
\begin{multline}\label{eq:5trrforUfn}
\la (Uf_n)(\la)\, = \, a_n (Uf_{n+2})(\la) \, + \, b_n (Uf_{n+1})(\la) \, 
\\ +\, c_n (Uf_n)(\la) \, + \, \overline{b_{n-1}}(Uf_{n-1})(\la) \, + \, a_{n-2}(Uf_{n-2})(\la)
\end{multline}
to be interpreted as an identity in $L^2(\cV)$. Restricted to $\Om_1$ \eqref{eq:5trrforUfn} 
is a scalar identity, and restricted to $\Om_2$ the components of 
$Uf(\la) = (U_1f(\la),U_2f(\la))^t$ satisfy \eqref{eq:5trrforUfn}. 

Working out the details for $N=2$ of \cite{DuraVA}, we see that we have to generate the 
$2\times 2$-matrix-valued polynomials by 
\begin{equation}\label{eq:3trr2by2MVOP}
\begin{split}
 \la\, P_n(\la) \, &= 
\begin{cases} \, A_n\, P_{n+1}(\la)
\, +\, B_n P_n(\la)
\, +\, A_{n-1}^\ast P_{n-1}(\la), & n\geq 1, \\
\, A_0\, P_{1}(\la)
\, +\, B_0 P_0(\la), & n=0, 
\end{cases}\\ 
A_n\, &= \, \begin{pmatrix} a_{2n} & 0 \\ b_{2n+1} & a_{2n+1} \end{pmatrix}, \qquad
B_n\, = \, \begin{pmatrix} c_{2n} & b_{2n} \\ \overline{b_{2n}} & c_{2n+1} \end{pmatrix}
\end{split}
\end{equation}
with initial conditions $P_{-1}(\la)=0$ and $P_0(\la)$ is a constant non-singular matrix, 
which we take to be the identity, so $P_0(\la)=I$. Note that $A_n$ is a non-singular 
matrix and $B_n$ is a Hermitian matrix for all $n\in\N$. Then the $\C^2$-valued functions 
\begin{equation*}
\cU_n(\la)\, =\, \begin{pmatrix} Uf_{2n}(\la) \\ Uf_{2n+1}(\la) \end{pmatrix}, \qquad
\cU^1_n(\la)\, =\, \begin{pmatrix} U_1f_{2n}(\la) \\ U_1f_{2n+1}(\la) \end{pmatrix}, \qquad
\cU^2_n(\la)\, =\, \begin{pmatrix} U_2f_{2n}(\la) \\ U_2f_{2n+1}(\la) \end{pmatrix}
\end{equation*}
satisfy \eqref{eq:3trr2by2MVOP} for vectors for $\la\in \Om_1$ in the first case and 
for $\la\in \Om_2$ in the last cases. Hence, 
\begin{equation}\label{eq:Un=PnU0}
\cU_n(\la)\, = \, P_n(\la)  \cU_0(\la),  \qquad
\cU_n^1(\la)\, = \, P_n(\la)  \cU^1_0(\la), \quad
\cU_n^2(\la)\, = \, P_n(\la)  \cU^2_0(\la), 
\end{equation}
where the first holds $\si$-a.e. and the last two hold $\rho$-a.e.
We can now state the orthogonality relations for the matrix-valued orthogonal polynomials. 

\begin{thm}\label{thm:genorth2t2MVOP} 
With the assumptions (a), (b), (c) as given above, 
the $2\times 2$-matrix-valued polynomials $P_n$ generated by \eqref{eq:3trr2by2MVOP} 
and $P_0(\la)=I$  satisfy 
\[
\int_{\Om_1} P_n(\la) \, W_1(\la)\, P_m(\la)^\ast\, d\si(\la)\, + \, 
\int_{\Om_2} P_n(\la) \, W_2(\la)\, P_m(\la)^\ast\, d\rho(\la)
=\, \de_{nm} I 
\]
where 
\begin{gather*}
W_1(\la)\, = \,  \begin{pmatrix} |Uf_0(\la)|^2 & Uf_0(\la)\overline{Uf_1(\la)} \\
\overline{Uf_0(\la)}Uf_1(\la) & |Uf_1(\la)|^2 \end{pmatrix}, \qquad
\si\text{-a.e.} \\
W_2(\la)\, = \,  \begin{pmatrix} \langle Uf_0(\la), Uf_0(\la)\rangle_{V(\la)} & 
 \langle Uf_0(\la), Uf_1(\la)\rangle_{V(\la)} \\
\langle Uf_1(\la), Uf_0(\la)\rangle_{V(\la)} & \langle Uf_1(\la), Uf_1(\la)\rangle_{V(\la)}
\end{pmatrix}, \qquad
\rho\text{-a.e.} 
\end{gather*}
and $\langle x, y\rangle_{V(\la)} = x^\ast V(\la) y$. 
\end{thm}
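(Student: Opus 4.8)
The plan is to read the asserted identity as nothing more than the unitarity of $U$ rewritten in the basis $\{f_n\}$, blocked into the pairs $(f_{2n},f_{2n+1})$. The two weight matrices will be recognised as (matrix-weighted) Gram matrices of the vector $\cU_0$, and the intertwining relation \eqref{eq:Un=PnU0} will be used to transport everything to the level of the images $Uf_{2n+p}$, $p\in\{0,1\}$.

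First I would record the factorisations that follow at once from the definitions of $W_1$ and $W_2$. On $\Om_1$, where $\cU_0(\la)=(Uf_0(\la),Uf_1(\la))^t$ and $Uf_n(\la)\in\C$, one has $\si$-a.e.
\[
W_1(\la)=\cU_0(\la)\,\cU_0(\la)^\ast .
\]
On $\Om_2$, where $Uf_n(\la)\in\C^2$, the matrix $W_2(\la)$ is by definition the Gram matrix of $Uf_0(\la),Uf_1(\la)$ for the (Hermitian, positive semidefinite) form $\langle x,y\rangle_{V(\la)}=x^\ast V(\la)y$; in particular $W_2(\la)$ is Hermitian $\rho$-a.e.

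Next I would invoke \eqref{eq:Un=PnU0}: $\cU_n(\la)=P_n(\la)\cU_0(\la)$ holds $\si$-a.e. on $\Om_1$, and $\cU_n^i(\la)=P_n(\la)\cU_0^i(\la)$ ($i=1,2$) holds $\rho$-a.e. on $\Om_2$; equivalently, since the entries of $P_n(\la)$ are scalars in $\la$, one has the componentwise relabelling $\sum_{r\in\{0,1\}}(P_n(\la))_{pr}\,Uf_r(\la)=Uf_{2n+p}(\la)$, valid on both $\Om_1$ and $\Om_2$. Feeding this into the factorisation of $W_1$ gives, $\si$-a.e.,
\[
P_n(\la)\,W_1(\la)\,P_m(\la)^\ast=\cU_n(\la)\,\cU_m(\la)^\ast ,
\]
whose $(p,q)$-entry is $Uf_{2n+p}(\la)\,\overline{Uf_{2m+q}(\la)}$. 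Carrying out the same substitution in the $\Om_2$ term---now pushing the scalar entries of $P_n(\la)$ and of $P_m(\la)^\ast$ to either side of $V(\la)$---reassembles the $(p,q)$-entry of $P_n(\la)W_2(\la)P_m(\la)^\ast$ into the $V(\la)$-pairing of $Uf_{2n+p}(\la)$ and $Uf_{2m+q}(\la)$. Adding the two integrals entrywise then yields, for $p,q\in\{0,1\}$,
\[
\Bigl(\int_{\Om_1}P_nW_1P_m^\ast\,d\si+\int_{\Om_2}P_nW_2P_m^\ast\,d\rho\Bigr)_{pq}
=\langle Uf_{2n+p},Uf_{2m+q}\rangle_{L^2(\cV)} .
\]
By assumption (c) the map $U$ is unitary, so the right-hand side equals $\langle f_{2n+p},f_{2m+q}\rangle_{\cH}=\de_{2n+p,\,2m+q}$. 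Since $p,q\in\{0,1\}$, this Kronecker symbol is $1$ precisely when $n=m$ and $p=q$, i.e. it is the $(p,q)$-entry of $\de_{nm}I$, which is the claim.

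The substantive input is the pair of factorisations together with \eqref{eq:Un=PnU0}; the rest is bookkeeping. The step I expect to require the most care is the $\Om_2$ computation: there $Uf_n(\la)\in\C^2$, the matrix weight $V(\la)$ sits between the vectors, and the scalar entries of $P_m(\la)^\ast$ must be moved to the correct side of $V(\la)$ with the correct conjugation. Matching the convention $\langle x,y\rangle_{V}=x^\ast Vy$ to the $\Om_2$ part $\int_{\Om_2}g^\ast Vf\,d\rho$ of the $L^2(\cV)$ inner product, and exploiting the Hermiticity of $V(\la)$ and of $W_2(\la)$, is exactly what makes the two sides agree; this is where one must be scrupulous about transposes and complex conjugates. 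Convergence of every integral is automatic, since $f_n\in\cH$ and unitarity of $U$ force $Uf_n\in L^2(\cV)$.
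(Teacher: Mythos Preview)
Your proposal is correct and follows essentially the same approach as the paper's proof: both arguments rest on the factorisation $W_1(\la)=\cU_0(\la)\cU_0(\la)^\ast$ (and its $V$-weighted analogue on $\Om_2$), the relation $\cU_n=P_n\cU_0$ from \eqref{eq:Un=PnU0}, and the unitarity of $U$ turning the resulting $L^2(\cV)$ inner products into $\de_{nm}I$. The only cosmetic difference is the direction of the calculation---the paper starts from $\de_{nm}I=\langle f_{2n+p},f_{2m+q}\rangle_\cH$ and unpacks, whereas you start from the integrals and work toward the Kronecker delta---and your caution about the $\Om_2$ bookkeeping with $V(\la)$ and the convention $\langle x,y\rangle_V=x^\ast Vy$ is exactly the point where the paper spends its effort as well.
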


Since we stick to the situation with the assumptions (a), (b), (c), the 
multiplicity of $T$ cannot be higher than $2$. Note that the matrices $W_1(\la)$ and 
$W_2(\la)$ are Gram matrices.
In particular,  
$\det(W_1(\la)) = 0$ for all $\la$. So the weight matrix $W_1(\la)$ 
is semi-definite positive with eigenvalues 
$0$ and $\text{tr}(W_1(\la))= |Uf_0(\la)|^2 + |Uf_1(\la)|^2>0$. Note that
\[
\text{ker}(W_1(\la)) \, = \, \C \begin{pmatrix} 
\overline{Uf_1(\la)} \\ -\overline{Uf_0(\la)} \end{pmatrix} \, = \, 
\begin{pmatrix} Uf_{0}(\la) \\ Uf_{1}(\la)\end{pmatrix}^\perp, \quad
\text{ker}(W_1(\la)-\text{tr}(W_1(\la))) = \C \begin{pmatrix} Uf_{0}(\la) \\ Uf_{1}(\la)\end{pmatrix}
\]
Moreover, $\det(W_2(\la)) = 0$ if and only if $Uf_0(\la)$ and $Uf_1(\la)$ are multiples of each other. 

Denoting the integral in Theorem \ref{thm:genorth2t2MVOP} as $\langle P_n, P_m\rangle_W$, we see that all
the assumptions on the matrix-valued inner product, as in 
the definition of the Hilbert $\text{C}^\ast$-module $L^2_C(\mu)$ in Section \ref{ssec:MVmeasurespols},
are trivially satisfied, except for $\langle Q,Q\rangle_W = 0$ implies $Q=0$ for a matrix-valued polynomial $Q$. 
We can proceed by writing $Q= \sum_{k=1}^n C_kP_k$ for suitable 
matrices $C_k$, since the leading coefficient of $P_k$ is non-singular by \eqref{eq:3trr2by2MVOP}. Then by Theorem 
\ref{thm:genorth2t2MVOP} we have $\langle Q,Q\rangle_W= \sum_{k=0}^nC_kC_k^\ast$ which is a sum 
of positive definite elements, which can only give $0$ if each of the terms is zero.
So $\langle Q,Q\rangle_W = 0$ 
implies $C_k=0$ for all $k$, hence $Q=0$. 

\begin{proof} Start using the unitarity 
\begin{equation}\label{eq:startoforthoPn}
\begin{split}
\, \de_{nm} \begin{pmatrix} 1 &  0 \\ 0 & 1\end{pmatrix} \, &= \,
 \begin{pmatrix} \langle f_{2n},f_{2m}\rangle_{\cH} &  \langle f_{2n},f_{2m+1}\rangle_{\cH} 
\\ \langle f_{2n+1},f_{2m}\rangle_{\cH} & \langle f_{2n+1},f_{2m+1}\rangle_{\cH}\end{pmatrix}\\\, &= \, 
\begin{pmatrix} \langle Uf_{2n},Uf_{2m}\rangle_{L^2(\cV)} &  \langle Uf_{2n},Uf_{2m+1}\rangle_{L^2(\cV)} 
\\ \langle Uf_{2n+1},Uf_{2m}\rangle_{L^2(\cV)} & 
\langle Uf_{2n+1},Uf_{2m+1}\rangle_{L^2(\cV)}\end{pmatrix}\\
\end{split}
\end{equation}
Split each of the inner products on the right hand side of \eqref{eq:startoforthoPn} 
as a sum over two integrals, one over $\Om_1$ and the other over $\Om_2$. First the integral 
over $\Om_1$ equals
\begin{equation}\label{eq:startoforthoPnOm1}
\begin{split}
&\, \begin{pmatrix} \int_{\Om_1}Uf_{2n}(\la) \overline{Uf_{2m}(\la)}\, d\si(\la) &  
\int_{\Om_1}Uf_{2n}(\la) \overline{Uf_{2m+1}(\la)}\, 
d\si(\la) \\ \int_{\Om_1}Uf_{2n+1}(\la) \overline{Uf_{2m}(\la)}\, d\si(\la) &
\int_{\Om_1}Uf_{2n+1}(\la) \overline{Uf_{2m+1}(\la)}\, d\si(\la)\end{pmatrix} \\
=&\, \int_{\Om_1}\begin{pmatrix} Uf_{2n}(\la) \overline{Uf_{2m}(\la)} &  
Uf_{2n}(\la) \overline{Uf_{2m+1}(\la)} \\ 
Uf_{2n+1}(\la) \overline{Uf_{2m}(\la)} &
Uf_{2n+1}(\la) \overline{Uf_{2m+1}(\la)}\end{pmatrix} \, d\si(\la)\\
=&\, \int_{\Om_1} \begin{pmatrix} Uf_{2n}(\la) \\ Uf_{2n+1}(\la)\end{pmatrix}
\begin{pmatrix} Uf_{2m}(\la) \\ Uf_{2m+1}(\la)\end{pmatrix}^\ast \, d\si(\la)\\
=&\, \int_{\Om_1} P_n(\la) \begin{pmatrix} Uf_{0}(\la) \\ Uf_{1}(\la)\end{pmatrix}
\begin{pmatrix} Uf_{0}(\la) \\ Uf_{1}(\la)\end{pmatrix}^\ast P_m(\la)^\ast \, d\si(\la) \\
=&\, \int_{\Om_1} P_n(\la) W_1(\la) P_m(\la)^\ast \, d\si(\la),
\end{split}
\end{equation}
where we have used \eqref{eq:Un=PnU0}.  
For the integral over $\Om_2$ we write $Uf(\la) = (U_1f(\la), U_2f(\la))^t$ and 
$V(\la) = (v_{ij}(\la))_{i,j=1}^2$, so that the integral over $\Om_2$ can be written as 
\begin{equation}\label{eq:startoforthoPnOm2a}
\begin{split}
&\, \sum_{i,j=1}^2 \int_{\Om_2} \begin{pmatrix} U_jf_{2n}(\la) v_{ij}(\la) \overline{U_if_{2m}(\la)} 
& U_jf_{2n}(\la) v_{ij}(\la) \overline{U_if_{2m+1}(\la)} \\  
U_jf_{2n+1}(\la) v_{ij}(\la) \overline{U_if_{2m}(\la)} & 
U_jf_{2n+1}(\la) v_{ij}(\la) \overline{U_if_{2m+1}(\la)}\end{pmatrix} d\rho(\la) \\
=&\, \sum_{i,j=1}^2 \int_{\Om_2} \begin{pmatrix} U_j f_{2n}(\la) \\ U_j f_{2n+1}(\la)\end{pmatrix}
\begin{pmatrix} U_i f_{2m}(\la) \\ U_i f_{2m+1}(\la)\end{pmatrix}^\ast v_{ij}(\la)\, d\rho(\la) \\
=&\, \sum_{i,j=1}^2 \int_{\Om_2} P_n(\la) \begin{pmatrix} U_j f_{0}(\la) \\ U_j f_{1}(\la)\end{pmatrix}
\begin{pmatrix} U_i f_{0}(\la) \\ U_i f_{1}(\la)\end{pmatrix}^\ast  P_m(\la)^\ast v_{ij}(\la)\, d\rho(\la) 
\,\\ =&\,  \int_{\Om_2} P_n(\la) W_2(\la)  P_m(\la)^\ast \, d\rho(\la), \\
\end{split}
\end{equation}
where we have used \eqref{eq:Un=PnU0} again and with 
\begin{equation}\label{eq:startoforthoPnOm2b}
\begin{split}
W_2(\la) \, &=\, \sum_{i,j=1}^2 \begin{pmatrix} U_j f_{0}(\la) \\ U_j f_{1}(\la)\end{pmatrix}
\begin{pmatrix} U_i f_{0}(\la) \\ U_i f_{1}(\la)\end{pmatrix}^\ast v_{ij}(\la) \,\\ 
&=\, \sum_{i,j=1}^2 v_{ij}(\la) \begin{pmatrix} U_j f_{0}(\la)\overline{U_if_0(\la)} 
& U_j f_{0}(\la)\overline{U_if_1(\la)} \\
U_j f_{1}(\la)\overline{U_if_0(\la)} & 
U_j f_{1}(\la)\overline{U_if_1(\la)}
\end{pmatrix} \\
\, &=\, \begin{pmatrix} (Uf_0(\la))^\ast V(\la) Uf_0(\la) & (Uf_1(\la))^\ast V(\la) Uf_0(\la) \\
(Uf_0(\la))^\ast V(\la) Uf_1(\la) & (Uf_1(\la))^\ast V(\la) Uf_1(\la) \end{pmatrix}
\end{split}
\end{equation}
and putting \eqref{eq:startoforthoPnOm1} and \eqref{eq:startoforthoPnOm2a}, 
\eqref{eq:startoforthoPnOm2b} into \eqref{eq:startoforthoPn} proves the result.
\end{proof}

In case we additionally assume $T$ is bounded, so that the measures $\si$ and $\rho$ 
have compact support, the coefficients in \eqref{eq:Tis5term} and \eqref{eq:3trr2by2MVOP} 
are bounded. In this case the corresponding Jacobi operator is bounded and self-adjoint.

\begin{remark}
Assume that $\Om_1=\si(T)$ or $\Om_2=\emptyset$, so that $T$ has simple spectrum. Then 
\begin{equation}\label{eq:defL2Wsigma}
\mathcal{L}^2(W_1d\si) \, = \, \{ f\colon \R \to \C^2 \mid  
\int_\R f(\la)^\ast W_1(\la) f(\la) \, d\si(\la)<\infty\} 
\end{equation}
has the subspace of null-vectors 
\begin{multline*}
\mathcal{N}\, = \, \{ f\in \mathcal{L}^2(W_1d\si) \mid 
\int_\R f(\la)^\ast W_1(\la) f(\la) \, d\si(\la) = 0\} \\ 
\, = \, \{ f\in \mathcal{L}^2(W_1d\si) \mid f(\la) = c(\la) \begin{pmatrix} 
\overline{Uf_1(\la)} \\ -\overline{Uf_0(\la)} \end{pmatrix} \text{ $\si$-a.e.}\}, 
\end{multline*}
where $c$ is a scalar-valued function. In this case  
$L^2(\cV)=\mathcal{L}^2(W_1d\si)/\mathcal{N}$. Note that 
$\mathcal{U}_n\colon \R\to L^2(W_1d\si)$ is completely determined by $Uf_0(\la)$,
 which is a restatement of $T$ having simple spectrum. 
From Theorem \ref{thm:genorth2t2MVOP} we see that, cf. \eqref{eq:spectraldecomposition}, 
\[
\langle P_n(\cdot)v_1, P_m(\cdot)v_2\rangle_{L^2(W_1d\si)} \, = \, 
\de_{nm} \langle v_1, v_2\rangle
\]
so that $\{ P_n(\cdot)e_i\}_{i\in\{1,2\}, n\in \N}$ is linearly independent in 
$L^2(W_1d\si)$ for any basis $\{e_1,e_2\}$ of
$\C^2$, cf. \eqref{eq:spectraldecomposition}. 
\end{remark}

We illustrate Theorem \ref{thm:genorth2t2MVOP} with an example, and we refer
to Groenevelt and the author \cite{GroeK} and \cite{GroeIK} for details. 
We extend the approach of Section \ref{ssec:JmatrixDO} and Lemma 
\ref{lem:TDfromL} by now assuming that $r$ is a polynomial of degree
$2$. Then the relations \eqref{eq:explexpgenpols} and \eqref{eq:OPphicombinationofPhi}
go through, except that it also involves a term $P_{n-2}$, respectively 
$\Phi_{n-2}$. Then we find that $r(L+\ga)$ is a $5$-term recurrence
operator. Adding a three-term recurrence relation, so $T=r(L+\ga)+\rho\, x$,
gives a $5$-term recurrence operator, see Exercise \ref{exer:5termrecurrence}.  
However it is usually hard to establish the assumption that an explicit spectral
decomposition of such an operator is available. 
Moreover, we want to have an example of such an operator where the 
spectrum of multiplicity $2$ is non-trivial. 

We do this for the Jacobi polynomials, and we consider\index{orthogonal polynomials!Jacobi polynomials} 
$T=T^{(\al,\be;\ka)}$ defined by
\begin{equation} \label{eq:T}
T = (1-x^2)^2 \frac{d^2}{dx^2} + (1-x^2)\bigl( \be-\al-(\al+\be+4)x\bigr) \frac{d}{dx} +
\frac14\bigl( \ka^2-(\al+\be+3)^2\bigr) (1-x^2)
\end{equation}
as an operator in the weighted $L^2$-space for 
the Jacobi polynomials; $L^2((-1,1), w^{(\al,\be)})$ with 
$w^{(\al,\be)}$ the normalised weight function for the Jacobi polynomials as 
given below. 
Here $\al,\be > -1$ and $\ka \in \R_{\geq 0} \cup i\R_{>0}$.
Then we can use \eqref{eq:DOforJacobipols} to obtain 
\begin{equation*} 
T^{(\al,\be;\ka)} = r\big( L^{(\al+1,\be+1)} +\rho\big), \qquad \rho =\frac14\left(\ka^2-(\al+\be+3)^2\right),
\end{equation*}
where $r(x)=1-x^2$ is, up to a constant, the quotient of the normalised weight 
functions of the Jacobi polynomial, 
\begin{gather*} 
r(x) = K \frac{w^{(\al+1,\be+1)}(x) }{w^{(\al,\be)}(x)}, \qquad K=  \frac{4(\al+1)(\be+1)}{ (\al+\be+2)(\al+\be+3)} \\
w^{(\al,\be)}(x) = 2^{-\al-\be-1}\frac{\Ga(\al+\be+2)}{\Ga(\al+1,\be+1) }(1-x)^\al (1+x)^\be.
\end{gather*}
It is then clear from the analogue of Lemma \ref{lem:TDfromL} 
that $T$ is $5$-term recurrence relation with respect to 
Jacobi polynomials 

In order to describe the spectral decomposition, we have to introduce some 
notation. For proofs we refer to Groenevelt and the author \cite{GroeK}. 
We assume $\be\geq \al$. 
Let $\Om_1, \Om_2 \subset \R$ be given by
\begin{equation*} 
\Om_1=\big(-(\be+1)^2,-(\al+1)^2\big)\quad \text{and} \quad \Om_2=\big(-\infty,-(\be+1)^2\big). 
\end{equation*}
We assume $0\leq \ka < 1$ or $\ka \in i\R_{>0}$ for convenience, in order to avoid
discrete spectrum of $T$. For the additional case of the discrete spectrum,
which arises with multiplicity one, see \cite{GroeK}. 
We set
\begin{equation*} 
\begin{split}
\de_\la = i \sqrt{ -\la-(\al+1)^2},&  \qquad \la \in \Om_1 \cup \Om_2,\\
\eta_\la = i \sqrt{ -\la-(\be+1)^2},& \qquad \la \in \Om_2,\\
\de(\la) = \sqrt{\la +(\al+1)^2 },& \qquad \la \in \C \setminus \big(\Om_1 \cup \Om_2\big),\\
\eta(\la) = \sqrt{\la+(\be+1)^2}, & \qquad \la \in \C \setminus \Om_2.
\end{split}
\end{equation*}
Here $\sqrt{\cdot}$ denotes the principal branch of the square root.
We denote by $\si$ the set $\Om_2 \cup \Om_1$. 
Theorem \ref{thm:integraltransform} will show that $\si$ is the spectrum of $T$.

Next we introduce the weight functions that we need to define $L^2(\cV)$. First we define
\begin{equation*} 
c(x;y) = \frac{\Ga(1+y)\, \Ga(-x)}{\Ga(\frac12(1+y-x+\ka))\, \Ga(\frac12(1+y-x-\ka))}.
\end{equation*}
With this function we define for $\la \in \Om_1$
\begin{equation*} 
v(\la) = \frac{1}{ c\big(\de_\la;\eta(\la)\big)c\big(-\de_\la;\eta(\la)\big) }.
\end{equation*}
For $\la \in \Om_2$ we define the matrix-valued weight function $V(\la)$ by
\begin{equation*} 
V(\la)=\begin{pmatrix} 1 & v_{12}(\la) \\ v_{21}(\la) & 1 \end{pmatrix},
\end{equation*}
with
\begin{equation*} 
v_{21}(\la) = \frac{ c(\eta_\la;\delta_\la) }{c(-\eta_\la;\delta_\la)} 
=\frac{ \Ga(-\eta_\la)\, \Ga\bigl(\frac12(1+\de_\la+\eta_\la+\ka)\bigr)\, \Ga\bigl(\frac12(1+\de_\la+\eta_\la-\ka)\bigr) }
{\Ga(\eta_\la)\,\Ga\bigl(\frac12(1+\de_\la-\eta_\la+\ka)\bigr)\, \Ga\bigl(\frac12(1+\de_\la- \eta_\la-\ka)\big)},
\end{equation*}
and $v_{12}(\la) = \overline{v_{21}(\la)}$.

Now we are ready to define the Hilbert space $L^2(\cV)$. It consists of functions that are $\C^2$-valued on $\Om_2$ and $\C$-valued on $\Om_1$. The inner product on $L^2(\cV)$ 
is given by
\begin{gather*}
\langle f,g \rangle_{\cV} = \frac{1}{2\pi D} \int_{\Om_2} g(\la)^* V(\la)
f(\la) \frac{d\la}{-i\eta_\la} 
 + \frac{1}{2\pi D} \int_{\Om_1} f(\la) \overline{g(\la)} v(\la) \frac{d\la}{-i\de_\la},
\end{gather*}
where $D=\frac{4\Ga(\al+\be+2)}{\Ga(\al+1,\be+1)}$.

Next we introduce the integral transform $\mathcal F$. For $\la \in \Om_1$ and $x \in (-1,1)$ we define
\begin{gather*} 
\varphi_\la(x) = \left(\frac{1-x}{2}\right)^{-\frac12(\al- \de_\la+1)} \left(\frac{1+x}{2}\right)^{-\frac12(\be - \eta(\la)+1)} 
\\ \qquad \times 
\rFs{2}{1}{ \frac12(1 +\de_\la + \eta(\la)-\ka), \frac12(1+ \de_\la + \eta(\la)+\ka)}{1+ \eta(\la)}{\frac{1+x}{2}}.
\end{gather*}
By Euler's transformation, see e.g.~\cite[(2.2.7)]{AndrAR}, we 
have the symmetry $\de_\la \leftrightarrow -\de_\la$. Furthermore, we define for $\la \in \Om_2$ and $x\in (-1,1)$,
\begin{gather*} 
\varphi^\pm_\la(x) =\left(\frac{1-x}{2}\right)^{-\frac12(\al - \de_\la+1)} \left(\frac{1+x}{2}\right)^{-\frac12(\be \mp \eta_\la+1)} 
\\  \qquad \times \rFs{2}{1}{ \frac12(1 + \de_\la \pm \eta_\la-\ka), \frac12(1 + \de_\la \pm \eta_\la+\ka)}{1\pm \eta_\la}{\frac{1+x}{2}}.
\end{gather*}
Observe that $\overline{\varphi^+_\la(x)}=\varphi^-_\la(x)$,
again by Euler's transformation.
Now, let $\mathcal F$ be the integral transform defined by
\begin{equation*} 
(\cF f)(\la) =
\begin{cases}
\displaystyle \int_{-1}^1 f(x) \begin{pmatrix} \varphi^+_\la(x)\\ \varphi^-_\la(x) \end{pmatrix} w^{(\al,\be)}(x) \,dx,& \la \in \Om_2,\\
\displaystyle \int_{-1}^1 f(x) \varphi_\la(x) w^{(\al,\be)}(x) \,dx,& \la \in \Om_1,
\end{cases}
\end{equation*}
for all $f \in \cH$ such that the integrals converge. 
The following result says that $\cF$ is the required unitary operator $U$ intertwining $T$ with 
multiplication. 

\begin{thm} \label{thm:integraltransform}
The transform $\cF$ extends uniquely to a unitary operator $\cF\colon\cH \to L^2(\cV)$ such that $\cF  T = M \cF$,
where $M\colon L^2(\cV) \to L^2(\cV)$ is the unbounded multiplication operator given by $(Mg)(\la) = \la g(\la)$ for almost all $\la \in \si$.
\end{thm}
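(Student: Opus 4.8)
The plan is to treat Theorem~\ref{thm:integraltransform} as a Weyl--Titchmarsh eigenfunction expansion for the singular second-order differential operator $T$, exactly parallel to the resolvent/Stieltjes--Perron argument of Section~\ref{sec:threetermZ} but now in the continuous setting. The eigenvalue equation $Tf=\la f$ is a Fuchsian equation with regular singular points at $-1$, $1$, $\infty$ (a Riemann--Papperitz equation, as in the scheme displayed after Theorem~\ref{thm:Jacobi}), and the functions $\varphi_\la$, $\varphi^\pm_\la$ are precisely the Frobenius solutions normalised by their local behaviour at the endpoints: the behaviour at $x=1$ is governed by $\de_\la$ through the factor $\left(\tfrac{1-x}{2}\right)^{-\frac12(\al-\de_\la+1)}$, and $\varphi^\pm_\la$ is singled out at $x=-1$ by the factor $\left(\tfrac{1+x}{2}\right)^{-\frac12(\be\mp\eta_\la+1)}$, while on $\Om_1$ the solution $\varphi_\la$ is selected by the real exponent $\eta(\la)$. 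First I would verify directly from \eqref{eq:T}, or from the factorisation $T=r(L^{(\al+1,\be+1)}+\rho)$ together with \eqref{eq:DOforJacobipols}, that these hypergeometric functions satisfy $Tf=\la f$, and that $\cF$ is well defined on the dense domain $\cP$, where the integrals converge absolutely.

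Next I would carry out the endpoint classification. For $\al,\be>-1$ the analysis at $x=\pm1$ shows that $(T,\cP)$ is essentially self-adjoint, as in Theorem~\ref{thm:Jacobi}; the decisive point is the sign of the quantities under the square roots defining $\de_\la,\eta_\la$. On $\Om_1$ one has $\eta(\la)$ real, so the limit-point case holds at $x=-1$ and a unique solution square-integrable there is singled out, giving simple spectrum; on $\Om_2$ both $\de_\la$ and $\eta_\la$ are purely imaginary, so both endpoints are oscillatory and there are two independent generalised eigenfunctions $\varphi^\pm_\la$, giving multiplicity two. This dichotomy is the continuous counterpart of the fact that $T$ is a five-term recurrence operator, hence a $2\times2$ block Jacobi operator in the sense of Section~\ref{ssec:JMatrixMVOP}, so that the spectrum has multiplicity at most two.

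The analytic core is the resolvent computation. As in Theorem~\ref{thm:resolventL}, I would build the Green kernel of $(T-\la)^{-1}$ from one solution square-integrable near $x=1$ and one near $x=-1$, divided by their Wronskian, and then apply the Stieltjes--Perron inversion formula, Theorem~\ref{thm:App-StieltjesPerron}, to recover the spectral measure from the jump of $\langle (T-\la)^{-1}f,g\rangle$ across the real axis. Evaluating this jump requires the connection formulas for ${}_2F_1$ relating its expansions at $x=-1$ and $x=1$ (equivalently at $\tfrac{1+x}{2}=0$ and $=1$): the connection coefficients are exactly the Gamma-quotient $c(x;y)$, and the boundary densities $v(\la)$ on $\Om_1$ and $V(\la)$ on $\Om_2$ emerge as products and ratios of these $c$-functions, while the normalisation $D$ falls out of the relevant Wronskian. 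I expect this $c$-function bookkeeping --- in particular tracking the correct branches of $\de_\la,\eta_\la$ as $\la$ approaches $\si=\Om_1\cup\Om_2$ from above and below, and disentangling the multiplicity-two contribution of $\varphi^+_\la$ and $\varphi^-_\la$ on $\Om_2$ --- to be the main obstacle; it is where all the detailed special-function identities of \cite{GroeK} enter.

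Finally I would assemble the pieces. The Stieltjes--Perron output is precisely a Plancherel formula stating that $\cF$ is isometric onto $L^2(\cV)$ with the stated inner product, and the eigenfunction property $T\varphi_\la=\la\varphi_\la$ (together with its $\varphi^\pm_\la$ analogues) gives $\cF T=M\cF$ on $\cP$. Essential self-adjointness then allows me to extend $\cF$ uniquely to all of $\cH$ and to upgrade isometry to unitarity, surjectivity being equivalent to completeness of the generalised eigenfunctions, which self-adjointness guarantees. Uniqueness of the extension is automatic from the density of $\cP$ in $\cH$.
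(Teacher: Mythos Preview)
Your proposal is correct and follows essentially the same route as the paper: the paper's own proof sketch says precisely that one solves $Tf_\la=\la f_\la$ in terms of hypergeometric functions (the equation being Fuchsian with three regular singular points), builds the Green kernel and hence the resolvent from suitable endpoint solutions, and then reads off the spectral decomposition via Stieltjes--Perron, referring to \cite{GroeK} for the $c$-function computations. Your elaboration of the endpoint analysis and the role of the connection coefficients is exactly the content of that reference.
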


The proof of Theorem \ref{thm:integraltransform} is based on the fact that the eigenvalue equation 
$Tf_\la=\la f_\la$ can be solved in terms of hypergeometric functions since
it is a second-order differential equation with regular singularities at three points.
Having sufficiently many solutions available gives the opportunity to find the 
Green kernel, and hence the resolvent operator, from which one derives the spectral
decomposition, see \cite{GroeK} for details. 

Now we want to apply Theorem \ref{thm:genorth2t2MVOP} for the polynomials 
generated by \eqref{eq:3trr2by2MVOP}. For this it suffices 
to write down explicitly the coefficients $a_n$, $b_n$ and $c_n$ in the 
$5$-term recurrence realisation of the operator $T$, cf.
Exercise \ref{exer:5termrecurrence}, and to calculate 
the matrix entries in the weight matrices of Theorem \ref{thm:genorth2t2MVOP}. 

The coefficients $a_n$, $b_n$ and $c_n$ follow by keeping track 
of the method of Exercise \ref{exer:5termrecurrence}, and  
this worked out in Exercise \ref{exer:calanbncnJacobi}.
This then makes the matrix entries in the three-term recurrence relation 
\eqref{eq:3trr2by2MVOP} 
completely explicit. 

It remains to calculate the matrix entries of the weight functions
in Theorem \ref{thm:genorth2t2MVOP}. 
In \cite{GroeK} these functions  are calculated in terms of ${}_3F_2$-functions.

\subsection{Exercises}

\begin{enumerate}[1.]
\item \label{exer:5termrecurrence} 
Generalise the situation of Section \ref{ssec:JmatrixDO} 
to the case where the polynomial $r$ is of degree $2$. 
Show that in this case the analogue of \eqref{eq:explexpgenpols}
and \eqref{eq:OPphicombinationofPhi} involve
three terms in the right-hand side. 
Show that now the operator $T=r(L+\ga)+ \tau x$ 
is a $5$-term operator in the bases $\{\phi_n\}_{n\in \N}$ of $L^2(\nu)$.
Here $r$, respectively $x$, denotes multiplication by $r$, respectively $x$,
and $\ga, \tau$ are constants. 
\item \label{exer:eq3trr2by2MVOP}
Show that \eqref{eq:3trr2by2MVOP} and \eqref{eq:Un=PnU0} hold 
starting from \eqref{eq:5trrforUfn}.
\item \label{exer:calanbncnJacobi}
\begin{enumerate}[(a)]
\item Show that 
\[
\phi_n = \al_n \Phi_n + \be_n \Phi_{n-1}+ \ga_n \Phi_{n-2},
\]
where $\phi_n$, respectively $\Phi_n$, are the orthonormalised
Jacobi polynomials $P^{(\al,\be)}_n$, respectively 
$P^{(\al+1,\be+1)}_n$ and where 
\[
\begin{split}
\al_n  & = \frac{2}{\sqrt{K}} \frac{1}{2n+\al+\be+2}\sqrt{ \frac{ (\al+n+1)(\be+n+1)(n+\al+\be+1)(n+\al+\be+2)  }{ (\al+\be+2n+1)(\al+\be+2n+3)}},\\
\be_n &= (-1)^{n} \frac{2}{\sqrt{K}} \frac{(\be-\al) \sqrt{n(n+\al+\be+1)}}{(\al+\be+2n)(\al+\be+2n+2)},\\
\ga_n & = - \frac{2}{\sqrt{K}} \frac{1}{2n+\al+\be}  \sqrt{ \frac{n(n-1)(\al+n)(\be+n)}{(\al+\be+2n-1)(\al+\be+2n+1)} }.
\end{split}
\]
Here $K$ as in the definition of $r(x)$. 
\item Show that 
\begin{gather*}
a_n =  K \al_n \ga_{n+2} (\La_n +\rho), \qquad b_n=K \al_n \be_{n+1}(\La_n+\rho)+ K \be_n \ga_{n+1} (\La_{n+1}+\rho), \\
\qquad c_n= K \al_n^2 (\La_n+\rho) + K \be_n^2(\La_{n-1}+\rho) + K \ga_n^2 (\La_{n-2}+\rho),
\end{gather*}
where $\La_n = -n(n+\al+\be+3)$, 
$\rho$ as in the definition of $T=T^{(\al,\be;\ka)}$ 
and $\al_n,\be_n,\ga_n$ as in (a). 
\end{enumerate}
\end{enumerate}

\appendix

\section{The spectral theorem}\label{app:spectralthm}

In this appendix we recall some facts from functional analysis 
with emphasis on the spectral theorem. There are many sources 
for this appendix, or parts of it, see e.g. 
\cite{DunfS}, \cite{Lax}, \cite{ReedS}, \cite{Rudi}, \cite{Schm}, \cite{Ston}, 
\cite{Wern}, 
but many other sources are available. 

\subsection{Hilbert spaces and operators}

A vector space $\cH$ over $\C$ is an inner product space if
there exists a mapping $\langle\cdot,\cdot\rangle\colon
\cH\times \cH \to \C$ such that for all $u,v,w\in\cH$
and for all $a,b\in\C$ we have
(i) $\langle a v+b w, u\rangle = a \langle v,u\rangle
+b \langle w,u\rangle$, (ii) $\langle u,v\rangle = \overline{
\langle v,u\rangle}$, and (iii) $\langle v,v\rangle \geq 0$ and
$\langle v,v\rangle =0$ if and only if $v=0$. With
the inner product we associate the norm $\| v\|=\|v\|_\cH
=\sqrt{\langle v,v \rangle}$, and the topology from the
corresponding metric $d(u,v)=\|u-v\|$. The standard inequality
is the Cauchy-Schwarz inequality;\index{Cauchy-Schwarz inequality}
$|\langle u,v\rangle| \leq \| u\| \|v\|$.
A Hilbert space $\cH$
is a complete inner product space, i.e. for
any Cauchy sequence $\{x_n\}_n$ in $\cH$, i.e.
$\forall \ep>0$ $\exists N\in\N$ such that for
all $n,m\geq N$ $\| x_n-x_m\|<\ep$, there exists an element
$x\in\cH$ such that $x_n$ converges to $x$.
In these notes all Hilbert spaces are separable, i.e. there
exists a denumerable set of basis vectors. 
The Cauchy-Schwarz inequality can be extended to the Bessel \index{Bessel inequality}
inequality; for an orthonormal sequence $\{ f_i\}_{i\in I}$ in 
$\cH$, i.e. $\langle f_i, f_j\rangle = \de_{i,j}$, 
\[
\sum_{i\in I} | \langle x, f_i\rangle |^2 \leq \| x\|^2
\]

\begin{example}\label{exa:Hilbertspaces}
(i) The finite-dimensional inner product space $\C^N$ with its
standard inner product is a Hilbert space. 

(ii) $\ell^2(\Z)$, the space of square summable 
sequences $\{ a_k\}_{k\in\Z}$, and $\ell^2(\N)$, the space of square
summable sequences $\{ a_k\}_{k\in\N}$, are Hilbert spaces.
The inner product is given by
$\langle \{ a_k\}, \{ b_k\} \rangle = \sum_{k\in\N} a_k\overline{b_k}$.
An orthonormal basis is given by the sequences $e_k$ defined
by $(e_k)_l=\de_{k,l}$, so we identify $\{a_k\}$ with
$\sum_{k\in\N} a_ke_k$. \index{l@$\ell^2(\Z)$} \index{l@$\ell^2(\N)$}

(iii) We consider a positive Borel measure $\mu$ on the real line
$\R$ such that all moments exist, i.e.
$\int_\R |x|^m\, d\mu(x) < \infty$ for all $m\in\N$. Without loss
of generality we assume that $\mu$ is a probability measure,
$\int_\R d\mu(x)=1$.
By $L^2(\mu)$ we denote the space of square integrable functions
on $\R$, i.e. $\int_\R |f(x)|^2\, d\mu(x)<\infty$.
Then $L^2(\mu)$ is a Hilbert space (after identifying
two functions $f$ and $g$ for which
$\int_\R |f(x)-g(x)|^2\, d\mu(x)=0$) with respect to the
inner product $\langle f,g\rangle =\int_\R f(x)\overline{g(x)}\,
d\mu(x)$.
In case $\mu$ is a finite sum of discrete Dirac measures,
we find that $L^2(\mu)$ is finite dimensional.

(iv) For two Hilbert spaces $\cH_1$ and $\cH_2$ we can take 
its algebraic tensor product $\cH_1\otimes \cH_2$ 
and equip it with an inner product defined on simple tensors by  
\[
\langle v_1\otimes v_2, w_1\otimes w_2\rangle = \langle v_1,w_1\rangle_{\cH_1}
\langle v_2,w_2\rangle_{\cH_2}. 
\]
 Taking its completion gives 
the Hilbert space $\cH_1\hat\otimes \cH_2$. \index{tensor product of Hilbert spaces}
\end{example}

An operator $T$ from a Hilbert space $\cH$ into
another Hilbert space $\cK$ is linear if
for all $u,v\in \cH$ and for all $a,b\in\C$ we have
$T(au+bv)=aT(u)+bT(v)$. An operator $T$ is bounded if there exists
a constant $M$ such that $\| Tu\|_\cK\leq M\|u\|_\cH$
for all $u\in \cH$. The smallest $M$ for which this holds is
the norm, denoted by $\|T\|$, of $T$. A bounded linear operator
is continuous. The adjoint of a bounded linear operator $T\colon
\cH\to\cK$ is a map $T^\ast\colon \cK\to \cH$
with $\langle Tu,v\rangle_\cK =
\langle u, T^\ast v\rangle_\cH$. We call $T\colon
\cH\to \cH$ self-adjoint if $T^\ast=T$.
$T^\ast\colon \cK\to \cH$ is unitary if $T^\ast T = {\mathbf 1}_\cH$ and
$TT^\ast = {\mathbf 1}_\cK$.
A projection $P\colon \cH\to \cH$ is a self-adjoint bounded operator 
such that $P^2=P$.

An operator $T\colon \cH\to \cK$ is compact\index{compact operator} if the closure 
of the image of the 
unit ball $B_1=\{ v\in \cH \mid \|v\|\leq 1\}$ under $T$ is compact in $\cK$. 
In case $\cK$ is finite dimensional any bounded operator $T\colon \cH\to \cK$ is compact, 
and slightly more general, any operator which has finite rank\index{operator of finite rank},
i.e. its range is finite dimensional, is compact.
Moreover, any compact operator can be approximated in the operator norm by 
finite-rank operators.

\subsection{Hilbert $\text{C}^\ast$-modules}\label{ssec:HilbertCastmodules}
For more information on Hilbert $\text{C}^\ast$-modules, see 
e.g. Lance \cite{Lanc}. 
The space $B(\cH)$ of bounded linear operators $T\colon \cH \to \cH$
is a $\ast$-algebra, where the $\ast$-operation is given by the adjoint,  
satisfying $\|TS\| \leq \|T \| \|S\|$ and $\|T^\ast T\|= \|T \|^2$. 
With the operator-norm $B(\cH)$ is a metric space, and a $\text{C}^\ast$-algebra
\index{C-algebra@$\text{C}^\ast$-algebra} is a closed $\ast$-invariant 
subalgebra of $B(\cH)$. 
Examples of a $\text{C}^\ast$-algebra are $B(\cH)$ and the space of 
all compact operators $T\colon \cH \to \cH$. 
We only need $M_N(\C)=B(\C^N)$, the space of all linear maps from $\C^N$ to itself, 
as an example of a $\text{C}^\ast$-algebra. 
An element $a\in A$ in a $\text{C}^\ast$-algebra $A$ is positive if
$a= b^\ast b$ for some element $b\in A$, and we use the notation $a\geq 0$.
This notation is extended to $a\geq b$ meaning $(a-b)\geq 0$. 
In case of $A=M_N(\C)$, $T\geq 0$ means that $T$ corresponds to 
a positive semi-definite matrix,\index{positive semi-definite matrix} 
i.e. $\langle Tx,x\rangle \geq 0$ for all $x\in \C^N$. 
The positive definite matrices form the cone $P_N(\C)$\index{P1@$P_N(\C)$}
in $M_N(\C)$. 
We say $T$ is a positive matrix\index{positive matrix} or
a positive definite matrix\index{positive definite matrix}
if $\langle Tx,x\rangle > 0$ for all $x\in \C^N\setminus\{0\}$.
Note that terminology concerning positivity in $\text{C}^\ast$-algebras and 
matrix algebras does not coincide, and we follow the latter, see \cite{HornJ}.  

A Hilbert $\text{C}^\ast$-module\index{Hilbert C-module@Hilbert $\text{C}^\ast$-module} 
$E$ over the (unital) $\text{C}^\ast$-algebra $A$ is a 
left $A$-module $E$ equipped with an $A$-valued inner product 
$\langle \cdot, \cdot \rangle \colon E\times E \to A$ so that 
for all $v,w,u\in E$ and all $a,b\in A$ 
\begin{gather*}
\langle av+bw, u\rangle = a\langle v, u\rangle + b \langle w, u\rangle, \qquad 
\langle v,w \rangle = \langle w,v\rangle^\ast, \qquad 
\langle v,v\rangle \geq 0\ \text{and} \ \langle v,v\rangle = 0 \ \Leftrightarrow\ v=0
\end{gather*}
and $E$ is complete with respect to the norm $\|v\| = \sqrt{\| \langle v,v\rangle\|}$. 
The analogue of the Cauchy-Schwarz inequality\index{Cauchy-Schwarz inequality} then reads 
\[
\langle v,w\rangle \langle w,v \rangle \leq \| \langle w,w\rangle\| \, \langle v,v\rangle,
\qquad v,w \in E
\]
and the analogue of the Bessel inequality\index{Bessel inequality}
\[
\sum_{i\in I} \langle v, f_i\rangle \langle f_i, v\rangle \leq \langle v, v \rangle, 
\qquad v\in E 
\]
for $(f_i)_{i\in I}$ an orthonormal set in $E$, i.e. $\langle f_i, f_j\rangle =\de_{i,j}\in A$.
(Here we use that $A$ is unital.)

\subsection{Unbounded operators}
We are also
interested in unbounded linear operators.
In that case we denote $(T, \cD(T))$, where $\cD(T)$, the
domain of $T$, is
a linear subspace of $\cH$ and $T\colon \cD(T)\to \cH$.
Then $T$ is densely defined\index{densely defined} if the closure of $\cD(T)$ equals
$\cH$. All unbounded operators that we consider in these
notes are densely defined. If the operator $(T-z)$, $z\in\C$, has an
inverse $R(z)=(T-z)^{-1}$ which is densely defined and is bounded,
so that $R(z)$,
the resolvent operator,\index{resolvent operator}
extends to a bounded linear operator on $\cH$,
then we call $z$ a regular value. 
The set of all regular values
is the resolvent set $\rho(T)$. The complement of the resolvent
set $\rho(T)$ in $\C$ is the spectrum $\si(T)$ of $T$.
The point spectrum is the subset of the spectrum for which
$T-z$ is not one-to-one. In this case there exists a vector
$v\in\cH$ such that $(T-z)v=0$, and $z$ is an eigenvalue.
The continuous spectrum consists of the points $z\in\si(T)$ for
which $T-z$ is one-to-one, but for which $(T-z)\cH$ is dense
in $\cH$,
but not equal to $\cH$. The remaining part of the spectrum
is the residual spectrum. For self-adjoint operators, both
bounded and unbounded, 
the spectrum only consists of the discrete and continuous spectrum.

The resolvent operator is defined in the same way for a bounded operator. 
For a bounded operator $T$ the spectrum $\si(T)$
is a compact subset of the disk of radius $\| T\|$. Moreover,
if $T$ is self-adjoint, then $\si(T)\subset \R$, so that
$\si(T)\subset [-\|T\|, \|T\|]$ and the spectrum consists
of the point spectrum and the continuous spectrum.

\subsection{The spectral theorem
for bounded self-adjoint operators}

A resolution of the identity, say $E$, of a Hilbert space
$\cH$ is a projection valued Borel measure on $\R$ such that
for all Borel sets $A,B\subseteq \R$ we have
(i) $E(A)$ is a self-adjoint projection, (ii) $E(A\cap B)=
E(A)E(B)$, (iii) $E(\emptyset)=0$,
$E(\R)={\mathbf 1}_\cH$,
(iv) $A\cap B=\emptyset$ implies $E(A\cup B)=E(A)+E(B)$,
and (v) for all $u,v\in\cH$ the map $A\mapsto
E_{u,v}(A)=\langle E(A)u,v\rangle$ is a complex Borel measure.

A generalisation of the spectral
theorem for matrices
is the following theorem for compact self-adjoint operators, 
see e.g \cite[VI.3]{Wern}.

\begin{thm}[Spectral theorem for compact operators]\label{thm:App-spectralcompactthm} \index{spectral theorem}
Let $T\colon \cH\to \cH$ be a compact self-adjoint linear map, then
there exists a sequence of orthonormal vectors $(f_i)_{i\in I}$ such that
$\cH$ is the orthogonal direct sum of $\Ker(T)$ and the subspace spanned by $(f_i)_{i\in I}$ 
and there exists a sequence $(\la_i)_{i\in I}$ of non-zero real numbers converging to $0$ so that
\[
T v  = \sum_{i\in I}  \la_i \, \langle v, f_i\rangle f_i 
\]
\end{thm}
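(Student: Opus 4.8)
The plan is to construct the eigenvectors one at a time by an iterated variational argument, using compactness and self-adjointness together. The cornerstone is the classical identity $\|T\| = \sup_{\|v\|=1} |\langle Tv, v\rangle|$ for a bounded self-adjoint operator, which I would establish first, since it points to a candidate eigenvalue of maximal modulus. (If $T=0$ there is nothing to do: $\cH=\Ker(T)$ and $I=\emptyset$.) Granting the identity, I would pick unit vectors $v_n$ with $\langle Tv_n, v_n\rangle \to \la_1$, where $|\la_1|=\|T\|$, and estimate
\[
\|Tv_n - \la_1 v_n\|^2 = \|Tv_n\|^2 - 2\la_1 \langle Tv_n, v_n\rangle + \la_1^2 \le 2\la_1^2 - 2\la_1 \langle Tv_n, v_n\rangle \to 0,
\]
where the inequality uses $\|Tv_n\| \le \|T\| = |\la_1|$ and the fact that $\langle Tv_n, v_n\rangle$ is real. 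Compactness lets me pass to a subsequence along which $Tv_n$ converges; writing $\la_1 v_n = Tv_n - (Tv_n - \la_1 v_n)$, the estimate then forces $v_n$ itself to converge, and its limit $f_1$ is a unit vector with $Tf_1 = \la_1 f_1$ by continuity of $T$.

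Next I would run the induction. The orthogonal complement $\{f_1\}^\perp$ is $T$-invariant precisely because $T$ is self-adjoint, and $T$ restricted to it is again compact and self-adjoint, so the first step applies verbatim and yields $(\la_2, f_2)$ with $|\la_2| = \|T|_{\{f_1\}^\perp}\| \le |\la_1|$. Iterating produces orthonormal eigenvectors $f_1, f_2, \ldots$ with eigenvalues of nonincreasing modulus. The process either terminates, when some restriction becomes the zero operator and $I$ is finite, or continues indefinitely; in the latter case I must check $\la_i \to 0$. This follows from compactness: an orthonormal sequence of eigenvectors with $|\la_i|$ bounded away from $0$ would satisfy $\|Tf_i - Tf_j\|^2 = \la_i^2 + \la_j^2$ bounded below for $i\neq j$, so $(Tf_i)$ would have no convergent subsequence.

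Finally I would verify the decomposition and the series. Let $\cH_0$ be the closed linear span of the $f_i$; then $\cH_0^\perp$ is $T$-invariant by self-adjointness, and $T|_{\cH_0^\perp}$ is compact, self-adjoint, and admits no nonzero eigenvalue (any unit eigenvector $g\in\cH_0^\perp$ lies in $\{f_1,\dots,f_{n-1}\}^\perp$ for every $n$, so its eigenvalue has modulus $\le |\la_n|\to 0$). Applying the eigenvalue-existence step of the first paragraph to this restriction forces $\|T|_{\cH_0^\perp}\| = 0$, hence $\cH_0^\perp \subseteq \Ker(T)$; since also $\Ker(T)\perp\cH_0$, we obtain $\cH = \Ker(T)\oplus \cH_0$. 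For the formula, any $v$ decomposes as $v = v_0 + \sum_i \langle v, f_i\rangle f_i$ with $v_0\in\Ker(T)$, and applying $T$ together with continuity and $Tf_i=\la_i f_i$ gives $Tv = \sum_i \la_i \langle v, f_i\rangle f_i$. The main obstacle is the eigenvalue-existence step of the first paragraph: it is the sole point where compactness and self-adjointness must be combined, and the variational characterization of $\|T\|$ paired with the norm estimate above is exactly what upgrades an approximate eigenvalue to a genuine one.
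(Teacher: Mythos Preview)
Your proof is correct and is the standard variational (Rayleigh--Ritz) argument found in most functional analysis textbooks. Note that the paper does not actually supply its own proof of this theorem: it is stated in the appendix as background and simply referred to \cite[VI.3]{Wern}. The approach you have written is exactly the one given in such references, so there is nothing to compare.
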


Here $I$ is at most countable, since we assume $\cH$ to be separable. 
In case $I$ is finite, the fact that the sequence $(\la_i)_{i\in I}$  
is a null-sequence is automatic. 

The following theorem is the corresponding statement for bounded self-adjoint operators,
see \cite[\S X.2]{DunfS}, \cite[\S 12.22]{Rudi}.

\begin{thm}[Spectral theorem]\label{thm:App-spectralthm} \index{spectral theorem}
Let $T\colon
\cH\to \cH$ be a bounded self-adjoint linear map, then
there exists a unique resolution of the identity such that
$T=\int_\R t \, dE(t)$, i.e. $\langle Tu,v\rangle =\int_\R t \,
dE_{u,v}(t)$. Moreover, $E$ is supported on the
spectrum $\si(T)$, which is contained in the interval
$[-\|T\|, \|T\|]$.
Moreover, any of the spectral projections $E(A)$, $A\subset \R$
a Borel set, commutes with $T$.
\end{thm}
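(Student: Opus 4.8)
The plan is to construct $E$ through the continuous functional calculus together with the Riesz representation theorem, and then read off the asserted properties. First I would record the two spectral facts that drive everything: for a bounded self-adjoint $T$ one has $\si(T)\subset\R$, and since the spectral radius of a self-adjoint operator equals its norm, $\si(T)\subset[-\|T\|,\|T\|]$. Combining this with the polynomial spectral mapping theorem $\si(p(T))=p(\si(T))$ and the $\text{C}^\ast$-identity $\|p(T)\|^2=\|(\bar p\,p)(T)\|$, I would obtain the crucial isometry $\|p(T)\|=\sup_{t\in\si(T)}|p(t)|$ for every polynomial $p$.

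Next I would build the functional calculus. By the Weierstrass approximation theorem the polynomials are dense in $C(\si(T))$, so the isometric map $p\mapsto p(T)$ extends uniquely to an isometric $\ast$-homomorphism $\Phi\colon C(\si(T))\to B(\cH)$. For fixed $u,v\in\cH$ the functional $f\mapsto\langle\Phi(f)u,v\rangle$ is bounded and linear on $C(\si(T))$, so by the Riesz representation theorem there is a unique regular complex Borel measure $E_{u,v}$ supported in $\si(T)$ with $\langle\Phi(f)u,v\rangle=\int f\,dE_{u,v}$. Sesquilinearity in $(u,v)$ and uniqueness of the representing measure force $(u,v)\mapsto E_{u,v}(A)$ to be a bounded sesquilinear form for each Borel set $A$, hence equal to $\langle E(A)u,v\rangle$ for a bounded operator $E(A)$.

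I would then extend $\Phi$ to bounded Borel functions $g$ by the same recipe $\langle\Phi(g)u,v\rangle=\int g\,dE_{u,v}$ and set $E(A)=\Phi(\chi_A)$. The resolution-of-the-identity axioms now follow from the algebraic structure of $\Phi$: since $\chi_A$ is real with $\chi_A^2=\chi_A$, each $E(A)$ is a self-adjoint idempotent, and $E(A\cap B)=\Phi(\chi_A\chi_B)=\Phi(\chi_A)\Phi(\chi_B)=E(A)E(B)$, while countable additivity comes from dominated convergence applied to the scalar measures $E_{u,v}$. Taking $f(t)=t$ gives $T=\int_\R t\,dE(t)$. Uniqueness follows because any competing resolution must integrate every polynomial, hence every continuous function, to $p(T)$, so its representing measures agree with $E_{u,v}$ by Riesz uniqueness. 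The support statement is immediate since each $E_{u,v}$ lives on $\si(T)$, and $E(A)$ commutes with $T$ because $\Phi$ is multiplicative and $\chi_A$ commutes with the identity function.

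The main obstacle I anticipate is exactly the claim that the extended map $\Phi$ on bounded Borel functions remains multiplicative, $\Phi(g_1g_2)=\Phi(g_1)\Phi(g_2)$, which is what makes $E$ a genuine projection-valued measure but is not inherited automatically from the continuous case. I expect to establish it by a two-stage argument: first fix $g_1$ continuous and extend the identity to all bounded Borel $g_2$ by a dominated-convergence (monotone-class) argument, using that $\Phi(g_1)^\ast$ acts in a controlled way on the measures; then fix $g_2$ bounded Borel and extend in the variable $g_1$. Checking along the way that each $\Phi(g)$ is truly a bounded operator, with $\|\Phi(g)\|\le\|g\|_\infty$, and that the family $\{E_{u,v}\}$ is uniformly controlled, is the technical heart of the proof.
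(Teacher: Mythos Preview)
The paper does not actually prove this theorem: it is stated in the appendix as background material with references to \cite[\S X.2]{DunfS} and \cite[\S 12.22]{Rudi}, and no argument is given. So there is no ``paper's own proof'' to compare against.

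That said, your outline is correct and is precisely the route taken in the second of those references (Rudin). The chain of steps --- isometry $\|p(T)\|=\sup_{\si(T)}|p|$ from the spectral mapping theorem and the $\text{C}^\ast$-identity, extension to $C(\si(T))$ by Weierstrass, Riesz representation to produce the measures $E_{u,v}$, extension to bounded Borel functions, and then $E(A)=\Phi(\chi_A)$ --- is the standard construction. You have also correctly identified the one genuinely nontrivial point: multiplicativity of the Borel functional calculus does not follow formally from multiplicativity on $C(\si(T))$, and your proposed two-variable monotone-class argument is the right way to close that gap. One small addition you might make explicit is why the sesquilinear form $(u,v)\mapsto E_{u,v}(A)$ is bounded (it follows from $|E_{u,v}|(\si(T))\le\|u\|\,\|v\|$, which comes out of the total variation bound in Riesz's theorem), since you need this to invoke the bounded-sesquilinear-form representation.
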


A more general theorem of this kind holds for normal
operators, i.e. for those operators satisfying $T^\ast T=TT^\ast$.

For the case of a compact operator, we have 
in the notation of Theorem \ref{thm:App-spectralcompactthm} that for $\la_i$ 
the spectral measure evaluated at $\{\la_k\}$  is the 
orthogonal projection on the corresponding eigenspace;
\[
E(\{\la_k\})v = \sum_{i\in I; \la_i=\la_k} \langle v, f_i\rangle f_i. 
\]

Using the spectral theorem we define for
any continuous
function $f$ on the spectrum $\si(T)$ the operator $f(T)$ by
$f(T)=\int_\R f(t) \, dE(t)$, i.e.
$\langle f(T)u,v\rangle =\int_\R f(t) \, dE_{u,v}(t)$. Then $f(T)$
is bounded operator with norm equal to
the supremum norm of $f$ on the spectrum of $T$, i.e.
$\| f(T)\| = \sup_{x\in\si(T)} |f(x)|$.
This is known as the functional calculus for self-adjoint operators.
In particular, for
$z\in\rho(T)$ we see that $f\colon x\mapsto (x-z)^{-1}$
is continuous on the spectrum, and the corresponding operator is
just the resolvent operator $R(z)$. The
functional calculus can be extended to measurable functions,
but then $\| f(T)\| \leq \sup_{x\in\si(T)} |f(x)|$.

The spectral measure can be obtained from the
resolvent operators by the Stiel\-tjes-Perron inversion formula,
see \cite[Thm. X.6.1]{DunfS}.

\begin{thm}\label{thm:App-StieltjesPerron} \index{Stieltjes-Perron inversion}
The spectral measure of the open interval
$(a,b)\subset \R$ is given by
$$
E_{u,v}\bigl( (a,b)\bigr) = \lim_{\de\downarrow 0}
\lim_{\ep\downarrow 0} \frac{1}{2\pi i}
\int_{a+\de}^{b-\de}
\langle R(x+i\ep)u,v\rangle - \langle R(x-i\ep)u,v\rangle \, dx.
$$
The limit holds in the strong operator topology, i.e.
$T_nx\to Tx$ for all $x\in\cH$.
\end{thm}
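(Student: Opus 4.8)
The plan is to reduce everything to the functional calculus for the self-adjoint operator $T$ supplied by the spectral theorem (Theorem~\ref{thm:App-spectralthm}) and to recognise the integrand as a Poisson kernel. First I would use that for $z=x\pm i\ep$ with $\ep>0$ the function $t\mapsto (t-z)^{-1}$ is bounded and continuous on $\si(T)\subset\R$, so that $R(z)=(T-z)^{-1}$ is exactly the operator produced by the functional calculus, with $\langle R(z)u,v\rangle=\int_\R (t-z)^{-1}\,dE_{u,v}(t)$. Subtracting the expressions for $z=x+i\ep$ and $z=x-i\ep$ and simplifying the kernel gives
\[
\frac{1}{2\pi i}\bigl(\langle R(x+i\ep)u,v\rangle-\langle R(x-i\ep)u,v\rangle\bigr)
=\frac{1}{\pi}\int_\R \frac{\ep}{(t-x)^2+\ep^2}\,dE_{u,v}(t),
\]
i.e. the integral of the Poisson kernel against the complex measure $E_{u,v}$.

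Next I would integrate this identity in $x$ over $(a+\de,b-\de)$ and interchange the $x$-integral with the spectral integral. This interchange is legitimate by Fubini's theorem, since the Poisson kernel is bounded by $\ep^{-1}$ on the compact $x$-range and $E_{u,v}$ has finite total variation (bounded by $\|u\|\,\|v\|$). The inner integral evaluates to
\[
h_{\ep,\de}(t)=\frac{1}{\pi}\Bigl(\arctan\tfrac{b-\de-t}{\ep}-\arctan\tfrac{a+\de-t}{\ep}\Bigr),
\]
so that the whole expression equals $\langle h_{\ep,\de}(T)u,v\rangle$. The symbol $h_{\ep,\de}$ is bounded by $1$ uniformly in $\ep,\de,t$, and as $\ep\downarrow0$ it converges pointwise to $\mathbf{1}_{(a+\de,b-\de)}(t)$ away from the two points $t=a+\de,\,b-\de$, where the limit equals $\tfrac12$.

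I would then pass to the limit inside the functional calculus by dominated convergence: since $\|h_{\ep,\de}(T)u-h(T)u\|^2=\int_\R|h_{\ep,\de}-h|^2\,dE_{u,u}$ with the integrable dominating constant $1$ against the finite measure $E_{u,u}$ of mass $\|u\|^2$, letting $\ep\downarrow0$ gives strong convergence of $h_{\ep,\de}(T)$ to $\tfrac12\bigl(E([a+\de,b-\de])+E((a+\de,b-\de))\bigr)$. Finally, letting $\de\downarrow0$ and using continuity from below of the projection-valued measure (both $[a+\de,b-\de]$ and $(a+\de,b-\de)$ increase to $(a,b)$, and $\|E(A_n)u-E(A)u\|^2=\langle E(A\setminus A_n)u,u\rangle\to0$ when $A_n\uparrow A$) yields $E((a,b))u$ in norm, hence the stated formula after pairing with $v$, with convergence in the strong operator topology.

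The main obstacle is bookkeeping rather than depth: one must carry the two limits in the prescribed order so that first shrinking to $[a+\de,b-\de]$ sidesteps possible atoms of $E_{u,v}$ located exactly at $a$ or $b$, and one must justify the two interchanges (Fubini for the $x$-integral, dominated convergence for $\ep\downarrow0$) from the finiteness of the spectral measures. For the precise strong-operator statement the cleanest device is to work throughout with the bounded Borel functional calculus, so that each approximant is literally $h_{\ep,\de}(T)$ and every limit reduces to $L^2(E_{u,u})$-convergence of the scalar symbols; the matrix-element formula in the statement is then just the pairing of this operator convergence against $v$.
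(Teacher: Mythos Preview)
Your argument is correct and is the standard route to Stieltjes--Perron inversion via the Poisson kernel and the bounded Borel functional calculus. Note, however, that the paper does not supply its own proof of this theorem: it is stated in the appendix with a reference to Dunford--Schwartz \cite[Thm.~X.6.1]{DunfS}. The closest thing in the paper to a proof is Exercise~\ref{exer:lemStieltjesPerroninversion}, which outlines exactly your computation (the $\arctan$ primitive of the Poisson kernel followed by pointwise limits) for the scalar analogue Lemma~\ref{lem:StieltjesPerroninversion}; your write-up matches that outline and additionally supplies the strong-operator upgrade by working in $L^2(E_{u,u})$, which the exercise does not ask for.
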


Note that the right hand side of Theorem \ref{thm:App-StieltjesPerron} is like the 
Cauchy integral formula, where we integrate over a rectangular contour.

\subsection{Unbounded self-adjoint operators}\label{ssec:appunboundedsaoperators}

Let $(T, \cD(T))$, with $\cD(T)$ the
domain of $T$, be a densely defined unbounded operator on
$\cH$. We can now define the
adjoint operator\index{adjoint operator} $(T^\ast, \cD(T^\ast))$ as follows.
First define
$$
\cD(T^\ast) = \{ v\in \cH\mid
u\mapsto \langle Tu,v\rangle \text{\ is continuous on
$\cD(T)$} \}.
$$
By the density of $\cD(T)$
the map $u\mapsto \langle Tu,v\rangle$
for $v\in \cD(T^\ast)$ extends to a continuous linear
functional $\om\colon
\cH\to \C$, and by the Riesz representation
theorem there exists a unique $w\in\cH$ such that
$\om(u)=\langle u,w\rangle$ for all $u\in \cH$.
Now the adjoint $T^\ast$ is defined by $T^\ast v =w$, so that
$$
\langle Tu,v\rangle = \langle u, T^\ast v\rangle \qquad
\forall\, u\in\cD(T),\, \forall \, v\in \cD(T^\ast).
$$

If $T$ and $S$ are unbounded operators
on $\cH$, then $T$ extends $S$, notation
$S\subset T$, if $\cD(S)\subset \cD(T)$ and
$Sv=Tv$ for all $v\in\cD(S)$. Two unbounded operators
$S$ and $T$ are equal, $S=T$, if $S\subset T$ and $T\subset S$,
or $S$ and $T$ have the same domain and act in the same way.
In terms of the graph
$$
{\mathcal G}(T) = \{ (u, Tu)\mid u\in \cD(T)\} \subset
\cH\times \cH
$$
we see that $S\subset T$ if and only if ${\mathcal G}(S) \subset
{\mathcal G}(T)$. An operator $T$ is closed if its graph is
closed in the product topology of $\cH\times \cH$.
The adjoint of a densely defined operator is a closed
operator, since the graph of the adjoint is given as
$$
{\mathcal G}(T^\ast) = \{ (-Tu,u)\mid u\in \cD(T)\}^\perp,
$$
for the inner product
$\langle (u,v),(x,y)\rangle=\langle u,x\rangle
+\langle v,y\rangle$ on $\cH\times \cH$, see
\cite[13.8]{Rudi}.

A densely defined operator is symmetric\index{symmetric operator} if
$T\subset T^\ast$, or, 
$$
\langle Tu,v\rangle = \langle u,Tv\rangle , \qquad\forall \
u,v\in\cD(T).
$$
A densely defined operator is self-adjoint if $T=T^\ast$,
so that a self-adjoint operator is closed. The spectrum of
an unbounded self-adjoint operator is contained in $\R$.
Note that $\cD(T) \subset \cD(T^\ast)$, so that
$\cD(T^\ast)$ is a dense subspace and taking
the adjoint once more gives
$(T^{\ast\ast}, \cD(T^{\ast\ast}))$
as the minimal closed extension
of $(T, \cD(T))$, i.e. any densely defined symmetric operator
has a closed extension. We have
$T\subset T^{\ast\ast}\subset T^\ast$.
We say that the densely defined symmetric operator is
essentially self-adjoint if its closure is self-adjoint, i.e.
if $T\subset T^{\ast\ast} = T^\ast$.

In general, a densely defined symmetric operator $T$
might not have self-adjoint extensions. This can be measured
by the deficiency indices. Define for $z\in \C\backslash\R$
the eigenspace
$$
N_z = \{ v\in \cD(T^\ast)\mid T^\ast v=z\, v\}.
$$
Then $\dim N_z$ is constant for $\Im z>0$ and for $\Im z<0$,
\cite[Thm. XII.4.19]{DunfS}, and we put $n_+=\dim N_i$
and $n_-=\dim N_{-i}$. The pair $(n_+,n_-)$ are
the deficiency indices for the densely defined symmetric
operator $T$. \index{deficiency index}
Note that if $T^\ast$ commutes with complex conjugation, then we automatically have
$n_+=n_-$. Here complex conjugation is an antilinear mapping $f=\sum_n f_n e_n$ to 
$\sum_n \overline{f_n} e_n$, where $\{e_n\}_n$ is an orthonormal basis of the separable Hilbert space $\cH$. 
Note furthermore that if $T$ is self-adjoint then
$n_+=n_-=0$, since a self-adjoint operator cannot have
non-real eigenvalues. Now the following
holds, see \cite[\S XII.4]{DunfS}.

\begin{prop}\label{prop:App-extensionselfadjointopertaotrs} 
Let $(T, \cD(T))$
be a densely defined symmetric operator. \par\noindent
{\rm (i)}
$\cD(T^\ast)= \cD(T^{\ast\ast})
\oplus N_i \oplus N_{-i}$,
as an orthogonal direct sum with respect to the graph norm for
$T^\ast$ from
$\langle u,v\rangle_{T^\ast} = \langle u,v\rangle +
\langle T^\ast u,T^\ast v\rangle$. As a direct sum,
$\cD(T^\ast)= \cD(T^{\ast\ast}) + N_z + N_{\bar z}$
for general $z\in\C\backslash\R$.
\par\noindent
{\rm (ii)} Let $U$ be an isometric bijection $U\colon N_i\to N_{-i}$
and define $(S, \cD(S))$ by
$$
\cD(S) = \{ u + v +Uv\mid u\in \cD(T^{\ast\ast}), \
v\in N_i\}, \quad Sw=T^\ast w,
$$
then $(S, \cD(S))$ is a self-adjoint extension
of $(T,\cD(T))$,
and every self-adjoint extension of $T$ arises in this way.
\end{prop}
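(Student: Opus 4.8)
The plan is to run the classical von Neumann argument inside the graph Hilbert space of $T^\ast$. Since the adjoint is always closed, the graph inner product $\langle u,v\rangle_{T^\ast}=\langle u,v\rangle+\langle T^\ast u,T^\ast v\rangle$ makes $\cD(T^\ast)$ a Hilbert space in which $\cD(T^{\ast\ast})$ (the graph-closure of $\cD(T)$) is a closed subspace. For (i) I would first check that the three summands are mutually $\langle\cdot,\cdot\rangle_{T^\ast}$-orthogonal. For $v\in N_i$, $w\in N_{-i}$ one has $\langle T^\ast v,T^\ast w\rangle=\langle iv,-iw\rangle=-\langle v,w\rangle$, so $\langle v,w\rangle_{T^\ast}=0$; and for $u\in\cD(T^{\ast\ast})$, $v\in N_i$, using $(T^{\ast\ast})^\ast=T^\ast$ to get $\langle T^{\ast\ast}u,v\rangle=\langle u,T^\ast v\rangle=-i\langle u,v\rangle$, one finds $\langle T^{\ast\ast}u,iv\rangle=-\langle u,v\rangle$ and hence $\langle u,v\rangle_{T^\ast}=0$, with the analogous computation for $N_{-i}$.

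The decisive step is that these subspaces span $\cD(T^\ast)$. I would take $w\in\cD(T^\ast)$ that is $\langle\cdot,\cdot\rangle_{T^\ast}$-orthogonal to $\cD(T^{\ast\ast})$; testing against $u\in\cD(T)$ gives $\langle Tu,T^\ast w\rangle=-\langle u,w\rangle$, so the functional $u\mapsto\langle Tu,T^\ast w\rangle$ is bounded, whence $T^\ast w\in\cD(T^\ast)$ and $(T^\ast)^2w=-w$. Writing $w=w_++w_-$ with $w_+=\tfrac{1}{2i}(T^\ast+i)w$ and $w_-=\tfrac{1}{2i}(i-T^\ast)w$, the relation $(T^\ast)^2w=-w$ shows $w_\pm\in N_{\pm i}$, giving $\cD(T^\ast)=\cD(T^{\ast\ast})\oplus N_i\oplus N_{-i}$. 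For general $z\in\C\setminus\R$ the same device applied to $(T^\ast-z)(T^\ast-\bar z)$ yields $\cD(T^\ast)=\cD(T^{\ast\ast})+N_z+N_{\bar z}$; here the sum is only direct (not orthogonal), directness following because a genuinely symmetric operator has no non-real eigenvalues, so $N_z$ and $N_{\bar z}$ meet $\cD(T^{\ast\ast})$ and each other trivially.

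For (ii), given an isometric bijection $U\colon N_i\to N_{-i}$ I would put $S=T^\ast|_{\cD(S)}$. Symmetry of $S$ is a direct expansion: for $w=u+v+Uv$ and $w'=u'+v'+Uv'$ the computation of $\langle Sw,w'\rangle-\langle w,Sw'\rangle$ uses $T^{\ast\ast}$ symmetric, $T^\ast v=iv$, $T^\ast Uv=-iUv$, and collapses to a multiple of $\langle v,v'\rangle-\langle Uv,Uv'\rangle$, which vanishes precisely because $U$ is isometric. To upgrade symmetry to self-adjointness, and to see that every self-adjoint extension arises this way, I would pass to the Cayley transform $V=(T^{\ast\ast}-i)(T^{\ast\ast}+i)^{-1}$, a partial isometry from $\Ran(T^{\ast\ast}+i)=N_i^\perp$ onto $\Ran(T^{\ast\ast}-i)=N_{-i}^\perp$. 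Self-adjoint extensions of $T$ correspond bijectively to unitary extensions of $V$ to all of $\cH$, and such extensions are obtained exactly by prescribing an isometric bijection of the orthogonal complements of domain and range, that is a map $U\colon N_i\to N_{-i}$; unwinding the inverse Cayley transform identifies the resulting self-adjoint operator with the $S$ constructed above.

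The main obstacle is the self-adjointness half of (ii): the isometry of $U$ only delivers symmetry ($S\subset S^\ast$), whereas self-adjointness requires $\cD(S^\ast)\subseteq\cD(S)$, and this is forced by the surjectivity of $U$ onto $N_{-i}$ (equivalently, by the defect spaces of $S$ being trivial). Carrying this out by hand means controlling $\cD(S^\ast)$ via the decomposition of part (i); the Cayley transform is what makes it clean, turning the domain bookkeeping into the statement that a partial isometry with equal-dimensional defect spaces has its unitary extensions parametrised exactly by unitaries between those defect spaces.
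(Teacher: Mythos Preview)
The paper does not actually prove this proposition: it is stated in the appendix as a known result with the pointer ``see \cite[\S XII.4]{DunfS}'', and no argument is given. Your proposal is the standard von~Neumann proof and is correct; the orthogonality checks, the spanning argument via $(T^\ast)^2w=-w$ on the graph-orthogonal complement of $\cD(T^{\ast\ast})$, and the Cayley-transform bijection between self-adjoint extensions and unitaries $N_i\to N_{-i}$ are all sound.
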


In particular, $T$ has self-adjoint extensions if and only if
the deficiency indices are equal; $n_+=n_-$.
However, $T$ has a self-adjoint extension to a bigger
Hilbert space in case the deficiency indices are unequal, 
see e.g. \cite[Prop.~3.17, Cor.~13.4]{Schm}, but 
we will not take this into account. 
$T^{\ast\ast}$ is a closed symmetric extension of
$T$. We can also
characterise the domains of the self-adjoint extensions of $T$
using the sesquilinear form
$$
B(u,v) =  \langle T^\ast u, v\rangle -
\langle u, T^\ast v\rangle, \qquad u,v\in \cD(T^\ast),
$$
then $\cD(S)=\{ u\in \cD(T^\ast)\mid B(u,v)=0
, \ \forall v\in \cD(S)\}$.

\subsection{The spectral theorem
for unbounded self-adjoint operators}

With all the preparations of the previous
subsection the Spectral
Theorem \ref{thm:App-spectralthm} goes through in the unbounded setting,
see \cite[\S XII.4]{DunfS}, \cite[Ch.~13]{Rudi}.

\begin{thm}[Spectral theorem] 
\label{thm:App-specrtathm-unbdd} \index{spectral theorem}
Let $T\colon
\cD(T)\to \cH$ be an unbounded self-adjoint linear map,
then there exists a unique resolution of the identity such that
$T=\int_\R t \, dE(t)$, i.e. $\langle Tu,v\rangle =\int_\R t \,
dE_{u,v}(t)$ for $u\in\cD(T)$, $v\in \cH$.
Moreover, $E$ is supported on the
spectrum $\si(T)$, which is contained in $\R$.
For any bounded operator $S$ that satisfies $ST\subset TS$ we
have $E(A)S=SE(A)$, $A\subset \R$
a Borel set.
Moreover, the Stieltjes-Perron inversion formula
of Theorem \ref{thm:App-StieltjesPerron} remains valid;
$$
E_{u,v}\bigl( (a,b)\bigr) = \lim_{\de\downarrow 0}
\lim_{\ep\downarrow 0} \frac{1}{2\pi i}
\int_{a+\de}^{b-\de}
\langle R(x+i\ep)u,v\rangle - \langle R(x-i\ep)u,v\rangle \, dx.
$$
\end{thm}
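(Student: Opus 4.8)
The plan is to reduce the unbounded statement to the already-established bounded Spectral Theorem \ref{thm:App-spectralthm} by means of the Cayley transform. Since $T=T^\ast$, for $u\in\cD(T)$ one has $\|(T\pm i)u\|^2=\|Tu\|^2+\|u\|^2$, so $T\pm i\colon \cD(T)\to\cH$ are injective; self-adjointness moreover forces $\Ran(T\pm i)=\cH$, since otherwise $\mp i$ would be an eigenvalue of $T^\ast=T$, which is impossible. Hence the resolvents $R(\pm i)=(T\mp i)^{-1}$ are everywhere-defined bounded operators, and the Cayley transform $U=(T-i)(T+i)^{-1}$ is a unitary operator on $\cH$ for which $1$ is not an eigenvalue and $\Ran(1-U)=\Ran\bigl(2i(T+i)^{-1}\bigr)=\cD(T)$ is dense. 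A direct computation gives $T=i(1+U)(1-U)^{-1}$ on $\cD(T)$, which is the identity I would use to recover $T$ from $U$.

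Next I would apply the bounded spectral theory to $U$. As $U$ is unitary, hence normal, the normal-operator version of Theorem \ref{thm:App-spectralthm} (noted in the text immediately after that theorem) provides a unique resolution of the identity $F$ on the unit circle $\mathbb{T}$ with $U=\int_{\mathbb{T}}\la\,dF(\la)$; since $1$ is not an eigenvalue, $F(\{1\})=0$. I would then transport $F$ to $\R$ along the inverse Cayley map $\phi\colon\mathbb{T}\setminus\{1\}\to\R$, $\phi(\la)=i\frac{1+\la}{1-\la}$ (which sends $e^{i\theta}$ to $-\cot(\theta/2)$), by setting $E(A)=F\bigl(\phi^{-1}(A)\bigr)$ for Borel $A\subset\R$. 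Because $F(\{1\})=0$, the map $\phi$ is defined $F$-a.e., so $E$ is a genuine resolution of the identity on $\R$, and the functional calculus for $U$ applied to the $F$-a.e. finite function $\phi$ yields $T=\int_\R t\,dE(t)$ with natural domain $\{u:\int_\R t^2\,dE_{u,u}(t)<\infty\}$.

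The remaining assertions follow by tracking this correspondence. The support statement $\supp(E)\subset\si(T)\subset\R$ comes from relating $\si(U)\subset\mathbb{T}$ to $\si(T)$ through $\phi$. For the commutation claim, if $S\in B(\cH)$ satisfies $ST\subset TS$, then $S$ commutes with $R(\pm i)$ and hence with $U$; the commutation property of the bounded resolution $F$ then gives $SF(\cdot)=F(\cdot)S$, which transports to $SE(A)=E(A)S$. Uniqueness of $E$ follows from uniqueness of $F$ together with injectivity of the pushforward along $\phi$. Finally, I would obtain the Stieltjes--Perron inversion formula either by transporting Theorem \ref{thm:App-StieltjesPerron} through the Cayley map, or, more directly, by showing that for fixed $u$ the scalar function $z\mapsto\langle R(z)u,u\rangle$ is a Nevanlinna function whose representing measure is exactly $E_{u,u}$, and then polarising to recover $E_{u,v}$.

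I expect the main obstacle to be the domain bookkeeping in the second step: verifying that the operator $\int_\R t\,dE(t)$ built from the pushforward measure has domain precisely $\cD(T)$ and acts as $T$ there, rather than merely agreeing with $T$ on a core. This is delicate because $\phi$ is unbounded near $\la=1$, so one must exploit $F(\{1\})=0$ and approximate $\phi$ by bounded continuous functions, checking convergence on $\cD(T)=\Ran(1-U)$ and that no spectral mass escapes to infinity beyond what membership in $\cD(T)$ permits.
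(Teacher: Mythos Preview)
The paper does not actually prove this theorem: immediately before the statement it says ``the Spectral Theorem \ref{thm:App-spectralthm} goes through in the unbounded setting, see \cite[\S XII.4]{DunfS}, \cite[Ch.~13]{Rudi}'', and after the statement it simply moves on to the functional calculus. So there is no in-paper argument to compare against; the result is quoted as standard.

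Your Cayley-transform sketch is correct and is in fact the route taken in \cite[Ch.~13]{Rudi}, one of the two references the paper cites. The steps you outline---bijectivity of $T\pm i$ from self-adjointness, unitarity of $U=(T-i)(T+i)^{-1}$ with $1\notin\si_p(U)$, the normal-operator spectral theorem for $U$, pushforward of the circle resolution $F$ along $\phi(\la)=i(1+\la)/(1-\la)$ using $F(\{1\})=0$, and then reading off support, uniqueness, and commutation---are exactly the standard ones. Your flagged obstacle (matching the domain of $\int_\R t\,dE(t)$ with $\cD(T)=\Ran(1-U)$) is the genuine technical point, and your proposed resolution via $F(\{1\})=0$ and bounded approximation of $\phi$ is the right idea. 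The Stieltjes--Perron formula is most cleanly obtained your second way, by writing $\langle R(z)u,u\rangle=\int_\R (t-z)^{-1}\,dE_{u,u}(t)$ from the functional calculus and then running the same computation as in the bounded case.
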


As in the case of bounded self-adjoint operators we can now define
$f(T)$ for any measurable function $f$ by
$$
\langle f(T)u,v\rangle = \int_\R f(t)\, dE_{u,v}(t),
\qquad u\in \cD(f(T)), \ v\in \cH,
$$
where $\cD(f(T)) = \{ u\in \cH\mid
\int_\R |f(t)|^2\, dE_{u,u}(t) <\infty\}$ is the domain
of $f(T)$. This makes $f(T)$ into a densely defined closed
operator. In particular, if $f\in L^\infty(\R)$, then
$f(T)$ is a continuous operator, by the closed graph
theorem. This in particular applies to $f(x)=(x-z)^{-1}$,
$z\in\rho(T)$, which gives the resolvent operator.



\section{Hints and answers for selected exercises}

\emph{Exercise \ref{sec:threetermZ}.\ref{exer:lembbdL}.}
See e.g. \cite[Lemma (3.3.3)]{Koel-Laredo}.

\emph{Exercise \ref{sec:threetermZ}.\ref{exer:adjointL}.}
See e.g. proof of Proposition \ref{prop:MVadjointJ} or 
\cite[Proposition (3.4.2)]{Koel-Laredo}.

\emph{Exercise \ref{sec:threetermZ}.\ref{exer:realcoefficients}.}
See e.g. \cite{Koel-IM}. 

\emph{Exercise \ref{sec:threetermZ}.\ref{exer:2times2}.}
See \cite[p.~583]{Bere}. 

\emph{Exercise \ref{sec:threetermN}.\ref{exer:thm3termrec-scalarOP}.}
See \cite{Koel-Laredo}.

\emph{Exercise \ref{sec:MVOP}.\ref{exer:MVOPexistence}.}
See e.g. \cite{DamaPS}, \cite{GrunT} or Section \ref{ssec:MoreMweights}.

\emph{Exercise \ref{sec:MVOP}.\ref{exer:lemMV2ndkindsolverecurrence}.}
See \cite{Berg}. 

\emph{Exercise \ref{sec:MVOP}.\ref{exer:lemStieltjesPerroninversion}.}
See e.g. \cite[\S 3.1]{Koel-Laredo}, Van Assche \cite[\S 22.1]{Isma}
for comparable calculations.

\emph{Exercise \ref{sec:MVOP}.\ref{exer:MVFavard}.}
See e.g. \cite[\S 1]{ApteN}.

\emph{Exercise \ref{sec:MVOP}.\ref{exer:Carleman}.} Mimick the proof of the 
Carleman condition for the scalar case, see \cite[Ch.~1]{Akhi}.

\emph{Exercise \ref{sec:MoreMweightsMVOPS-Jacobi}.\ref{exer:commutant}.}
See Tirao and Zurri\'an \cite{TiraZ}. 

\printindex


\end{document}